\numberwithin{equation}{section}
\newcommand{\R}{\mathbb{R}}
\newcommand{\N}{\mathbb{N}}
\newcommand{\Sym}{\operatorname{Sym}}
\newcommand{\Ric}{\operatorname{Ric}}
\newcommand{\Rm}{\operatorname{Rm}}
\newcommand{\inj}{\operatorname{inj}}
\newcommand{\supp}{\operatorname{Supp}}
\newcommand{\vol}{\operatorname{vol}}
\newcommand{\id}{\operatorname{id}}
\newcommand{\loc}{\text{loc}}
\renewcommand{\flat}{\text{flat}}
\newcommand{\diam}{\operatorname{diam}}
\newcommand{\met}{\operatorname{met}}
\newcommand{\G}{\operatorname{G}}
\newcommand{\Rf}{\operatorname{Rf}}
\renewcommand{\L}{\mathcal{L}}
\theoremstyle{plain}
\newtheorem{theorem*}{Theorem}
\newtheorem{corollary*}{Corollary}
\newtheorem{question*}{Question}
\newtheorem{definition*}{Definition}
\newtheorem{theorem}{Theorem}[section]
\newtheorem{lemma}[theorem]{Lemma}
\newtheorem{corollary}[theorem]{Corollary}
\newtheorem{proposition}[theorem]{Proposition}
\theoremstyle{definition}
\newtheorem{definition}[theorem]{Definition}
\theoremstyle{remark}
\newtheorem{remark}[theorem]{Remark}
 \title[lower scalar curvature bounds via regularizing Ricci flow]{Pointwise lower scalar curvature bounds for $C^0$ metrics via regularizing Ricci flow}
\author{Paula Burkhardt-Guim}
\address{Dept. of Mathematics \\ University of California, Berkeley}
\email{paulab@math.berkeley.edu}
\date{\today}							
\begin{document}
\begin{abstract}
In this paper we propose a class of local definitions of weak lower scalar curvature bounds that is well defined for $C^0$ metrics. We show the following: that our definitions are stable under greater-than-second-order perturbation of the metric, that there exists a reasonable notion of a Ricci flow starting from $C^0$ initial data which is smooth for positive times, and that the weak lower scalar curvature bounds are preserved under evolution by the Ricci flow from $C^0$ initial data.
\end{abstract}
\maketitle

\section{Introduction}

It is natural to ask whether there exists a useful notion of scalar curvature for singular metric spaces. Gromov has introduced (see \cite{Gro}) a definition of lower scalar curvature bounds for certain singular spaces, which has the advantage that it is well-defined for $C^0$ metrics, rather than requiring higher regularity. As a result, Gromov  was able to prove in \cite{Gro} that the space of $C^2$ Riemannian metrics with scalar curvature bounded below was closed with respect to $C^0$-convergence. In \cite{Bam}, Bamler provided an alternative proof of this fact, using Ricci flow and the results of Koch and Lamm \cite{KL1}, and making use of the fact that, for smooth Ricci flows, lower bounds on the scalar curvature are preserved.

Bamler's approach and the preservation of (constant) lower bounds on the scalar curvature under the Ricci flow leads one to ask whether it is possible to formulate a local notion of lower bounds on the scalar curvature for singular spaces in terms of Ricci flow (for a discussion of classical Ricci flow, see \S \ref{sec:preliminaries}). A satisfactory notion of a pointwise lower bound on the scalar curvature should satisfy the following requirements: For any constant $\kappa$, we should have
\begin{enumerate}
\item\label{item:secondorder} \textbf{Stability under greater-than-second-order perturbation:} If $g'$ and $g''$ are two $C^0$ metrics that agree to greater than second order around a point $x_0$, i.e. $|g'(x) - g''(x)| \leq cd^{2+\eta}(x,x_0)$ for some $c, \eta>0$ and all $x$ in a neighborhood of $x_0$, then $g'$ should have scalar curvature bounded below by $\kappa$ in the weak sense at $x_0$ if and only if $g''$ does. Moreover, if $g'$ and $g''$ are $C^0$ metrics on different manifolds which merely agree to greater than second order under pullback by a locally defined diffeomorphism, the conclusion should still hold.
\item \textbf{Preservation under the Ricci flow:} If $g$ is a $C^0$ metric on a closed manifold that has scalar curvature bounded below by $\kappa$ in the weak sense at every point, and $\tilde g_t$ is a solution to the Ricci flow starting from $g$ in some sense, then $\tilde g_t$ should have scalar curvature bounded below by $\kappa$ at every point for all $t>0$ for which the flow is defined. This is true for Ricci flows starting from smooth initial data.
\item \textbf{Agreement with the classical notion for $C^2$ metrics:} If $g$ is a $C^2$ metric with scalar curvature bounded below by $\kappa$ at $x_0$ in the generalized sense for $C^0$ metrics, then $g$ should have scalar curvature bounded below by $\kappa$ at $x_0$ in the classical sense.
\end{enumerate}

We now explain what it means to have Ricci flow starting from a metric that is only $C^0$. In \cite{Sim} Simon showed that, for a complete initial metric, there is a smooth, time-dependent family of metrics defined on a positive time interval and converging uniformly to the initial data, which solves the Ricci-DeTurck flow, an evolution equation closely related to the Ricci flow (we discuss the Ricci-DeTurck flow in greater detail in \S \ref{sec:preliminaries}). Additionally, in \cite{KL1} and \cite{KL2} Koch and Lamm developed a natural notion of a solution to various geometric flows starting from nonsmooth, or ``rough'', initial data, namely, a solution to the corresponding integral equation for these geometric flows. For positive times, certain integral solutions from the rough initial data have high regularity. These results suggest that one might define a weak notion of lower scalar curvature bounds for $C^0$ metrics by finding a solution to the flow starting from the $C^0$ data, and then checking that, for small positive times, the lower scalar curvature bound is satisfied in the classical sense. In order to state our notion of local lower scalar curvature bounds for $C^0$ metrics, we first show that there is a Ricci flow starting from $C^0$ initial data in the following sense:

\begin{theorem}\label{thm:introthm0}
Let $M$ be a closed manifold and $g_0$ a $C^0$ metric on $M$. Then there exists a time-dependent family of smooth metrics $(\tilde g_t)_{t\in(0,T]}$ and a continuous surjection $\chi:M\to M$ such that the following are true:
\begin{enumerate}
\item[(a)] The family $(\tilde g_t)_{t\in(0,T]}$ is a Ricci flow, and
\item[(b)] There exists a smooth family of diffeomorphisms $(\chi_t)_{t\in (0,T]}: M\to M$ such that
\begin{equation*}
\chi_t\xrightarrow[t\to 0]{C^0}\chi\,  \text{ and } \, ||(\chi_t)_*\tilde g_t - g_0||_{C^0(M)}\xrightarrow[t\to 0]{}0.
\end{equation*}
\end{enumerate}
Moreover, for any $x\in M$, $\diam\{\chi_s(x): s\in(0,t]\} \leq C\sqrt{t}$  for some constant $C>0$ independent of $x$, and any two such families are isometric, in the sense that if $\tilde g_t'$ is another such family with corresponding continuous surjection $\chi'$, then there exists a stationary diffeomorphism $\alpha:M\to M$ such that $\alpha^* \tilde g_t = \tilde g_t'$ and $\chi\circ\alpha = \chi'$.
\end{theorem}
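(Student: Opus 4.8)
The plan is to build $(\tilde g_t)$ from a Ricci--DeTurck flow out of $g_0$ via the DeTurck trick, and to deduce the uniqueness statement from uniqueness of the Ricci--DeTurck flow. \textbf{Step 1 (a Ricci--DeTurck flow from $g_0$).} Since $M$ is closed, a sufficiently fine mollification $\bar g$ of $g_0$ is a smooth metric with $(1+\varepsilon)^{-1}\bar g\le g_0\le(1+\varepsilon)\bar g$ for any prescribed small $\varepsilon>0$. Fixing such a $\bar g$, I would invoke Simon's construction \cite{Sim} (one could instead localize in $\bar g$-coordinates, apply Koch--Lamm \cite{KL1,KL2} where $g_0$ is an $L^\infty$-small perturbation of the Euclidean metric, and patch) to obtain $T>0$ and a smooth family $(g_t)_{t\in(0,T]}$ solving the $\bar g$-Ricci--DeTurck flow
\begin{equation*}
\partial_t g_t=-2\Ric(g_t)+\L_{W(t)}g_t,\qquad W(t)^k=g_t^{ij}\bigl(\Gamma(g_t)^k_{ij}-\Gamma(\bar g)^k_{ij}\bigr),
\end{equation*}
with $g_t$ uniformly $\bar g$-nondegenerate on $(0,T]$, with $\|g_t-g_0\|_{C^0(M)}\to0$ as $t\to0$, and with the Shi-type interior estimates $\sup_M|\nabla^k g_t|_{\bar g}\le C_k t^{-k/2}$ ($\nabla$ the $\bar g$-connection). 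Since the difference of Christoffel symbols is a tensor of the schematic form $g_t^{-1}\ast\nabla^{\bar g}g_t$, this gives $\|W(t)\|_{C^0(M)}\le C_1 t^{-1/2}$, so $W$ is integrable in $t$ on $(0,T]$.

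\textbf{Step 2 (DeTurck diffeomorphisms, the Ricci flow, and the diameter bound).} Let $(\chi_t)_{t\in(0,T]}$ solve the time-dependent ODE $\partial_t\chi_t=-W(t)\circ\chi_t$. By integrability of $\|W(t)\|_{C^0}$, the $\chi_t$ form a Cauchy family in $C^0(M,M)$ as $t\to0$, hence extend continuously; set $\chi:=\lim_{t\to0}\chi_t$. As a uniform limit of diffeomorphisms of the compact manifold $M$, $\chi$ is continuous, and it is surjective (if $p\notin\chi(M)$ then $d(p,\chi(M))>0$, incompatible with $\chi_t(M)=M$ and $\chi_t\to\chi$ uniformly). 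Put $\tilde g_t:=\chi_t^*g_t$. The usual DeTurck computation --- differentiating the pullback and using $\partial_t\chi_t=-W(t)\circ\chi_t$ --- shows the Lie-derivative term is exactly cancelled, so $\partial_t\tilde g_t=-2\Ric(\tilde g_t)$, giving (a); and $(\chi_t)_*\tilde g_t=(\chi_t)_*\chi_t^*g_t=g_t\xrightarrow{C^0}g_0$, giving (b). Finally, for fixed $x$ and $0<s_1\le s_2\le t$,
\begin{equation*}
d_{\bar g}\bigl(\chi_{s_1}(x),\chi_{s_2}(x)\bigr)\le\int_{s_1}^{s_2}\bigl|W(s)\bigr|_{\bar g}(\chi_s(x))\,ds\le C_1\int_0^t s^{-1/2}\,ds=2C_1\sqrt t,
\end{equation*}
and since $d_{\bar g}$ and $d_{g_0}$ are comparable this yields $\diam\{\chi_s(x):s\in(0,t]\}\le C\sqrt t$ with $C$ independent of $x$.

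\textbf{Step 3 (uniqueness up to a stationary diffeomorphism).} Given another such family $(\tilde g_t',\chi_t',\chi')$, I would apply the DeTurck trick in reverse to the smooth Ricci flow $\tilde g_t'$: there should be a $\bar g$-Ricci--DeTurck flow $g_t'$ and diffeomorphisms $\psi_t$ with $\tilde g_t'=\psi_t^*g_t'$ and $\partial_t\psi_t=-W(g_t',\bar g)\circ\psi_t$. The crux is then to see $g_t'$ lies in the same uniqueness class as $g_t$: using $(\chi_t')_*\tilde g_t'\xrightarrow{C^0}g_0$ together with the $\sqrt t$-displacement bound for $\chi_t'$ (and the analogous bound for $\psi_t$, which integrates a DeTurck field of the same size), one shows $\|g_t'-g_0\|_{C^0}\to0$ and $g_t'$ uniformly $\bar g$-nondegenerate for small $t$; then uniqueness of the $\bar g$-Ricci--DeTurck flow from $C^0$ data \cite{KL1} forces $g_t'=g_t$ on a common time interval, so $W(g_t',\bar g)=W(g_t,\bar g)$. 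Now $\chi_t$ and $\psi_t$ solve the same time-dependent ODE, so if $\Phi_{t,t_0}$ ($t_0\in(0,T]$ fixed) denotes its flow then $\chi_t=\Phi_{t,t_0}\circ\chi_{t_0}$ and $\psi_t=\Phi_{t,t_0}\circ\psi_{t_0}$, whence $\chi_t^{-1}\circ\psi_t=\chi_{t_0}^{-1}\circ\psi_{t_0}=:\alpha$ is a stationary diffeomorphism and $\psi_t=\chi_t\circ\alpha$. Therefore $\tilde g_t'=\psi_t^*g_t=(\chi_t\circ\alpha)^*g_t=\alpha^*(\chi_t^*g_t)=\alpha^*\tilde g_t$, and letting $t\to0$ in $\psi_t=\chi_t\circ\alpha$ (having matched the $t\to0$ limits of $\psi_t$ and $\chi_t'$) gives $\chi'=\chi\circ\alpha$.

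\textbf{Main obstacle.} Steps 1--2 are essentially bookkeeping on top of \cite{Sim,KL1,KL2}, modulo the standard patching needed to globalize the small-perturbation theory to a Ricci--DeTurck flow on $M$ and the gradient estimate $\|\nabla^{\bar g}g_t\|_{C^0}\le C_1 t^{-1/2}$ that makes $W$ time-integrable. I expect the real difficulty to be in Step 3: showing that an a priori ungauged Ricci flow $\tilde g_t'$ satisfying (b) and the $\sqrt t$-bound can be DeTurck-gauged all the way down to $t=0$ with limiting data exactly $g_0$, so that $g_t'$ enters the Koch--Lamm uniqueness class, and in controlling the $t\to0$ behaviour of the gauge diffeomorphisms $\psi_t$ (again through the $\sqrt t$-displacement estimate) precisely enough to identify their limit with $\chi'$. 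It is this estimate that rigidifies the limiting map and removes any would-be gauge freedom at $t=0$.
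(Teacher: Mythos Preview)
Your Steps 1--2 are essentially the paper's approach: the paper constructs a Ricci--DeTurck flow $g_t$ from $g_0$ (via a Koch--Lamm fixed-point argument, with a Ricci flow $\bar g(t)$ rather than a stationary $\bar g$ as background, but this is a technical detail), obtains the derivative bound $|\nabla^{\bar g}g_t|\le Ct^{-1/2}$, integrates the resulting $|X|\le Ct^{-1/2}$ to get the $\sqrt t$-drift (Lemma~2.1), and pulls back by $\chi_t$ exactly as you describe. So existence and the diameter bound are fine.

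Step 3, however, has a genuine gap, and the paper's argument is substantially different from yours. The $\sqrt t$-displacement bound for $\chi_t'$ is \emph{not} available to you: it is a conclusion of the theorem for the specific flow constructed in Steps 1--2, not part of the definition of a regularizing Ricci flow (which is just (a) and (b)). So you cannot invoke it when proving uniqueness for an arbitrary second family $(\tilde g_t',\chi_t',\chi')$. Relatedly, your plan to ``DeTurck-gauge $\tilde g_t'$ in reverse'' down to $t=0$ is problematic: the harmonic map heat flow that produces $\psi_t$ is forward-parabolic, so from an initial choice $\psi_{t_0}$ you get a RDT flow on $[t_0,T]$, not on $(0,t_0]$; and without the $t^{-1/2}$ gradient bound on the resulting $g_t'$ (which is exactly what you do not know a priori for the second family) you cannot control $\psi_t$ as $t\to0$, nor place $g_t'$ in the Koch--Lamm uniqueness class. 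Finally, there is no mechanism in your outline forcing $\lim_{t\to0}\psi_t=\chi'$, since $\psi_t$ and $\chi_t'$ are unrelated constructions.

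The paper sidesteps all of this (Lemma~5.3 and Theorem~5.2). It never tries to gauge $\tilde g_t'$ globally. Instead it picks times $t_i\searrow0$; at each $t_i$ the smooth metrics $(\chi_{t_i}^1)_*\tilde g^1(t_i)$ and $(\chi_{t_i}^2)_*\tilde g^2(t_i)$ are both $C^0$-close to $g_0$ (this uses only (b)), hence $\delta_i$-close to each other. From each it runs a \emph{fresh} Ricci--DeTurck flow with respect to a common background on $[t_i,T']$; these are smooth flows from smooth data, hence isometric (by smooth Ricci flow uniqueness) to $\tilde g^1$ and $\tilde g^2$ respectively via diffeomorphisms $\phi_t^{j,i}$. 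The stability estimate for RDT (Appendix~A) keeps the two RDT flows $C\delta_i$-close, producing $(1+C\delta_i)$-bilipschitz maps $\psi_t^i:(M,\tilde g^2(t))\to(M,\tilde g^1(t))$. Arzel\`a--Ascoli then extracts a smooth isometry $\alpha_t$, and a further argument using uniqueness of smooth Ricci flow from the slice $t_i$ shows $\alpha_t$ is independent of $t$. The relation $\chi^1\circ\alpha=\chi^2$ is obtained by tracking the composed diffeomorphisms through the limits, using the drift bound only for the freshly constructed $\phi_t^{j,i}$ (where it is available, since those come from RDT flows satisfying the derivative estimates). This discrete-in-time, compactness-based strategy is what replaces your attempt to gauge down to $t=0$.
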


We call the pair $((\tilde g_t)_{t\in(0,T]}, \chi)$ a \emph{regularizing Ricci flow} for $g_0$, and use it to make the following definition:
\begin{definition}\label{def:RRFscalar}
Let $M^n$ be a closed manifold and $g_0$ a $C^0$ metric on $M$. For $0 < \beta < 1/2$ we say that $g_0$ has scalar curvature bounded below by $\kappa$ at $x$ in the \emph{$\beta$-weak sense} if there exists a regularizing Ricci flow $((\tilde g_t)_{t\in(0,T]}, \chi)$ for $g_0$ such that, for some point $y\in M$ with $\chi(y) = x$, we have
\begin{equation}\label{eqn:RRFscalar}
\inf_{C>0}\left(\liminf_{t\searrow 0}\left(\inf_{B_{\tilde g(t)}(y,Ct^\beta)}R^{\tilde g}(\cdot, t)\right)\right) \geq \kappa,
\end{equation}
where $B_{\tilde g(t)}(y,Ct^\beta)$ denotes the ball of radius $Ct^\beta$ about $y$, measured with respect to the metric $\tilde g(t)$, and $R^{\tilde g}(\cdot,t)$ denotes the scalar curvature of $\tilde g_t$ .
\end{definition}

\begin{remark}
In fact, we will show in \S \ref{sec:equivalentdefs} that Definition \ref{def:RRFscalar} is independent of choice of $y$, so it is equivalent to require that (\ref{eqn:RRFscalar}) hold at $y$ for \emph{all} $y$ with $\chi(y) = x$.
\end{remark}

The objective of this paper is to show that Definition \ref{def:RRFscalar} satisfies items ($1$), ($2$), and ($3$). It is clear that ($3$) is satisfied, since if $g_0$ is $C^2$, then the regularizing Ricci flow is the usual Ricci flow with $\chi = \id$, and 
\begin{equation}
\inf_{C>0}\left(\liminf_{t\searrow 0}\left(\inf_{B_{\tilde g(t)}(y,Ct^\beta)}R^{\tilde g}(\cdot, t)\right)\right) = \lim_{t\to 0}R^{\tilde g}(x, t).
\end{equation}

In order to show that Definition \ref{def:RRFscalar} satisfies item ($1$), we study the stability of the difference of scalar curvatures of regularizing Ricci flows from metrics that agree to greater than second order. We show:
\begin{theorem}\label{cor:introcor1}
Suppose $g'$ and $g''$ are two $C^0$ metrics on closed manifolds $M'$ and $M''$ respectively, and that there is a locally defined diffeomorphism $\phi: U\to V$ where $U$ is a neighborhood of $x_0'$ in $M'$ and $V$ is a neighborhood of $x_0''$ in $M''$ with $\phi(x_0') = x_0''$. Suppose furthermore that $g'$ and $\phi^*g''$ agree to greater than second order around $x_0'$, i.e. $|g'(x) - \phi^*g''(x)| \leq cd^{2+\eta}(x,x_0)$ for some $c, \eta>0$ and all $x$ in a neighborhood of $x_0'$. Then there exist regularizing Ricci flows $(\tilde g_t', \chi')$ and $(\tilde g_t'', \chi'')$ for $g'$ and $g''$ respectively such that, for $1/(2+\eta) < \beta <1/2$, $C>0$, and $t$ sufficiently small depending on $C$, $\beta$, and $\eta$, we have
\begin{equation}
\sup_{B(x_0', Ct^\beta)}|R^{(\chi_t')_*\tilde g'_t} - \phi^*R^{(\chi_t'')_*\tilde g''_t}|\leq ct^\omega,
\end{equation}
where $\omega$ is some positive exponent, $c$ is a constant that does not depend on $t$ or $C$, $R^{(\chi_t')_*\tilde g'_t}$ and $R^{(\chi_t'')_*\tilde g''_t}$ denote the scalar curvatures with respect to $(\chi_t')_*\tilde g_t'$ and $(\chi_t'')_*\tilde g_t''$ respectively, and $(\chi_t')$ and $(\chi_t'')$ are the smooth families of diffeomorphisms for $\tilde g_t'$ and $\tilde g_t''$ respectively, whose existence is given by (b) in Theorem \ref{thm:introthm0}.

In particular, Definition \ref{def:RRFscalar} holds for $g'$ at $x_0'$ if and only if it holds for $g''$ and $x_0''$.
\end{theorem}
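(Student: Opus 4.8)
The plan is to realize both regularizing Ricci flows as Ricci--DeTurck flows with matched background metrics, reduce the claim to a localized stability estimate for the Ricci--DeTurck flow, and then extract the rate $t^\omega$ by parabolic rescaling. First I would recall that, as in the construction underlying Theorem \ref{thm:introthm0}, a regularizing Ricci flow arises from a Ricci--DeTurck flow: if $\bar g$ is a smooth metric $C^0$-close to $g_0$ and $(g_t)$ is the Simon/Koch--Lamm solution of the $\bar g$-Ricci--DeTurck flow converging uniformly to $g_0$, then $(\chi_t)_*\tilde g_t = g_t$, where $\chi_t$ is the family of DeTurck diffeomorphisms. Since scalar curvature is a pointwise diffeomorphism invariant, $\phi^* R^{(\chi_t'')_*\tilde g_t''} = R^{\phi^*(\chi_t'')_*\tilde g_t''}$ on $U$, so it is enough to compare, near $x_0'$, the two Ricci--DeTurck flows $g_t' := (\chi_t')_*\tilde g_t'$ and $h_t := \phi^*\big(((\chi_t'')_*\tilde g_t'')|_{\phi(U)}\big)$. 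Using that $g'$ and $\phi^* g''$ are $C^0$-close near $x_0'$, I would choose the backgrounds $\bar g'$ on $M'$ and $\bar g''$ on $M''$ so that $\bar g' = \phi^*\bar g''$ on a neighborhood $U' \Subset U$ of $x_0'$, each remaining $C^0$-close to the corresponding $g_0$. Then, by equivariance of the Ricci--DeTurck equation under pullback of $(g,\bar g)$, both $g_t'$ and $h_t$ solve the \emph{same} quasilinear parabolic system on $U'$, both obey the Koch--Lamm weighted bounds $\sup_{U'}|\cdot - \bar g'| \le \epsilon$ and $\sqrt t\,\sup_{U'}|\partial(\cdot)| + t\,\sup_{U'}|\partial^2(\cdot)| \le \epsilon$, and $g_t' \to g'$, $h_t \to \phi^* g''$ in $C^0(U')$ with $|g' - \phi^* g''|(x) \le c\, d(x,x_0')^{2+\eta}$ on $U'$.

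The heart of the argument is a \emph{localized stability estimate} for $w_t := g_t' - h_t$, which solves the linear parabolic system $\partial_t w = a^{ij}\partial_i\partial_j w + b^i\partial_i w + c\,w$ obtained by differencing the Ricci--DeTurck operators, with $\|a - (\bar g')^{-1}\|\le\epsilon$, $|b|\lesssim \epsilon/\sqrt t$, $|c|\lesssim\epsilon/t$. I would prove that the order-$(2+\eta)$ vanishing of the initial datum is propagated, together with its natural time-weighted $C^1$, $C^2$ companions:
\begin{equation*}
|\partial^k w_t(x)| \lesssim t^{-k/2}\big(d(x,x_0') + \sqrt t\,\big)^{2+\eta},\qquad k = 0,1,2,
\end{equation*}
for $x$ near $x_0'$ and $t$ small. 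This is where the bulk of the work — and the main obstacle — lies: the lower-order coefficients $b$, $c$ blow up like $t^{-1/2}$, $t^{-1}$, so propagating the spatial decay must be done in the scale-invariant function spaces of Koch--Lamm (or via Simon's a priori estimates), in which these terms are admissible perturbations; the $O(\epsilon)$ boundary data on $\partial U'$ contributes only an exponentially small ($\sim e^{-c/t}$) error since $d(x_0',\partial U')$ is a fixed positive distance; and the $C^1$, $C^2$ bounds come from interior parabolic estimates after parabolic rescaling, using the Koch--Lamm bounds on $g_t'$, $h_t$ to control the regularity of the coefficients.

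Granting this, I would fix $C>0$, set $\lambda = \sqrt t$, and rescale parabolically about $x_0'$ in a fixed coordinate chart: $g^{(\lambda)}_s(x) := g'_{\lambda^2 s}(x_0' + \lambda x)$, likewise $h^{(\lambda)}_s$; these are Ricci--DeTurck flows on $s\in(0,1]$ over a ball of radius $\sim r_0/\lambda \to\infty$, with $|g^{(\lambda)}_0 - h^{(\lambda)}_0|(x) \lesssim c\,\lambda^{2+\eta}|x|^{2+\eta}$. Applying the estimate above to the rescaled flows at $s=1$ gives $|\partial^k w^{(\lambda)}_1(x)| \lesssim c\,(|x|+1)^{2+\eta}\lambda^{2+\eta}$ for $k\le 2$ on bounded regions; writing the scalar curvature of a Ricci--DeTurck flow through $g^{(\lambda)}$, $\partial g^{(\lambda)}$, $\partial^2 g^{(\lambda)}$ (all uniformly controlled at $s=1$) then yields $|R^{g^{(\lambda)}_1} - R^{h^{(\lambda)}_1}|(x) \lesssim c\,(|x|+1)^{2+\eta}\lambda^{2+\eta}$. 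Undoing the rescaling, $R^{g^{(\lambda)}_1}(x) = \lambda^2 R^{g'_t}(x_0' + \lambda x)$ with $\lambda = \sqrt t$, so for $|y-x_0'|\le Ct^\beta$ (i.e. $|x|\le Ct^{\beta-1/2}$, with $\beta-1/2<0$) and using that the evolving metrics are uniformly comparable to the coordinate metric:
\begin{equation*}
|R^{(\chi_t')_*\tilde g_t'}(y) - \phi^* R^{(\chi_t'')_*\tilde g_t''}(y)| \lesssim c\,C^{2+\eta}\,t^{\beta(2+\eta)-1},
\end{equation*}
which is the claimed bound with $\omega = \beta(2+\eta)-1>0$, using $\beta>1/(2+\eta)$. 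To remove the $C$-dependence of the constant I would shrink $\beta$ to some $\beta'\in(1/(2+\eta),\beta)$ and absorb $C^{2+\eta}$ into $t^{(\beta-\beta')(2+\eta)}$, which is $\le 1$ once $t$ is small in terms of $C$ — exactly the hypothesis ``$t$ sufficiently small depending on $C$''.

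Finally, for the ``in particular'' clause I would argue by bookkeeping with Definition \ref{def:RRFscalar}. Since (\ref{eqn:RRFscalar}) is invariant under the stationary isometries of Theorem \ref{thm:introthm0}, it holds for the regularizing Ricci flows constructed above whenever it holds for any regularizing Ricci flow. Pick $y'$, $y''$ with $\chi'(y')=x_0'$, $\chi''(y'')=x_0''$; by Theorem \ref{thm:introthm0} these satisfy $d(\chi_t'(y'),x_0') + d(\chi_t''(y''),x_0'') \lesssim\sqrt t\ll t^\beta$, so, since $\chi_t'$, $\chi_t''$ are diffeomorphisms and $\phi$ is bi-Lipschitz near $x_0'$, the map $(\chi_t'')^{-1}\circ\phi\circ\chi_t'$ carries $B_{\tilde g'(t)}(y',Ct^\beta)$ into some $B_{\tilde g''(t)}(y'',C''t^\beta)$ (and conversely, with $C$, $C''$ changed by fixed factors), while the displayed curvature estimate controls the discrepancy of scalar curvatures along these balls by $o(1)$ as $t\searrow 0$. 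Taking $\liminf_{t\to 0}$ and then $\inf_{C>0}$ as in (\ref{eqn:RRFscalar}), and running the same argument with $\phi^{-1}$ for the reverse implication, shows that (\ref{eqn:RRFscalar}) holds for $g'$ at $x_0'$ if and only if it holds for $g''$ at $x_0''$.
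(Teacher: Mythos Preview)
Your outline is correct and matches the paper's strategy: choose backgrounds with $\bar g'=\phi^*\bar g''$ near $x_0'$, prove a weighted stability estimate $|g_t'-\phi^*g_t''|(x)\lesssim (d(x,x_0')+\sqrt t)^{2+\eta}$ for the Ricci--DeTurck flows, upgrade to second derivatives, and compare scalar curvatures. The paper implements the stability estimate via the weighted Koch--Lamm norms $\tilde X_a,\tilde Y_a$ with weight $w_a(x,t)=\max\{(d(x,x_0')+\sqrt t+a)^{-2-\eta},1\}$ (Theorem~\ref{thm:fixedpointexistence}); the cutoff/boundary terms are absorbed not by the exponential heat-kernel decay you invoke but simply because the weight is already $O(R^{-2-\eta})$ at distance $R$, so those terms are bounded in $\tilde Y_a$ by a constant depending on $R$. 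The one genuine methodological difference is the passage to $C^2$: you propose interior Schauder for the linearized difference equation after parabolic rescaling, whereas the paper avoids tracking the regularity of the singular coefficients $b\sim t^{-1/2}$, $c\sim t^{-1}$ altogether and instead interpolates (Lemma~\ref{lemma:derivativeinterpolation}) between the weighted $C^0$ bound and the crude $|\nabla^\ell(g_t'-\phi^*g_t'')|\lesssim t^{-\ell/2}$ from Corollary~\ref{cor:C0RDTderivbounds}, taking interpolation scale $a=t^{1/2+\delta}$ and $\ell$ large. Both routes give $\omega=\beta(2+\eta)-1$ up to the trick you note of shrinking $\beta$ to kill the $C$-dependence; your Schauder route is slightly more direct, the paper's interpolation is more robust in that it uses only a priori derivative bounds on each flow separately.
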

Theorem \ref{cor:introcor1} follows from some results in \S \ref{sec:fixedptconstruction} and \S \ref{sec:RRF}, as we shall discuss.

Definition \ref{def:RRFscalar} may be reformulated in terms of Ricci or Ricci-DeTurck flows, and also has a natural formulation in terms of subspaces of the space of germs of Riemannian metrics at a point: By Theorem \ref{cor:introcor1}, Definition \ref{def:RRFscalar} descends to the space of germs of metrics at a point, and further descends to the quotient space induced on the space of germs of metrics at $x_0$ by the relation $[g] \sim [g']$ if $g$ and $g'$ agree to greater than second order at $x_0$. Moreover, we show that Definition \ref{def:RRFscalar} behaves appropriately under the Ricci flow, and thus satisfies item ($2$):

\begin{theorem}\label{thm:introthm2}
Suppose that $g_0$ is a $C^0$ metric on a closed manifold $M$, and suppose there is some $\beta \in (0,1/2)$ such that $g_0$ has scalar curvature bounded below by $\kappa$ in the $\beta$-weak sense at all points in $M$. Suppose also that $(\tilde g(t))_{t\in (0,T]}$ is a Ricci flow starting from $g_0$ in the sense of Theorem \ref{thm:introthm0}. Then the scalar curvature of $\tilde g(t)$, $R(\tilde g(t))$, satisfies $R(\tilde g(t)) \geq \kappa$ everywhere on $M$, for all $t\in (0,T]$.
\end{theorem}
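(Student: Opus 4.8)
The plan is to combine the standard preservation of scalar lower bounds under smooth Ricci flow with a conjugate heat kernel computation that carries the bound across the singular time $t=0$.

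First I would reduce to a single flow. By the uniqueness clause of Theorem~\ref{thm:introthm0} any regularizing Ricci flow for $g_0$ differs from $(\tilde g(t))$ by a stationary diffeomorphism, and both the quantity in~(\ref{eqn:RRFscalar}) and the desired conclusion $R(\tilde g(t))\ge\kappa$ are invariant under diffeomorphisms; so by the Remark the hypothesis is equivalent to the statement that for \emph{every} $y\in M$,
\[
\inf_{C>0}\Bigl(\liminf_{t\searrow0}\ \inf_{B_{\tilde g(t)}(y,Ct^\beta)}R^{\tilde g}(\cdot,t)\Bigr)\ \ge\ \kappa .
\]
Since each ball contains its center, this gives in particular $\liminf_{t\searrow0}R^{\tilde g}(y,t)\ge\kappa$ for every $y$, hence: for all $y$, $C>0$ and $\delta>0$ there is $t_0=t_0(y,C,\delta)>0$ such that $R^{\tilde g}(\cdot,t)\ge\kappa-\delta$ on $B_{\tilde g(t)}(y,Ct^\beta)$ for $t<t_0$. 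It then suffices to prove $R(\tilde g(t))\ge\kappa$ for this one flow.

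On $(0,T]$ the flow is smooth, so from $\partial_tR=\Delta R+2|\Ric|^2$ both $\min_MR(\cdot,t)$ and $\max_MR(\cdot,t)$ are non-decreasing in $t$, and comparison with the ODE $\psi'=\tfrac2n\psi^2$ gives the universal bound $R(\cdot,t)\ge-\tfrac n{2t}$. By monotonicity of $\min_MR$ it is enough to show $\liminf_{t\searrow0}\min_MR(\cdot,t)\ge\kappa$. Fix $t_1\in(0,T]$, let $q$ be a minimum point of $R(\cdot,t_1)$, and let $u(\cdot,\tau)$, $\tau\in(0,t_1)$, solve the conjugate heat equation $\partial_\tau u=-\Delta_{\tilde g(\tau)}u+R(\tilde g(\tau))u$ with $u(\cdot,\tau)\to\delta_q$ as $\tau\nearrow t_1$. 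Then $\int_Mu\,dV_{\tilde g(\tau)}\equiv1$ and $\tfrac d{d\tau}\int_MR\,u\,dV_{\tilde g(\tau)}=2\int_M|\Ric|^2u\,dV_{\tilde g(\tau)}\ge0$, so for every $\tau<t_1$
\[
\min_MR(\cdot,t_1)=R(q,t_1)\ \ge\ \int_MR(\cdot,\tau)\,u(\cdot,\tau)\,dV_{\tilde g(\tau)};
\]
moreover, using $\max_MR(\cdot,\tau)\le\max_MR(\cdot,t_1)$ and parabolic estimates on the fixed-length interval $[\tau,t_1]$, one has $\|u(\cdot,\tau)\|_{L^\infty(M,\tilde g(\tau))}\le C_1$ uniformly in $\tau$.

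Splitting the last integral at the level $\kappa-\delta$, with $\mathrm{Bad}_\tau:=\{R(\cdot,\tau)<\kappa-\delta\}$ and using $R(\cdot,\tau)\ge-\tfrac n{2\tau}$ on $\mathrm{Bad}_\tau$, gives
\[
\int_MR(\cdot,\tau)\,u(\cdot,\tau)\,dV_{\tilde g(\tau)}\ \ge\ (\kappa-\delta)\ -\ \Bigl(\tfrac n{2\tau}+|\kappa|+\delta\Bigr)\,C_1\,\vol_{\tilde g(\tau)}(\mathrm{Bad}_\tau).
\]
By the reduction, every fixed $y$ lies outside $\mathrm{Bad}_\tau$ for $\tau$ small, so $\mathbf{1}_{\mathrm{Bad}_\tau}\to0$ pointwise and $\vol_{\tilde g(\tau)}(\mathrm{Bad}_\tau)\to0$; but because of the blowing-up factor $\tfrac n{2\tau}$ this alone is not enough, and the main obstacle is to upgrade it to the uniform assertion that $\mathrm{Bad}_\tau=\emptyset$ for all sufficiently small $\tau$, equivalently $\min_MR(\cdot,\tau)\ge\kappa-\delta$ eventually. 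This is where the pointwise nature of Definition~\ref{def:RRFscalar} bites: soft compactness does not supply the needed uniformity in the centers $y$. I would obtain it by a covering argument at the super-parabolic scale $\tau^\beta$. Since $\beta<\tfrac12$ one has $C\sqrt\tau=o(\tau^\beta)$, so the displacement bound $\diam\{\chi_s(x):s\in(0,\tau]\}\le C\sqrt\tau$, the $C^0$-convergences $\chi_\tau\to\chi$ and $(\chi_\tau)_*\tilde g_\tau\to g_0$, and the quantitative estimates on the regularizing Ricci flow established in \S\ref{sec:fixedptconstruction}--\S\ref{sec:RRF} allow one, for each small $\tau$, to cover $M$ by finitely many balls $B_{\tilde g(\tau)}(y_i,C\tau^\beta)$ on each of which the weak bound has already taken effect, the uniformity over the centers being precisely what those quantitative estimates provide. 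With $\mathrm{Bad}_\tau=\emptyset$ the displayed inequality yields $\min_MR(\cdot,t_1)\ge\kappa-\delta$; letting $\delta\searrow0$ and using monotonicity of $\min_MR$ gives $R(\tilde g(t))\ge\kappa$ on all of $(0,T]$, as claimed.
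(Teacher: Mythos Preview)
Your reduction is correct up to the point where you identify the ``main obstacle'': one must show that $\mathrm{Bad}_\tau=\{R(\cdot,\tau)<\kappa-\delta\}$ is empty for all small $\tau$. But note that once you have this, the conjugate heat kernel computation is superfluous: emptiness of $\mathrm{Bad}_\tau$ is exactly $\min_MR(\cdot,\tau)\ge\kappa-\delta$, and then monotonicity of $\min_MR$ (which you already invoked) finishes the argument directly. So the entire content of the theorem is hidden in the step you wave away.

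The proposed covering argument does not close this gap. The hypothesis gives, for each \emph{fixed} $y$, a threshold $t_0(y,C,\delta)$ below which $R\ge\kappa-\delta$ on $B_{\tilde g(\tau)}(y,C\tau^\beta)$. To cover $M$ at time $\tau$ by balls of radius $C\tau^\beta$ you need roughly $\tau^{-n\beta}$ centers, and these centers must change as $\tau\searrow0$; there is no compactness mechanism that makes $\inf_y t_0(y,C,\delta)>0$. The sections you cite, \S\ref{sec:fixedptconstruction}--\S\ref{sec:RRF}, concern stability of the difference of two flows whose initial data agree to greater than second order and the construction/uniqueness of the regularizing flow; they do not provide any uniform-in-$y$ version of the $\beta$-weak lower bound, and indeed no such uniformity is assumed or proved anywhere in the paper.

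The paper circumvents the uniformity problem by a different, local-to-global mechanism (Lemma~\ref{lemma:localscalarbound}). One argues by contradiction: if $R(x,t)<\kappa-\varepsilon$, then using the representation $R(x,t)-\kappa=\int_M\Phi(x,t;y,t/2)\bigl(R(y,t/2)-\kappa\bigr)\,dV$ together with the crude bound $R\ge-\tfrac{n}{2t}$ outside a ball of radius $t^\beta$, one finds a point $x^{(1)}\in B(x,t^\beta)$ with $R(x^{(1)},t/2)<\kappa-\varepsilon+\text{(exponentially small error)}$. Iterating, one produces $x^{(k)}\in B(x^{(k-1)},(t/2^{k-1})^\beta)$ with $R(x^{(k)},t/2^k)<\kappa-\varepsilon/2$; here the errors sum because $\beta<\tfrac12$ makes the Gaussian tail at scale $t^\beta$ beat the $1/t$ blow-up. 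Since $\sum_k(t/2^k)^\beta<\infty$, the $x^{(k)}$ converge to some $x^\infty$, and the sequence $(x^{(k)},t/2^k)$ witnesses the failure of the $\beta$-weak bound at the \emph{single} point $x^\infty$. This pointwise contradiction is exactly what sidesteps the need for uniformity in your covering argument.
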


Theorem \ref{thm:introthm2} implies:
\begin{corollary}\label{cor:C2approximation}
If $(M,g)$ is a closed Riemannian manifold with $C^0$ metric $g$, and if there exists $\beta\in(0,1/2)$ such that, at every point in $M$, $g$ has scalar curvature bounded below by $\kappa$ in the $\beta$-weak sense, then there exists a sequence of $C^2$ metrics on $M$ with scalar curvature bounded below by $\kappa$ that converges uniformly to $g$.
\end{corollary}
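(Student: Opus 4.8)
The plan is to produce the approximating sequence directly from a regularizing Ricci flow for $g$. First I would invoke Theorem \ref{thm:introthm0} to obtain a regularizing Ricci flow $((\tilde g_t)_{t\in(0,T]}, \chi)$ for $g$, together with the smooth family of diffeomorphisms $(\chi_t)_{t\in(0,T]}$ satisfying $\|(\chi_t)_*\tilde g_t - g\|_{C^0(M)}\to 0$ as $t\searrow 0$. Since by hypothesis there is some $\beta\in(0,1/2)$ for which $g$ has scalar curvature bounded below by $\kappa$ in the $\beta$-weak sense at every point of $M$, and $(\tilde g_t)_{t\in(0,T]}$ is a Ricci flow starting from $g$ in the sense of Theorem \ref{thm:introthm0}, Theorem \ref{thm:introthm2} applies and gives $R(\tilde g(t))\geq \kappa$ everywhere on $M$ for every $t\in(0,T]$.

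Next I would fix a sequence $t_i\searrow 0$ in $(0,T]$ and set $g_i := (\chi_{t_i})_*\tilde g_{t_i}$. Each $\tilde g_{t_i}$ is a smooth metric and each $\chi_{t_i}$ is a diffeomorphism, so $g_i$ is a smooth (in particular $C^2$) metric on $M$. Because scalar curvature is a pointwise diffeomorphism invariant, $R^{g_i}(\chi_{t_i}(x)) = R^{\tilde g_{t_i}}(x)\geq\kappa$ for all $x\in M$, and since $\chi_{t_i}$ is surjective this gives $R^{g_i}\geq\kappa$ everywhere on $M$. Finally, the convergence statement in part (b) of Theorem \ref{thm:introthm0} says precisely that $\|g_i - g\|_{C^0(M)}\to 0$, i.e.\ $g_i\to g$ uniformly, which completes the argument.

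There is essentially no serious obstacle once Theorems \ref{thm:introthm0} and \ref{thm:introthm2} are in hand; the only points requiring (routine) care are that the pushforward of a smooth metric by a diffeomorphism is again smooth and that scalar curvature transforms as a scalar under diffeomorphisms, so that the lower bound $R(\tilde g(t))\geq\kappa$ descends to $(\chi_t)_*\tilde g_t$. It is worth emphasizing that the conclusion asserts only $C^0$-convergence of the $g_i$ to $g$; one should not expect convergence in a stronger norm in general, since $g$ itself need not be better than continuous.
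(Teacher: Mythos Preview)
Your proposal is correct and is exactly the argument the paper has in mind: the paper does not write out a separate proof of Corollary~\ref{cor:C2approximation} but simply notes that it is implied by Theorem~\ref{thm:introthm2}, and the sequence $g_i=(\chi_{t_i})_*\tilde g_{t_i}$ you construct, with the scalar curvature lower bound coming from Theorem~\ref{thm:introthm2} and diffeomorphism invariance, and the uniform convergence coming from Theorem~\ref{thm:introthm0}(b), is precisely how one unpacks that implication.
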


Theorem \ref{thm:introthm2} also implies:
\begin{theorem}\label{thm:kappaapproximation}
Let $g$ be a $C^0$ metric on a closed manifold $M$ which admits a uniform approximation by $C^0$ metrics $g^i$ such that, for some $\beta\in(0,1/2)$, $g^i$ has scalar curvature bounded below by $\kappa_i$ in the $\beta$-weak sense everywhere on $M$, where $\kappa_i$ is some sequence of numbers such that $\kappa_i\xrightarrow[i\to \infty]{} \kappa$ for some number $\kappa$. Then $g$ has scalar curvature bounded below by $\kappa$ in the $\beta$-weak sense. In particular, any regularizing Ricci flow $(\tilde g(t))_{t\in (0,T]}$ for  $g$ satisfies $R(\tilde g(t)) \geq \kappa$ for all $t\in (0,T]$, so $g$ admits a uniform approximation by smooth metrics with scalar curvature bounded below by $\kappa$.
\end{theorem}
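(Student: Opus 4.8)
The plan is to reduce the statement to Theorem \ref{thm:introthm2} via a stability property of the regularizing Ricci flow under $C^0$-perturbation of the initial data. Fix a smooth background metric $h$ on $M$ with $\|g-h\|_{C^0(M)}$ small enough that the construction underlying Theorem \ref{thm:introthm0} applies (recall it proceeds, following \cite{KL1} and \cite{Sim}, by solving the $h$-Ricci--DeTurck flow from $C^0$ initial data). Since $g^i\to g$ uniformly, for all sufficiently large $i$ we also have $\|g^i-h\|_{C^0(M)}$ small, and the existence time produced by the construction depends only on this $C^0$-distance together with $h$ and $n$. Discarding finitely many $i$, we thus obtain a common time $T>0$, regularizing Ricci flows $((\tilde g_t)_{t\in(0,T]},\chi)$ for $g$ and $((\tilde g_t^i)_{t\in(0,T]},\chi^i)$ for $g^i$, and the associated $h$-Ricci--DeTurck flows $g(t)$ and $g^i(t)$ on $[0,T]$ with $g(0)=g$ and $g^i(0)=g^i$.

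The key technical input, which I expect to be the main obstacle, is the stability statement: as $i\to\infty$, $g^i(t)\to g(t)$ in $C^0(M\times[0,T])$ and, for each $\delta\in(0,T]$, in $C^\infty(M\times[\delta,T])$. The first convergence is continuous dependence of the solution obtained as a Koch--Lamm fixed point on its initial data in the relevant weighted Banach norm; the upgrade to smooth convergence on $[\delta,T]$ follows from interior parabolic regularity, using the uniform higher-order bounds available on $M\times[\delta,T]$ (these are the estimates developed in \S \ref{sec:fixedptconstruction} and \S \ref{sec:RRF}). In particular, for each fixed $z\in M$ and $t\in(0,T]$ the scalar curvatures converge, $R(g^i(t))(z)\to R(g(t))(z)$; since $g^i(t)$ and $\tilde g^i_t$ differ by the DeTurck diffeomorphisms, which are isometries, and scalar curvature is a diffeomorphism invariant, the analogous convergence holds for $\tilde g^i_t$.

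Now I would apply Theorem \ref{thm:introthm2} to each $g^i$. By hypothesis $g^i$ has scalar curvature bounded below by $\kappa_i$ in the $\beta$-weak sense at every point of $M$, and $(\tilde g^i_t)_{t\in(0,T]}$ is a Ricci flow starting from $g^i$ in the sense of Theorem \ref{thm:introthm0}; hence Theorem \ref{thm:introthm2} yields $R(\tilde g^i_t)\geq\kappa_i$ everywhere on $M$, for all $t\in(0,T]$, equivalently $R(g^i(t))\geq\kappa_i$ on $M$. Fixing $z\in M$ and $t\in(0,T]$, letting $i\to\infty$, and using $\kappa_i\to\kappa$, the stability statement gives $R(g(t))(z)\geq\kappa$; that is,
\begin{equation*}
R(\tilde g_t)\geq\kappa\quad\text{on }M,\ \text{for all }t\in(0,T].
\end{equation*}

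It remains to read off the conclusions. For any $x\in M$, choose $y\in\chi^{-1}(x)$ (possible since $\chi$ is surjective); then $R(\tilde g_t)\geq\kappa$ gives $\inf_{B_{\tilde g(t)}(y,Ct^\beta)}R^{\tilde g}(\cdot,t)\geq\kappa$ for every $C>0$ and every $t\in(0,T]$, so the left-hand side of (\ref{eqn:RRFscalar}) is $\geq\kappa$. Thus $g$ has scalar curvature bounded below by $\kappa$ in the $\beta$-weak sense at every point of $M$. By the uniqueness-up-to-isometry clause of Theorem \ref{thm:introthm0}, any regularizing Ricci flow for $g$ is isometric to $(\tilde g_t,\chi)$ and so also satisfies $R(\tilde g_t)\geq\kappa$ for all $t$. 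Finally, pushing forward by the diffeomorphisms $\chi_t$ of Theorem \ref{thm:introthm0}(b) produces smooth metrics $(\chi_t)_*\tilde g_t$ with scalar curvature $\geq\kappa$ that converge uniformly to $g$ as $t\searrow 0$, exactly as in Corollary \ref{cor:C2approximation}; hence $g$ admits a uniform approximation by smooth metrics with scalar curvature bounded below by $\kappa$, and the proof is complete.
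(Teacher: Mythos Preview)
Your proposal is correct and follows essentially the same approach as the paper: choose a common smooth background, run the Ricci--DeTurck flows $g^i(t)$ and $g(t)$, use the scalar curvature lower bound $R(g^i(t))\geq\kappa_i$ (the paper cites Proposition~\ref{prop:RDTmaxprinciple}, which is the Ricci--DeTurck version of Theorem~\ref{thm:introthm2}), pass to the limit via the $C^\infty_{\loc}$-convergence $g^i(t)\to g(t)$ for positive times, and read off the conclusions. The only cosmetic difference is the order in which you extract the two conclusions, and note that the stability input you describe is stated in Corollary~\ref{cor:C0RDTderivbounds} (in \S\ref{sec:initialderivatives}), not in \S\ref{sec:fixedptconstruction}--\ref{sec:RRF}.
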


As a corollary of Theorem \ref{thm:kappaapproximation}, we may answer the following question, posed by Gromov in \cite{Gro}:
\begin{question*}[{\cite[Page $1119$]{Gro}}]\label{q:Gromov}
Let $g$ be a continuous Riemannian metric on a closed manifold $M$ which admits a $C^0$-approximation by smooth Riemannian metrics $g_i$ with $R(g_i)\geq -\varepsilon_i\xrightarrow[i\to \infty]{}0$. Does $M$ admit a smooth metric of nonnegative scalar curvature?
\end{question*}

By setting $\kappa_i = -\varepsilon_i$ in Theorem \ref{thm:kappaapproximation}, we obtain the following response:
\begin{corollary}\label{cor:GromovQ}
If $(M,g)$ is as in Question \ref{q:Gromov}, then any regularizing Ricci flow $(\tilde g_t)_{t\in (0,T]}$ for $g$ satisfies $R(\tilde g_t) \geq 0$ for all $t\in (0,T]$. In particular, $M$ admits a smooth metric of nonnegative scalar curvature, and moreover, $g$ admits a uniform approximation by smooth metrics with nonnegative scalar curvature.
\end{corollary}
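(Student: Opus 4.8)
The plan is to obtain Corollary~\ref{cor:GromovQ} as an immediate consequence of Theorem~\ref{thm:kappaapproximation}, applied with $\kappa_i = -\varepsilon_i$ and $\kappa = 0$; the only point needing verification is that the smooth approximating metrics $g_i$ from Question~\ref{q:Gromov} satisfy the hypotheses of that theorem. I would fix once and for all some $\beta \in (0,1/2)$, say $\beta = 1/4$.

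First I would check that each $g_i$ has scalar curvature bounded below by $-\varepsilon_i$ in the $\beta$-weak sense at every point of $M$. This is precisely the (trivial) converse to item~($3$): since $g_i$ is smooth, a regularizing Ricci flow for $g_i$ is the classical Ricci flow $(\tilde g_t)$ starting at $g_i$ with $\chi = \id$, and $R^{\tilde g}(\cdot,t)$ converges uniformly to $R(g_i)$ as $t\searrow 0$ because the flow is smooth up to the initial time. Hence, exactly as noted after Definition~\ref{def:RRFscalar},
\begin{equation*}
\inf_{C>0}\left(\liminf_{t\searrow 0}\left(\inf_{B_{\tilde g(t)}(x,Ct^\beta)}R^{\tilde g}(\cdot,t)\right)\right) = R(g_i)(x) \geq -\varepsilon_i
\end{equation*}
for every $x\in M$, which is the claim.

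Since by hypothesis $g_i \to g$ uniformly on $M$ and $-\varepsilon_i \to 0$, Theorem~\ref{thm:kappaapproximation} now applies verbatim and gives that $g$ has scalar curvature bounded below by $0$ in the $\beta$-weak sense everywhere, and that any regularizing Ricci flow $(\tilde g_t)_{t\in(0,T]}$ for $g$ satisfies $R(\tilde g_t) \geq 0$ for all $t\in(0,T]$. From this the remaining assertions follow at once: for any fixed $t\in(0,T]$ the metric $\tilde g_t$ is smooth with $R(\tilde g_t)\geq 0$, so $M$ carries a smooth metric of nonnegative scalar curvature; and by part~(b) of Theorem~\ref{thm:introthm0} the smooth metrics $(\chi_t)_*\tilde g_t$, which are isometric to $\tilde g_t$ and hence also have nonnegative scalar curvature, converge uniformly to $g$ as $t\searrow 0$, yielding the desired smooth uniform approximation with $R\geq 0$.

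I do not expect any real obstacle here: all the substance is contained in Theorem~\ref{thm:kappaapproximation} (and, behind it, Theorems~\ref{thm:introthm0} and~\ref{thm:introthm2}), and the sole thing to confirm is that the classical lower bound $R(g_i)\geq -\varepsilon_i$ upgrades to a $\beta$-weak lower bound, which is immediate from the definition together with continuity of the scalar curvature along a smooth Ricci flow.
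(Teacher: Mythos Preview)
Your proposal is correct and is exactly the paper's approach: the paper states explicitly just before the corollary that it follows ``by setting $\kappa_i = -\varepsilon_i$ in Theorem~\ref{thm:kappaapproximation},'' and gives no further argument. Your added verification that the classical bound $R(g_i)\geq -\varepsilon_i$ yields the $\beta$-weak bound, and your explanation of the uniform approximation via $(\chi_t)_*\tilde g_t$, simply spell out details the paper leaves implicit.
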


We use similar methods to show a torus rigidity result, motivated by the scalar torus rigidity theorem, which was first proven by Schoen and Yau \cite{SY} for dimensions $\leq 7$, and later proven by Gromov and Lawson \cite{GL} for all dimensions, and which says that any Riemannian manifold with nonnegative scalar curvature that is diffeomorphic to the torus must be isometric to the flat torus. We show:
\begin{corollary}\label{cor:torusrigidity}
Suppose $g$ is a $C^0$ metric on the torus $\mathbb{T}$, and that there is some $\beta \in (0,1/2)$ such that $g$ has nonnegative scalar curvature in the $\beta$-weak sense everywhere. Then $(\mathbb{T}, g)$ is isometric as a metric space to the standard flat metric on $\mathbb{T}$.
\end{corollary}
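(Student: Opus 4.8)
The plan is to reduce everything to the classical smooth scalar torus rigidity theorem by passing through a regularizing Ricci flow. First I would fix a regularizing Ricci flow $((\tilde g_t)_{t\in(0,T]},\chi)$ for $g$, which exists by Theorem \ref{thm:introthm0}. Since $g$ has nonnegative scalar curvature in the $\beta$-weak sense everywhere, Theorem \ref{thm:introthm2} yields $R(\tilde g_t)\geq 0$ on $\mathbb{T}$ for every $t\in(0,T]$. For each fixed $t>0$ the metric $\tilde g_t$ is smooth, so the scalar torus rigidity theorem of Schoen--Yau \cite{SY} and Gromov--Lawson \cite{GL} applies and forces $\tilde g_t$ to be flat; in particular $\Ric(\tilde g_t)\equiv 0$. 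Feeding this back into the Ricci flow equation gives $\partial_t\tilde g_t=-2\Ric(\tilde g_t)=0$, so the family is stationary: $\tilde g_t\equiv\tilde g$ for a single flat metric $\tilde g$ on $\mathbb{T}$. (By the uniqueness clause of Theorem \ref{thm:introthm0}, $\tilde g$ is well defined up to a stationary diffeomorphism, so this is canonical.)

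It then remains to identify $(\mathbb{T},g)$, as a metric space, with the flat torus $(\mathbb{T},\tilde g)$. For a $C^0$ metric $h$ on a closed manifold let $d_h$ denote the induced length metric; I would use the standard facts that $d_h$ is a genuine distance inducing the manifold topology, and that $h\mapsto d_h$ is continuous for the $C^0$ topology, i.e.\ $h_i\to h$ in $C^0$ implies $d_{h_i}\to d_h$ uniformly. Applying this to $h_t:=(\chi_t)_*\tilde g_t=(\chi_t)_*\tilde g$, Theorem \ref{thm:introthm0}(b) gives $h_t\to g$ in $C^0$ as $t\to 0$, hence $d_{h_t}\to d_g$ uniformly. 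On the other hand, for each $t$ the diffeomorphism $\chi_t$ is a Riemannian isometry from $(\mathbb{T},\tilde g)$ onto $(\mathbb{T},h_t)$, so $d_{h_t}(\chi_t(p),\chi_t(q))=d_{\tilde g}(p,q)$ for all $p,q\in\mathbb{T}$. Since $\chi_t\to\chi$ uniformly, $d_{h_t}\to d_g$ uniformly, and $d_g$ is continuous on $\mathbb{T}\times\mathbb{T}$, I can let $t\to 0$ to conclude $d_g(\chi(p),\chi(q))=d_{\tilde g}(p,q)$ for all $p,q$. Thus $\chi$ is distance-preserving; it is automatically injective, and it is surjective by Theorem \ref{thm:introthm0}, hence a bijective metric-space isometry $(\mathbb{T},d_{\tilde g})\to(\mathbb{T},d_g)$. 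Since $\tilde g$ is flat, this gives the desired conclusion.

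The one step requiring genuine care is this last limiting argument, where the fact that $g$ is only $C^0$ actually enters: one must justify that $d_{h_t}\to d_g$ uniformly (equivalently, that a $C^0$-small change in a metric tensor changes the associated length metric uniformly little) and that this is compatible with $\chi_t\to\chi$, so that $d_{h_t}(\chi_t(p),\chi_t(q))\to d_g(\chi(p),\chi(q))$. This is elementary but slightly technical; everything else is a direct application of Theorem \ref{thm:introthm2}, Theorem \ref{thm:introthm0}, and the classical smooth torus rigidity theorem.
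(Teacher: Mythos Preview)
Your proposal is correct and follows essentially the same approach as the paper: regularizing Ricci flow, Theorem \ref{thm:introthm2} to get $R(\tilde g_t)\geq 0$, classical torus rigidity to force $\tilde g_t$ flat, stationarity of the flow, and then passage to the metric-space isometry. The only cosmetic differences are that the paper deduces stationarity from uniqueness of smooth Ricci flow rather than directly from $\partial_t\tilde g_t=-2\Ric(\tilde g_t)=0$, and in the final step the paper packages your explicit limiting argument as Gromov--Hausdorff convergence of the constant sequence $(\mathbb{T},g_{\flat})$ to $(\mathbb{T},g)$, whereas you exhibit $\chi$ itself as the isometry; your version is slightly more informative since it identifies the isometry explicitly.
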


\begin{remark}
Corollary \ref{cor:torusrigidity} is in fact the optimal result, i.e. it is not possible to show that there is a \emph{Riemannian} isometry between $g$ and the standard flat metric. In the case where $g^1$ and $g^2$ are smooth metrics, a metric space isometry is automatically a smooth Riemannian isometry. However, there exist examples of $C^0$ isometries between $C^0$ metrics which are not $C^1$. Moreover, the regularizing Ricci flow is invariant under $C^0$ isometry, in the sense that if $\varphi: (M_2, g^2) \to (M_1, g^1)$ is a metric space isometry, then, for any two regularizing Ricci flows $((\tilde g^1(t))_{t\in (0,T^1]}, \chi^1)$ and $((\tilde g^2(t))_{t\in (0,T^2]}, \chi^2)$ for $g^1$ and $g^2$ respectively, there is a stationary diffeomorphism $\alpha:M_2 \to M_1$ such that $\alpha^*\tilde g^1(t) = \tilde g^2(t)$ for all $t\in (0,\min\{T^1,T^2\}]$ and $\chi^1\circ\alpha = \varphi \circ \chi^2$; this is Corollary \ref{cor:metricuniqueness}.
\end{remark}

We now briefly discuss the role of this paper in the context of possible definitions of scalar curvature bounded below for $C^0$ metrics. Let $M$ be a closed manifold. We define the following classes of $C^0$ metrics on $M$, each of which is a class of $C^0$ metrics that have scalar curvature bounded below by $\kappa$ in some reasonable generalized sense. Let $C^0_{\beta}(M,\kappa)$ denote the space of $C^0$ metrics on $M$ that satisfy Definition \ref{def:RRFscalar} everywhere in $M$, for a given value of $\beta$. Let $C^0_{\met}(M, \kappa)$ denote the $C^0$-completion of $C^2$-metrics whose scalar curvature is bounded below by $\kappa$ in the classical sense, i.e. a $C^0$ metric $g$ on $M$ is an element of $C^0_{\met}(M, \kappa)$ if and only if there exists a sequence of $C^2$ metrics $g^i$ on $M$ such that the $g^i$ converge uniformly to $g$ and satisfy $R(g^i) \geq \kappa$. Define $C^0_{\Rf}(M,\kappa)$ to be the space of $C^0$ metrics on $M$ whose corresponding regularizing Ricci flows have scalar curvature bounded below by $\kappa$ for all positive times, i.e. $g\in C^0_{\Rf}(M,\kappa)$ if and only if for all regularizing Ricci flows $(\tilde g_t, \chi)$ for $g$ we have $R(\tilde g(t)) \geq \kappa$ everywhere for all $t>0$ for which the flow is defined. This is a natural class of metrics to study in light of \cite{Bam}. Finally, let $C^0_{\G}(M, \kappa)$ denote the space of $C^0$ metrics on $M$ that have scalar curvature bounded below by $\kappa$ in the sense of Gromov's paper \cite{Gro}. 

Gromov's formulation is essentially that a Riemannian manifold $(M,g)$ has nonnegative scalar curvature if it does not contain a cube with strictly mean convex faces, such that the dihedral angles are acute, or more generally, that it has scalar curvature bounded below by $\kappa$ if its product with an appropriate space form has nonnegative scalar curvature in the same weak sense; see \cite[p. $1119$]{Gro}. This is a natural definition because, if such a cube were to exist in a manifold with (classical) nonnegative scalar curvature, Gromov has proposed (see \cite[pp. $1144-1145$]{Gro}) that one could glue together $2n$ copies of the cube and obtain a non-flat torus of nonnegative scalar curvature, contradicting the scalar torus rigidity theorem (\cite[Corollary 2]{SY} and \cite[Corollary A]{GL}).

We now discuss the question of equivalence of these different definitions of lower scalar curvature bounds for $C^0$ metrics. Certainly we have $C^0_{\Rf}(M,\kappa) \subset C^0_{\beta}(M,\kappa)$. Moreover, Theorem \ref{thm:introthm2} implies that $C^0_{\beta}(M,\kappa) \subset C^0_{\Rf}(M,\kappa)$, so $C^0_{\Rf}(M,\kappa) = C^0_{\beta}(M,\kappa)$.

By Corollary \ref{cor:C2approximation}, $C^0_{\beta}(M,\kappa)\subset C^0_{\met}(M,\kappa)$. Moreover, Theorem \ref{thm:kappaapproximation} implies that $C^0_{\met}(M,\kappa)\subset C^0_{\Rf}(M,\kappa)$. Thus we have that $C^0_{\met}(M,\kappa) = C^0_{\Rf}(M,\kappa) = C^0_{\beta}(M,\kappa)$.

That \cite{Bam} provides a Ricci flow proof of Gromov's $C^0$-limit Theorem \cite[Page $1118$]{Gro} suggests a relationship between $C^0_{\Rf}(M,\kappa)$ and $C^0_{\G}(M,\kappa)$. We have that $C^0_{\met}(M,\kappa) \subset C^0_{\G}(M,\kappa)$, since $C^0_{\G}(M,\kappa)$ contains all $C^2$ metrics $g$ with $R(g)\geq \kappa$ and is closed in $C^0$. Thus, $C^0_{\Rf}(M,\kappa) \subset C^0_{\G}(M,\kappa)$. It is an open question whether, if a  $C^0$ metric on a closed manifold has scalar curvature bounded below in the sense of \cite{Gro}, it necessarily has scalar curvature bounded below under the Ricci flow:

\begin{question*}
Suppose that $M$ is closed. Is $C^0_{\G}(M,\kappa)\subset C^0_{\Rf}(M,\kappa)$?
\end{question*}

One feature of Gromov's definition is that it may be localized around a point $x\in M$, by requiring only that there exist a neighborhood of $x$ such that no cube within the neighborhood that contains $x$ has strictly mean convex faces and acute dihedral angles; see \cite[p. $1144$]{Gro}. In light of this, for $x\in M$ define $C^0_{\G}(x, \kappa)$ to be the space of germs of $C^0$ metrics on $M$ at $x$ that have scalar curvature bounded below by $\kappa$ at $x$ in the sense of \cite{Gro}. In particular, \cite{Bam} suggests that there should be a way of localizing $C^0_{\Rf}(M,\kappa) = C^0_{\beta}(M,\kappa)$. Define $C^0_{\beta}(x,\kappa)$ to be the space of germs of $C^0$ metrics on $M$ at $x$ that have scalar curvature bounded below by $\kappa$ at $x$ in the sense of Definition \ref{def:RRFscalar}. Theorem \ref{cor:introcor1} suggests that this is a reasonable of localization of $C^0_{\Rf}(M,\kappa)$, because if a metric satisfies Definition \ref{def:RRFscalar} at all points for a uniform value of $\beta$, then it is in $C^0_{\Rf}(M,\kappa)$

\begin{question*}
Let $M$ be a closed manifold and $x\in M$. Do we have $C^0_{\G}(x, \kappa) = C^0_{\beta}(x,\kappa)$?
\end{question*}

Moreover, towards the aim of showing that Definition \ref{def:RRFscalar} is equivalent to Gromov's, we might ask:
\begin{question*}
Suppose $g$ is a $C^0$ metric on a closed manifold $M$ and that the germ of $g$ at $x$ is in $C^0_{\G}(x,\kappa)$. Suppose that $g'$ is another $C^0$ metric on $M$ that agrees with $g$ to greater than second order about $x$. Is the germ of $g'$ at $x$ also in $C^0_{\G}(x,\kappa)$?
\end{question*}

Theorem \ref{cor:introcor1} says that the perturbation of a metric by greater than second order does not affect $\beta$-weak lower bounds on the scalar curvature. Thus, it is natural to ask whether one can characterize those metrics, up to higher order perturbation, that have nonnegative scalar curvature in the sense of Definition \ref{def:RRFscalar}:

\begin{question*}
Suppose $g = g_{ij}dx^i\otimes dx^j$ is a metric on a neighborhood of the origin in $\R^n$, and that we may write
\begin{equation}
g_{ij}(x) = \delta_{ij} + r^2G_{ij}(\frac{x}{r}) + O(|x|^{2+\eta}),
\end{equation}
where the $G_{ij}$ are functions on $\mathbb{S}^{n-1}\subset \R^n$ satisfying $x^ix^jG_{ij}(x) = 0$. Is there an explicit characterization of metrics of this form that have nonnegative scalar curvature at the origin, in the sense of Definition \ref{def:RRFscalar}?
\end{question*}

We now explain the structure of the rest of the paper.

 In \S \ref{sec:preliminaries} we recall some facts about Ricci and Ricci-DeTurck flow, and the evolution of scalar curvature under these flows. We then state some estimates for the heat kernel on a Ricci flow background, and derive relevant bounds for the heat kernel on a Ricci-DeTurck flow background. Finally, we record some useful analytic facts.

In \S \ref{sec:initialderivatives} we define appropriate vector spaces and use an argument of Koch and Lamm to show that there exists a solution to the integral equation for the Ricci-DeTurck flow from $C^0$ initial data, and that two solutions with close initial data remain close for positive times. We also apply parabolic interior estimates to show that one may take these solutions to be smooth for positive times, and that time slices of such solutions converge uniformly to their initial data as $t\to 0$.

In \S \ref{sec:fixedptconstruction} we use weighted norms to study the stability under Ricci-DeTurck flow of the difference of two $C^0$ metrics which initially agree to greater than second order. We then study the behavior of their second derivatives as $t$ tends to $0$ and (essentially) prove Theorem \ref{cor:introcor1}.

In \S \ref{sec:RRF} we prove Theorem \ref{thm:introthm0} and further discuss Theorem \ref{cor:introcor1}.

 In \S \ref{sec:equivalentdefs} we provide several equivalent formulations of Definition \ref{def:RRFscalar} and show invariance of the definition under greater than second order perturbation of the metric. We also show that Definition \ref{def:RRFscalar} is independent choice of $y$ and choice of regularizing Ricci flow. Finally, we show that Definition \ref{def:RRFscalar} descends to the space of germs of $C^0$ metrics on $M$.

In \S \ref{sec:maxprinciple} we prove Theorem \ref{thm:introthm2}, Theorem \ref{thm:kappaapproximation}, and Corollary \ref{cor:torusrigidity}.

\subsection*{Acknowledgements}
I would like to thank my advisor, Richard Bamler, for introducing me to this project, and for his help and encouragement. I would also like to thank Christina Sormani, for posing the problem of torus rigidity for Definition \ref{def:RRFscalar} to me, and for showing relevant references to me. Finally, I would like to thank Chao Li, for showing the work of Simon \cite{Sim} to me, and for many helpful comments on a previous version of this paper.

This material is based upon work supported by the National Science Foundation Graduate Research Fellowship Program under Grant No. DGE $1752814$. Any opinions, findings, and conclusions or recommendations expressed in this material are those of the author(s) and do not necessarily reflect the views of the National Science Foundation.

\section{Preliminaries}\label{sec:preliminaries}

\subsection{Ricci and Ricci-DeTurck flow}\label{sec:RFandRDT}
If $M$ is a smooth manifold and $g_0$ is a smooth Riemannian metric on $M$, the Ricci flow is a solution to
\begin{equation}
\begin{cases}
\partial_t \bar g&= -2\Ric(\bar g) \text{ in } M \times (0,T)\\
\bar g(0) &= g_0,
\end{cases}
\end{equation}
where $\bar g(t)$ is a smooth, time-dependent family of Riemannian metrics on a $M$ for $t\in (0,T)$. Moreover, if $M$ is closed and $g_0$ is smooth, then a short-time solution to the Ricci flow always exists and is unique; see \cite[Theorems $5.2.1$, $5.2.2$]{Top}. If $\bar g(t) = \bar g_t$ is a Ricci flow, then the parabolically rescaled family of metrics $\hat{\bar g}(x,t) := \lambda \bar g(x, t/\lambda)$ also solves the Ricci flow equation. Moreover, if $\bar g(t)$ is a Ricci flow with $|\Rm(\bar g(t))| \leq C$ on $[0,T]$, then 
\begin{equation}\label{eq:distancedistortion}
e^{-2Ct}\bar g(0) \leq \bar g(t) \leq e^{2Ct}\bar g(0)
\end{equation}
for all $t\in[0,T]$; see \cite[Lemma $5.3.2$]{Top}.

By taking $T$ sufficiently small depending on the flow $(\bar g(t))_{t\in [0,T]}$ we may assume that
\begin{equation}\label{eq:almosteucl}
\frac{1}{10}\omega_n r^n \leq \vol_{\bar g_t}(B_{\bar g_t}(x, r)) \leq 10\omega_n r^n
\end{equation}
for all $x\in M$ and all $r\leq \sqrt{T}$, where $\omega_n$ denotes the volume on the unit ball in $n$-dimensional Euclidean space.

Moreover, we may also take $T$ sufficiently small depending on the flow so that around every point there is an exponential coordinate ball for $\bar g_0$ of radius $2\sqrt{T}$, and in any such coordinate ball we have
\begin{equation}\label{eq:boundedcoeffs}
|\partial^m(\bar g^{ij}(t) - \delta^{ij})| \leq 100
\end{equation}
for any multiindex $m$ with $|m| \leq 3$ and all $t\leq T$.

We will also be concerned with the Ricci-DeTurck flow, first introduced by DeTurck in \cite{De}, which is related to the Ricci flow via pullback by a family of diffeomorphisms, and depends on a choice of background metric. Throughout this paper, we consider Ricci-DeTurck flows on a Ricci flow background. We now recall some facts about this setting, which may be found (in a more general setting) in \cite[Appendix A]{BK}. 

Define the following operator, which maps symmetric $2$-forms on $M$ to vector fields:
\begin{equation}\label{eq:Xoperator}
X_{\bar g}(g):= \sum_{i=1}^n(\nabla^{\bar g}_{e_i}e_i - \nabla^{g}_{e_i}e_i),
\end{equation}
where $\{e_i\}_{i=1}^n$ is any local orthonormal frame with respect to $g$. Then the Ricci-DeTurck equation is
\begin{equation}\label{eq:RDTeq}
\partial_t g(t) = -2\Ric(g(t)) - \L_{X_{\bar g(t)}(g(t))}g(t),
\end{equation}
where $\bar g(t)$ is a background Ricci flow. As mentioned, if $g(t)$ solves (\ref{eq:RDTeq}) then it is related to a Ricci flow via pullback by diffeomorphisms. More precisely, if $g(t)$ solves (\ref{eq:RDTeq}) and $\chi_t: M\to M$ is a family of diffemorphisms satisfying
\begin{equation}\label{eq:diffeoseq}
\begin{cases}
X_{\bar g(t)}(g(t))f &= \frac{\partial}{\partial t}(f\circ\chi_t) \text{ for all } f\in C^\infty(M)\\
\chi_0 &= \id,
\end{cases}
\end{equation}
 then $\chi_t^*g(t)$ solves the Ricci flow equation with initial condition $g(0)$.
If $g_t$ is a solution to the Ricci-DeTurck equation, and we write $g_t = \bar g_t + h_t$, where $\bar g_t$ is again a (smooth) background Ricci flow, then the evolution equation for $h_t$ is
\begin{equation}\label{eq:hevolution}
\partial_t h_t + Lh_t = Q[h_t],
\end{equation}
where the linear part, $L$, is 
\begin{equation}\label{eq:Lis}
L h_t := \Delta^{\bar g_t}h_t + 2\Rm^{\bar g_t}(h_t) := \Delta^{\bar g_t}h_t + 2{\bar g}^{pq}R_{pij}^{m}h_{q m}dx^i\otimes dx^j,
\end{equation}
and $Q$ denotes the quadratic term 
\begin{equation}
\begin{split}
\left(Q_{\bar g_t}[h_t]\right)_{ij}&:= \left((\bar g+h)^{pq} - \bar g^{pq}\right)\left(\nabla^2_{pq}h_{ij} +R_{pij}^mh_{mq} + R_{pji}^mh_{mq}\right)
\\& \qquad + \left(\bar g^{pq} - (\bar g+h)^{pq}\right)\left(R_{ipq}^mh_{mj} + R_{jpq}^mh_{im}\right)
\\& -\frac{1}{2}(\bar g+h)^{pq}(\bar g+h)^{m\ell}\big(-\nabla_i h_{pm}\nabla_jh_{q\ell} - 2\nabla_mh_{ip}\nabla_qh_{j\ell}
\\& \qquad + 2\nabla_mh_{ip}\nabla_{\ell}h_{jq} + 2\nabla_ph_{i\ell}\nabla_jh_{qm} + 2\nabla_ih_{pm}\nabla_qh_{j\ell}\big)
\\&= \nabla_p (\left((\bar g+h)^{pq} - \bar g^{pq}\right)\nabla_qh_{ij}) 
\\& \qquad - \left(\nabla_p\left((\bar g+h)^{pq} - \bar g^{pq}\right)\right)\nabla_qh_{ij} +  \left((\bar g+h)^{pq} - \bar g^{pq}\right)\left(R_{pij}^mh_{mq} + R_{pji}^mh_{mq}\right)
\\& \qquad + \left(\bar g^{pq} - (\bar g+h)^{pq}\right)\left(R_{ipq}^mh_{mj} + R_{jpq}^mh_{im}\right)
\\& -\frac{1}{2}(\bar g+h)^{pq}(\bar g+h)^{m\ell}\big(-\nabla_i h_{pm}\nabla_jh_{q\ell} - 2\nabla_mh_{ip}\nabla_qh_{j\ell}
\\& \qquad + 2\nabla_mh_{ip}\nabla_{\ell}h_{jq} + 2\nabla_ph_{i\ell}\nabla_jh_{qm} + 2\nabla_ih_{pm}\nabla_qh_{j\ell}\big),
\end{split}
\end{equation}
where $\nabla$ denotes the covariant derivative with respect to $\bar g_t$, and the last equality follows from the Leibniz rule.
Moreover, we may write
\begin{equation}
Q[h_t] = Q^0_t + \nabla^* Q^1_t,
\end{equation}
where
\begin{equation}\label{eq:Q0is}
\begin{split}
Q^0_t&:=-\frac{1}{2}(\bar g+h)^{pq}(\bar g+h)^{m\ell}\big(-\nabla_i h_{pm}\nabla_jh_{q\ell} - 2\nabla_mh_{ip}\nabla_qh_{jm}
\\& \qquad + 2\nabla_mh_{ip}\nabla_{\ell}h_{jq} + 2\nabla_ph_{i\ell}\nabla_jh_{qm} + 2\nabla_ih_{pm}\nabla_qh_{j\ell}\big)
\\& \qquad - \left(\nabla_p((\bar g+h)^{pq} - \bar g^{pq})\right)\nabla_qh_{ij} +  \left((\bar g+h)^{pq} - \bar g^{pq}\right)\left(R_{pij}^mh_{mq} + R_{pji}^mh_{mq}\right)
\\& = (\bar g + h)^{-1}\star (\bar g + h)^{-1} \star \nabla h \star \nabla h + ((\bar g + h)^{-1} - \bar g^{-1})\star \Rm^{\bar g_t}\star h
\end{split}
\end{equation}
and
\begin{equation}\label{eq:Q1is}
\nabla^*Q^1_t:= \nabla_p (\left((\bar g+h)^{pq} - \bar g^{pq}\right)\nabla_qh_{ij}) = \nabla(((\bar g + h)^{-1} - \bar g^{-1})\star \nabla h),
\end{equation}
where here we use the notation $A\star B$ for two tensor fields $A$ and $B$ to mean a linear combination of products of the coefficients of $A$ and $B$, and $(\bar g + h)^{-1}$ and $\bar g^{-1}$ denote tensor fields with coefficients $(\bar g + h)^{ij}$ and $\bar g^{ij}$ respectively. For any $(0,2)$-tensors $\bar g$ and $h$ satisfying $|h|_{\bar g} \leq \gamma<1$, we have
\begin{equation}\label{eq:inverseexpansion1}
|(\bar g + h)^{-1}|_{\bar g} \leq c(n)(1 - |h|_{\bar g})^{-1} \leq c(n,\gamma),
\end{equation}
\begin{equation}\label{eq:inverseexpansion2}
|(\bar g + h)^{-1} - \bar g^{-1}|_{\bar g} \leq c(n)|h|_{\bar g}/(1-|h|_{\bar g}) \leq c(n,\gamma)|h|_{\bar g},
\end{equation}
and
\begin{equation}\label{eq:inverseexpansion3}
|(\bar g + h')^{-1} - (\bar g + h'')^{-1}| \leq |(\bar g + h')^{-1}|||h' - h''||(\bar g + h'')^{-1}|
\end{equation}
To see why this is true, fix a point in $M$, and choose coordinates about this point so that $\bar g_{ij} = \delta_{ij}$. Then, using the usual expansion for matrices, we find
\begin{equation*}
|(\bar g + h)^{-1}| \leq c(n)\sum_{i=0}^{\infty}|h|^i \leq c(n)\frac{1}{1 - |h|},
\end{equation*}
and
\begin{equation*}
|(\bar g + h')^{-1} - (\bar g + h'')^{-1}| = |[\bar g - (\bar g + h')^{-1}(\bar g + h'')](\bar g + h'')^{-1}| = |(\bar g + h')^{-1}[(\bar g + h') - (\bar g + h'')](\bar g + h'')^{-1}|.
\end{equation*}

We make use of the following pointwise estimates for $Q^0$ and $Q^1$: if $|h|_{\bar g} \leq \gamma < 1$ then (\ref{eq:inverseexpansion1}) and (\ref{eq:inverseexpansion2}) imply
\begin{equation}\label{eq:ptwiseR0}
|Q_t^0| \leq c(n,\gamma)(|\nabla h|^2 + |\Rm^{\bar g_t}||h|^2)
\end{equation}
and
\begin{equation}\label{eq:ptwiseR1}
|Q_t^1|\leq c(n,\gamma)|h||\nabla h|.
\end{equation}

We will now bound the amount that the diffeomorphisms $\chi_t$ which solve the differential equation in (\ref{eq:diffeoseq}) perturb the points of $M$ (cf. \cite[Lemma A.$18$]{BK}):

\begin{lemma}\label{lemma:driftbound}
Assume that $g(s)$ and $\bar g(s)$ are smooth families of Riemannian metrics in the setting above, defined for $t\in (0,T]$ and $t\in [0,T]$ respectively. Suppose that $\chi_s$ are a family of diffeomorphisms solving the differential equation from (\ref{eq:diffeoseq}). Suppose also that on this time interval there is some constant $c_0$ such that $|\nabla^{\bar g} g_s|_{\bar g} \leq c_0s^{-1/2}$. Then
\begin{equation}\label{eq:Xbound}
|X_s|_{\bar g_t}\leq \frac{c}{\sqrt{s}}
\end{equation}
for any $t\in[0,T]$, and, for all $p\in M$ and all $0<t_1<t_2$, we have
\begin{equation}\label{eq:distancesnotmuchmoved}
d_{\bar g(t_1)}(\chi_{t_1}(p), \chi_{t_2}(p)) \leq c(\sqrt{t_2}-\sqrt{t_1}),
\end{equation}
where $c$ depends on $c_0$, $T\sup_{[0,T]}|\Rm|(\bar g)$, and the dimension.
\end{lemma}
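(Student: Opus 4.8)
The plan is to exploit the defining relation $X_{\bar g(t)}(g(t))f = \tfrac{\partial}{\partial t}(f\circ\chi_t)$, which says that $\chi_t$ is the flow of the time-dependent vector field $X_t := X_{\bar g(t)}(g(t))$, and to control everything through the pointwise size of $X_t$ measured in a fixed background metric. First I would establish \eqref{eq:Xbound}. Working at a fixed point and in $g(t)$-orthonormal coordinates, the expression \eqref{eq:Xoperator} for $X_{\bar g}(g)$ is a contraction of $g$ against the difference of Christoffel symbols of $\bar g$ and $g$; since $\Gamma^{\bar g} - \Gamma^g$ is tensorial and is a $g^{-1}$-contraction of $\nabla^{\bar g} g$ (the covariant derivative of $g$ with respect to the background connection), one gets a pointwise bound $|X_t|_{\bar g} \le c(n)\,|g_t^{-1}|_{\bar g}^{\,k}\,|\nabla^{\bar g} g_t|_{\bar g}$ for an appropriate power $k$. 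The hypothesis gives $|\nabla^{\bar g} g_t|_{\bar g} \le c_0 t^{-1/2}$; the factors of $g_t^{-1}$ are controlled because, on a short enough time interval, $|\Rm(\bar g)|$ is bounded, hence by the distance-distortion estimate \eqref{eq:distancedistortion} the metrics $\bar g(t)$ are all uniformly comparable, and $g_t$ itself stays uniformly comparable to $\bar g_t$ (this last point is where one uses that $T\sup_{[0,T]}|\Rm(\bar g)|$ is small, together with the integral/evolution bounds from \S\ref{sec:initialderivatives}). This yields $|X_t|_{\bar g_s} \le c\, t^{-1/2}$ for all $s,t \in [0,T]$, with $c$ depending only on $c_0$, $T\sup|\Rm(\bar g)|$, and $n$, which is \eqref{eq:Xbound}.

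Next I would deduce the drift bound \eqref{eq:distancesnotmuchmoved}. Fix $p \in M$ and consider the curve $\gamma(s) = \chi_s(p)$ for $s \in [t_1, t_2]$; by the flow equation $\gamma'(s) = X_s(\gamma(s))$. Measuring the length of $\gamma$ with respect to the fixed metric $\bar g(t_1)$ and using \eqref{eq:Xbound} (with the comparability of the $\bar g(t)$'s to absorb the change of reference metric into the constant), we get
\begin{equation*}
d_{\bar g(t_1)}(\chi_{t_1}(p), \chi_{t_2}(p)) \le \int_{t_1}^{t_2} |X_s(\gamma(s))|_{\bar g(t_1)}\, ds \le c \int_{t_1}^{t_2} \frac{ds}{\sqrt{s}} = 2c(\sqrt{t_2} - \sqrt{t_1}),
\end{equation*}
which is exactly the claimed estimate after renaming the constant. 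The passage from $[0,T]$ regularity of $\bar g$ to $(0,T]$ regularity of $g$ (and of $\chi$) is harmless here since $t_1 > 0$.

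The main obstacle is the first step: getting the pointwise bound on $X_t$ uniformly up to $t = 0$ requires knowing that $g_t$ does not degenerate relative to the background as $t \searrow 0$, i.e.\ that $g_t^{-1}$ stays bounded in $\bar g_t$. This is not immediate from the hypotheses of the lemma alone and must be fed in from the construction of the solution (the uniform closeness $\|g_t - \bar g_t\| \to 0$, or at least a uniform smallness $|h_t|_{\bar g_t} \le \gamma < 1$), via \eqref{eq:inverseexpansion1}; once that is in hand the rest is a routine contraction-of-tensors estimate plus the elementary integral $\int t^{-1/2}\,dt$. A secondary technical point is justifying that integrating $|X_s|$ along the flow line genuinely bounds the $\bar g(t_1)$-distance — this is just the fact that distance is the infimum of lengths, applied to the specific curve $s \mapsto \chi_s(p)$, combined with uniform equivalence of the metrics $\bar g(t)$ on $[0,T]$ coming from \eqref{eq:distancedistortion}.
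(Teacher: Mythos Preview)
Your proposal is correct and follows essentially the same route as the paper: bound $|X_s|$ pointwise via the difference of Christoffel symbols, then integrate $|X_s|\le c\,s^{-1/2}$ along the flow curve $s\mapsto\chi_s(p)$ to get $c(\sqrt{t_2}-\sqrt{t_1})$. The only cosmetic difference is that the paper carries out the pointwise bound by fixing $g_s$-normal coordinates at the point (so that $\nabla^{g_s}_{e_i}e_i=0$ there and only the $\bar g$-Christoffel term survives), whereas you invoke the standard identity $\Gamma^g-\Gamma^{\bar g}=\tfrac12\,g^{-1}\!\ast\!\nabla^{\bar g}g$ directly; these are the same computation. Your explicit flagging of the need for a uniform bound on $|g_t^{-1}|_{\bar g}$ (equivalently, uniform comparability of $g_t$ and $\bar g_t$) is well placed---the paper's proof uses this implicitly when it passes from $\bar g_s$-norms to coordinate expressions in the $g_s$-normal frame, and in the paper's applications this comes for free from $|h_t|_{\bar g}\le C\varepsilon<1$.
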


\begin{proof}
We first bound the norm of the operator from (\ref{eq:Xoperator}), $|X_s|_{\bar g_s}$, at a point $p\in M$. Let the $(e_i)$ be normal coordinates for $g_s$ at $p$. We have
\begin{align*}
\bar g_s\left(\sum_{i}\nabla^{\bar g_s}_{e_i}e_i - \nabla^{g_s}_{e_i}e_i, \sum_j\nabla^{\bar g_s}_{e_j}e_j - \nabla^{g_s}_{e_j}e_j\right) &= \sum_{i,j} \bar g_s(\nabla^{\bar g_s}_{e_i}e_i, \nabla^{\bar g_s}_{e_j}e_j) - 2 \bar g_s(\nabla^{\bar g_s}_{e_i}e_i, \nabla^{g_s}_{e_j}e_j) + \bar g_s(\nabla^{g_s}_{e_i}e_i, \nabla^{g_s}_{e_j}e_j)
\\& =: \sum_{i,j}I + II + III.
\end{align*}
We estimate each of these terms separately. We have
\begin{align*}
I = \bar g_s(\nabla^{\bar g_s}_{e_i}e_i, \nabla^{\bar g_s}_{e_j}e_j)&= \bar g_s(\bar\Gamma_{ii}^k e_k, \bar\Gamma_{jj}^\ell e_{\ell}) 
\\& = \sum_{k=1}^{n}\bar\Gamma_{ii}^k\bar\Gamma_{jj}^{k} \leq \frac{c}{s},
\end{align*}
where $\bar \Gamma_{ab}^c$ are the Christoffel symbols with respect $\bar g$ of the $(e_i)$. If $\Gamma_{ab}^c$ denote the Christoffel symbols with respect to $g$ of the $(e_i)$, then
\begin{align*}
II = \bar g_s(\nabla^{\bar g_s}_{e_i}e_i, \nabla^{g_s}_{e_j}e_j) &= \partial_i\bar g_s(e_i,\nabla^{g_s}_{e_j}e_j) - \bar g_s(e_i, \nabla^{\bar g_s}_{e_i}\nabla^{g_s}_{e_j}ej)
\\& = \partial_i\left(\Gamma_{jj}^k\bar g_s(e_i,e_k)\right) - \bar g_s(e_i, \nabla_i^{\bar g_s}\Gamma_{jj}^k e_k)
\\& = \partial_i\left(\Gamma_{jj}^k\bar g_s(e_i,e_k)\right) - \bar g_s(e_i, \partial_i\Gamma_{jj}^ke_k + \Gamma_{jj}^k\bar\Gamma_{ik}^{\ell}e_\ell)
\\& = \partial_i\Gamma_{jj}^i- \partial_i\Gamma_{jj}^i = 0.
\end{align*}
Finally,
\begin{align*}
III = \bar g_s(\nabla^{g_s}_{e_i}e_i, \nabla^{g_s}_{e_j}e_j) = \bar g_s( \Gamma_{ii}^ke_k, \Gamma_{jj}^{\ell}e_{\ell}) = 0,
\end{align*}
so
\begin{equation}
|X_s|_{\bar g_s} \leq \frac{c}{\sqrt{s}}.
\end{equation}
Then (\ref{eq:distancedistortion}) implies (\ref{eq:Xbound}).

We now estimate the drift. Fix some $p\in M$, and write $p = \chi_s^{-1}(q)$ for some $q$. Then, by \cite[(A.9)]{BK}, we have
\begin{equation*}
|\partial_s\chi_s(p)|_{\bar g_t} = |(\partial_s\chi_s)(\chi_s^{-1}(q))|_{\bar g_t} = |X_{\bar g_s}(g_s)(q)|_{\bar g_t} \leq \frac{c}{\sqrt{s}}.
\end{equation*}.
Therefore, for $0<t_1<t_2$ and $p\in M$,
\begin{align*}
d_{\bar g(t_1)}(\chi_{t_1}(p), \chi_{t_2}(p)) &\leq \int_{t_1}^{t_2}\left|\frac{\partial}{\partial s}d_{\bar g_{t_1}}(\chi_{t_1}(p), \chi_s(p))\right|_{\bar g_{t}}ds
\\& \leq \int_{t_1}^{t_2}|\nabla^{\bar g_{t_1}}d_{\bar g_{t_1}}(\chi_{t_1}(p),\cdot)|_{\bar g_t}|\partial_s\chi_s(p)|_{\bar g_{t}}ds
\\& \leq c(n)\int_{t_1}^{t_2}|\partial_s\chi_s(p)|_{\bar g_{t}}ds \leq c\int_{t_1}^{t_2}\frac{1}{\sqrt{s}}ds = c(\sqrt{t_2}-\sqrt{t_1}).
\end{align*}
\end{proof}

\subsection{Maximum principle and evolution of the scalar curvature under Ricci and Ricci-DeTurck flow}

The scalar curvature under the Ricci flow evolves by
\begin{equation}
\partial_t R = \Delta^{\bar g(t)} R + 2|\Ric|^2;
\end{equation}
see \cite[Proposition $2.5.4$]{Top}.
Making an orthogonal decomposition, we may conclude that
\begin{equation}\label{eq:diffineqR}
\partial_t R \geq \Delta^{\bar g(t)}R + \frac{2}{n}R^2;
\end{equation}
this is \cite[Corollary $2.5.5$]{Top}.

If instead, $g(t)$ is a Ricci-DeTurck flow, then recall that $g(t) = (\chi_t^{-1})^*\bar g(t)$ for some Ricci flow $\bar g(t)$, where the family $(\chi_t)$ satisfies (\ref{eq:diffeoseq}) and $X$ is the corresponding vector field. Therefore, pushing forward (\ref{eq:diffineqR}) by $(\chi_t)$ we find that, under the Ricci-DeTurck flow,
\begin{equation}\label{eq:RDTRevolution}
\partial_t R \geq \Delta^{g(t)}R - \langle X, \nabla R\rangle + \frac{2}{n}R^2;
\end{equation}
see also \cite[p.$6$]{Bam}. It follows that, if $g(t)$ is a Ricci or Ricci-DeTurck flow on a closed manifold, starting from a smooth initial metric, and defined on the interval $[0,T]$, then if $R^{g_0} \geq \kappa$, we have (cf. \cite[Theorem $3.2.1$]{Top})
\begin{equation}\label{eq:Rpreservation}
R^{g(t)} \geq \frac{\kappa}{1 - \left(\frac{2\kappa}{n}t\right)} \text{ for all } t\in [0,T].
 \end{equation} 
 A more general bound that follows from the maximum principle is the following (cf. \cite[Corollary $3.2.5$]{Top}): suppose $g(t)$ is a Ricci or Ricci-DeTurck flow on a closed manifold, for $t\in (0,T]$. Then
\begin{equation}\label{eq:scalarlowerbound}
R \geq -\frac{n}{2t}
\end{equation}
for all $t\in (0,T]$.

\subsection{Heat kernel for the Ricci flow}

If $E$ is a vector bundle over $M$ and $L$ is a second order linear differential operator acting on sections of $E$, then the heat kernel associated to $L$ is the family $(k_t) \in C^{\infty}(M\times M \times \R_+\to E\otimes E^*)$ satisfying 
\begin{equation}
(\partial_t + L)k_t(\cdot, y) = 0, \qquad \lim_{t\searrow 0}k_t(\cdot, y) = \delta_y\id_{E_y}.
\end{equation}
for all $y\in M$. 

 If $h(x,t)$ solves
\begin{equation}
(\partial_t + L)h_t = Q[h_t], \qquad h(0) = h_0
\end{equation}
and $k_t(x,y)$ is the corresponding heat kernel on the background $\bar g(t)$,
then we have the representation formula
\begin{equation}\label{eq:representationformula}
h_t(x) = \int_M k_t(x,y)h_0(y)d_{\bar g_0}(y) + \int_{M\times[0,t]}k_{t-s}(x,y)Q[h_s](y)d_{\bar g_s}(y)ds.
\end{equation}
We will be interested in the cases where $E = \Sym^2(T^*M)$ and $E = M\times \R$, and $L$ is an operator on a Ricci or Ricci-DeTurck flow background. We will sometimes write $k(x,t;y,s)$ for $k_{t-s}(x,y)$, where $s\leq t$. Henceforth we will denote by $\bar \Phi$ the scalar heat kernel for the operator $\partial_t - \Delta$ on a Ricci flow background.

It is a computation to show:
\begin{lemma}\label{lemma:rescaledheatkernel}
Let $\bar g(t)$ be a Ricci flow on a manifold $M$ with complete time slices, defined on $[T_1, T_2]$. Now consider the rescaled Ricci flow on $M$, $\hat{\bar g}(t) := \lambda^{-1}g(\lambda t + T_1)$, which is a Ricci flow defined for $t\in [0,\lambda^{-1}(T_2 - T_1)]$, as discussed in \S\ref{sec:RFandRDT}. Let $\bar \Phi$ be the heat kernel for the Ricci flow background, and $\hat{\bar \Phi}$ be the heat kernel for the rescaled Ricci flow background $\hat{\bar g}(t)$. Then
\begin{equation}
\hat{\bar \Phi}(x,t; y,s) = \lambda^{n/2}\bar \Phi(x,\hat t, y, \hat s),
\end{equation}
where $\hat t:= \lambda t + T_1$ and $\hat s := \lambda s + T_1$.
\end{lemma}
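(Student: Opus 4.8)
The plan is to verify the defining heat-kernel equation directly under the parabolic rescaling, using the scaling behavior of the Laplacian and of the volume measure. Write $\hat{\bar\Phi}(x,t;y,s)$ for the candidate right-hand side $\lambda^{n/2}\bar\Phi(x,\hat t;y,\hat s)$ with $\hat t = \lambda t + T_1$, $\hat s = \lambda s + T_1$, and check that it satisfies $(\partial_t - \Delta^{\hat{\bar g}(t)})\hat{\bar\Phi}(\cdot,t;y,s) = 0$ together with the correct initial condition $\lim_{t\searrow s}\hat{\bar\Phi}(\cdot,t;y,s) = \delta_y$ with respect to $d_{\hat{\bar g}(s)}$; by uniqueness of the heat kernel on a (complete) Ricci flow background this identifies it with $\hat{\bar\Phi}$.

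First I would record the two scaling identities. Since $\hat{\bar g}(t) = \lambda^{-1}\bar g(\lambda t + T_1)$, the Laplace–Beltrami operator satisfies $\Delta^{\hat{\bar g}(t)} = \lambda\,\Delta^{\bar g(\hat t)}$ (the Laplacian scales inversely to the metric), and the Riemannian volume measure satisfies $d_{\hat{\bar g}(s)} = \lambda^{-n/2}\,d_{\bar g(\hat s)}$. Next, for the time derivative, by the chain rule $\partial_t\big[\lambda^{n/2}\bar\Phi(x,\hat t;y,\hat s)\big] = \lambda^{n/2}\cdot\lambda\,(\partial_{\hat t}\bar\Phi)(x,\hat t;y,\hat s)$. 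Combining these,
\begin{align*}
\big(\partial_t - \Delta^{\hat{\bar g}(t)}\big)\hat{\bar\Phi}(\cdot,t;y,s) &= \lambda^{n/2+1}(\partial_{\hat t}\bar\Phi)(\cdot,\hat t;y,\hat s) - \lambda^{n/2}\cdot\lambda\,(\Delta^{\bar g(\hat t)}\bar\Phi)(\cdot,\hat t;y,\hat s)\\
&= \lambda^{n/2+1}\big(\partial_{\hat t} - \Delta^{\bar g(\hat t)}\big)\bar\Phi(\cdot,\hat t;y,\hat s) = 0,
\end{align*}
using that $\bar\Phi$ is the heat kernel for the unrescaled flow. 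For the initial condition, one checks that for a test function $f$, $\int_M \hat{\bar\Phi}(x,t;y,s)f(x)\,d_{\hat{\bar g}(s)}(x) = \lambda^{n/2}\int_M \bar\Phi(x,\hat t;y,\hat s)f(x)\,\lambda^{-n/2}d_{\bar g(\hat s)}(x) \to f(y)$ as $t\searrow s$ (equivalently $\hat t \searrow \hat s$), since the $\lambda^{n/2}$ factors cancel and $\bar\Phi$ has the delta-initial condition with respect to $d_{\bar g(\hat s)}$.

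There is really no serious obstacle here — this is the "it is a computation" the paper advertises. The only points requiring a modicum of care are: (i) getting the power of $\lambda$ right in the volume-measure scaling (which is exactly what forces the $\lambda^{n/2}$ prefactor, so that integration against $\hat{\bar\Phi}\,d_{\hat{\bar g}(s)}$ reproduces point evaluation), and (ii) noting that $\hat s = \lambda s + T_1$ so the heat kernel written in the $k(x,t;y,s)$ convention genuinely depends on both endpoints through the time-dependent background, not merely on $t-s$; one must track $\hat t$ and $\hat s$ separately rather than just $\hat t - \hat s = \lambda(t-s)$. I would also remark that completeness of the time slices (assumed in the statement) is what guarantees uniqueness of the heat kernel, so that the verified candidate is the heat kernel.
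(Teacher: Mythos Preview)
Your proposal is correct and is exactly the kind of direct verification the paper intends when it says ``It is a computation to show''---the paper provides no proof of this lemma. Your tracking of the Laplacian scaling $\Delta^{\hat{\bar g}(t)}=\lambda\Delta^{\bar g(\hat t)}$, the volume scaling $d_{\hat{\bar g}(s)}=\lambda^{-n/2}d_{\bar g(\hat s)}$, and the chain rule in $t$ is right, and your remark that completeness is what yields uniqueness (so that verifying the PDE plus initial condition suffices) is the correct justification.
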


The following is a consequence of \cite[Lemma $26.23$]{CCG+} and is essentially \cite[Theorem $26.25$]{CCG+} for the Ricci flow:
\begin{theorem}\label{thm:heatkernelbound}
Suppose that $\bar g(t)$ is a Ricci flow on $M\times [0,T]$ with complete time slices that satisfies $\sup_{[0,T]}|\Ric|< \infty$. Then there exist constants $C = C(n,T\sup|\Ric|)< \infty$ and $D = D(T \sup|\Ric|) < \infty$ such that, if $\bar \Phi$ is the heat kernel on the background $\bar g(t)$, then
\begin{equation}
\bar \Phi(x,t; y,s) \leq \frac{C\exp\left(\frac{-d^2_{\bar g(0)}(x,y)}{D(t-s)}\right)}{\vol_{\bar g(0)}^{1/2}B_{\bar g(0)}\left(x,\sqrt{\frac{t-s}{2}}\right)\vol_{\bar g(0)}^{1/2}B_{\bar g(0)}\left(y,\sqrt{\frac{t-s}{2}}\right)}.
\end{equation}
\end{theorem}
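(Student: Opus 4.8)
The plan is to obtain Theorem \ref{thm:heatkernelbound} from the general Gaussian heat-kernel bound of Chow et al., \cite[Theorem 26.25]{CCG+}, after checking that its hypotheses hold in our setting and converting its conclusion to quantities measured with respect to $\bar g(0)$. First I would record the basic geometric consequences of the hypothesis $K := \sup_{[0,T]}|\Ric| < \infty$: since $|\partial_t\bar g| \le 2|\Ric| \le 2K$ along the flow, one has $e^{-2Kt}\bar g(0)\le\bar g(t)\le e^{2Kt}\bar g(0)$ for all $t\in[0,T]$ (cf. \eqref{eq:distancedistortion}), so that distances and volume ratios measured at any two times in $[0,T]$ differ only by factors depending on $n$ and $KT$; moreover, on short time intervals the almost-Euclidean volume bounds \eqref{eq:almosteucl} hold. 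The on-diagonal input needed to launch the argument — a bound of the form $\bar\Phi(x,t;x,s)\le C\,(\vol_{\bar g(0)}B_{\bar g(0)}(x,\sqrt{t-s}))^{-1}$ — is precisely (a rescaled form of) \cite[Lemma 26.23]{CCG+}, and the off-diagonal Gaussian decay is then produced by the Davies--Grigor'yan integrated-maximum-principle machinery as carried out in \cite[\S26]{CCG+}.

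Next I would translate the output. \cite[Theorem 26.25]{CCG+} yields a bound of the shape $\bar\Phi(x,t;y,s)\le C\,\vol^{-1/2}\vol^{-1/2}\exp(-d^2/(D(t-s)))$, but with the distance $d$ and the balls appearing in the volumes measured with respect to $\bar g(s)$ (or some $\bar g(\tau)$, $\tau\in[s,t]$) rather than $\bar g(0)$. Using the metric equivalence above, $d_{\bar g(s)}(x,y)$ and $d_{\bar g(0)}(x,y)$ agree up to a factor $e^{KT}$, each $\vol_{\bar g(s)}B_{\bar g(s)}(\cdot,r)$ and $\vol_{\bar g(0)}B_{\bar g(0)}(\cdot,r)$ agree up to a factor depending only on $n$ and $KT$, and the radius $\sqrt{(t-s)/2}$ is affected only by a bounded factor (and may be adjusted further at the cost of the volume-doubling constant, which on this short interval is again controlled by $n$ and $KT$ via \eqref{eq:almosteucl}). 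Absorbing all of these into $C$ and $D$ produces constants depending only on $n$ and $T\sup|\Ric|$, which is the correct scale-invariant combination in view of the rescaling behaviour of $\bar\Phi$ recorded in Lemma \ref{lemma:rescaledheatkernel}.

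The step I expect to be the main obstacle is exactly this bookkeeping: making sure every constant collapses to a dependence on $n$ and the dimensionless quantity $T\sup|\Ric|$ alone, and not separately on $T$, which requires care about the time at which each ball and distance is measured and about the doubling factors incurred when changing the radius inside the volumes. If one preferred a self-contained proof to a citation, the same two ingredients would need to be established directly: (i) the on-diagonal bound, via a parabolic mean-value inequality on the evolving background, whose essential input is a uniform local Sobolev inequality on $(M,\bar g(t))$ — available on a short time interval with bounded Ricci because of the almost-Euclidean volume ratios \eqref{eq:almosteucl} — together with the a priori control $\int_M\bar\Phi(\cdot,t;y,s)\,d\mu_{\bar g(t)}\le e^{nKT}$ coming from $\partial_t\,d\mu_{\bar g(t)}=-R\,d\mu_{\bar g(t)}$ and $|R|\le nK$; and (ii) the passage from (i) to the pointwise Gaussian bound, via the integrated maximum principle for $\tau\mapsto\int_M\bar\Phi(z,\tau;y,s)^2 e^{\xi(z,\tau)}\,d\mu_{\bar g(\tau)}(z)$ with $\xi$ built from a $1$-Lipschitz approximation of $d_{\bar g(0)}(x,\cdot)$, combined with the semigroup identity $\bar\Phi(x,t;y,s)=\int_M\bar\Phi(x,t;z,\tfrac{t+s}{2})\,\bar\Phi(z,\tfrac{t+s}{2};y,s)\,d\mu$ split into points near and far from the geodesic midpoint. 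Of these, the uniform Sobolev inequality on the moving background is the genuinely delicate point, but over a short time with bounded Ricci it follows from \eqref{eq:almosteucl}.
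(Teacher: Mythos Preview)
Your approach is essentially the same as the paper's: cite \cite[Theorem 26.25]{CCG+} (with the on-diagonal input \cite[Lemma 26.23]{CCG+}) and then argue that the constants can be made to depend only on $n$ and the scale-invariant product $T\sup|\Ric|$. The difference is one of emphasis and mechanism. The paper takes the full $\bar g(0)$-bound with \emph{separate} dependence $C=C(n,T,\sup|\Ric|)$, $D=D(T,\sup|\Ric|)$ as given directly by \cite[Theorem 26.25]{CCG+}, and then the entire content of the proof is an explicit parabolic rescaling argument: given two flows with the same value of $T\sup|\Ric|$, rescale one onto the time interval of the other via Lemma \ref{lemma:rescaledheatkernel}, apply the separate-dependence bound with identical inputs, and unwind the scaling. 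Your write-up instead puts most of the work into converting $\bar g(s)$-measurements to $\bar g(0)$-measurements via \eqref{eq:distancedistortion}, and then invokes rescaling only as a heuristic (``the correct scale-invariant combination in view of Lemma \ref{lemma:rescaledheatkernel}''). That heuristic is right, but note that tracking metric-equivalence factors alone cannot upgrade the dependence: those factors are controlled by $e^{KT}$, yet the constants you inherit from \cite{CCG+} may already depend on $T$ and $K$ separately, and absorbing $e^{KT}$ into them does not cure that. The rescaling is the actual mechanism, not just the motivation; making it explicit, as the paper does, is what closes the argument.
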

\begin{proof}
If we allow the constants to have the dependencies $C = C(n, T, \sup|\Ric|)$ and $D = D(T,\sup|\Ric|)$ (i.e. the constants may depend on the individual values of $T$ and $\sup |\Ric|$ rather than the product) then proof is identical to that of \cite[Theorem $26.25$]{CCG+}.

Now suppose that we have two Ricci flows $\bar g'$ and $\bar g''$ on $M$ that have complete times slices on $[0,T']$ and $[0,T'']$ respectively, such that $T'\sup_{[0,T']}|\Ric(\bar g')| = T''\sup_{[0,T'']}|\Ric(\bar g'')|$. Denote by $\bar \Phi'$ and $\bar \Phi''$ the heat kernels of $\bar g'$ and $\bar g''$ respectively. We now use the notation of Lemma \ref{lemma:rescaledheatkernel}. We take $T_1 = 0$ and $\lambda = T''/T'$, and parabolically rescale $\bar g''$ by $\lambda^{-1}$ as described in Lemma \ref{lemma:rescaledheatkernel} to obtain the Ricci flow $\hat{\bar g}''$ with heat kernel $\hat{\bar \Phi}''$. The rescaled flow $\hat{\bar g}''$ is defined on $[0, \lambda^{-1}T''] = [0,T']$ and satisfies 
\begin{equation*}
\sup_{[0,T']}|\Rm|(\hat{\bar g}'') = \frac{T''\sup_{[0,T'']}|\Rm|(\bar g'')}{T'} = \frac{T'\sup_{[0,T']}|\Rm|(\bar g')}{T'} = \sup_{[0,T']}|\Rm|(\bar g').
\end{equation*}

As discussed in the first paragraph, there are constants $C = C(n, T, \sup|\Ric|)$ and $D = D(T,\sup|\Ric|)$ such that 
\begin{equation*}
\bar \Phi'(x,t; y,s) \leq \frac{C\exp\left(\frac{-d^2_{\bar g'(0)}(x,y)}{D(t-s)}\right)}{\vol_{\bar g'(0)}^{1/2}B_{\bar g'(0)}\left(x,\sqrt{\frac{t-s}{2}}\right)\vol_{\bar g'(0)}^{1/2}B_{\bar g'(0)}\left(y,\sqrt{\frac{t-s}{2}}\right)}.
\end{equation*}
Because $\hat{\bar g}''$ satisfies the same upper bonds on the time interval and the curvature as $\bar g'$, we have, for the same values of $C$ and $D$:
\begin{equation}
\lambda^{n/2}\bar\Phi''(x,\hat t, y, \hat s) = \hat{\bar \Phi}''(x,t;y,s)\leq \frac{C_3\exp\left(\frac{-d^2_{\hat{\bar g}''(0)}(x,y)}{C_4(t-s)}\right)}{\vol_{\hat{\bar g}''(0)}^{1/2}B_{\hat{\bar g}''(0)}\left(x,\sqrt{\frac{t-s}{2}}\right)\vol_{\hat{\bar g}''(0)}^{1/2}B_{\hat{\bar g}''(0)}\left(y,\sqrt{\frac{t-s}{2}}\right)}.
\end{equation}
where the first equality is due to Lemma \ref{lemma:rescaledheatkernel}. 
The rest is a computation. We have
\begin{equation}
d_{\hat{\bar g}''(0)}(x,y) = \sqrt{\lambda}^{-1}d_{\bar g''(0)}(x,y)
\end{equation}
for all $x,y\in M$. Therefore, 
\begin{equation*}\label{eq:initialkernelbound}
\exp\left(\frac{-d^2_{\hat{\bar g}''(0)}(x,y)}{D( t- s)}\right) = \exp\left(\frac{-d^2_{\bar g''(0)}(x,y)}{D(\hat t- \hat s)}\right)
\end{equation*}
and
\begin{equation*} 
B_{\hat{\bar g}''(0)}\left(x, \sqrt{\frac{( t - s)}{2}}\right) = B_{\bar g''(0)}\left(x, \sqrt{\frac{(\hat t - \hat s)}{2}}\right).
\end{equation*}
Therefore, 
\begin{align*}
\vol_{\hat{\bar g}''(0)}B_{\hat{\bar g}''(0)}\left(x, \sqrt{\frac{t - s}{2}}\right) &= \int_{B_{\hat{\bar g}''(0)}\left(x, \sqrt{\frac{t - s}{2}}\right) } \sqrt{\det((\hat{\bar g}''(0))_{ij})}dx^1 \wedge \ldots \wedge dx^n
\\& = \int_{B_{\bar g''(0)}\left(x, \sqrt{\frac{(\hat t - \hat s)}{2}}\right)} \lambda^{-n/2}\sqrt{\det((\bar g''(0))_{ij})}dx^1 \wedge \ldots \wedge dx^n
\\&=  \lambda^{-n/2}\vol_{\bar g''(0)}B_{\bar g''(0)}\left(x, \sqrt{\frac{\hat t - \hat s}{2}}\right).
\end{align*}
Inserting these computations into (\ref{eq:initialkernelbound}), we find
\begin{equation}
\bar \Phi''(x,\hat t; y,\hat s) \leq  \frac{C\exp\left(\frac{-d^2_{\bar g''(0)}(x,y)}{D(\hat t-\hat s)}\right)}{\vol_{\bar g''(0)}^{1/2}B_{\bar g''(0)}\left(x,\sqrt{\frac{\hat t-\hat s}{2}}\right)\vol_{\bar g''(0)}^{1/2}B_{\bar g''(0)}\left(y,\sqrt{\frac{\hat t-\hat s}{2}}\right)}.
\end{equation}
for the same constants $C$ and $D$.
\end{proof}

\begin{lemma}\label{lemma:timeintervalhkb}
Fix $t_0>0$ and let $\bar g(t)$ be a Ricci flow with complete time slices on the time interval $[\tfrac{t_0}{2}, t_0]$. Suppose also that $\bar g(t)$ satisfies a curvature bound of the form $|\Rm|\leq \tfrac{c}{t}$. There exist constants $C = C(n,c) < \infty$ and $D(c) < \infty$ such that for all $x, y\in M, s< t\in [\tfrac{t_0}{2}, t_0]$, we have
\begin{equation}
{\bar \Phi}(x,t;y,s)\leq \frac{C\exp\left(\frac{-d^2_{\bar g(t_0/2)}(x,y)}{D(t-s)}\right)}{\vol_{\bar g(t_0/2)}^{1/2}B_{\bar g(t_0/2)}\left(x,\sqrt{\frac{t-s}{2}}\right)\vol_{\bar g(t_0/2)}^{1/2}B_{\bar g(t_0/2)}\left(y,\sqrt{\frac{t-s}{2}}\right)}.
\end{equation}
\end{lemma}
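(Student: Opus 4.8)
The plan is to reduce the statement directly to Theorem~\ref{thm:heatkernelbound} by a time translation, using the fact that the Ricci flow equation is autonomous. First I would set $\bar g'(\tau) := \bar g(\tau + \tfrac{t_0}{2})$ for $\tau \in [0, T']$ with $T' := \tfrac{t_0}{2}$; this is again a Ricci flow with complete time slices, and its heat kernel $\bar\Phi'$ satisfies
\[
\bar\Phi'(x,\tau;y,\sigma) = \bar\Phi\bigl(x, \tau + \tfrac{t_0}{2};\, y, \sigma + \tfrac{t_0}{2}\bigr),
\]
since the right-hand side solves the heat equation for $\partial_\tau - \Delta_{\bar g'(\tau)}$ with the same delta initial data and the heat kernel is unique. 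In particular $\bar g'(0) = \bar g(\tfrac{t_0}{2})$, so the distances, balls, and volumes with respect to $\bar g'(0)$ are precisely those appearing in the statement.

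Next I would verify the curvature hypothesis of Theorem~\ref{thm:heatkernelbound}. Since $|\Rm(\bar g(t))| \le \tfrac{c}{t}$ and $\tau + \tfrac{t_0}{2} \ge \tfrac{t_0}{2}$ on $[0,T']$, we get $|\Rm(\bar g'(\tau))| \le \tfrac{2c}{t_0}$, hence $\sup_{[0,T']}|\Ric(\bar g')| \le C(n)\tfrac{2c}{t_0}$ and therefore
\[
T' \sup_{[0,T']}|\Ric(\bar g')| \;\le\; \tfrac{t_0}{2}\cdot C(n)\tfrac{2c}{t_0} \;=\; C(n)\,c,
\]
a quantity depending only on $n$ and $c$; in particular it is finite. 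Then I would apply Theorem~\ref{thm:heatkernelbound} to $\bar g'$ on $M\times[0,T']$: it yields constants $C = C\bigl(n, T'\sup|\Ric(\bar g')|\bigr) = C(n,c)$ and $D = D\bigl(T'\sup|\Ric(\bar g')|\bigr) = D(n,c)$ with
\[
\bar\Phi'(x,\tau;y,\sigma) \;\le\; \frac{C\exp\!\left(\frac{-d^2_{\bar g'(0)}(x,y)}{D(\tau - \sigma)}\right)}{\vol_{\bar g'(0)}^{1/2}B_{\bar g'(0)}\!\left(x, \sqrt{\tfrac{\tau-\sigma}{2}}\right)\, \vol_{\bar g'(0)}^{1/2}B_{\bar g'(0)}\!\left(y, \sqrt{\tfrac{\tau-\sigma}{2}}\right)}.
\]
Finally, given $s < t$ in $[\tfrac{t_0}{2}, t_0]$, I would set $\tau = t - \tfrac{t_0}{2}$ and $\sigma = s - \tfrac{t_0}{2}$, so that $\tau - \sigma = t-s$ and $\bar\Phi'(x,\tau;y,\sigma) = \bar\Phi(x,t;y,s)$; combined with $\bar g'(0) = \bar g(\tfrac{t_0}{2})$, the displayed inequality becomes exactly the claimed bound.

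There is no substantive obstacle: the content is essentially subsumed by Theorem~\ref{thm:heatkernelbound}, and the only things requiring care are the bookkeeping checks that time translation commutes with the heat kernel, that the curvature bound transfers with the correct dependence on $n$ and $c$ (note that it is crucial here that the constants in Theorem~\ref{thm:heatkernelbound} depend only on the product $T'\sup|\Ric|$ and not on $T'$ and $\sup|\Ric|$ separately, since individually these degenerate as $t_0 \to 0$), and that $\bar g'(0)$ is literally $\bar g(\tfrac{t_0}{2})$ so that all geometric quantities line up. If one insists on the dependence $D = D(c)$ exactly as written, rather than $D(n,c)$, one absorbs the dimensional factor from $|\Ric|\le C(n)|\Rm|$ into an enlarged value of $c$ at the outset, which is harmless.
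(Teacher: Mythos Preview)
Your proposal is correct and is precisely the paper's approach: the paper's proof consists of the single sentence ``This follows immediately from Theorem~\ref{thm:heatkernelbound}, after shifting the time interval to begin at $0$,'' and your write-up simply fills in the bookkeeping details of that shift.
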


\begin{proof}
This follows immediately from Theorem \ref{thm:heatkernelbound}, after shifting the time interval to begin at $0$.
\end{proof}

If instead we consider the representation formula (\ref{eq:representationformula}) for $(0,2)$-tensors, we may still derive exponential estimates on the corresponding heat kernel. To this end, we now record a version of parabolic interior estimates that will be useful for our setting.  We first define the following norms; see also \cite[Page $422$]{Bam2}: Fix $\alpha \in (0,\tfrac{1}{2})$. If $Q\subset \R^n\times [0,T]$ is a parabolic domain, and 
\begin{equation}
r:= \min\{r': Q\subset B(p, r')\times[t- (r')^2, t] \text{ for some } (p, t) \in M\times [0,T]\},
\end{equation}
then let
\begin{equation}
||u||_{C^{2m,2\alpha;m,\alpha}(Q)}:= \sum_{|\iota|+2k\leq 2m}r^{|\iota|+2k}(||\nabla^{\iota}\partial_t^ku||_{C^0(Q)} + r^{2\alpha}[\nabla^{\iota}\partial_t^k u]_{2\alpha,\alpha}).
\end{equation}
\begin{lemma}\label{lemma:interiorests}
For $k= 1, 2, 3$, there exists $c_k = c(n, \alpha, k)$ such that the following is true:

 If $(\bar g(t))_{t\in [0,T]}$ is a smooth Ricci flow and $T$ is sufficiently small so that (\ref{eq:boundedcoeffs}) holds, and if $u(x,t)$ is a time-dependent family of $(0,2)$-tensors satisfying $(\partial_t - \Delta^{\bar g(t)})u = 0$, then we have
 \begin{equation}
 ||\nabla^k u||_{C^0(B_{\bar g_0}(x, r) \times [t - r^2, t])} \leq c_kr^{-k}||u||_{C^0(B_{\bar g_0}(x, 2r)\times [t- 4r^2, t])}
 \end{equation}
 for all $r^2 < t \leq T$.
 
\end{lemma}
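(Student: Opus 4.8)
The plan is to normalize to $r=1$ by parabolic rescaling and then quote standard interior parabolic Schauder estimates for linear systems. First I would fix $(x,t)$ with $r^2<t\le T$ together with a $\bar g_0$-exponential chart of radius $2\sqrt T$ centered near $x$, shrinking $r$ relative to $\sqrt T$ if necessary (harmless) so that $B_{\bar g_0}(x,2r)\times[t-4r^2,t]$ lies in the chart and in the domain $[0,T]$ of the flow. I would then pass to the parabolically rescaled Ricci flow $\hat g(s):=r^{-2}\bar g(t+r^2 s)$ written in the rescaled coordinates $\hat y=x/r$, and to the corresponding pulled-back family $\hat u$, which solves $(\partial_s-\Delta^{\hat g(s)})\hat u=0$ because the connection Laplacian on $(0,2)$-tensors obeys $\Delta^{\lambda g}=\lambda^{-1}\Delta^{g}$ and commutes with pullback. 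In these coordinates $\hat g^{ij}-\delta^{ij}$ takes the same pointwise values as $\bar g^{ij}-\delta^{ij}$ and its $m$-th coordinate derivatives equal $r^{m}$ times those of $\bar g^{ij}$; as $r\le\sqrt T\le1$, (\ref{eq:boundedcoeffs}) then gives $|\partial^m(\hat g^{ij}-\delta^{ij})|\le100$ for $|m|\le3$ on a parabolic cylinder $B(0,4)\times[-4,0]$, and for $T$ small $\hat g(s)$ is uniformly equivalent to the Euclidean metric there, so the equation for $\hat u$ is uniformly parabolic. A direct computation of the scaling weights of a $(0,2)$-tensor and its covariant derivatives gives $|\hat u|_{\hat g}=r^{2}|u|_{\bar g}$ and $|\hat\nabla^{k}\hat u|_{\hat g}=r^{2+k}|\nabla^{k}u|_{\bar g}$ at corresponding points, so a unit-scale estimate $\|\hat\nabla^{k}\hat u\|_{C^0(B(0,1)\times[-1,0])}\le c_k\|\hat u\|_{C^0(B(0,2)\times[-4,0])}$ is exactly equivalent to the claim.

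Next I would expand $(\partial_s-\Delta^{\hat g(s)})\hat u=0$ in the chart into the uniformly parabolic linear system $\partial_s\hat u_{ab}=\hat g^{ij}\partial_i\partial_j\hat u_{ab}+B\star\partial\hat u+C\star\hat u$, whose lower-order coefficients $B,C$ are universal polynomials in $\hat g^{ij},\partial\hat g,\partial^2\hat g$; by (\ref{eq:boundedcoeffs}) the leading coefficient is bounded in (parabolic) $C^{2,\alpha}$, $B$ in $C^{1,\alpha}$, and $C$ in $C^{0,\alpha}$, with bounds depending only on $n$ and $\alpha$. Standard interior parabolic Schauder estimates (e.g. the interior estimates in Krylov's or Lieberman's books) then yield $\|\hat u\|_{C^{2,\alpha}(B(0,3/2)\times[-3,0])}\le c(n,\alpha)\|\hat u\|_{C^0(B(0,2)\times[-4,0])}$. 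Since $\hat\nabla\hat u$ and $\hat\nabla^2\hat u$ are expressible through coordinate derivatives of $\hat u$ of order $\le2$ together with $\hat\Gamma$ and $\partial\hat\Gamma$ (controlled by the $C^2$-bound on $\hat g$ from (\ref{eq:boundedcoeffs})), rescaling back then gives the cases $k=1$ and $k=2$.

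The case $k=3$ genuinely needs $\hat u\in C^3$, and the naive bootstrap — differentiating the system once in space and reapplying Schauder to $\partial_l\hat u$ — would require $C\in C^{1,\alpha}$, i.e. the metric in $C^{3,\alpha}$, which is one derivative beyond what (\ref{eq:boundedcoeffs}) provides. I would remedy this by first upgrading the regularity of the metric, using that $(\bar g(t))$ is a \emph{smooth} Ricci flow: since $|\Rm(\hat g)|\le C(n)$ on $B(0,4)\times[-4,0]$ (curvature being a universal algebraic expression in $\hat g^{ij},\partial\hat g,\partial^2\hat g$), Shi's interior derivative estimates give $|\nabla^m\Rm(\hat g)|\le C(n,m)$ on $B(0,2)\times[-2,0]$ for every $m$, and converting these covariant bounds to coordinate derivatives through the already-controlled $\hat g$ and $\hat\Gamma$ yields, by induction on $m$, $|\partial^m(\hat g^{ij}-\delta^{ij})|\le C(n,m)$ on that smaller cylinder for all $m$. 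With the coefficients $\hat g^{ij},B,C$ of the system now bounded in every $C^{m,\alpha}$, differentiating the system once in space and applying the Schauder estimate of the previous paragraph to $\partial_l\hat u$ — whose forcing term, built from $\hat u,\partial\hat u,\partial^2\hat u$ and first coordinate derivatives of the coefficients, is controlled in $C^{\alpha}$ by the $k\le2$ step — gives $\|\hat u\|_{C^{3,\alpha}(B(0,1)\times[-1,0])}\le c(n,\alpha)\|\hat u\|_{C^0(B(0,2)\times[-4,0])}$; rescaling back finishes $k=3$.

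I expect the $k=3$ case to be the only genuine obstacle: the third covariant derivative costs a classical third derivative of $\hat u$, and one cannot produce it without either improving the metric regularity as above or, equivalently, passing to time-dependent $\bar g(t)$-harmonic coordinates, where the metric components satisfy a strictly parabolic system and one can bootstrap directly. The remaining steps — reducing the tensorial heat equation to a uniformly parabolic system, tracking the scaling of a $(0,2)$-tensor and its covariant derivatives, and the bookkeeping of the nested shrinking parabolic cylinders (including the innocuous adjustment of $r$ when $t$ exceeds $r^2$ only slightly) — are routine.
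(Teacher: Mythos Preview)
Your proposal is correct and follows the same route as the paper: parabolic rescaling to unit scale, interior parabolic Schauder estimates there (the paper simply cites Krylov's Theorem~8.12.1 as a black box for all $k\le 3$), then rescale back. Your extra care for $k=3$ via Shi's local estimates is a sound way to supply the additional coefficient regularity that (\ref{eq:boundedcoeffs}) alone does not quite give, and since the Shi bounds depend only on the dimensional curvature bound coming from (\ref{eq:boundedcoeffs}), it preserves the key feature that $c_k=c(n,\alpha,k)$ is independent of the particular Ricci flow; the paper does not isolate this step explicitly.
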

\begin{remark}
They key point here is that, by making $T$ small enough so that (\ref{eq:boundedcoeffs}) holds, the constants $c_k$ do not depend on the choice of Ricci flow $\bar g(t)$.
\end{remark}
\begin{proof}[Proof of Lemma \ref{lemma:interiorests}]
First suppose that (\ref{eq:boundedcoeffs}) is true for $T= 1$, and $B_{\bar g_0}(x, 1)$ is contained in an exponential coordinate chart for $\bar g_0$. Then the result is immediate from \cite[Theorem $8.12.1$]{Kry}, and is independent of $\bar g(t)$ since (\ref{eq:boundedcoeffs}) implies that $|\partial^m\bar g^{ij}| \leq 101$ for $|m| \leq 3$. Now fix arbitrary $0 < r \leq \sqrt{T}$ and $x\in M$. Pick exponential coordinates for $\bar g_0$ based at $x$, so that $B_{\bar g_0}(x,r)$ is contained within an exponential coordinate chart. We parabolically rescale $u$ and $\bar g$ by $r^{-2}$ to find that $\hat u$ and $\hat{\bar g}$ satisfy $(\partial_t - \Delta^{\hat{\bar g}(t)})\hat u = 0$. 
Moreover, by (\ref{eq:boundedcoeffs}), we have $|\partial^m\hat{\bar g}^{ij}(t)| \leq 101r^2 \leq 101$, so for the same constant $c$ we find
\begin{align*}
||\nabla^k u||_{C^{2,2\alpha;1,\alpha}(B_{\bar g_0}(x, r) \times [t - r^2, t])} &= ||\nabla ^k \hat u||_{C^{2,2\alpha;1,\alpha}(B_{\bar g_0}(x, 1) \times [tr^{-2} - 1, tr^{-2}])}
\\& \leq c||\hat u||_{C^0(B_{\bar g_0}(x,2)\times [tr^{-2}- 4, tr^{-2}])} = c||u||_{C^0(B_{\bar g_0}(x,2r)\times [t- 4r^2, t])}.
\end{align*}
\end{proof}

 Let $\bar K$ denote the heat kernel for $(0,2)$-tensors on a Ricci flow background corresponding to the differential operator $\partial_t + L$, where $L$ is the linear operator from (\ref{eq:hevolution}). If we assume that $\bar g(t)$ satisfies the curvature bound $|\Rm|(\bar g(t))\leq c$, then, by Kato's inequality, $\partial_t|\bar K_t(\cdot, y)| \leq \Delta|\bar K_t|(\cdot, y) + c|\bar K_t|(\cdot, y)$ in the barrier sense for all fixed $y\in M$, so $|\bar K_t(x,y)|\leq e^{ct}\bar \Phi_t(x,y)$, where $\bar \Phi$ denotes the scalar heat kernel for the Ricci flow on $M$, as before. We now record some estimates for $\bar K_t$ (cf. \cite[p.$32$]{Xphdthesis}).

\begin{corollary}\label{cor:exponentialboundnonscalar}
Let $\bar g(t)$ be a Ricci flow with complete time slices and heat kernel $\bar K$. Suppose $\bar g(t)$ satisfies the curvature bound $|\Rm|(\bar g(t)) \leq c$. Then, for $k= 0,1,2,3$, there exist $C_k = C(n,Tc, k)$ and $D = D(Tc)$ such that, for $t$ sufficiently small so that (\ref{eq:almosteucl}) and (\ref{eq:boundedcoeffs}) hold, we have
\begin{equation}\label{eq:ptwiseheatkernel}
\begin{split}
|\bar K_t|(x,y) &\leq C_0t^{-n/2}\exp\left(-\frac{d^2_{\bar g(0)}(x,y)}{Dt}\right) \text{ and }\\
|\nabla^k \bar K_t|(x,y) & \leq C_kt^{-(k+n)/2}\exp\left(-\frac{d^2_{\bar g(0)}(x,y)}{Dt}\right),
\end{split}
\end{equation}
where $\nabla$ is the covariant derivative with respect to $\bar g(t)$.
\end{corollary}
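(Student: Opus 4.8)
The plan is to prove the pointwise bounds \eqref{eq:ptwiseheatkernel} by combining the scalar heat kernel estimate of Lemma \ref{lemma:timeintervalhkb} (applied on dyadic time intervals $[\tfrac{t_0}{2},t_0]$) with the Kato-type pointwise domination $|\bar K_t|(x,y)\le e^{ct}\bar\Phi_t(x,y)$ recorded just above the statement, and then upgrading to derivative bounds via the parabolic interior estimates of Lemma \ref{lemma:interiorests}. First I would handle the $k=0$ case. Fix $x,y$ and set $t_0 = t$. On the interval $[\tfrac{t_0}{2},t_0]$ the flow satisfies $|\Rm|\le c$ (in fact $\le \tfrac{c}{t_0/2}$ after rescaling, but the constant $c$ bound suffices here), so Lemma \ref{lemma:timeintervalhkb} gives
\begin{equation*}
\bar\Phi(x,t_0;y,\tfrac{t_0}{2}) \le \frac{C\exp\left(-\frac{d^2_{\bar g(t_0/2)}(x,y)}{D(t_0/2)}\right)}{\vol_{\bar g(t_0/2)}^{1/2}B_{\bar g(t_0/2)}(x,\sqrt{t_0/4})\,\vol_{\bar g(t_0/2)}^{1/2}B_{\bar g(t_0/2)}(y,\sqrt{t_0/4})}.
\end{equation*}
Using the almost-Euclidean volume bound \eqref{eq:almosteucl}, each volume factor in the denominator is comparable to $(\sqrt{t_0/4})^n$, so the denominator is $\ge c(n) t_0^{n/2}$, giving $\bar\Phi_{t_0/2}(x,y)\le C_0 t_0^{-n/2}\exp(-d^2_{\bar g(t_0/2)}(x,y)/(Dt_0))$. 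Then \eqref{eq:distancedistortion} lets me replace $d_{\bar g(t_0/2)}$ by $d_{\bar g(0)}$ at the cost of enlarging $D$, and absorbing $e^{ct}$ into $C_0$ yields the first line of \eqref{eq:ptwiseheatkernel}. A mild point to be careful about: Lemma \ref{lemma:timeintervalhkb} compares times $s<t$ \emph{inside} $[\tfrac{t_0}{2},t_0]$, so to get the kernel $\bar K_t(x,y)=\bar K_{t-0}(x,y)$ I should instead apply the time-shifted version of Theorem \ref{thm:heatkernelbound} directly on $[0,t]$, which is clean since the curvature bound $|\Rm|\le c$ holds on all of $[0,T]$ by hypothesis; then the semigroup-parameter rescaling is unnecessary and the argument is just \eqref{eq:almosteucl} plus \eqref{eq:distancedistortion} plus Kato.

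For the derivative bounds $k=1,2,3$, I would apply Lemma \ref{lemma:interiorests} to the solution $u(\cdot,\cdot) = \bar K_{(\cdot)}(\cdot,y)$ of $(\partial_t-\Delta^{\bar g(t)})u = $ (lower order), but more precisely to the actual heat equation $(\partial_t+L)\bar K = 0$: since $L = \Delta^{\bar g(t)} + 2\Rm^{\bar g(t)}\star(\cdot)$ differs from $\partial_t - \Delta$ by a zeroth-order term with bounded coefficients, the interior estimate of Lemma \ref{lemma:interiorests} still applies after absorbing that term (either by a trivial modification of the proof, or by treating $2\Rm\star \bar K$ as a bounded inhomogeneity and invoking the Schauder theory cited in the proof of that lemma). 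Fix $(x,t)$ and choose radius $r = \sqrt{t}/2$, so that the parabolic cylinder $B_{\bar g_0}(x,2r)\times[t-4r^2,t] = B_{\bar g_0}(x,\sqrt t)\times[0,t]$ lies in the region where the flow is defined. Lemma \ref{lemma:interiorests} then gives
\begin{equation*}
|\nabla^k\bar K_t|(x,y) \le c_k r^{-k}\sup_{B_{\bar g_0}(x,\sqrt t)\times[0,t]} |\bar K_{(\cdot)}(\cdot,y)| \le c_k (\tfrac{\sqrt t}{2})^{-k}\cdot C_0 (\text{something})^{-n/2}\exp(\ldots).
\end{equation*}
Applying the $k=0$ bound already proved at each point $(x',t')$ of the cylinder: $|\bar K_{t'}(x',y)|\le C_0 (t')^{-n/2}\exp(-d^2_{\bar g(0)}(x',y)/(Dt'))$. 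Since $t'\in[t/2,t]$ we have $(t')^{-n/2}\le (t/2)^{-n/2}\le 2^{n/2}t^{-n/2}$, and since $x'\in B_{\bar g_0}(x,\sqrt t)$ we have $d_{\bar g(0)}(x',y)\ge d_{\bar g(0)}(x,y) - \sqrt t$, so $\exp(-d^2_{\bar g(0)}(x',y)/(Dt'))\le \exp(-(d_{\bar g(0)}(x,y)-\sqrt t)^2/(2Dt))$. The elementary inequality $(a-\sqrt t)^2 \ge \tfrac12 a^2 - t$ converts this into $e^{1/(2D)}\exp(-d^2_{\bar g(0)}(x,y)/(4Dt))$, and the constant $e^{1/(2D)}$ is absorbed into $C_k$ while $D$ is replaced by $4D$. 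Collecting: $|\nabla^k\bar K_t|(x,y)\le c_k 2^k t^{-k/2}\cdot C_0 2^{n/2} t^{-n/2}\exp(-d^2_{\bar g(0)}(x,y)/(4Dt)) = C_k t^{-(k+n)/2}\exp(-d^2_{\bar g(0)}(x,y)/(Dt))$ after renaming constants, which is the second line of \eqref{eq:ptwiseheatkernel}.

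I expect the main obstacle to be bookkeeping rather than genuine difficulty: making sure the parabolic cylinder $B_{\bar g_0}(x,\sqrt t)\times[0,t]$ on which Lemma \ref{lemma:interiorests} is invoked actually sits inside the domain $M\times[0,T]$ where the Ricci flow and the coefficient bounds \eqref{eq:boundedcoeffs}, \eqref{eq:almosteucl} hold (this is why the statement restricts to ``$t$ sufficiently small''), and confirming that the zeroth-order term $2\Rm^{\bar g(t)}\star(\cdot)$ in $L$ does not spoil the interior Schauder estimate (its coefficients are bounded by \eqref{eq:boundedcoeffs}-type control on the flow, so it is harmless, but one should say a word). A secondary subtlety is the interplay of the three relevant metrics $\bar g(0)$, $\bar g(t)$, and $\bar g(t_0/2)$: all the distance substitutions cost only a fixed multiplicative change in $D$ via \eqref{eq:distancedistortion}, and all the volume comparisons cost only fixed constants via \eqref{eq:almosteucl}, so none of this affects the form of the bound — but the reader should see explicitly that the constants $C_k, D$ depend only on $n$, $Tc$, and $k$, exactly as the distortion constant in \eqref{eq:distancedistortion} and the heat kernel constant in Theorem \ref{thm:heatkernelbound} do.
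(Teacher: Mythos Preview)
Your approach is essentially the paper's: Kato domination plus Theorem \ref{thm:heatkernelbound} and \eqref{eq:almosteucl} for $k=0$, then Lemma \ref{lemma:interiorests} applied to $u(\cdot,\cdot)=\bar K_{(\cdot)}(\cdot,y)$ for the derivatives. Your self-correction in the $k=0$ argument (apply Theorem \ref{thm:heatkernelbound} directly on $[0,t]$ rather than via the dyadic Lemma \ref{lemma:timeintervalhkb}) is exactly what the paper does, and your remark about the zeroth-order $2\Rm^{\bar g(t)}\star(\cdot)$ term in $L$ is a fair caveat that the paper simply glosses over.

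There is one concrete bookkeeping error. With your choice $r=\sqrt{t}/2$, the outer cylinder in Lemma \ref{lemma:interiorests} is $B_{\bar g_0}(x,2r)\times[t-4r^2,t]=B_{\bar g_0}(x,\sqrt t)\times[0,t]$, as you correctly compute. But you then write ``since $t'\in[t/2,t]$'', which contradicts this: the supremum is over $t'\in[0,t]$, and as $t'\searrow 0$ the $k=0$ bound $(t')^{-n/2}$ blows up (the kernel becomes a delta in $y$). The fix is to shrink $r$ so the outer cylinder stays bounded away from $t'=0$; the paper takes $r=\sqrt{t/8}$, giving outer time interval $[t/2,t]$, and then your subsequent estimates go through verbatim. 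The paper handles the spatial shift with the Jensen-type inequality $d^2_{\bar g_0}(z,y)\ge \tfrac12 d^2_{\bar g_0}(x,y)-d^2_{\bar g_0}(z,x)$, which is equivalent to your $(a-\sqrt t)^2\ge\tfrac12 a^2-t$ after squaring.
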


\begin{proof}
The first statement follows immediately from Theorem \ref{thm:heatkernelbound} and (\ref{eq:almosteucl}), since $|\bar K_t|(x,y) \leq e^{ct}|\bar \Phi_t(x,y)| < e^{cT}|\bar \Phi_t(x,y)|$. The estimates on the higher derivatives follow from Lemma \ref{lemma:interiorests}: for fixed $y\in M$, let $u(x,t) = \bar K_t(x,y)$, so that Lemma \ref{lemma:interiorests} implies
\begin{align*}
|\nabla^k u(x,t)| & \leq ||\nabla^k u||_{C^0(B_{\bar g_0}(x, \sqrt{t/8})\times[\tfrac{7t}{8}, t])} \leq ct^{-k/2}||u||_{C^0(B_{\bar g_0}(x, \sqrt{t/2})\times[\tfrac{t}{2}, t])}
\\& \leq Ct^{-k/2} ||\bar \Phi_s(z,y)||_{C^0((z,s)\in B_{\bar g_0}(x, \sqrt{t/2})\times[\tfrac{t}{2}, t])}
\\& \leq Ct^{-(k+n)/2}\exp\left(-\frac{d^2_{\bar g_0}(x,y)}{2Dt}\right),
\end{align*}
with $C$ adjusted, where the last line is as follows:  we have, by Jensen's inequality, $d_{\bar g_0}^2(z,y) \geq \tfrac{1}{2}d_{\bar g_0}^2(x,y) - d_{\bar g_0}^2(z,x)$, so for all $z\in B_{\bar g_0}(x, \sqrt{t/2})$, Theorem \ref{thm:heatkernelbound} and (\ref{eq:almosteucl}) imply
\begin{equation*}
\begin{split}
|\bar \Phi_s(z,y)| &\leq Ct^{-n/2}\exp\left(-\frac{d^2_{\bar g_0}(z,y)}{2Dt}\right) 
\\& \leq Ct^{-n/2}\exp\left(-\frac{d^2_{\bar g_0}(x,y)}{2Dt}\right)\exp\left(\frac{d^2_{\bar g_0}(z,x)}{4Dt}\right)  \leq Ct^{-n/2}\exp\left(-\frac{d^2_{\bar g_0}(x,y)}{2Dt}\right).
\end{split}
\end{equation*}
\end{proof}

\begin{lemma}
Let $\bar g(t)$ be a Ricci flow with complete time slices and heat kernel $\bar K$. Suppose $\bar g(t)$ satisfies the curvature bound $|\Rm|(\bar g(t)) \leq c$. Let $\nabla$ denote the covariant derivative with respect to $\bar g(t)$. Take $T$ sufficiently small so that (\ref{eq:almosteucl}) and (\ref{eq:boundedcoeffs}) hold. Then, for $k= 0, 1, 2, 3$ there exist $C_k = C(n,Tc, k)$ and $D = D(Tc)$ such that, for $t\leq T$, we have
\begin{equation}\label{eq:ptwiseheatkernelderivbounds}
|\nabla^k\bar K_t(x, y)| \leq C_k(d_{\bar g(0)}(x,y) + \sqrt{t})^{-n - k}
\end{equation}

\noindent Moreover, for $0< r \leq \sqrt{T}$, if $d_{\bar g(0)}(x,y) \geq r$ or $s \leq \tfrac{r^2}{2}$ we have
\begin{equation}\label{eq:ptwiseheatkernelawayfromball}
\begin{split}
|\bar K_{r^2 - s}|(x,y) &\leq C_0r^{-n}\exp\left(-\frac{d^2_{\bar g(0)}(x,y)}{2Dr^2}\right)\\
|\nabla \bar K_{r^2 - s}|(x,y) &\leq C_1r^{-n-1}\exp\left(-\frac{d^2_{\bar g(0)}(x,y)}{2Dr^2}\right)\\
|\nabla^2\bar K_{r^2 - s}|(x,y) &\leq C_2r^{-n-2}\exp\left(-\frac{d^2_{\bar g(0)}(x,y)}{2Dr^2}\right).
\end{split}
\end{equation}

\noindent Also,
\begin{equation}\label{eq:heatkernelawayfrom0}
\begin{split}
\int_{M\setminus B_{\bar g_0}(x,r)}|\bar K_t(x,y)|d_{\bar g_0}(y) &\leq C_0'\exp\left(-\frac{r^2}{2Dt}\right)\\
\int_{M\setminus B_{\bar g_0}(x, r)}|\nabla \bar K_t(x,y)|d_{\bar g_0}(y) &\leq \frac{C_1'}{\sqrt{t}}\exp\left(-\frac{r^2}{2Dt}\right),
\end{split}
\end{equation}
where $C_k' = C'(n,Tc, k)$.
\end{lemma}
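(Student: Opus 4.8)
The plan is to deduce everything from the Gaussian heat-kernel bounds already established in Corollary \ref{cor:exponentialboundnonscalar}, namely $|\nabla^k\bar K_t|(x,y)\le C_k t^{-(n+k)/2}\exp(-d_{\bar g(0)}^2(x,y)/(Dt))$ for $k=0,1,2,3$, combined with two elementary facts: the function $u\mapsto u^{m}e^{-u/D}$ is bounded on $[0,\infty)$ by a constant depending only on $m$ and $D$; and a Gaussian factor may always be split as $e^{-d^2/(Dt)}=e^{-d^2/(2Dt)}\,e^{-d^2/(2Dt)}$, where one half can be absorbed into a power of $r$ as soon as $d\ge r$, or into a power of $t$ as soon as $t\le r^2$. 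Throughout, $d:=d_{\bar g(0)}(x,y)$ and $T$ is taken small enough that the hypotheses of Corollary \ref{cor:exponentialboundnonscalar} hold, so in particular $t\le T$ there.

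For (\ref{eq:ptwiseheatkernelderivbounds}) I would split on whether $d\le\sqrt t$ or $d\ge\sqrt t$. If $d\le\sqrt t$ then $(d+\sqrt t)^{-n-k}\ge 2^{-n-k}t^{-(n+k)/2}$, so the Corollary bound gives $|\nabla^k\bar K_t|\le C_k 2^{n+k}(d+\sqrt t)^{-n-k}$ directly. If $d\ge\sqrt t$ then $(d+\sqrt t)^{-n-k}\ge 2^{-n-k}d^{-n-k}$, and writing $t^{-(n+k)/2}e^{-d^2/(Dt)}=d^{-n-k}(d^2/t)^{(n+k)/2}e^{-d^2/(Dt)}$ and applying the boundedness fact with $u=d^2/t$ gives $|\nabla^k\bar K_t|\le C\,d^{-n-k}\le C(d+\sqrt t)^{-n-k}$. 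For (\ref{eq:ptwiseheatkernelawayfromball}) set $t=r^2-s$, so $0<t\le r^2$. When $s\le r^2/2$ one has $t\ge r^2/2$, hence $t^{-(n+k)/2}\le 2^{(n+k)/2}r^{-n-k}$, while $t\le r^2$ gives $e^{-d^2/(Dt)}\le e^{-d^2/(2Dr^2)}$; this handles $k=0,1,2$. When instead $d\ge r$, split $e^{-d^2/(Dt)}=e^{-d^2/(2Dt)}e^{-d^2/(2Dt)}$: the first factor combines with $t^{-(n+k)/2}=d^{-n-k}(d^2/t)^{(n+k)/2}$ via the boundedness fact to give $C d^{-n-k}\le C r^{-n-k}$, and the second is $\le e^{-d^2/(2Dr^2)}$ since $t\le r^2$.

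For the integral estimates (\ref{eq:heatkernelawayfrom0}) I would use that $d\ge r$ on the domain of integration to split off $e^{-d^2/(Dt)}\le e^{-r^2/(2Dt)}e^{-d^2/(2Dt)}$, obtaining, for $k=0,1$,
\[
\int_{M\setminus B_{\bar g_0}(x,r)}|\nabla^k\bar K_t|(x,y)\,d_{\bar g_0}(y)\ \le\ C_k\,t^{-k/2}e^{-r^2/(2Dt)}\ \Big(t^{-n/2}\!\int_M e^{-d^2(x,y)/(2Dt)}\,d_{\bar g_0}(y)\Big),
\]
so it remains to bound the bracketed integral by a constant depending only on $n$ and $Tc$. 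I would decompose $M$ into the metric annuli $A_j=\{y:\,j\sqrt t\le d(x,y)<(j+1)\sqrt t\}$, giving $\int_M e^{-d^2/(2Dt)}\le\sum_{j\ge 0}\vol_{\bar g_0}(A_j)\,e^{-j^2/(2D)}$ with $\vol_{\bar g_0}(A_j)\le\vol_{\bar g_0}(B_{\bar g_0}(x,(j+1)\sqrt t))$. The curvature hypothesis $|\Rm|(\bar g(t))\le c$ forces $\Ric(\bar g(0))\ge-(n-1)c\,\bar g(0)$, so Bishop--Gromov comparison yields $\vol_{\bar g_0}(B_{\bar g_0}(x,\rho))\le C(n)\rho^n e^{(n-1)\sqrt c\,\rho}$; with $\rho=(j+1)\sqrt t\le (j+1)\sqrt T$ this gives $t^{-n/2}\vol_{\bar g_0}(A_j)\le C(n)(j+1)^n e^{(n-1)\sqrt{cT}(j+1)}$, and since $D=D(Tc)$ the series $\sum_{j\ge 0}(j+1)^n e^{(n-1)\sqrt{cT}(j+1)}e^{-j^2/(2D)}$ converges to a constant $C(n,Tc)$, as desired.

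The elementary splittings and the boundedness fact are routine. The one step I expect to require care — and the only genuinely new ingredient beyond Corollary \ref{cor:exponentialboundnonscalar} — is the large-scale volume control in the last display: the near-Euclidean estimate (\ref{eq:almosteucl}) bounds volumes only at scales $\le\sqrt T$, whereas the annular sum needs a volume growth bound at every scale, and it is here that the curvature assumption must re-enter, through Bishop--Gromov (equivalently, by iterating the doubling inequality implicit in (\ref{eq:almosteucl}) together with the Ricci lower bound).
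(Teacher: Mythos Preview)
Your proposal is correct and follows essentially the same route as the paper. The only cosmetic differences are that for (\ref{eq:ptwiseheatkernelderivbounds}) the paper packages your case split into the single inequality $\exp(-d^2/(Dt))\le\bigl(c(n+k)\sqrt t/(d+\sqrt t)\bigr)^{n+k}$, and for (\ref{eq:heatkernelawayfrom0}) the paper rescales the metric by $t^{-1}$ and bounds the resulting integral by $\int_0^\infty \vol(B_s^{-Tc})e^{-s^2/(2D)}\,ds$ via Bishop--Gromov, which is the continuous version of your annular sum.
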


\begin{proof}
The bounds (\ref{eq:ptwiseheatkernelderivbounds}) follow from Corollary \ref{cor:exponentialboundnonscalar} by observing that 
\begin{equation*}
\exp\left(-\frac{d^2_{\bar g(0)}(x,y)}{Dt}\right) \leq \left(\frac{c(n+k)}{\sqrt{\frac{d^2(x,y)}{t}} + 1}\right)^{n+k} = \left(\frac{c(n+k)\sqrt{t}}{d(x,y) + \sqrt{t}}\right)^{n+k}.
\end{equation*}

The bounds (\ref{eq:ptwiseheatkernelawayfromball}) follow from Corollary \ref{cor:exponentialboundnonscalar} immediately in the case that $s\leq r^2/2$. On the other hand, if $d(x,y) \geq r$, then 
\begin{align*}
(r^2 - s)^{\frac{-n-k}{2}}\exp\left(-\frac{d^2_{\bar g(0)}(x,y)}{D(r^2-s)}\right) & \leq(r^2 - s)^{\frac{-n-k}{2}}\left(\frac{c(n+k)\sqrt{r^2 -s}}{d_{\bar g(0)}(x,y) + \sqrt{r^2-s}}\right)^{n+k}\exp\left(-\frac{d^2_{\bar g(0)}(x,y)}{2D(r^2-s)}\right)
\\& = \left(\frac{c(n+k)}{d_{\bar g(0)}(x,y) + \sqrt{r^2-s}}\right)^{n+k}\exp\left(-\frac{d^2_{\bar g(0)}(x,y)}{2D(r^2-s)}\right)
\\& \leq \left(\frac{c(n+k)}{r + \sqrt{r^2-s}}\right)^{n+k}\exp\left(-\frac{d^2_{\bar g(0)}(x,y)}{2D(r^2-s)}\right)
\\& \leq \left(\frac{c(n+k)}{r}\right)^{n+k}\exp\left(-\frac{d^2_{\bar g(0)}(x,y)}{2D(r^2-s)}\right).
\end{align*}

The bounds (\ref{eq:heatkernelawayfrom0}) follow from Corollary \ref{cor:exponentialboundnonscalar} by integration and rescaling the metric, as follows:
\begin{align*}
\int_{M\setminus B_{\bar g(0)}(x,r)}C_0t^{-n/2}\exp\left(-\frac{d^2_{\bar g(0)}(x,y)}{Dt}\right)d\bar g_0  
& \leq C_0\exp\left(-\frac{r^2}{2Dt}\right)\int_{M\setminus B_{\bar g(0)}(x,r/\sqrt{t})}\exp\left(-\frac{d^2_{\bar g(0)}(x,y)}{2D}\right)d\hat{\bar g}_0
\\& \leq C_0\exp\left(-\frac{r^2}{2Dt}\right)\int_0^{\infty}\vol(B_s^{-Tc})\exp\left(-\frac{s^2}{2D}\right)ds
\\& \leq C_0'\exp\left(-\frac{r^2}{2Dt}\right),
\end{align*}
where $\hat{\bar g} = t^{-1}\bar g$, and $B_s^{-Tc}$ denotes a ball of radius $s$ in a space of constant curvature $-Tc$, since $|\Rm|(\hat{\bar g}) = t|\Rm|(\bar g) \leq T|\Rm|(\bar g)$. The penultimate line is due to Bishop-Gromov. The integral bound on the covariant derivative of $\bar K$ follows similarly.
\end{proof}

\subsection{Scalar heat kernel for the Ricci-DeTurck flow}
In this section we push forward the analysis on $\bar \Phi$ by diffeomorphisms to estimate the scalar heat kernel for the Ricci-DeTurck flow.

\begin{lemma}\label{lemma:RDTexpbound}
Let $(g(t))_{t\in(0,T]}$ be a solution to the Ricci-DeTurck equation with respect to a smooth background Ricci flow $(\bar g(t))_{t\in [0,T]}$, such that $g(t)$ is uniformly $(1+b)$-bilipschitz to $\bar g(t)$ on $[0,T]$. Take $t_0$ sufficiently small so that (\ref{eq:almosteucl}) holds for the Ricci flow $\chi_t^*g_t$, where $\chi_t$ are as in (\ref{eq:diffeoseq}), and suppose that $g(t)$ is smooth on $[\tfrac{t_0}{2}, t_0]$. Let $\Phi(x,t;y,s)$ be the heat kernel for the operator $\partial_t - \Delta^{g(t)} - \nabla^{g(t)}_{X_{\bar g}(g)}$, where $X$ is as in (\ref{eq:Xoperator}). Suppose also that $g(t)$ satisfies a bound of the form $|(\nabla^{\bar g})^{m}g(t)|_{\bar g} \leq \tfrac{c}{t^{m/2}}$ for $m=1,2$ on $[\tfrac{t_0}{2}, t_0]$. Then there exist constants $C = C(n,c) < \infty$ and $D(c, b, T\sup_{[0,T]}|\Rm|(\bar g(t))) < \infty$ such that, for all $s,t\in [\tfrac{t_0}{2}, t_0]$, we have
\begin{equation}
\Phi(x,t;y,s) \leq \frac{C}{(t-s)^{n/2}}\exp\left(\frac{-d^2_{\bar g_0}(x,y)}{D(t-s)}\right)
\end{equation}
\end{lemma}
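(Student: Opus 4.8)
The plan is to realize $\Phi$ as a diffeomorphism pullback of the scalar Ricci-flow heat kernel $\bar\Phi$, invoke the Gaussian bound for $\bar\Phi$ on the time window $[\tfrac{t_0}{2},t_0]$, and then repair the resulting exponential weight using the drift estimate of Lemma \ref{lemma:driftbound}. Recall that $g(t)=(\chi_t^{-1})^*\bar g(t)$, so $\chi_t^*g(t)=\bar g(t)$ and $\chi_t^{-1}\colon(M,g(t))\to(M,\bar g(t))$ is a Riemannian isometry for each $t$; note also that the Ricci flow $\chi_t^*g_t$ appearing in the hypothesis on (\ref{eq:almosteucl}) is precisely $\bar g(t)$. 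Using the defining equation (\ref{eq:diffeoseq}) for $\chi_t$ together with the naturality of the Laplace--Beltrami operator under pullback, one checks directly that $u$ solves $(\partial_t-\Delta^{g(t)}-\nabla^{g(t)}_{X_{\bar g}(g)})u=0$ if and only if $u\circ\chi_t$ solves $(\partial_t-\Delta^{\bar g(t)})(u\circ\chi_t)=0$. Feeding this into the heat-kernel representation (\ref{eq:representationformula}) and changing variables (the volume measures transform correctly because $(\chi_s^{-1})^*\bar g(s)=g(s)$), I obtain the pointwise identity
\[
\Phi(x,t;y,s)=\bar\Phi\bigl(\chi_t^{-1}(x),\,t;\,\chi_s^{-1}(y),\,s\bigr).
\]

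Next I would bound $\bar\Phi$ on $[\tfrac{t_0}{2},t_0]$. Since $(\bar g(t))_{t\in[0,T]}$ is a smooth Ricci flow with complete time slices, $\Lambda:=\sup_{[0,T]}|\Rm|(\bar g)<\infty$, and $|\Rm|(\bar g(t))\le\Lambda\le(\Lambda T)/t$ on $[\tfrac{t_0}{2},t_0]$. Applying Lemma \ref{lemma:timeintervalhkb} (equivalently Theorem \ref{thm:heatkernelbound} after translating the interval to begin at $0$), then using (\ref{eq:almosteucl}) at time $\tfrac{t_0}{2}$ to replace the volume factors by a multiple of $(t-s)^{n/2}$, and (\ref{eq:distancedistortion}) to pass from $\bar g(\tfrac{t_0}{2})$-distance to $\bar g_0$-distance, yields
\[
\bar\Phi(a,t;b,s)\le\frac{C}{(t-s)^{n/2}}\exp\!\left(\frac{-d_{\bar g_0}^2(a,b)}{D(t-s)}\right)
\]
for all $a,b\in M$ and $\tfrac{t_0}{2}\le s<t\le t_0$, for constants of the asserted type. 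Combined with the identity above, it then remains only to estimate $d_{\bar g_0}(\chi_t^{-1}(x),\chi_s^{-1}(y))$ from below in terms of $d_{\bar g_0}(x,y)$.

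For this, let $\Psi_\tau$ be the flow in $\tau$ of the time-dependent vector field $X_{\bar g(\tau)}(g(\tau))$ with $\Psi_s=\id$ (well-defined on $[s,t]\subset[\tfrac{t_0}{2},t_0]$, where $g$ is smooth); then $\Psi_\tau=\chi_\tau\circ\chi_s^{-1}$ by uniqueness of ODE solutions, so $\chi_s^{-1}(y)=\chi_t^{-1}(\Psi_t(y))$. Since $\chi_t^{-1}$ is a $g(t)$-to-$\bar g(t)$ isometry, $\bar g(t)$ is uniformly bilipschitz to $\bar g_0$ by (\ref{eq:distancedistortion}), and $g(t)$ is $(1+b)$-bilipschitz to $\bar g(t)$, we get
\[
d_{\bar g_0}\bigl(\chi_t^{-1}(x),\chi_s^{-1}(y)\bigr)\ge e^{-\Lambda T}\,d_{g(t)}\bigl(x,\Psi_t(y)\bigr)\ge e^{-\Lambda T}\Bigl(d_{g(t)}(x,y)-d_{g(t)}\bigl(y,\Psi_t(y)\bigr)\Bigr)\ge c_2\,d_{\bar g_0}(x,y)-c_3\sqrt{t-s},
\]
where the first term is bounded below using the bilipschitz bounds, and for the second one uses that the hypothesis $|\nabla^{\bar g}g(\tau)|_{\bar g}\le c\tau^{-1/2}$ on $[\tfrac{t_0}{2},t_0]$ and Lemma \ref{lemma:driftbound} give $|X_{\bar g(\tau)}(g(\tau))|_{g(t)}\le c'\tau^{-1/2}$, so that the curve $\tau\mapsto\Psi_\tau(y)$ has $g(t)$-length at most $c'\int_s^t\tau^{-1/2}\,d\tau=2c'(\sqrt t-\sqrt s)\le 2c'\sqrt{t-s}$; here $c_2,c_3$ depend only on $n$, $c$, $b$, and $\Lambda T$. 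Finally I split into cases: if $d_{\bar g_0}(x,y)\le\tfrac{2c_3}{c_2}\sqrt{t-s}$, then the asserted exponential is bounded below by a fixed positive constant and the estimate follows from $\Phi(x,t;y,s)\le C(t-s)^{-n/2}$; if $d_{\bar g_0}(x,y)\ge\tfrac{2c_3}{c_2}\sqrt{t-s}$, then $d_{\bar g_0}(\chi_t^{-1}(x),\chi_s^{-1}(y))\ge\tfrac{c_2}{2}d_{\bar g_0}(x,y)$ and substituting into the two displays above (and enlarging $D$) gives the claim. Taking the larger constant finishes the proof.

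The step I expect to be the main obstacle is the weight correction: one cannot transport $x$ and $y$ back to time $0$, since the gradient bound on $g$ — and hence the drift estimate — is only available on $[\tfrac{t_0}{2},t_0]$. The device that circumvents this is to compare $\chi_t^{-1}(x)$ and $\chi_s^{-1}(y)$ through the single diffeomorphism $\chi_t^{-1}$ (using that it is a $g(t)$-to-$\bar g(t)$ isometry), so that only the drift over the subinterval $[s,t]\subset[\tfrac{t_0}{2},t_0]$ ever enters; the bilipschitz hypotheses then convert everything into $\bar g_0$-distances with constants independent of $t_0$. A minor secondary point is the bookkeeping in the first two steps — the measure normalization of $\Phi$ in the representation formula, and the passage from the volume-normalized bound of Theorem \ref{thm:heatkernelbound} to the clean $C(t-s)^{-n/2}\exp(\cdots)$ form via (\ref{eq:almosteucl}).
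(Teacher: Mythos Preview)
Your overall strategy --- pull back the Ricci--DeTurck heat kernel to a Ricci-flow heat kernel, invoke the Gaussian bound of Lemma~\ref{lemma:timeintervalhkb}, and repair the distance using the drift estimate of Lemma~\ref{lemma:driftbound} --- is exactly the paper's approach. However, there is a genuine error in the identification you make at the start.

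You write that $g(t)=(\chi_t^{-1})^*\bar g(t)$, so that $\chi_t^*g(t)=\bar g(t)$ and $\chi_t^{-1}$ is an isometry from $(M,g(t))$ to $(M,\bar g(t))$. This is false. The background Ricci flow $\bar g(t)$ and the Ricci flow $\tilde g(t):=\chi_t^*g(t)$ are \emph{different} flows: $\bar g(t)$ is a fixed smooth Ricci flow used to define the DeTurck gauge, while $\tilde g(t)$ is the Ricci flow obtained by pulling back the Ricci--DeTurck solution $g(t)$, with initial data $g(0)$ rather than $\bar g(0)$. The diffeomorphisms $\chi_t$ intertwine $g(t)$ with $\tilde g(t)$, not with $\bar g(t)$; consequently the heat-kernel identity reads
\[
\Phi(x,t;y,s)=\bar\Phi_{\tilde g}\bigl(\chi_t^{-1}(x),t;\chi_s^{-1}(y),s\bigr),
\]
where $\bar\Phi_{\tilde g}$ is the scalar heat kernel for the Ricci flow $\tilde g$, not for $\bar g$. (This is also why the hypothesis asks that (\ref{eq:almosteucl}) hold for $\chi_t^*g_t$: it is a genuine assumption on $\tilde g$, not a triviality about $\bar g$.) Your subsequent use of Lemma~\ref{lemma:timeintervalhkb} requires a curvature bound of the form $|\Rm|(\tilde g(t))\le c/t$; since $|\Rm|(\tilde g(t))=|\Rm|(g(t))$, this follows from the derivative hypotheses $|(\nabla^{\bar g})^m g|_{\bar g}\le c\,t^{-m/2}$ for $m=1,2$ together with the bound on $|\Rm|(\bar g)$, but it is not simply $\Lambda=\sup|\Rm|(\bar g)$.

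Once this is corrected, the rest of your argument goes through essentially as in the paper. The isometry step becomes $d_{\tilde g(t)}(\chi_t^{-1}(x),\chi_t^{-1}(\Psi_t(y)))=d_{g(t)}(x,\Psi_t(y))$, and one passes from $d_{\tilde g(t_0/2)}$ (which is what Lemma~\ref{lemma:timeintervalhkb} produces) to $d_{g(t)}$ via distance distortion for $\tilde g$, and from $d_{g(t)}$ to $d_{\bar g_0}$ via the bilipschitz hypothesis and distance distortion for $\bar g$. Your case-split on the size of $d_{\bar g_0}(x,y)/\sqrt{t-s}$ is equivalent to the paper's use of the elementary inequality $d^2(a,c)\ge\tfrac12 d^2(a,b)-d^2(b,c)$ together with $(\sqrt t-\sqrt s)^2\le t-s$; either works.
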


\begin{proof}
We pull back Lemma \ref{lemma:timeintervalhkb} by diffeomorphisms, as in \cite[Lemma $3$]{Bam}. Let $\bar \Phi$ be the heat kernel with respect to the Ricci flow $\chi_t^*g(t)$, so that $\Phi(x,t;y,s) = \bar \Phi(\chi_t^{-1}(x), t; \chi_s^{-1}(y),s)$. 

First observe that, by Jensen's inequality,
\begin{equation}\label{eq:Jensentriangle}
d^2_{\chi_{t_0/2}^*g(t_0/2)}(\chi_t^{-1}(x), \chi_t^{-1}(y))  \geq \tfrac{1}{2}d^2_{\chi_{t_0/2}^*g(t_0/2)}(\chi_t^{-1}(x), \chi_t^{-1}(y)) - d^2_{\chi_{t_0/2}^*g(t_0/2)}(\chi_t^{-1}(y), \chi_s^{-1}(y)).
\end{equation}

Then, by (\ref{eq:almosteucl}), (\ref{eq:Jensentriangle}), and (\ref{eq:distancesnotmuchmoved}), and Lemma \ref{lemma:timeintervalhkb} applied to $\chi_t^*g_t$,  we have
 \begin{align*}
 \Phi(x,t;y,s) & = \bar \Phi(\chi_t^{-1}(x), t; \chi_s^{-1}(y),s) \leq  \frac{C}{(t-s)^{n/2}}\exp\left(-\frac{d_{\chi_{t_0/2}^*g(t_0/2)}^{2}(\chi_t^{-1}(x), \chi_s^{-1}(y))}{D(t-s)}\right)
\\& \leq \frac{C}{(t-s)^{n/2}}\exp\left(-\frac{d^{2}_{\chi_{t_0/2}^*g(t_0/2)}(\chi_t^{-1}(x), \chi_t^{-1}(y))}{D(t-s)}\right)\exp\left(\frac{d^{2}_{\chi_{t_0/2}^*g(t_0/2)}(\chi_t^{-1}(y), \chi_s^{-1}(y))}{D(t-s)}\right)
\\& \leq \frac{C}{(t-s)^{n/2}}\exp\left(-\frac{d^{2}_{\chi_{t}^*g(t)}(\chi_t^{-1}(x), \chi_t^{-1}(y))}{D(t-s)}\right)\exp\left(\frac{d^{2}_{\chi_{t}^*g(t)}(\chi_t^{-1}(y), \chi_s^{-1}(y))}{D(t-s)}\right)
\\& = \frac{C}{(t-s)^{n/2}}\exp\left(-\frac{d^{2}_{g(t)}(x, y)}{D(t-s)}\right)\exp\left(\frac{d^{2}_{g(t)}(y, \chi_t\circ\chi_s^{-1}(y))}{D(t-s)}\right)
\\& = \frac{C}{(t-s)^{n/2}}\exp\left(-\frac{d^{2}_{g(t)}(x, y)}{D(t-s)}\right)\exp\left(\frac{d^{2}_{g(t)}(\chi_s\circ\chi_s^{-1}(y), \chi_t\circ\chi_s^{-1}(y))}{D(t-s)}\right)
\\& \leq \frac{C}{(t-s)^{n/2}}\exp\left(-\frac{d^{2}_{\bar g_0}(x, y)}{D(t-s)}\right)\exp\left(\frac{d^{2}_{\bar g_0}(\chi_s\circ\chi_s^{-1}(y), \chi_t\circ\chi_s^{-1}(y))}{D(t-s)}\right)
\\& \leq \frac{C}{(t-s)^{n/2}}\exp\left(-\frac{d^{2}_{\bar g_0}(x, y)}{D(t-s)}\right)\exp\left(\frac{(\sqrt{t} - \sqrt{s})^2}{D(t-s)}\right) \leq \frac{C}{(t-s)^{n/2}}\exp\left(-\frac{d^{2}_{\bar g_0}(x, y)}{D(t-s)}\right),
\end{align*}
after adjusting the constants.
\end{proof}

\begin{corollary}\label{cor:heatkernelboundforballs}
Let $(g(t))_{t\in(0,T]}$ be a solution to the Ricci-DeTurck equation with respect to a smooth background Ricci flow $(\bar g(t))_{t\in [0,T]}$, where we take $T$ sufficiently small so that (\ref{eq:almosteucl}) holds. Suppose that $g(t)$ is smooth on $[\tfrac{t_0}{2}, t_0]$ and uniformly $(1+b)$-bilipschitz to $\bar g(t)$ on $[0,T]$. Let $\Phi(x,t;y,s)$ be the heat kernel for the operator $\partial_t - \Delta^{g(t)} - \nabla^{g(t)}_{X_{\bar g}(g)}$, where $X$ is as in (\ref{eq:Xoperator}). Suppose also that $g(t)$ satisfies a bound of the form $|(\nabla^{\bar g})^{m}g(t)|_{\bar g} \leq \tfrac{c}{t^{m/2}}$ for $m=1,2$ on $[\tfrac{t_0}{2}, t_0]$. Then there exist constants $C = C(n,c, T\sup_{[0,T]}|\Rm|(\bar g(t))) < \infty$ and $D(c, b, T\sup_{[0,T]}|\Rm|(\bar g(t))) < \infty$ such that, for any $r>0$, and all $s,t\in [\tfrac{t_0}{2}, t_0]$ with $s<t$, we have
\begin{equation}
\int_{M\setminus B_{\bar g_0}(x,r)}\Phi(x,t; y, s)dg_s(y) \leq C\exp\left(-\frac{r^2}{D(t-s)}\right).
\end{equation}
\end{corollary}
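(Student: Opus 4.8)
The plan is to integrate the pointwise Gaussian upper bound from Lemma \ref{lemma:RDTexpbound}, using half of the Gaussian exponent to pull out the factor $\exp\!\big(-r^2/(D(t-s))\big)$ and the other half, after a parabolic rescaling of the metric, to bound what remains by a constant via Bishop--Gromov. This is exactly the device used to prove \eqref{eq:heatkernelawayfrom0} in the preceding lemma. First I would convert the measure $dg_s$ into a controlled multiple of $d\bar g_0$: since $g(t)$ is uniformly $(1+b)$-bilipschitz to $\bar g(t)$ on $[0,T]$, the volume forms satisfy $dg_s \le (1+b)^n\, d\bar g_s$, and since $\sup_{[0,T]}|\Rm|(\bar g)<\infty$, \eqref{eq:distancedistortion} gives $\bar g_s \le e^{2T\sup|\Rm|}\bar g_0$ pointwise, hence $d\bar g_s \le A\, d\bar g_0$ for a constant $A = A(n,b,T\sup|\Rm|(\bar g))$. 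Thus $dg_s \le A'\,d\bar g_0$ with the same dependencies, and
\[
\int_{M\setminus B_{\bar g_0}(x,r)}\Phi(x,t;y,s)\,dg_s(y)\ \le\ A'\int_{M\setminus B_{\bar g_0}(x,r)}\Phi(x,t;y,s)\,d\bar g_0(y).
\]

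Next I would invoke Lemma \ref{lemma:RDTexpbound}, which applies under the present hypotheses and yields $\Phi(x,t;y,s)\le C(t-s)^{-n/2}\exp\!\big(-d^2_{\bar g_0}(x,y)/(D(t-s))\big)$. On $M\setminus B_{\bar g_0}(x,r)$ we have $d_{\bar g_0}(x,y)\ge r$, so splitting the exponent gives
\[
\exp\!\left(-\frac{d^2_{\bar g_0}(x,y)}{D(t-s)}\right)\le \exp\!\left(-\frac{r^2}{2D(t-s)}\right)\exp\!\left(-\frac{d^2_{\bar g_0}(x,y)}{2D(t-s)}\right),
\]
and therefore the integral above is at most
\[
A'C\,\exp\!\left(-\frac{r^2}{2D(t-s)}\right)\int_M (t-s)^{-n/2}\exp\!\left(-\frac{d^2_{\bar g_0}(x,y)}{2D(t-s)}\right)d\bar g_0(y).
\]
To bound the last integral I would rescale, setting $\hat{\bar g}_0 := (t-s)^{-1}\bar g_0$, so that $d\bar g_0 = (t-s)^{n/2}\,d\hat{\bar g}_0$ and $d_{\bar g_0} = \sqrt{t-s}\,d_{\hat{\bar g}_0}$; the integral becomes $\int_M \exp\!\big(-d^2_{\hat{\bar g}_0}(x,y)/(2D)\big)\,d\hat{\bar g}_0(y)$, which no longer depends on $t-s$. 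Since $|\Rm|(\hat{\bar g}_0) = (t-s)|\Rm|(\bar g_0)\le T\sup|\Rm|(\bar g)$, Bishop--Gromov bounds the volume of $\hat{\bar g}_0$-balls of radius $\rho$ by the volume of balls of radius $\rho$ in the simply connected space form of curvature $-T\sup|\Rm|(\bar g)$; this grows at most exponentially in $\rho$ and is hence integrable against the Gaussian, giving a finite bound depending only on $n$, $D$, and $T\sup|\Rm|(\bar g)$. Absorbing $A'C$ and this bound into a new constant, and replacing $D$ by $2D$, produces the stated estimate with $C = C(n,c,T\sup|\Rm|(\bar g))$ and $D = D(c,b,T\sup|\Rm|(\bar g))$.

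Because Lemma \ref{lemma:RDTexpbound} already supplies the essential Gaussian decay, there is no substantial obstacle here; the proof is essentially a repetition of the integration argument behind \eqref{eq:heatkernelawayfrom0}. The only points that need care are (i) verifying that the Jacobian relating $dg_s$ and $d\bar g_0$ is bounded by a constant with the claimed dependencies — which combines the bilipschitz hypothesis with the metric-distortion estimate \eqref{eq:distancedistortion} — and (ii) the Bishop--Gromov volume-growth bound used to dominate the Gaussian integral by a constant, both of which are standard.
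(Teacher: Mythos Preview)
Your proposal is correct and follows essentially the same approach as the paper: apply the pointwise Gaussian bound from Lemma~\ref{lemma:RDTexpbound}, split the exponent to extract the $\exp(-r^2/(2D(t-s)))$ factor, then rescale the metric and use Bishop--Gromov to bound the remaining Gaussian integral by a constant, exactly as in the proof of~\eqref{eq:heatkernelawayfrom0}. You are in fact more explicit than the paper about the measure conversion from $dg_s$ to $d\bar g_0$, which the paper handles implicitly by citing~\eqref{eq:distancedistortion}.
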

\begin{proof}
The proof is similar to that of (\ref{eq:heatkernelawayfrom0}). We have, by Lemma \ref{lemma:RDTexpbound}, (\ref{eq:distancedistortion}), and Bishop-Gromov,
\begin{align*}
\int_{M\setminus B_{\bar g_0}(x,r)}\Phi(x,t; y, s)dg_s(y) &\leq C\int_{M\setminus B_{\bar g_0}(x,r)}\frac{\exp\left(-\frac{d^2_{\bar g(t_0/2)}(x,y)}{D(t-s)}\right)}{(t-s)^{n/2}}
 \\& \leq  C\exp\left(-\frac{d^2_{\bar g_0}(x,y)}{2D(t-s)}\right)\int_{0}^{\infty}\vol(B_{s'}^{-\sup|\Rm|(\tilde g)(t-s)})\exp\left(-\frac{s'^2}{2D}\right)ds'
 \\& \leq C\exp\left(-\frac{d^2_{\bar g_0}(x,y)}{2D(t-s)}\right).
\end{align*}
\end{proof}

\subsection{Analytic preliminaries}

In \S \ref{sec:initialderivatives} we make use of the following result concerning higher order derivatives of a tensor field on a manifold.
\begin{lemma}\label{lemma:derivativeinterpolation}
There exists $C = C(n,q, \ell, K,\rho)$ such that if $(M^n,g)$ is a closed manifold with $|\nabla^k\Rm| \leq K$ for all $0\leq k \leq \ell$ and $\inj(M)\geq \rho > 0$, then, for any $C^{\ell}$ $(0,q)$-tensor field $f$ and all $0<r\leq 1$, $0<a\leq r$, and all $x\in M$, we have
\begin{equation}
\begin{split}
a||\nabla f||_{L^\infty(B(x,r))} + a^2||\nabla^2 f||_{L^\infty(B(x,r))} + \cdots + &a^{\ell - 1}||\nabla^{\ell - 1}f||_{L^\infty(B(x,r))} 
\\& \leq C||f||_{L^{\infty}(B(x, r+a))} + Ca^{\ell}||\nabla^{\ell}f||_{L^\infty(B(x, r+a))}.
\end{split}
\end{equation}
In particular,
\begin{equation}\label{eq:stronginterpolation}
||\nabla^k f||_{L^\infty(B(x,r))} \leq \frac{C}{a^k}||f||_{L^\infty(B(x, r+a))} + Ca^{\ell - k}||\nabla^\ell f||_{L^\infty(B(x, r+a))}.
\end{equation}
\end{lemma}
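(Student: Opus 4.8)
The inequality (\ref{eq:stronginterpolation}) is an immediate consequence of the first displayed inequality — its left-hand side dominates the single summand $a^k\|\nabla^k f\|_{L^\infty(B(x,r))}$, so one divides through by $a^k$ — hence I will only address the main inequality. My plan is to prove it first in the regime $a \le \rho_0$, for a threshold $\rho_0 = \rho_0(n,q,\ell,K,\rho) \in (0,\min\{1,\rho\}]$ to be fixed below, and then to remove the restriction by a scaling argument that uses only $a \le r \le 1$. For the small-scale estimate I would avoid coordinates and work along geodesics. Fix $y \in B(x,r)$ and a unit vector $v \in T_yM$; since $a \le \rho_0 \le \rho$, the geodesic $\gamma(t) = \exp_y(tv)$ is defined with $\gamma([-a,a]) \subset B(y,a) \subset B(x,r+a)$, and the pullback $F(t) := P_t^{-1}\big(f(\gamma(t))\big)$ of $f$ along $\gamma$ by parallel transport $P_t$ is a smooth curve in the fixed inner-product space $\otimes^q T^*_yM$. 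Because $\gamma$ is a geodesic, $F^{(k)}(t) = P_t^{-1}\big((\nabla^k_{\dot\gamma(t)}f)(\gamma(t))\big)$; in particular $F^{(k)}(0) = (\nabla^k_v f)(y)$, while $|F(t)| \le \|f\|_{L^\infty(B(y,a))}$ and $|F^{(k)}(t)| \le \|\nabla^k f\|_{L^\infty(B(y,a))}$. Applying to each component of $F$ the standard finite-difference identity with spacing $h = a/\ell$ and nodes $jh$, $|j| \le \ell$ — the one exact on polynomials of degree $< \ell$, whose remainder is bounded by $h^{\ell-k}\sup|G^{(\ell)}|$ — and multiplying through by $a^k = (\ell h)^k$ yields
\begin{equation*}
a^k\,|(\nabla^k_v f)(y)| \le C_1\,\|f\|_{L^\infty(B(x,r+a))} + C_1\,a^\ell\,\|\nabla^\ell f\|_{L^\infty(B(x,r+a))}, \qquad C_1 = C_1(n,q,\ell).
\end{equation*}

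Next I would pass from directional derivatives to $|\nabla^k f|$. Iterated covariant derivatives are not symmetric in their differentiation slots, but reordering those slots changes $\nabla^k f$ only by a sum of terms $(\nabla^j\Rm)\star\nabla^{k-2-j}f$ with $0 \le j \le k-2$; combining the curvature bounds with polarization of the symmetrized part gives $|\nabla^k f(y)| \le C_2\sup_{|v|=1}|(\nabla^k_v f)(y)| + C_2\sum_{m=0}^{k-2}|\nabla^m f(y)|$ with $C_2 = C_2(n,q,\ell,K)$. Feeding in the previous display and taking suprema over $B(x,r)$, the parasitic terms $a^k\|\nabla^m f\|_{L^\infty(B(x,r))}$ with $m \le k-2$ each satisfy $a^k\|\nabla^m f\| \le \rho_0^2\,a^m\|\nabla^m f\|$ since $a \le \rho_0 \le 1$. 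Summing over $1 \le k \le \ell-1$ and letting $S$ denote the left-hand side of the target inequality, one obtains $S \le C_3\big(\|f\|_{L^\infty(B(x,r+a))} + a^\ell\|\nabla^\ell f\|_{L^\infty(B(x,r+a))}\big) + C_3\rho_0^2\big(\|f\|_{L^\infty(B(x,r))} + S\big)$ with $C_3 = C_3(n,q,\ell,K)$. Now choosing $\rho_0$ so that $C_3\rho_0^2 \le \tfrac12$ lets one absorb $S$, which proves the inequality whenever $a \le \rho_0$.

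To obtain general $a$, suppose $\rho_0 < a \le r \le 1$. For every $z \in M$ apply the case already established, with $(r,a)$ replaced by $(\rho_0/2,\rho_0/2)$ and centre $z$, to get $(\rho_0/2)^k|\nabla^k f(z)| \le C\big(\|f\|_{L^\infty(B(z,\rho_0))} + \rho_0^\ell\|\nabla^\ell f\|_{L^\infty(B(z,\rho_0))}\big)$. For $z \in B(x,r)$ we have $B(z,\rho_0) \subset B(x,r+a)$; multiplying by $(2a/\rho_0)^k$ and using $a \le 1$, $\rho_0 < a$, $k \le \ell-1$ — so that $(2a/\rho_0)^k \le (2/\rho_0)^\ell$ and $(2a/\rho_0)^k\rho_0^\ell \le 2^\ell a^\ell$ — gives $a^k|\nabla^k f(z)| \le C\|f\|_{L^\infty(B(x,r+a))} + C a^\ell\|\nabla^\ell f\|_{L^\infty(B(x,r+a))}$ with $C = C(n,q,\ell,K,\rho)$. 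Taking the supremum over $z \in B(x,r)$ and summing over $k$ completes the proof.

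I expect the genuine obstacle to be the coupling between different derivative orders: both the finite-difference identity and the non-commutativity of covariant derivatives reintroduce lower-order covariant derivatives of $f$ on the right-hand side, over balls no smaller than the one on the left, so a naive iteration does not close. The argument is rescued by the fact that every such parasitic term carries at least two spare powers of the length scale, so it becomes absorbable once that scale is small — which is legitimate precisely because $K$ is fixed in advance — after which the scale-invariant step above restores the full range of $a$. Apart from that, the only real care needed is the bookkeeping of which ball each norm is measured on throughout.
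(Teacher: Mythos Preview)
Your argument is correct and takes a genuinely different route from the paper's. The paper proves the case $a=r=1$ by contradiction and compactness: assuming no constant works, it takes a sequence of counterexamples $(M_i,g_i,x_i,f_i)$, uses the curvature and injectivity-radius bounds to extract a Cheeger--Gromov limit, normalizes the $f_i$, and applies Arzel\`a--Ascoli to obtain a limiting tensor that is forced to vanish yet have a nonzero derivative. It then rescales to $a=r<1$ and finishes with the same covering step you use for $a<r$.

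Your proof is direct: you reduce to a one-variable interpolation inequality by pulling $f$ back along a geodesic via parallel transport, apply a finite-difference identity to bound $a^k|(\nabla^k_v f)(y)|$, and then recover $|\nabla^k f|$ from its diagonal values by polarization plus curvature commutators, absorbing the resulting lower-order terms using the two spare powers of the length scale. The paper's soft argument avoids the bookkeeping of commutators and polarization and does not need to identify an absorbable smallness parameter, but it gives no effective constant and invokes convergence theory for Riemannian manifolds. Your approach is more elementary, stays closer to the classical Landau--Kolmogorov inequalities, and in principle yields an explicit $C(n,q,\ell,K,\rho)$; the price is the explicit handling of the curvature-induced coupling between derivative orders, which you carry out correctly.
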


\begin{proof}
We first show the case for $a = r = 1$. We proceed by contradiction, so suppose the lemma is false and let $C^i \to \infty$. Pick a sequence of counterexamples $(M_i, g_i, x_i, f_i)$ such that
\begin{equation}\label{eq:counterexamples}
\begin{split}
||\nabla^{g_i} f_i||_{L^\infty(B_{g_i}(x_i,1))} &+ ||(\nabla^{g_i})^2 f_i||_{L^\infty(B_{g_i}(x_i, 1))} \cdots + ||(\nabla^{g_i})^{\ell - 1} f_i||_{L^\infty(B_{g_i}(x_i, 1))} 
\\& \qquad \qquad \qquad > C_i ||f_i||_{L^\infty(B_{g_i}(x_i,2))} + C_i||(\nabla^{g_i})^\ell f_i||_{L^\infty(B_{g_i}(x_i,2))}.
\end{split}
\end{equation}

Due to the bounds $|\Rm|(g_i) \leq K, \inj(x_i)\geq \rho$, we have smooth pointed convergence, determined by maps $\phi_i: M_\infty \to M_i$ which are diffeomorphisms onto their images, such that, on a subsequence, we have $(M_i, g_i, x_i)\to (M_{\infty}, g_{\infty}, x_{\infty})$. Pass to this subsequence, so that we may assume, without loss of generality, that $M_i = M_{\infty}$ for all $i$, by replacing $f_i$ by $f_i\circ \phi_i$. Moreover, (\ref{eq:counterexamples}) implies that
\begin{align*}
||\nabla f_i||_{L^\infty(B(x_\infty,1))} &+ ||\nabla^2 f_i||_{L^\infty(B(x_\infty, 1))} \cdots + ||\nabla^{\ell - 1} f_i||_{L^\infty(B(x_\infty, 1))} 
\\ & \geq ||\nabla^{g_i} f_i||_{L^\infty(B_{g_i}(x_i,1))} + ||(\nabla^{g_i})^2 f_i||_{L^\infty(B_{g_i}(x_i, 1))} \cdots + ||(\nabla^{g_i})^{\ell - 1} f_i||_{L^\infty(B_{g_i}(x_i, 1))} - \varepsilon_i
\\& > C_i ||f_i||_{L^\infty(B_{g_i}(x_i,2))} + C_i||(\nabla^{g_i})^\ell f_i||_{L^\infty(B_{g_i}(x_i,2))} - \varepsilon_i
\\& \geq C_i||f_i||_{L^\infty(B(x_\infty,2))} + C_i||\nabla^\ell f_i||_{L^\infty(B(x_\infty,2))} -\varepsilon_i - \delta_i
\end{align*}
where $\varepsilon_i$ and $\delta_i$ are some numbers that tend to $0$ as $i\to \infty$, $\nabla$ denotes the covariant derivative with respect to $g_\infty$, and $B(p, r)$ denotes the ball of radius $r$ with respect to $g_\infty$ about $p$.

Multiplying $f_i$ by the correct constant, we may assume that
\begin{equation}\label{eq:rescaleto1}
||\nabla f_i||_{L^\infty(B(x_\infty, 1))} + \cdots + ||\nabla^{\ell - 1}f_i||_{L^\infty(B(x_\infty, 1))} = 1.
\end{equation}
Since $\tfrac{1}{C_i}\to 0$, (\ref{eq:rescaleto1}) implies that $||f_i||_{L^\infty(B(x_\infty, 2))} + ||\nabla^{\ell}f_i||_{L^\infty(B(x_\infty, 2))} \leq 1/C_i + \varepsilon_i + \delta_i$, so certainly $||f_i||_{L^\infty(B(x_\infty, 2))}$ and $||\nabla^{\ell}f_i||_{L^\infty(B(x_\infty, 2))}$ are uniformly bounded with respect to $i$. We wish to show that the $\nabla^kf_i$ are uniformly bounded and uniformly equicontinuous within an exponential coordinate chart for $g_\infty$. Let $U\subset B(x_{\infty}, 2)$ be one such chart.

For all $k < \ell$ the sequence $\{\nabla^kf_i\}$ is uniformly bounded on $U$ as follows: Fix $0\leq k < \ell$ and assume that $\{ \nabla^{k+1} f_i\}$ is uniformly bounded on $U$, so that the argument may be applied iteratively. Let $y\in U$, and let $\gamma$ be a minimizing geodesic from $x_i$ to $y$, parametrized by arclength. Then, using (\ref{eq:rescaleto1}) and the mean value theorem, we have
\begin{align*}
|\nabla^kf_i(y)| &\leq \max_{t\in [0, d(x_i,y)]}\frac{d}{dt}\left|\nabla^{k}f_i(\gamma(t))\right|d(x_i,y) + |\nabla^{k} f_i(x_i)|
\\& \leq \max_{t\in [0,d(x_i,y)]}\frac{1}{|\nabla^kf_i(\gamma(t))|}\langle\nabla^kf_i(\gamma(t)), \nabla_{\dot{\gamma}(t)}\nabla^kf_i(\gamma(t))\rangle d(x_i,y) + 1
\\& \leq \sup_{i\in \N}||\nabla^{k+1}f_i||_{L^\infty(U)}\diam(U) + 1,
\end{align*}
so $\{\nabla^kf_i\}$ is uniformly bounded on $U$. We now show that the $\nabla^k f_i$ are uniformly equicontinuous within $U$. For multiiindices $k$ and $m$ we have that 
\begin{align*}
\nabla^k f_i &= \nabla^k\left(f_{a_1a_2\cdots a_q}dx^{a_1}\otimes dx^{a_2}\otimes \cdots \otimes dx^{a_q}\right)
\\&= \sum_{|m|=0}^{|k|}\left(\partial^m f_{a_1a_2\cdots a_q}\right)\nabla^{k-m}\left(dx^{a_1}\otimes dx^{a_2}\otimes \cdots \otimes dx^{a_q}\right)
\end{align*}
Then we may inductively show that $\partial^k f_{a_1a_2\cdots a_q}$ are uniformly bounded on $U$ because $|\nabla^k f_i|$, $\partial^m  f_{a_1a_2\cdots a_q}$ for $|m| < |k|$, and Christoffel symbols and first $k$ derivatives of the Christoffel symbols are all uniformly bounded with respect to $i$ on $U$. Then, for $y$ and $y'$ within $U$, we have
\begin{equation*}
\begin{split}
|\partial^kf_i(y) - \partial^kf_i(y')| &\leq \max_{a_1, a_2, \ldots, a_q}||\partial^{k+1}(f_i)_{a_1a_2\cdots a_q}||_{L^\infty(U)}|y - y'|
\\& \leq c|y-y'|,
\end{split}
\end{equation*}
where $c$ is some constant independent of $i$, and depends on upper bounds for the first $k$ derivatives of $\Rm(g_\infty)$. Therefore, the Arzel\'a-Ascoli theorem implies that, on a subsequence, $f_i$ converges in $C^{\ell - 1}_{\loc}(U)$ to some limiting tensor. After covering $B(x_\infty, 1.5)$ by exponential coordinate charts, we find that the $f_i$ converge on a subsequence to a limiting $f_\infty$ in $C^{\ell - 1}(B(x_\infty, 1.1))$. Pass to this subsequence to find
\begin{align*}
0&= \lim_{i\to\infty}\frac{1}{C_i} + \varepsilon_i + \delta_i \geq \lim_{i\to \infty} ||f_i||_{L^{\infty(B(x_\infty, 1.1))}} = ||f_{\infty}||_{L^\infty(B(x_\infty, 1.1))},
\end{align*}
so $f_\infty \equiv 0$ on $B(x_\infty, 1.1)$. On the other hand, 
\begin{align*}
1 &= \lim_{i\to \infty}\left( ||\nabla f_i||_{L^\infty(B(x_\infty,1))} + \cdots + ||\nabla^{\ell - 1}f_i||_{L^\infty(B(x_\infty,1))}\right)
\\&= ||\nabla f_\infty||_{L^\infty(B(x_\infty,1))} + \cdots + ||\nabla^{\ell - 1}f_\infty||_{L^\infty(B(x_\infty,1))},
\end{align*}
so we cannot have $||\nabla f_{\infty}||_{L^{\infty}(B(x_\infty,1))} = ||\nabla^2 f_{\infty}||_{L^{\infty(B(x_\infty,1))}}= \cdots = ||\nabla ^{\ell - 1}f_{\infty}||_{L^\infty(B(x_\infty,1))} = 0$. This is a contradiction, so we find that there exists $C$ such that, for any $(M,g,x, f)$,
\begin{equation}\label{eq:interpolationat1}
||\nabla f||_{L^\infty(B(x,1))} + ||\nabla^2 f||_{L^\infty(B(x,1))} + \cdots + ||\nabla^{\ell - 1}f||_{L^\infty(B(x,1))} \leq C||f||_{L^{\infty}(B(x,2))} + C||\nabla^{\ell}f||_{L^\infty(B(x,2))}.
\end{equation}

We now show the case for $0 < a = r < 1$. If $(M,g)$ is any Riemannian manifold satisfying the hypotheses of the lemma, $0<r<1$, $x\in M$, and $f$ a $C^{\ell}$ $(0,q)$-tensor field on $M$, then the rescaled metric $g':= r^{-2}g$ satisfies $|\Rm|(g') \leq r^2K < K$ and $\inj(M, g') \geq \rho/r > \rho$, so we may apply (\ref{eq:interpolationat1}) to $g'$ to find
\begin{equation}\label{eq:interpolationatr}
\begin{split}
r^{q+1}||\nabla f||_{L^\infty(B_g(x,r))} &+ r^{q+2}||\nabla^2f||_{L^\infty(B_g(x,r))} + \cdots + r^{q + \ell - 1}||\nabla^{\ell - 1}f||_{L^\infty(B_g(x,r))}
\\& = ||\nabla f||_{L^\infty(B_{g'}(x,1))} + ||\nabla^2 f||_{L^\infty(B_{g'}(x,1))} + \cdots + ||\nabla^{\ell - 1}f||_{L^\infty(B_{g'}(x,1))} 
\\& \leq C||f||_{L^{\infty}(B_{g'}(x,2))} + C||\nabla^{\ell}f||_{L^\infty(B_{g'}(x,2))}
\\& = Cr^q||f||_{L^\infty(B_g(x, 2r))} + Cr^{q + \ell}||\nabla^{\ell}f||_{L^\infty(B_g(x, 2r))},
\end{split}
\end{equation}
where $C$ is the constant from (\ref{eq:interpolationat1}).

We now handle the case where we have $a<r$. Let $y\in B(x,r)$. Then $y\in B(x',a)$ for some $x'$ with $d(x,x') \leq r-a$ (by choosing $x'$ along a minimizing geodesic from $x$ to $y$). Note that $B(x', 2a)\subset B(x, r+a)$ so, by (\ref{eq:interpolationatr}), we have
\begin{equation*}
\begin{split}
a|\nabla f|(y) + a^2|\nabla^2f|(y) + \cdots + a^{\ell-1}|\nabla^{\ell - 1}f|(y) &\leq C||f||_{L^\infty(B(x', 2a))} + Ca^{\ell}||\nabla^{\ell}f||_{L^\infty(B(x', 2a))}
\\& \leq C||f||_{L^\infty(B(x, r+a))} + Ca^{\ell}||\nabla^{\ell}f||_{L^\infty(B(x, r+a))},
\end{split}
\end{equation*}
whence follows the result.
\end{proof}

We now record the following version of Young's convolution inequality for heat kernels; this is essentially \cite[Theorem $0.3.1$]{Sogge}.

\begin{lemma}\label{lemma:Youngsfork}
Let $k_t(\cdot,\cdot)$ be a heat kernel for some operator on $M$, and let $Q$ be a time-dependent family of tensor fields of the appropriate shape on $M$. Suppose $\tfrac{1}{p} + \tfrac{1}{q} = \tfrac{1}{r} + 1$. Let $U\subset M$, and let $I$ and $J$ be time intervals. Suppose also that
\begin{equation*}
\max\left\{\sup_{(x,t)\in U\times J}\left(\int_I\int_M|k_{t-s}(x,y)|^p dyds\right)^{1/p}, \sup_{(y,s)\in M\times I}\left(\int_J\int_U|k_{t-s}(x,y)|^pdxdt\right)^{1/p}\right\} \leq B.
\end{equation*}
If $Q\in L^q(M\times I)$, then
\begin{equation}
\left|\left|\int_{M\times I}k_{t-s}(x,y)Q(y,s)dyds\right|\right|_{L^r((x,t)\in U\times J)} \leq B ||Q||_{L^q(M\times I)}.
\end{equation}
\end{lemma}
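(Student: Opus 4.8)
The plan is to run the classical proof of Young's convolution inequality via a three-fold Hölder estimate (a Schur-test type argument), with the role played by translation invariance in the flat case replaced by the two-sided $L^p$ bound on the kernel that appears in the hypothesis. Throughout write $K(x,t;y,s):=k_{t-s}(x,y)$ and $F(x,t):=\int_{M\times I}K(x,t;y,s)Q(y,s)\,dy\,ds$; the estimate derived below will in particular show $\int_{M\times I}|K||Q|<\infty$ for a.e. $(x,t)$, so $F$ is well defined.

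First I would record the exponent bookkeeping. Since $\tfrac1p+\tfrac1q=1+\tfrac1r$ with $p,q,r\ge 1$, one has $\tfrac1r=\tfrac1p+(\tfrac1q-1)\le\tfrac1p$ and likewise $\tfrac1r\le\tfrac1q$, so $r\ge p$ and $r\ge q$. Set $p':=\tfrac{pr}{r-p}$ and $q':=\tfrac{qr}{r-q}$ (read as $\infty$ when $r=p$, resp.\ $r=q$); then $\tfrac1{p'}=\tfrac1p-\tfrac1r$, $\tfrac1{q'}=\tfrac1q-\tfrac1r$, and hence $\tfrac1r+\tfrac1{p'}+\tfrac1{q'}=\tfrac1p+\tfrac1q-\tfrac1r=1$.

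Next, for fixed $(x,t)$ I would use the pointwise splitting
\begin{equation*}
|K(x,t;y,s)\,Q(y,s)|=\bigl(|K(x,t;y,s)|^p|Q(y,s)|^q\bigr)^{1/r}\cdot|K(x,t;y,s)|^{p(1/p-1/r)}\cdot|Q(y,s)|^{q(1/q-1/r)},
\end{equation*}
where the exponents of $|K|$ and $|Q|$ on the right each sum to $1$, and apply Hölder in $(y,s)\in M\times I$ with exponents $r,p',q'$. The middle factor integrates to $\bigl(\int_{M\times I}|K(x,t;y,s)|^p\,dy\,ds\bigr)^{1/p'}\le (B^p)^{1/p'}=B^{\,1-p/r}$ by the first half of the hypothesis (the $\sup$ over $(x,t)\in U\times J$), and the last factor integrates to $\|Q\|_{L^q(M\times I)}^{\,q/q'}=\|Q\|_{L^q(M\times I)}^{\,1-q/r}$. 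Raising the resulting inequality to the $r$-th power gives
\begin{equation*}
|F(x,t)|^r\le B^{\,r-p}\,\|Q\|_{L^q(M\times I)}^{\,r-q}\int_{M\times I}|K(x,t;y,s)|^p|Q(y,s)|^q\,dy\,ds .
\end{equation*}

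Finally I would integrate this over $(x,t)\in U\times J$, apply Fubini, and invoke the second half of the hypothesis, $\sup_{(y,s)\in M\times I}\int_J\int_U|k_{t-s}(x,y)|^p\,dx\,dt\le B^p$, to obtain $\int_{U\times J}|F|^r\le B^{\,r-p}\|Q\|_{L^q}^{\,r-q}\cdot B^p\cdot\|Q\|_{L^q}^{\,q}=B^r\|Q\|_{L^q(M\times I)}^{\,r}$; taking $r$-th roots yields the claim. I do not expect a genuine obstacle here: the only points that need a word of care are the degenerate cases $p=r$ (forcing $q=1$) and $q=r$ (forcing $p=1$), in which one auxiliary exponent is $\infty$ and the three-fold Hölder inequality collapses to the ordinary Hölder inequality together with a trivial supremum bound from the hypothesis; the entire content of the lemma is the tracking of exponents just described.
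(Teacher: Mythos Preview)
Your argument is correct and is precisely the standard Schur-test proof of Young's inequality (as in the reference \cite[Theorem $0.3.1$]{Sogge} that the paper cites); the paper itself does not supply a proof but simply records the statement with this citation. There is nothing to add.
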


\section{Regularity of solutions to the integral equation}\label{sec:initialderivatives}

The purpose of this section is to discuss properties of solutions to the integral equation that corresponds to the Ricci-DeTurck flow. For convenience, we consider the Ricci-DeTurck flow on a smooth Ricci flow background, $\bar g(t)$. Let $\bar K$ denote the $(0,2)$-tensor heat kernel associated to the background $\bar g(t)$ for the operator $\partial_t + L$, where $L$ is given by (\ref{eq:Lis}).

We study the integral equation that corresponds to the perturbation equation (\ref{eq:hevolution}):
\begin{equation}\label{eq:integraleq}
\begin{split}
h(x,t) &= \int_M \bar K(x,t;y, 0)h_0(y)d\bar g_0(y) + \int_{M\times[0,t]}\bar K(x,t; y, s)Q^0_s(y) + \nabla^* \bar K(x,t;y,s)Q^1_s(y)d\bar g_s(y) 
\\& =: F[h, h_0].
\end{split}
\end{equation}

Having discussed the existence of a solution to the integral equation (\ref{eq:integraleq}), we will then show that such solutions are smooth away from $t=0$, and satisfy certain derivative bounds. To do this, we appeal to derivative estimates for smooth Ricci-DeTurck flows and properties of the operator $F[\cdot,\cdot]$ defined by (\ref{eq:integraleq}).

The existence of a solution to (\ref{eq:integraleq}) is given by a fixed point argument as in \cite{KL1}. As such, we introduce their norms below. We also define weighted analogs of these norms, which are not used in this section, but which we will use in the next section to study the evolution under the Ricci-DeTurck flow of the difference of two metrics that agree to greater-than-second order.

\begin{definition}\label{def:norms}
Let $\bar g(t)$ be a smooth Ricci flow on $M$ defined on the time interval $[0,T]$, where we take $T<1$.
Throughout we take all covariant derivatives with respect to $\bar g(t)$, and measure all balls and take all absolute values with respect to $\bar g(0)$, which is uniformly bilipschitz to $\bar g(t)$ on this interval by (\ref{eq:distancedistortion}). We define the following localized versions of the Banach spaces introduced in \cite{KL1}.

\begin{align*}
X(B(x,r))&=\bigg\{h\in L^{\infty}_{\loc}\cap \dot{W}^{1,2}_{\loc}\cap \dot{W}^{1, n+4}_{\loc}: ||h||_{X(B(x,r))} = \sup_{0<t<T}||h_t||_{L^\infty(B(x,r))} 
\\& \qquad + \left(r^{-n/2}||\nabla h||_{L^2(B(x,r)\times(0,r^2))} + r^{\tfrac{2}{n+4}}||\nabla h||_{L^{n+4}(B(x,r)\times(\tfrac{r^2}{2},r^2))}\right)< \infty\bigg\},\\
 \text{ and }\\
Y(B(x,r)) &= \{f: ||f||_{Y(B(x,r))} < \infty\}, \text{ where } 
\\ ||f||_{Y(B(x,r))} &= \inf\{||f_0||_{Y^0(B(x,r))} + ||f_1||_{Y^1(B(x,r))}: f_0\in Y^0(B(x,r)), f_1\in Y^1(B(x,r)), 
\\& \qquad \qquad \text{ and } f = f_0 + \nabla^*f_1\},\\
 ||f||_{Y^0(B(x,r))} &= r^{-n}||f||_{L^1(B(x,r)\times(0,r^2))} + r^{\tfrac{4}{n+4}}||f||_{L^{\tfrac{n+4}{2}}(B(x,r)\times(\tfrac{r^2}{2}, r^2))}\\
 ||f||_{Y^1(B(x,r))} &= r^{-n/2}||f||_{L^2(B(x,r)\times(0,r^2))} + r^{\tfrac{2}{n+4}}||f||_{L^{n+4}(B(x,r)\times(\tfrac{r^2}{2}, r^2))}.
\end{align*}
More precisely, we take the Banach spaces to be the completions with respect to their respective norms of the smooth time-dependent families of $(0,2)$-tensor fields on $M$ for which the aforementioned norms are finite.

The following three norms are due to Koch and Lamm (see, for instance, \cite[Page $225$]{KL1}); the difference between their setting and ours being that they employ a stationary background metric, while ours evolves by the Ricci flow.
\begin{align*}
X_T &= \bigg\{h : ||h||_{X} = \sup_{\substack{x\in M\\0<r^2<T}} ||h||_{X(B(x,r))} < \infty\bigg\} \text{ and }
Y_T = \{f: ||f||_{Y} < \infty\}, \text{ where } 
\\ ||f||_{Y} &= \inf\{||f_0||_{Y^0_T} + ||f_1||_{Y^1}: f_0\in Y^0_T, f_1\in Y^1_T, f = f_0 + \nabla^*f_1\}, \text{and where } \\
||f||_{Y^0} &= \sup_{\substack{x\in M \\ 0<r^2<T}}||f||_{Y^0(B(x,r))} \text{ and }
||f||_{Y^1} = \sup_{\substack{x\in M\\ 0 < r^2 <T}}||f||_{Y^1(B(x,r))}.
\end{align*}
In most cases we will suppress the $T$, and simply write $X$ and $Y$ to mean $X_T$ and $Y_T$ respectively. For $\gamma >0$, we set $X^\gamma := \{f\in X : ||f||_X \leq \gamma\}$.

We now define some weighted norms, which we will use in the next section. These are similar to the unweighted norms that we have just introduced, but they are equipped with a greater-than-second-order weight designed to offset the evolution of metrics that agree to greater than second order near a point.

For $a\geq 0, t\in [0,T], x\in M$, and some given $\eta>0$, let $w_a(x,t)$ denote the greater-than-second-order weight given by
\begin{equation}\label{eq:weight}
w_a(x,t) := \max\{(d(x_0, x) + \sqrt{t} + a)^{-2-\eta}, 1\}.
\end{equation}
Observe that $w_0(\cdot, \cdot)$ is not defined at $(x_0, 0)$. We say that two initial metrics $g_0'$ and $g_0''$ \emph{agree to greater than second order} around $x_0$ if there is some $\eta$ such that $||w_0(\cdot, 0)(g_0' - g_0'')||_{L^\infty(B(x_0,R))} < \infty$ for some $R>0$.
For a fixed point $x_0\in M$, exponent $\eta>0$, and time $T$, we define the weighted Banach spaces
\begin{align*}
\tilde X_a(B(x,r))&=\bigg\{h \in L^{\infty}_{\loc}\cap \dot{W}^{1,2}_{\loc}\cap \dot{W}^{1, n+4}_{\loc}: ||h||_{\tilde X_a(B(x,r))} := \sup_{0<t<T}||w_a(t)h_t||_{L^\infty(B(x,r))} 
\\& \qquad + w_{a}(x,r^2)\left(r^{-n/2}||\nabla h||_{L^2(B(x,r)\times(0,r^2))} + r^{\tfrac{2}{n+4}}||\nabla h||_{L^{n+4}(B(x,r)\times(\tfrac{r^2}{2},r^2))}\right)< \infty\bigg\},
\\ \tilde Y_a(B(x,r))&= \{f: ||f||_{\tilde Y_a(B(x,r))}< \infty\}, \text{ where } 
\\ ||f||_{\tilde Y_a(B(x,r))} &= \inf\{||f_0||_{\tilde Y^0_a(B(x,r))} + ||f_1||_{\tilde Y^1_a(B(x,r))}: f_0\in \tilde Y^0_a(B(x,r)), f_1\in \tilde Y^1_a(B(x,r)),
\\& \qquad \qquad \qquad \text{ and } f = f_0 + \nabla^*f_1\}\\
||f||_{\tilde Y^0_a(B(x,r))} &= w_{a}(x,r^2)\left(r^{-n}||f||_{L^1(B(x,r)\times(0,r^2))} + r^{\tfrac{4}{n+4}}||f||_{L^{\tfrac{n+4}{2}}(B(x,r)\times(\tfrac{r^2}{2}, r^2))}\right)\\
||f||_{\tilde Y^1_a(B(x,r))} &= w_{a}(x,r^2)\left(r^{-n/2}||f||_{L^2(B(x,r)\times(0,r^2))} + r^{\tfrac{2}{n+4}}||f||_{L^{n+4}(B(x,r)\times(\tfrac{r^2}{2}, r^2))}\right).
\end{align*}

Because the representation formula (\ref{eq:integraleq}) requires integration over the entire manifold, it is often necessary to consider weighted norms that are not localized to a particular neighborhood of $x_0$. To this end, we define
\begin{align*}
\tilde X_a &= \bigg\{h : ||h||_{\tilde X_a} = \sup_{\substack{x\in M\\0<r^2<T}} ||h||_{\tilde X_a(B(x,r))} < \infty\bigg\} \text{ and }
\tilde Y_a = \{f: ||f||_{\tilde Y_a} < \infty\}, \text{ where } 
\\ ||f||_{\tilde Y_a} &= \inf\{||f_0||_{\tilde Y^0_a} + ||f_1||_{\tilde Y^1_a}: f_0\in \tilde Y^0_a, f_1\in \tilde Y^1_a, f = f_0 + \nabla^*f_1\}, \text{ and where } \\
||f||_{\tilde Y^0_a} &= \sup_{\substack{x\in M \\ 0<r^2<T}}||f||_{\tilde Y^0_a(B(x,r))} \text{ and }
||f||_{\tilde Y^1_a} = \sup_{\substack{x\in M\\ 0 < r^2 <T}}||f||_{\tilde Y^1_a(B(x,r))}.
\end{align*}
\end{definition} 

\begin{remark}
By definition, any element of $X$ is continuous (for positive times), because it is a locally uniform limit of smooth tensor fields.
\end{remark}

In order to state our main result, we must know that there exists a solution to the Ricci-DeTurck flow equation starting from $C^0$ initial data:
 \begin{lemma}\label{lemma:RDTexistence}
For any smooth Ricci flow $\bar g(t)$ on a closed manifold $M^n$, defined for $t\in [0,T]$, if $T$ is sufficiently small so that (\ref{eq:almosteucl}) and (\ref{eq:boundedcoeffs}) hold, there exist constants $\varepsilon(n,T\sup_{[0,T]}|\Rm|(\bar g))$, $C(n,T\sup_{[0,T]}|\Rm|(\bar g))>0$ such that the following is true:

 For every metric $g_0\in C^0(M)$ such that $||g_0 - \bar g(0)||_{L^\infty(M)} < \varepsilon$, there exists a solution $g_t \in \bar g(t) + X_T$ of the integral equation (\ref{eq:integraleq}) with $||g_t - \bar g(t)||_{X_T} \leq C||g_0 - \bar g(0)||_{L^\infty(M)}$. Furthermore, the solution is unique in $\{g_t: ||g_t - \bar g(t)||_{X_T} \leq C\varepsilon < 1 \}$.

\medskip
\noindent Moreover, there exist constants $\varepsilon = \varepsilon(n), C = C(n)$ such that the following is true:

For every metric $g_0\in C^0(M)$ and every smooth background metric $\bar g_0$ on $M$, if $||g_0 - \bar g(0)||_{L^\infty(M)} < \varepsilon$ and $\bar g(t)$ is the Ricci flow starting from $\bar g_0$, then there exists $T= T(\bar g(t))$ sufficiently small so that (\ref{eq:almosteucl}) and (\ref{eq:boundedcoeffs}) hold, and such that there is a solution $g_t \in \bar g(t) + X_T$ to the integral equation (\ref{eq:integraleq}) with $||g_t - \bar g(t)||_{X_T} \leq C||g_0 - \bar g(0)||_{L^\infty(M)}$. Furthermore, the solution is unique in $\{g_t : ||g_t - \bar g(t)||_{X_T} \leq C\varepsilon < 1\}$.
 \end{lemma}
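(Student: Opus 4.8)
The plan is to solve the integral equation (\ref{eq:integraleq}) by a contraction-mapping argument in the Banach space $X_T$, following Koch--Lamm \cite{KL1}; the only new feature is that here the background $\bar g(t)$ evolves by Ricci flow rather than being stationary. Write $h_0 := g_0 - \bar g(0)$ and look for the solution in the form $g_t = \bar g(t) + h_t$ with $h_t \in X_T$, so that (\ref{eq:integraleq}) becomes the fixed point equation $h = F[h, h_0]$, where $F[h, h_0]$ is the sum of the linear term $L_0 h_0 := \int_M \bar K(x,t;y,0)h_0(y)\,d\bar g_0(y)$ and the nonlinear term $\mathcal{N}[h]$ obtained by convolving $\bar K(x,t;y,s)$ against $Q^0_s[h]$ and $\nabla^*\bar K(x,t;y,s)$ against $Q^1_s[h]$ over $M\times[0,t]$, with $Q = Q^0 + \nabla^*Q^1$ the decomposition (\ref{eq:Q0is})--(\ref{eq:Q1is}).

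First I would establish two basic mapping estimates. \textbf{(i)} The linear term is bounded from $L^\infty(M)$ to $X_T$, i.e. $||L_0 h_0||_{X_T} \leq C_1||h_0||_{L^\infty(M)}$; this is where the heat kernel analysis of \S\ref{sec:preliminaries} enters, via the pointwise Gaussian bounds on $\bar K$ and $\nabla\bar K$ from Corollary \ref{cor:exponentialboundnonscalar} and the integrated tail bounds (\ref{eq:heatkernelawayfrom0}), which between them control each of the three ingredients of the local norm $||\cdot||_{X(B(x,r))}$ uniformly over balls with $r^2 < T$. \textbf{(ii)} The nonlinearity maps $X^\gamma$ into $Y_T$ quadratically and is locally Lipschitz: from the pointwise bounds (\ref{eq:ptwiseR0}) and (\ref{eq:ptwiseR1}), together with the inverse-expansion inequalities (\ref{eq:inverseexpansion1})--(\ref{eq:inverseexpansion3}) (valid because $|h|_{\bar g}\leq\gamma<1$ on $X^\gamma$), one obtains $||Q[h]||_{Y_T}\leq c||h||_{X_T}^2$ and $||Q[h'] - Q[h'']||_{Y_T}\leq c(||h'||_{X_T} + ||h''||_{X_T})||h' - h''||_{X_T}$; one then needs the convolution operator underlying $\mathcal{N}$, namely $f = f_0 + \nabla^*f_1 \longmapsto \int_{M\times[0,t]}\bar K(x,t;y,s)f_0(y,s) + \nabla^*\bar K(x,t;y,s)f_1(y,s)\,d\bar g_s(y)\,ds$, to be bounded from $Y_T$ to $X_T$, which follows from Lemma \ref{lemma:Youngsfork} applied with the H\"older exponents built into the definitions of the $X$- and $Y$-norms, again using the $L^p_{t,x}$ heat kernel bounds. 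Combining \textbf{(i)} and \textbf{(ii)} yields $||F[h, h_0]||_{X_T}\leq C_1||h_0||_{L^\infty(M)} + C_2||h||_{X_T}^2$ and $||F[h', h_0] - F[h'', h_0]||_{X_T}\leq C_2(||h'||_{X_T} + ||h''||_{X_T})||h' - h''||_{X_T}$.

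Given these estimates the fixed point is routine: fix $\gamma\in(0,1)$ with $2C_2\gamma<1$ and then $\varepsilon>0$ with $C_1\varepsilon<\gamma/2$; for $||h_0||_{L^\infty(M)}<\varepsilon$ the map $F[\cdot, h_0]$ carries the closed ball $X^\gamma$ into itself and is a contraction there, so Banach's fixed point theorem produces a unique $h\in X^\gamma$ with $h = F[h, h_0]$, and reinserting the bound gives $||h||_{X_T}\leq 2C_1||h_0||_{L^\infty(M)} =: C||h_0||_{L^\infty(M)}$. Since $||h_t||_{L^\infty}\leq ||h||_{X_T}\leq C\varepsilon < \gamma < 1$, the family $g_t := \bar g(t) + h_t$ is uniformly bilipschitz to $\bar g(t)$, hence a genuine Riemannian metric, and uniqueness within $\{g_t : ||g_t - \bar g(t)||_{X_T}\leq C\varepsilon < 1\}$ is exactly uniqueness of the fixed point in $X^\gamma$. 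For the second statement, the key observation is that all the heat kernel constants above depend on the background only through $n$ and $T\sup_{[0,T]}|\Rm|(\bar g)$; since $\bar g_0$ is smooth on a closed manifold, $\sup_{[0,T]}|\Rm|(\bar g(t))$ stays bounded as $T\searrow 0$, so $T\sup_{[0,T]}|\Rm|(\bar g)\to 0$. One therefore fixes $g_0$ and $\bar g_0$, lets $\bar g(t)$ be the Ricci flow from $\bar g_0$, and chooses $T = T(\bar g(t))$ small enough that in addition (\ref{eq:almosteucl}) and (\ref{eq:boundedcoeffs}) hold and $T\sup_{[0,T]}|\Rm|(\bar g)\leq 1$; for such $T$ the constants $C_1, C_2, \varepsilon$ may be taken to depend on $n$ alone, and the first part applies.

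The main obstacle is proving \textbf{(i)} and \textbf{(ii)} with constants uniform in the choice of background, i.e. transporting the Koch--Lamm estimates from a flat Euclidean background to a Ricci-flow background. The difficulty is bookkeeping rather than conceptual: each appeal in \cite{KL1} to explicit Gaussian heat kernel bounds must be replaced by the uniform Gaussian-type bounds for $\bar K$ and $\nabla\bar K$ recorded in Corollary \ref{cor:exponentialboundnonscalar} and the lemma following it, and one must verify that the resulting constants do not degenerate across backgrounds with $T\sup|\Rm|(\bar g)$ bounded, which holds precisely because $T$ is taken small enough for (\ref{eq:boundedcoeffs}), making the parabolic interior estimates of Lemma \ref{lemma:interiorests}, and hence the derivative heat kernel bounds, independent of $\bar g(t)$. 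A secondary point is that the distance distortion (\ref{eq:distancedistortion}) must be invoked to pass freely between $\bar g(0)$- and $\bar g(t)$-balls and norms, absorbing the cost into the constants $c(n, T\sup|\Rm|(\bar g))$.
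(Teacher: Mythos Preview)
Your proposal is correct and follows essentially the same approach as the paper: a Banach fixed point argument in $X_T$ modeled on Koch--Lamm \cite{KL1}, with the heat kernel bounds of \S\ref{sec:preliminaries} replacing the explicit Euclidean Gaussian, and the second statement obtained by shrinking $T$ so that $T\sup_{[0,T]}|\Rm|(\bar g)\leq 1$. The paper's own proof is little more than a pointer to \cite[Theorem~4.3]{KL1}, noting that the extra $|\Rm(\bar g)||h|^2$ term in $Q^0$ (absent in the flat case) is harmlessly absorbed into the $L^\infty$ part of $||h||_X^2$, and that the Lipschitz estimate for $Q$ needed for the contraction is recorded in the appendix as (\ref{eq:easylemunweighted}); one small point you gloss over is that the $Y_T\to X_T$ bound for the convolution operator (the unweighted analog of Lemma~\ref{lemma:hard}) requires, for the $L^{n+4}$ gradient piece acting on $Q^1$, a Calder\'on--Zygmund argument rather than just Lemma~\ref{lemma:Youngsfork}, since the pointwise bounds on $\nabla^2\bar K$ are too weak for a direct H\"older/Young estimate.
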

 
 The main result of this section is:
 \begin{corollary}\label{cor:C0RDTderivbounds}
Let $\varepsilon$ be as in the second statement in Lemma \ref{lemma:RDTexistence}. There exists a positive constant $\varepsilon' = \varepsilon'(n) \leq \varepsilon$ such that the following is true:

Let $g_0\in C^0(M)$ and $\bar g(t)$ be a smooth Ricci flow defined on some positive time interval. Let $T$ be as in the second statement in Lemma \ref{lemma:RDTexistence}. Suppose $||g_0 - \bar g(0)||_{L^\infty(M)} < \varepsilon'$ and let $g_t$ be the solution to (\ref{eq:integraleq}) whose existence is assured by Lemma \ref{lemma:RDTexistence}. Then there exists $T' = T'(\bar g(t))< T$ and $c_k = c(m, n, \sup_{t\in [0,T], \ell \leq k}|\nabla^{\ell}\Rm|(\bar g(t)))$ such that $g_t$ is smooth on $M\times (0,T']$, continuous on $M\times [0,T']$, and satisfies
\begin{equation}\label{eq:C0RDTderivbounds}
|\nabla^k(g_t - \bar g_t)| \leq\frac{c_k}{t^{k/2}}||g_0 - \bar g_0||_{L^\infty(M)},
\end{equation}
for all $t\in (0,T']$, where $\nabla$ denotes the covariant derivative with respect to $\bar g(t)$.

In particular, if $g^i$ is a sequence of $C^0$ metrics on $M$ such that $g^i\xrightarrow[i\to\infty]{C^0} g$, and $g^i(t)$, $g(t)$ are the Ricci-DeTurck flows with respect to $\bar g(t)$ starting from $g^i$ and $g$ respectively, then $g^i(t)$ converges locally smoothly to $g(t)$ on $M\times (0,T']$.
\end{corollary}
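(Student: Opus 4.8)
The plan is to split the proof into three parts, mirroring the structure of the statement: (i) upgrading the $X_T$-solution $h_t := g_t - \bar g_t$ to a smooth solution on $M \times (0,T']$ that is continuous up to $t=0$; (ii) proving the decay estimate (\ref{eq:C0RDTderivbounds}); and (iii) deducing the local smooth convergence. Throughout I would fix $\delta := \|g_0 - \bar g_0\|_{L^\infty(M)}$ and shrink $\varepsilon'$ so that, by Lemma~\ref{lemma:RDTexistence}, $\|h\|_{X_T} \le C\delta < \gamma_0$, where $\gamma_0 = \gamma_0(n)$ is small enough that (\ref{eq:inverseexpansion1})--(\ref{eq:inverseexpansion3}) hold and the perturbation equation (\ref{eq:hevolution}) is uniformly parabolic whenever $|h|_{\bar g}\le\gamma_0$; in particular $\sup_{0<t<T}\|h_t\|_{L^\infty(M)}\le C\delta$.

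For (i), I would first note that any element of $X_T$ is continuous for $t>0$ (being a locally uniform limit of smooth tensors) with $\nabla h\in L^{n+4}_{\loc}$ on parabolic regions about positive times; since $n+4$ exceeds the parabolic dimension $n+2$, parabolic Sobolev embedding then gives that $h$ is locally Hölder continuous for $t>0$. Rewriting (\ref{eq:hevolution}) via (\ref{eq:Q1is}), $h$ solves a quasilinear system of the schematic shape $\partial_t h = \nabla_p\bigl((\bar g+h)^{pq}\nabla_q h\bigr) + (\text{zeroth order in }h) + Q^0[h]$, uniformly parabolic by (\ref{eq:inverseexpansion1}) and the smallness of $h$, with $|Q^0[h]|\lesssim|\nabla h|^2 + |\Rm||h|^2$ by (\ref{eq:ptwiseR0}). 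Since the principal coefficient $(\bar g+h)^{-1}$ is now $C^{0,\alpha}$ and $\|h\|_{X_T}$ is small, I would run the bootstrap of Koch--Lamm \cite{KL1,KL2} (adapted to the Ricci-flow background through the heat-kernel estimates of Corollary~\ref{cor:exponentialboundnonscalar}), using parabolic $L^p$ and Schauder estimates: once $(\bar g+h)^{-1}\in C^{k,\alpha}$ one gets $h\in C^{k+2,\alpha}$, and iterating yields $h\in C^\infty(M\times(0,T'])$ for some $T'=T'(\bar g)<T$. Continuity at $t=0$ I would extract from the integral equation (\ref{eq:integraleq}): the first term converges to the continuous $h_0$ uniformly (the kernel $\bar K$ is an approximate identity, by the Gaussian bound of Corollary~\ref{cor:exponentialboundnonscalar}), and the bulk term tends to $0$ uniformly, from the $Y$-norm bounds on $Q^0,Q^1$ that drive the fixed-point argument, applied on the shrinking intervals $(0,t)$ on which $\|h\|_X\to0$.

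For (ii) --- the heart of the matter --- I would argue by parabolic rescaling. Fix $(x_0,t_0)\in M\times(0,T']$ and set $r:=\tfrac14\sqrt{t_0}$; put $\hat{\bar g}(t):=r^{-2}\bar g(t_0+r^2t)$ and $\hat h(t):=r^{-2}h(t_0+r^2t)$ for $t\in[-1,0]$, so that $\hat g:=\hat{\bar g}+\hat h$ is a Ricci-DeTurck flow over the background Ricci flow $\hat{\bar g}$ on the parabolic cylinder $B_{\hat{\bar g}(0)}(x_0,2)\times(-1,0]$. Since $r^2\le T$, the rescaled background has uniformly bounded geometry: in rescaled exponential coordinates (\ref{eq:boundedcoeffs}) persists, (\ref{eq:almosteucl}) holds, and $|\nabla^\ell\Rm|(\hat{\bar g})=r^{2+\ell}|\nabla^\ell\Rm|(\bar g)\le T^{(2+\ell)/2}\sup_{[0,T]}|\nabla^\ell\Rm|(\bar g)$, so Lemma~\ref{lemma:interiorests} and Corollary~\ref{cor:exponentialboundnonscalar} apply to $\hat{\bar g}$ with constants independent of $(x_0,t_0)$. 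The pointwise norm $|\cdot|_{\bar g}$ and the Koch--Lamm norm are scale-invariant under this rescaling, so $\sup|\hat h|_{\hat{\bar g}}\le C\delta$ and $\|\hat h\|_{X(B_{\hat{\bar g}(0)}(x_0,1))}\le\|h\|_{X_T}\le C\delta$; in particular $\nabla\hat h\in L^2\cap L^{n+4}$ on the unit cylinder with norm $\lesssim\delta$. Applying the interior estimates from (i) on this unit scale --- where the smallness $C\delta<\gamma_0$ makes the reaction term a $\lesssim\delta^2$-perturbation, so each estimate stays linear in $\delta$ --- I would obtain $\|\nabla^k\hat h\|_{C^0(B_{\hat{\bar g}(0)}(x_0,\frac12)\times(-\frac14,0])}\le c_k\delta$ for $k=0,1,2,\dots$, with $c_k=c(n,k,\sup_{t\in[0,T],\ell\le k}|\nabla^\ell\Rm|(\bar g))$. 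Undoing the rescaling, $|\nabla^k h|_{\bar g}(x_0,t_0)=r^{-k}|\nabla^k\hat h|_{\hat{\bar g}}(x_0,0)\le(\tfrac14\sqrt{t_0})^{-k}c_k\delta$, which is (\ref{eq:C0RDTderivbounds}) after adjusting $c_k$. The main obstacle is precisely this step: because (\ref{eq:hevolution}) is genuinely quasilinear --- the principal part depends on $h$ and the reaction term is quadratic in $\nabla h$ --- the linear interior estimate of Lemma~\ref{lemma:interiorests} cannot be invoked directly, and the way around it is to exploit the smallness of $\|h\|_{X_T}$ (to keep every estimate linear in $\delta$ and the equation uniformly parabolic) together with the exact scale-invariance of the Koch--Lamm norms under parabolic rescaling, which is what converts a fixed-scale regularity bound into the sharp $t^{-k/2}$ decay.

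For (iii), given $g^i\to g$ in $C^0$ I would note $h^i_0:=g^i-\bar g_0\to h_0:=g-\bar g_0$ in $L^\infty(M)$, whence, by Lemma~\ref{lemma:RDTexistence} together with the continuous dependence of solutions of (\ref{eq:integraleq}) on their initial data established in \S\ref{sec:initialderivatives}, $\|h^i-h\|_{X_T}\to0$ and $\sup_i\|h^i_0\|_{L^\infty}<\infty$. The estimate (\ref{eq:C0RDTderivbounds}) then gives bounds on $|\nabla^k h^i_t|$ on $M\times[\tau,T']$ that are uniform in $i$ for each $\tau\in(0,T')$ and every $k$, so $\{h^i\}$ is precompact in $C^\infty_{\loc}(M\times(0,T'])$; any subsequential limit solves (\ref{eq:integraleq}) with initial data $h_0$ --- passing to the limit using the uniform bounds and dominated convergence --- and lies within $C\varepsilon'$ of $\bar g$ in $X_T$, hence equals $h$ by the uniqueness in Lemma~\ref{lemma:RDTexistence}. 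Therefore the full sequence $g^i(t)=\bar g_t+h^i_t$ converges to $g(t)=\bar g_t+h_t$ in $C^\infty_{\loc}(M\times(0,T'])$, as claimed.
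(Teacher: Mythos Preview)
Your overall architecture is viable, but it diverges from the paper's route, and your argument for continuity at $t=0$ contains a genuine error. The paper never bootstraps directly on the $X_T$-solution. Instead it first proves the derivative estimate $(\ref{eq:C0RDTderivbounds})$ for \emph{smooth} Ricci--DeTurck flows, where the equation is satisfied classically and the bootstrap is ordinary iterated Schauder theory on nested cylinders (the first step supplied by \cite[Proposition~2.5]{Bam2}, the inductive step by \cite[Theorem~8.11.1]{Kry2}). It then approximates $g_0$ by smooth $g_0^i$, flows each, and uses Arzel\`a--Ascoli on the uniform bounds to extract $C^\infty_{\loc}$-convergence of $g^i_t$; the limit inherits smoothness and the estimates, and agrees with $g_t$ by the uniqueness in Lemma~\ref{lemma:RDTexistence}. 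This simultaneously yields smoothness, the decay bounds, and the ``in particular'' convergence, and it avoids ever having to argue that an $X_T$-solution of the integral equation is a weak solution of the quasilinear system in a sense strong enough to launch the first Schauder step with quadratic gradient nonlinearity. Continuity at $t=0$ is handled separately (Corollary~\ref{cor:convergenceto0}), again via smooth approximation combined with the $X$-norm stability of Lemma~\ref{lemma:iterationcontraction}, not from the integral equation directly.

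Your specific claim that the bulk term in $(\ref{eq:integraleq})$ vanishes as $t\to 0$ because ``$\|h\|_X\to 0$'' on the intervals $(0,t)$ is wrong: the piece $\sup_{0<s<t}\|h_s\|_{L^\infty}$ of the $X$-norm does not tend to zero (indeed if it did you would get $h_s\to 0\neq h_0$, contradicting the very continuity you want), and the gradient pieces are scale-invariant by design and likewise do not vanish. The bulk term does go to zero, but establishing this directly is not immediate from the fixed-point estimates; the paper sidesteps this entirely with the approximation argument. Your steps (ii) and (iii) are correct in spirit --- the parabolic rescaling is exactly how the $t^{-k/2}$ factor arises --- though in the paper they appear not as separate steps but as immediate byproducts of the smooth approximation: the estimates transfer to the limit, and the convergence statement is the mechanism of proof rather than a consequence.
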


\begin{remark}
In \cite{KL1}, Koch and Lamm proved the estimates (\ref{eq:C0RDTderivbounds}) by applying the analytic implicit function theorem and showing that, for a solution $h$ to (\ref{eq:integraleq}) on $\R^n$ with the standard Euclidean metric as the background metric, the tensor $h_{a,\tau}(x,\tau) := h(x-at, \tau t)$ is analytic in $a$ and $\tau$ in a neighborhood of $(a,\tau) = (0,0)$, and hence $h$ is analytic in $x$ and in $t$. Because we do not work on $\R^n$, this technique is not available to us. We will provide an alternate proof later in this section, by iteratively applying parabolic interior estimates and approximating $h$ by smooth solutions to (\ref{eq:integraleq}). 
\end{remark}

\begin{remark}
In \cite{Sim}, Simon showed that for a fixed complete background metric there is a complete solution $(g(t))_{t\in(0,T]}$ to the Ricci-DeTurck perturbation equation that converges locally uniformly to the initial data and satisfies
\begin{equation}
\sup_{x\in M}|\nabla^i g(t)|(x) \leq \frac{c_i}{t^{i/2}}
\end{equation}
for all $t\in (0,T]$, where $c_i$ depends on the dimension and the first $i$ covariant derivatives of the curvature of the background metric; see \cite[Theorem $1.1$]{Sim}. Because we will at times study sequences of Ricci-DeTurck flows for which the initial data converges uniformly to some limit, we require slightly different estimates, as in Corollary \ref{cor:C0RDTderivbounds}.
\end{remark}

 \begin{proof}[Proof of Lemma \ref{lemma:RDTexistence}]
 The proof of the first statement follows from the Banach fixed point theorem, and is extremely similar to much of the proof of \cite[Theorem $4.3$]{KL1}, with the modification that the quadratic term is of the form $Q^0 + \nabla^*Q^1$, where now $|Q^0|\leq c(n,\gamma)(|\nabla h|^2 + |\Rm(\bar g)||h|^2)$, by (\ref{eq:ptwiseR0}). Nevertheless, this does not require a significant change to their proof, as the $|h|^2$-term may be absorbed into the $L^\infty$ part of $||h||_X^2$ in the proof of \cite[Lemma $4.1$]{KL1}. Lemma \ref{lemma:easy} is a weighted analog of the first part of \cite[Lemma $4.1$]{KL1}, and we deal with the addition of the $|h|^2$-term more explicitly in the proof of that lemma. The fixed point argument in \cite{KL1} also requires a version of this result for a difference of two solutions, as stated in the second part of \cite[Lemma $4.1$]{KL1}. We prove the requisite result for our setting in Appendix \ref{appendix:iterationscheme}; see (\ref{eq:easylemunweighted}). Note that we do not make any claims about the regularity of such a solution in the statement of this lemma, so we do not need to perform an analog of Koch and Lamm's application of the analytic implicit function theorem.
 
 To prove the second statement, let $\varepsilon, C$ be the constants given by the first statement in the case that $T\sup_{[0,T]}|\Rm|(\bar g) = 1$. Then note that for any smooth Ricci flow defined on a time interval $[0,A]$, it is possible to find some $T = T(\sup_{[0,A]}|\Rm|(\bar g(t)))$ such that (\ref{eq:almosteucl}) and (\ref{eq:boundedcoeffs}) hold, and such that $T\sup_{[0,A]}|\Rm|(\bar g) \leq 1$.
 \end{proof}

Out next objective is to prove Corollary \ref{cor:C0RDTderivbounds}. We first prove a result concerning convergence of the initial data.
\begin{corollary}\label{cor:convergenceto0}
Let $\varepsilon$ be as in the second statement in Lemma \ref{lemma:RDTexistence}. There exists $\varepsilon ''(n) \leq \varepsilon$ such that the following is true:

Let $g_0$ be a $C^0$ metric on $M$ and $\bar g(t)$ be a background Ricci flow with $||g_0 - \bar g_0||_{C^0(M)}\leq \varepsilon''$. Let $g_t$ denote the Ricci-DeTurck flow starting from $g_0$ with respect to a background Ricci flow $\bar g(t)$, in the sense of Lemma \ref{lemma:RDTexistence}. Then $g(t)$ converges uniformly to $g_0$ as $t\to 0$. Thus, $g(t)$ is continuous in time.
\end{corollary}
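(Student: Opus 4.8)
The plan is to derive continuity at $t=0$ from two ingredients: the Lipschitz dependence of the solution on its initial data, which is built into the fixed point construction of Lemma \ref{lemma:RDTexistence}, and the classical fact that the Ricci--DeTurck flow from \emph{smooth} data is continuous up to $t=0$. The reason one must pass through smooth data is that the $X$-norm is scale invariant and so cannot by itself detect that $g_t$ converges to $g_0$ as $t\searrow 0$.

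First I would record the stability estimate. Writing the right-hand side of (\ref{eq:integraleq}) as $F[h,h_0]$, where the dependence on $h_0$ is linear and the dependence on $h$ is at least quadratic, the linear heat-kernel estimates together with the difference estimate (\ref{eq:easylemunweighted}) give $||F[h,h_0] - F[h',h_0']||_{X_T} \le C||h_0 - h_0'||_{L^\infty(M)} + C(||h||_{X_T} + ||h'||_{X_T})||h - h'||_{X_T}$. Hence, if $g_t = \bar g(t) + h_t$ and $g_t' = \bar g(t) + h_t'$ both solve (\ref{eq:integraleq}) for the same background and have initial data within $\varepsilon$ of $\bar g_0$ (so $||h||_{X_T}, ||h'||_{X_T} \le C\varepsilon$ is small), absorbing the quadratic term yields $||g_t - g_t'||_{X_T} \le 2C||g_0 - g_0'||_{L^\infty(M)}$, and in particular $\sup_{0<t<T}||g_t - g_t'||_{L^\infty(M)} \le 2C||g_0 - g_0'||_{L^\infty(M)}$.

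Next I would reduce to smooth initial data. Choose $\varepsilon'' = \varepsilon''(n) \le \varepsilon$ small, suppose $||g_0 - \bar g_0||_{C^0(M)} \le \varepsilon''$, and pick smooth metrics $g_0^{(k)}$ on $M$ with $g_0^{(k)} \to g_0$ uniformly; for large $k$ we have $||g_0^{(k)} - \bar g_0||_{C^0(M)} < \varepsilon$, so Lemma \ref{lemma:RDTexistence} produces solutions $g^{(k)}_t$ of (\ref{eq:integraleq}) on $[0,T]$ over the same background $\bar g(t)$, and by the previous paragraph $||g_t - g_t^{(k)}||_{L^\infty(M)} \le 2C||g_0 - g_0^{(k)}||_{L^\infty(M)}$ for all $t\in(0,T)$. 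It therefore suffices to show $g_t^{(k)} \to g_0^{(k)}$ uniformly as $t\searrow 0$ for each fixed $k$: this gives $\limsup_{t\searrow 0}||g_t - g_0||_{L^\infty(M)} \le (2C+1)||g_0 - g_0^{(k)}||_{L^\infty(M)}$, and letting $k\to\infty$ proves $g_t\to g_0$ uniformly. For fixed $k$, the Ricci--DeTurck flow over $\bar g(t)$ with smooth initial metric $g_0^{(k)}$ is a strictly parabolic quasilinear system with smooth data on a closed manifold, hence has a unique smooth solution $\psi^{(k)}(t)$ on some interval $[0,\tau_k]$ that is continuous up to $t=0$ with $\psi^{(k)}(0) = g_0^{(k)}$ (classical; cf.\ \cite{Sim}). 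Shrinking to a short enough subinterval $[0,\tau_k']$, the boundedness of $\psi^{(k)}(t) - \bar g(t)$ and its $\bar g(t)$-covariant derivative on $M\times[0,\tau_k']$ forces $||\psi^{(k)}(t) - \bar g(t)||_{X_{\tau_k'}} \le C\varepsilon < 1$, so $\psi^{(k)}(t)$ lies in the uniqueness class of Lemma \ref{lemma:RDTexistence}; since $\psi^{(k)}(t)$ satisfies Duhamel's formula, hence (\ref{eq:integraleq}), uniqueness yields $g_t^{(k)} = \psi^{(k)}(t)$ on $[0,\tau_k']$, so $g_t^{(k)} \to g_0^{(k)}$ uniformly as $t\searrow 0$. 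Finally, since $g_t - \bar g(t) \in X_T$ is continuous on $M\times(0,T]$ (being a locally uniform limit of smooth tensor fields, cf.\ the remark following Definition \ref{def:norms}) and $\bar g(t)$ is smooth, $g(t)$ is continuous on $M\times(0,T]$; combined with the uniform convergence just proved, $g(t)$ is continuous on $M\times[0,T]$, i.e.\ continuous in time.

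I expect the one genuinely delicate point to be the identification in the last step of $g_t^{(k)}$ with the classical smooth flow --- that is, checking that by shrinking the time interval the smooth flow can be fitted inside the uniqueness class of Lemma \ref{lemma:RDTexistence} (a matter of arranging the constants $\varepsilon''$ and $\tau_k'$ correctly, using that for fixed $k$ all $\bar g$-covariant derivatives of $\psi^{(k)}(t) - \bar g(t)$ are bounded on $M\times[0,\tau_k]$). The remaining ingredients --- the Lipschitz estimate, density of smooth metrics in $C^0(M)$, and continuity of the smooth Ricci--DeTurck flow at $t=0$ --- are routine.
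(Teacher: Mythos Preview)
Your proposal is correct and follows essentially the same approach as the paper: approximate $g_0$ by smooth metrics, use the Lipschitz stability estimate (the paper's Lemma \ref{lemma:iterationcontraction}, which you derive by direct absorption rather than iteration) to compare $g_t$ with the smooth flows, and invoke continuity at $t=0$ of the classical smooth Ricci--DeTurck flow. You are in fact more careful than the paper about the identification of the fixed-point solution with the classical smooth flow from smooth data; the paper simply asserts ``since the $g_0^i$ are smooth, $g_t^i \to g_0^i$ uniformly as $t\to 0$'' without comment.
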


\begin{proof}
Reduce $\varepsilon$ as determined in Lemma \ref{lemma:iterationcontraction}. Set $\varepsilon'' = \varepsilon/2$. Let $g_0^i$ be a sequence of smooth metrics on $M$ that converge to $g_0$ in $C^0$. Since $||g_0 - \bar g_0||_{C^0(M)} < \varepsilon''$, we have, for sufficiently large $i$, $||g_0^i - \bar g_0||_{C^0(M)} < \varepsilon = 2\varepsilon''$. Let $g^i(t)$ denote the Ricci-DeTurck flow starting from $g_0^i$ with respect to $\bar g(t)$. Since the $g_0^i$ are smooth, $g_t^i \to g_0^i$ uniformly as $t\to 0$. Thus, Lemma \ref{lemma:iterationcontraction} implies
\begin{align*}
||g_t - g_0||_{C^0(M)} &\leq ||g_t - g_t^i||_{C^0(M)} + ||g_t^i - g_0^i||_{C^0(M)} + ||g_0^i - g_0||_{C^0(M)}
\\& \leq c(n)||g_0 - g_0^i||_{C^0(M)} + ||g_t^i - g_0^i||_{C^0(M)} + ||g_0^i - g_0||_{C^0(M)}.
\end{align*}
 We find
\begin{align*}
\limsup_{t\to 0}||g_t - g_0||_{C^0(M)} &\leq (c(n) + 1)||g_0 - g_0^i||_{C^0(M)} + \limsup_{t\to 0}||g_t^i - g_0^i||_{C^0(M)} 
\\& = (c(n) + 1)||g_0 - g_0^i||_{C^0(M)}
\end{align*}
for all $i$. Letting $i\to \infty$, we find
\begin{equation}
\lim_{t\to 0}||g_t - g_0||_{C^0(M)} = 0.
\end{equation}
\end{proof}

\begin{proof}[Proof of Corollary \ref{cor:C0RDTderivbounds}]
First set $\varepsilon' \leq \varepsilon''$, so that Corollary \ref{cor:convergenceto0} implies that $g_t$ is continuous on $M\times [0,T]$.
Now fix $\alpha \in (0, \tfrac{1}{2})$, and measure all balls with respect to $\bar g_0$. We first show the derivative estimates for a smooth $(0,2)$-tensor $u$ satisfying the evolution equation
\begin{equation}
(\partial_t + L) u = (\bar g + u)^{-1} \star (\bar g + u)^{-1} \star \nabla u \star \nabla u +  [(\bar g + u)^{-1} - \bar g^{-1}]\star\Rm(\bar g)\star u + [(\bar g + u)^{-1}-\bar g^{-1}]\star u\star \nabla u,
\end{equation}
where $L$ is the linear operator $L = \Delta_{\bar g_t} + 2\Rm_{\bar g_t}$ as in (\ref{eq:hevolution}). Assume that we know 
\begin{equation}\label{eq:parabolicC0bound}
||u||_{C^0(M\times[0,T])} \leq C(n)||u_0||_{C^0(M)} \leq \frac{1}{2};
\end{equation} 
in our case this assumption will be satisfied due to the bound on the $X$-norm from Lemma \ref{lemma:RDTexistence}, after reducing $\varepsilon'$ if necessary. Then we have
\begin{equation}\label{eq:coeffsok}
|(\bar g + u)^{-1}| \leq c(n), |(\bar g + u)^{-1}-\bar g^{-1}| \leq c(n)
\end{equation}
by (\ref{eq:inverseexpansion1}) and (\ref{eq:inverseexpansion2}).

Define a sequence of domains  $Q_1, Q_2, \ldots$, by $Q_n = B_{\bar g_0}\left(x, 2 - \sum_{i=1}^n\tfrac{1}{2^i}\right)\times[-(1 - \sum_{i=1}^{n}\tfrac{1}{2^{i+1}})^2,0]$, so that $B_{\bar g_0}(x,1)\times[-\tfrac{1}{4}, 0]$ is contained in all of these domains. By arguing as in the proof of Lemma \ref{lemma:interiorests}, it is sufficient to show the estimate 
\begin{equation}\label{eq:C0alpha}
||\nabla^m u||_{C^{0,2\alpha; 0, \alpha}(Q_m)} \leq c_m||u_0||_{L^\infty(M)}
\end{equation} 
on $Q_m$, and to assume that the $Q_m$ are contained in an exponential coordinate chart on which (\ref{eq:boundedcoeffs}) is true. We will show that (\ref{eq:C0alpha}) is true for all $m$ inductively.

By \cite[Proposition $2.5$]{Bam2} and (\ref{eq:boundedcoeffs}) there exist $\varepsilon_1$ and $c_1$ depending only on $\alpha$ and the dimension, such that if $||u||_{L^\infty(Q_1)}< \varepsilon_1$, then $||u||_{C^{2,2\alpha;1,\alpha}(Q_2)} \leq c_1||u||_{L^\infty(Q_1)} \leq c_1 C||u_0||_{L^\infty(M)}$, where $C$ is as in (\ref{eq:parabolicC0bound}). Now reduce $\varepsilon'$ further, so that $\varepsilon' \leq \varepsilon_1/C$ and (\ref{eq:parabolicC0bound}) implies that $||u||_{C^0(M)}\leq \varepsilon_1$. Then (\ref{eq:C0alpha}) is true for $\nabla u$ and $\nabla^2 u$ on $Q_2$.

We estimate higher derivatives inductively. First observe that, because $\nabla$ is the Levi-Civita connection for $\bar g$, we have
\begin{equation}\label{eq:prodderiv}
\nabla(\bar g + u)^{-1} = -(\bar g + u)^{-1}\star (\nabla (\bar g + u))\star (\bar g + u)^{-1} = -(\bar g + u)^{-1}\star \nabla u \star (\bar g + u)^{-1}.
\end{equation}
Now assume that (\ref{eq:C0alpha}) holds on $Q_k$ for all $\nabla^{k}u$ with $k \leq m-1$. Observe that, after commuting the derivatives (see \cite[$(2.1.6)$ and $(2.3.3)$]{Top}), appealing to (\ref{eq:coeffsok}) and (\ref{eq:prodderiv}), and omitting constants that depend only on $n$ and $m$,
\begin{align*}
(\partial_t + L)(\nabla^m u) &= \nabla^m[(\partial_t + L)u] + \sum_{i=0}^{m-1}\nabla^iu \star \nabla \Ric - \nabla \Rm \star \nabla^i u
\\& = \sum_{i=0}^m\sum_{j=0}^{m-i}\sum_{k=0}^{m-i-j}[\nabla^i(\bar g + u)^{-1}]\star[\nabla^j(\bar g + u)^{-1}]\star\nabla^{k+1}u *\nabla^{m-i-j-k+1}u
\\& + \sum_{i=0}^{m}\sum_{j=0}^{m-i}\sum_{k=0}^{m-i-j}[\nabla^i(\bar g + u)^{-1}]\star\Rm(\bar g)\star \nabla^k u
\\& + \sum_{i=0}^{m}\sum_{j=0}^{m-i}[\nabla^i((\bar g + u)^{-1} - \bar g^{-1})]\star\nabla^ju \star \nabla^{m-i-j+1}u
\\& + \sum_{i=0}^{m-1}\nabla^iu \star \nabla \Ric - \nabla \Rm \star \nabla^i u
\\& = \nabla^{m+1}u\star\nabla u + \text{lower order terms},
\end{align*}
where ``lower order terms'' refers to terms involving derivatives of the curvature of the background metric of order at most $m$ and derivatives of $u$ of order strictly less than $m+1$.

By \cite[Theorem $8.11.1$]{Kry2}, there exists $N$ depending on $\alpha$, $n$, and $||\Rm(\bar g)||_{C^{0,2\alpha;0,\alpha}(Q_{m-1})}$ such that
\begin{equation}
||\nabla^{m-2}u||_{C^{2,2\alpha; 1,\alpha}(Q_{m})} \leq N(||\partial_t u + L u||_{C^{0,2\alpha,0,\alpha}(Q_{m-1})} + ||u||_{C^0(Q_{m-1})}).
\end{equation}
Then, inserting our analysis from above, we find
\begin{equation}
\begin{split}
||\nabla^{m-2}u||_{C^{2,2\alpha; 1,\alpha}(Q_{m})} &\leq c ||\nabla^{m-1}u||_{C^{0,2\alpha,0,\alpha}(Q_{m-1})}||\nabla u||_{C^{0,2\alpha,0,\alpha}(Q_{m-1})} + \text{ lower order terms}
\\& \leq c||u_0||_{L^\infty(M)},
\end{split}
\end{equation}
where $c$ depends on $n$, $m$, $\alpha$, and $||\nabla^k\Rm(\bar g)||_{C^{0,2\alpha; 0, \alpha}(Q_{m-1})}$ for $k\leq m$, whence follows (\ref{eq:C0alpha}) for $\nabla^m u$. After arguing as in the proof of Lemma \ref{lemma:interiorests} we find
\begin{equation}\label{eq:smoothderivbds}
||\nabla^k u||_{C^0(B_{\bar g_0}(x,\sqrt{t}/2)\times[\tfrac{7t}{8},t])} \leq \frac{c}{t^{k/2}}||u_0||_{C^0(B_{\bar g_0}(x,\sqrt{t})\times[t/2,t])},
\end{equation}
for the same constant $c$.

Now let $g_0^i$ be a sequence of $C^\infty$ metrics on $M$ that converge to $g_0$ in $C^0$, and let $g_t^i$ be the Ricci-DeTurck flows starting from $g_0^i$ with respect to $\bar g (t)$. 
Fix $t>0$ and $k$. Then, since $g_0^i$ and hence $g_t^i$ is smooth, (\ref{eq:smoothderivbds}) implies that
\begin{equation*}
||\nabla^k(g^i - \bar g)||_{C^0(B_{\bar g_0}(x, \sqrt{t}/2)\times[\tfrac{7t}{8}, t])} \leq \frac{c}{t^{k/2}}||g_0^i - \bar g(0)||_{C^0(M)},
\end{equation*}
so that, on a subsequence, $\nabla^k(g^i - \bar g)$ converges locally uniformly on $[\tfrac{7t}{8}, t]$ to some continuous limit. In particular, $g_t$ is smooth for fixed $t>0$, and $g_t^i \to g_t$ in $C^\infty_{\loc}(M \times (0,T])$; this proves the second statement of Corollary \ref{cor:C0RDTderivbounds}. Moreover, after taking limits, we have
\begin{equation}\label{eq:derivbdsunscaled}
|\nabla^k(g_t - \bar g(t))| \leq\frac{c}{t^{k/2}}||g_0 - \bar g(0)||_{L^\infty(M)}.
\end{equation}
\end{proof}
 
\section{Stability for Ricci-DeTurck flows with initial metrics that agree to greater than second order}\label{sec:fixedptconstruction}

The purpose of this section is to use the weighted norms introduced in Definition \ref{def:norms} to bound the growth under the Ricci-DeTurck flow of the difference of two $C^0$ metrics that agree to greater than second order about a point at the initial time.

The main result of the section is the following theorem:
\begin{theorem}\label{thm:fixedpointexistence}
Suppose $g_0'$ and $g_0''$ are two $C^0$ metrics on closed $n$-manifolds $M'$ and $M''$ respectively. Suppose also that there exists a locally defined diffeomorphism $\phi: U\to V$, where $U\subset M'$ is some neighborhood of a point $x_0' \in M''$ and $V\subset M''$ is a neighborhood of $x_0'' = \phi(x_0')$, and  that $\phi^*g''$ agrees to greater than second order with $g'$ around $x_0'$, in the sense of Definition \ref{def:norms}.

Then there exist Ricci-DeTurck flows $(g_t')_{t\in(0,T']}$ and $(g_t'')_{t\in (0,T'']}$ starting from $g_0'$ and $g_0''$ in the sense of Lemma \ref{lemma:RDTexistence} with respect to background Ricci flows $\bar g'(t)$ and $\bar g''(t)$ respectively, which satisfy
\begin{equation}
\sup_{0<t<T}||g_t' - \phi^*g_t''||_{L^\infty(B(x_0', R))} \leq (R + \sqrt{t})^{2+\eta}\big[c(n)||w_0(0)(h_0' - h_0'')||_{L^\infty(B(x_0', 4R))}  + c(n, R)\big],
\end{equation}
for all $R< \sqrt{T}/4$, where $T = T(n, \sup_{[0,T']}|\Rm(\bar g'_t)|, \sup_{[0,T'']}|\Rm(\bar g''_t)|, \bar g' - \phi^*\bar g'') \leq \min\{T', T''\}$ is small enough so that (\ref{eq:almosteucl}) and (\ref{eq:boundedcoeffs}) hold, $c$ is some finite positive number, and $w_0(0)$ is as in (\ref{eq:weight}).
\end{theorem}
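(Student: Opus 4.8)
The plan is to fix $a>0$, run a fixed-point estimate for the \emph{difference} $H_t := h_t' - \phi^*h_t''$ (defined on $U$) in the weighted Banach space $\tilde X_a$ of Definition \ref{def:norms} — whose weight $w_a$ is, by design, a greater-than-second-order weight — and then let $a\searrow 0$.

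\emph{Choice of backgrounds.} Since $g_0'$ and $\phi^*g_0''$ agree to greater than second order near $x_0'$, they are $C^0$-close there, so I may pick a smooth metric $\bar g_0'$ on $M'$ with $\|g_0'-\bar g_0'\|_{L^\infty(M')}<\varepsilon$ (the constant of Lemma \ref{lemma:RDTexistence}) which near $x_0'$ agrees with a fixed smooth metric that is also $C^0$-close to $\phi^*g_0''$, and then a smooth metric $\bar g_0''$ on $M''$ with $\|g_0''-\bar g_0''\|_{L^\infty(M'')}<\varepsilon$ and $\phi^*\bar g_0''=\bar g_0'$ on a neighborhood of $x_0'$. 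Let $\bar g'(t),\bar g''(t)$ be the associated Ricci flows and let $g_t'=\bar g_t'+h_t'$, $g_t''=\bar g_t''+h_t''$ be the Ricci-DeTurck flows furnished by Lemma \ref{lemma:RDTexistence}, with $\|h'\|_{X_{T'}},\|h''\|_{X_{T''}}$ as small as we wish after shrinking $\varepsilon$. Shrinking $T\le\min\{T',T''\}$ (depending on the data as stated, via interior parabolic estimates for the smooth flows $\bar g'(t)$ and $\phi^*\bar g''(t)$, whose initial metrics agree to infinite order near $x_0'$) I also arrange that (\ref{eq:almosteucl}) and (\ref{eq:boundedcoeffs}) hold and that $\sup_{0<t<T}\|w_0(\cdot,t)(\bar g_t'-\phi^*\bar g_t'')\|_{L^\infty(B(x_0',4R))}\le c(n,R)$.

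\emph{The difference equation.} Subtracting the integral equations (\ref{eq:integraleq}) satisfied by $h'$ (background $\bar g'(t)$, tensor heat kernel $\bar K'$) and by $\phi^*h''$ (background $\phi^*\bar g''(t)$, tensor heat kernel $\phi^*\bar K''$), and inserting a cutoff supported in $B(x_0',4R)$ so that integration over $M'$ is replaced by integration over $B(x_0',4R)$ up to remainders controlled by the already-known (small) $X$-norms of $h',h''$ together with the exponential heat-kernel decay of \S\ref{sec:preliminaries}, I obtain a relation of the schematic form $H=\mathcal F_a[H]$ with $\mathcal F_a$ affine: its $H$-dependent part is the Ricci-DeTurck integral operator in which the quadratic terms $Q^0+\nabla^*Q^1$ are replaced by their polarizations against $h'+\phi^*h''$, and so has operator norm $\lesssim \|h'\|_X+\|h''\|_X+\sup_t\|\bar g_t'-\phi^*\bar g_t''\|$ on $\tilde X_a$, which is $<1$ after the previous step; its inhomogeneous part $\mathcal F_a[0]$ consists of the initial-data term $\int \bar K'(x,t;y,0)H_0(y)\,d\bar g_0'(y)$ plus error terms coming from the background difference $\bar g_t'-\phi^*\bar g_t''$ and from the heat-kernel difference $\bar K'-\phi^*\bar K''$, the latter estimated by interior Schauder estimates using that the two backgrounds are higher-order close.

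\emph{Weighted estimates and conclusion.} The technical core is to prove, uniformly in $a\in(0,1]$, that $\|\mathcal F_a[H_1]-\mathcal F_a[H_2]\|_{\tilde X_a}\le\theta\|H_1-H_2\|_{\tilde X_a}$ for some $\theta<1$ and $\|\mathcal F_a[0]\|_{\tilde X_a}\le c(n)\|w_a(\cdot,0)H_0\|_{L^\infty(B(x_0',4R))}+c(n,R)$. These reduce to the unweighted Koch--Lamm-type mapping estimates (the analogs of \cite[Lemma $4.1$]{KL1} adapted to an evolving background, whose weighted versions must be established) once one checks that the weight passes through the heat-kernel convolutions: when $d(x,y)\lesssim\sqrt{t-s}$ and $t-s\lesssim t$ one has $w_a(y,s)\le C\,w_a(x,t)$, since $d(x_0',\cdot)+\sqrt{(\cdot)}+a$ varies by a bounded factor at the parabolic scale, while for $y$ far from $x$ the Gaussian factor in $\bar K'_{t-s}(x,y)$ (Corollary \ref{cor:exponentialboundnonscalar}) dominates the polynomial quotient $w_a(y,s)/w_a(x,t)$. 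Granting these, for each fixed $a>0$ we have $H\in\tilde X_a$ (because $1\le w_a\le a^{-2-\eta}$, so the localized $\tilde X_a$- and $X$-norms are comparable, and $h',\phi^*h''\in X$ near $x_0'$ by Lemma \ref{lemma:RDTexistence}), so $\|H\|_{\tilde X_a}=\|\mathcal F_a[H]\|_{\tilde X_a}\le\theta\|H\|_{\tilde X_a}+\|\mathcal F_a[0]\|_{\tilde X_a}$ yields $\|H\|_{\tilde X_a}\le(1-\theta)^{-1}\big(c(n)\|w_a(\cdot,0)H_0\|_{L^\infty(B(x_0',4R))}+c(n,R)\big)$. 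For $x\in B(x_0',R)$ and $0<t<T$ this gives $|H_t(x)|\le w_a(x,t)^{-1}\|w_a(t)H_t\|_{L^\infty(B(x_0',R))}\le(R+\sqrt t+a)^{2+\eta}\|H\|_{\tilde X_a}$; adding back $\bar g_t'-\phi^*\bar g_t''$ (controlled by the first step) and letting $a\searrow 0$ — so that $w_a\nearrow w_0$, $H_0=h_0'-\phi^*h_0''$ near $x_0'$ (because $\bar g_0'=\phi^*\bar g_0''$ there), and $\|w_a(\cdot,0)H_0\|\to\|w_0(\cdot,0)(h_0'-h_0'')\|$ — produces the stated inequality. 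I expect the main obstacle to be exactly this uniform-in-$a$ weighted contraction estimate: carrying $w_a$ through the quadratic structure $Q^0+\nabla^*Q^1$ and through the term $\nabla^*\bar K'$ while keeping $\theta<1$, and showing that all heat-kernel and background discrepancies between the two manifolds are either of size $\lesssim(\|h'\|_X+\|h''\|_X)\|H\|_{\tilde X_a}$ (absorbable into the contraction) or of size $\lesssim c(n,R)$ in $\tilde Y_a$ (a legitimate source term).
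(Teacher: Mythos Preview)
Your overall architecture matches the paper's: pick backgrounds that coincide near $x_0'$, localize with a cutoff, obtain an absorption inequality in $\tilde X_a$ with contraction factor $\lesssim\|h'\|_X+\|h''\|_X$, use finiteness of $\|H\|_{\tilde X_a}$ for $a>0$ to rearrange, then send $a\searrow 0$. The weighted mapping estimates you anticipate are exactly Lemmata \ref{lemma:easy}, \ref{lemma:homogeneous}, \ref{lemma:hard}, and your observation about how $w_a$ passes through Gaussian convolutions is the content of (\ref{eq:expweight}) in the proof of Lemma \ref{lemma:hard}.

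The one substantive difference is how you handle the two backgrounds. You propose to subtract the two integral equations, each written with its own heat kernel, and then estimate the discrepancy $\bar K'-\phi^*\bar K''$ directly via interior Schauder estimates. The paper instead writes a \emph{single} evolution equation for $\varphi(h'-\phi^*h'')$ with respect to the operator $\partial_t+L^{\bar g'}$ only: since $\phi^*h''$ satisfies $(\partial_t+L^{\phi^*\bar g''})\phi^*h''=Q^{\phi^*\bar g''}[\phi^*h'']$, applying $\partial_t+L^{\bar g'}$ to it picks up the commutator $(L^{\bar g'}-L^{\phi^*\bar g''})\phi^*h''$, which produces explicit zeroth-order terms $(\Delta^{\bar g'}-\Delta^{\phi^*\bar g''})\phi^*h''$ and $(\Rm^{\bar g'}-\Rm^{\phi^*\bar g''})(\phi^*h'')$ (Terms III and IV). These are controlled directly because $\bar g_0'=\phi^*\bar g_0''$ on $B(x_0',4R)$ and the flows are smooth, so all differences of connection coefficients and curvature are $O(t^m)$ for any $m$ (the paper's estimate (\ref{eq:eatsweight})). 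This buys you a representation formula involving only $\bar K^{\bar g'}$, so no heat-kernel comparison is ever needed. Your route is not wrong, but the heat-kernel difference term would have to be carried through the $\nabla^*Q^1$ integration by parts, so you would need pointwise bounds on $\nabla(\bar K'-\phi^*\bar K'')$ compatible with the $\tilde Y_a^1$ structure; this is doable but noticeably more work than the paper's computation. The cutoff errors you gesture at are the paper's Terms V and VI: they are not small, but on $\operatorname{supp}\nabla\varphi\subset B(x_0',4R)\setminus B(x_0',R)$ the weight satisfies $w_a\le R^{-2-\eta}$, so these terms land harmlessly in the constant $c(n,R)$.
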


From Theorem \ref{thm:fixedpointexistence} follows this key result about the scalar curvatures near the initial time of such solutions.
\begin{theorem}\label{thm:weakagreement}
Suppose $g_0'$ and $g_0''$ are two $C^0$ metrics on closed $n$-manifolds $M'$ and $M''$ respectively. Suppose also that there exists a locally defined diffeomorphism $\phi: U\to V$, where $U\subset M'$ is some neighborhood of a point $x_0' \in M'$ and $V\subset M''$ is a neighborhood of $x_0'' = \phi(x_0')$, and  that $\phi^*g''$ agrees to greater than second order with $g'$ around $x_0'$, in the sense of Definition \ref{def:norms}.

Let $g'_t$ and $g''_t$ be solutions to the Ricci-DeTurck flow equation on $M'$ and $M''$ respectively, starting from $g'$ and $g''$, with respect to background Ricci flows $\bar g'(t)$ and $\bar g''(t)$, $t\in [0,T]$, as described in Theorem \ref{thm:fixedpointexistence}. Then
\begin{equation}
\sup_{C>0}\left(\limsup_{t\to 0}\left(\sup_{B_{\bar g_0'}(x_0', Ct^\beta)} |R^{g'}|(t)\right)\right) = \sup_{C>0}\left(\limsup_{t\to 0}\left(\sup_{B_{\bar g_0''}(x_0'', Ct^\beta)} |R^{g''}|(t)\right)\right),
\end{equation}
for $\beta \in (1/(2+\eta), 1/2)$, where $R^{g'}$ and $R^{g''}$ denote the scalar curvatures with respect to $g'$ and $g''$ respectively.
\end{theorem}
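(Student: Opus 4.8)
The plan is to deduce this from Theorem~\ref{thm:fixedpointexistence} together with the pointwise derivative bounds of \S\ref{sec:initialderivatives}. The heart of the matter is to show
\begin{equation*}
\limsup_{t\to 0}\ \sup_{B_{\bar g_0'}(x_0',Ct^\beta)}\big|R^{g'}(t)-R^{g''}(t)\circ\phi\big|=0\qquad\text{for every }C>0 .
\end{equation*}
Granting this, the theorem follows by a soft argument: scalar curvature is natural under diffeomorphism, so $R^{g''}(t)\circ\phi=R^{\phi^*g''}(t)$, and $\phi$ is bilipschitz between $\bar g_0'$-balls and $\bar g_0''$-balls near $x_0'$ (since $g_0',\phi^*g_0'',\bar g_0',\phi^*\bar g_0''$ are mutually $C^0$-close there), so $\phi\big(B_{\bar g_0'}(x_0',Ct^\beta)\big)$ is sandwiched between $B_{\bar g_0''}(x_0'',\lambda Ct^\beta)$ and $B_{\bar g_0''}(x_0'',\Lambda Ct^\beta)$ for small $t$; the triangle inequality applied to $R^{g'}(t)$ and $R^{\phi^*g''}(t)$, together with the displayed estimate and then $\sup_{C>0}$, gives ``$\le$'' in the theorem, and the symmetric argument with $\phi^{-1}$ gives the reverse inequality.

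To establish the displayed estimate I would set $h_t:=g'_t-\phi^*g''_t$ and work near $x_0'$, using $\bar g'(t)$ as a common background connection for both metrics (both $g'_t$ and $\phi^*g''_t$ are uniformly bilipschitz to $\bar g_0'$ by Lemma~\ref{lemma:RDTexistence}). Writing the scalar curvature of a metric as a universal expression in $\bar g'(t)^{-1}$, $\nabla(\cdot)$, $\nabla^2(\cdot)$ and $\Rm(\bar g'(t))$ (with $\nabla=\nabla^{\bar g'(t)}$), integrating $\tfrac{d}{ds}R[(1-s)\phi^*g''_t+sg'_t]$ over $s\in[0,1]$, and using that by Corollary~\ref{cor:C0RDTderivbounds} (applied to $g'_t$ with background $\bar g'_t$, and to $g''_t$ with background $\bar g''_t$, then pulled back by the fixed smooth $\phi$ and converted to $\nabla^{\bar g'(t)}$) one has $|\nabla^k g'_t|,|\nabla^k(\phi^*g''_t)|\le c\,t^{-k/2}$ for $k=1,2$, one arrives at a pointwise bound
\begin{equation*}
\big|R^{g'}(t)-R^{g''}(t)\circ\phi\big|\ \le\ c\Big(|\nabla^2 h_t|+t^{-1/2}|\nabla h_t|+t^{-1}|h_t|\Big),
\end{equation*}
with $c$ independent of $t$. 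It then remains to bound $|h_t|,|\nabla h_t|,|\nabla^2 h_t|$ on $B_{\bar g_0'}(x_0',Ct^\beta)$. For $|h_t|$, Theorem~\ref{thm:fixedpointexistence} gives $\sup_{B(x_0',R)}|h_t|\le (R+\sqrt t)^{2+\eta}K(R)$ with $K(R)$ bounded for $R$ in a fixed bounded range. The derivative estimates are the crux, and they must carry the greater-than-second-order weight: one needs, for $k=1,2$, a bound of the form $|\nabla^k h_t|(x)\le c\,t^{-k/2}\big(d(x_0',x)+\sqrt t\big)^{2+\eta}$ on a fixed neighbourhood of $x_0'$, for small $t$. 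Such a bound is \emph{not} obtainable by interpolating the $C^0$ estimate against the crude bound $|\nabla^3 h_t|\le c\,t^{-3/2}$ from Corollary~\ref{cor:C0RDTderivbounds}: in Lemma~\ref{lemma:derivativeinterpolation} the interpolation scale $a$ is at most the radius on which $h_t$ is known to be small, namely $\sim t^\beta$, and $(t^\beta)^{\ell-k}t^{-\ell/2}\to\infty$ for every $\ell$ since $\beta<1/2$. Instead, the weighted derivative bound is to be read off from the weighted estimates of \S\ref{sec:fixedptconstruction}: $h_t$ (after a cutoff, since it is defined only where $\phi$ is) lies in a weighted space $\tilde X_a$ with controlled norm, and the parabolic interior estimates of Lemma~\ref{lemma:interiorests}, applied on the cylinder $B(x,\sqrt t/c)\times[t/2,t]$ to the perturbation-type equation governing $h_t$ (whose quadratic error is controlled in the same weighted norm, the weight being essentially constant on such a cylinder), upgrade the weighted $L^\infty$ and $L^{n+4}$ control of $h_t,\nabla h_t$ to the pointwise weighted bounds on $\nabla h_t,\nabla^2 h_t$.

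Granting those bounds, fix $C>0$; for $t$ so small that $\sqrt t\le Ct^\beta$ and $2Ct^\beta$ lies in the relevant neighbourhood, on $B_{\bar g_0'}(x_0',Ct^\beta)$ one gets $|h_t|\le c_C t^{\beta(2+\eta)}$, $|\nabla h_t|\le c_C t^{\beta(2+\eta)-1/2}$ and $|\nabla^2 h_t|\le c_C t^{\beta(2+\eta)-1}$, so the displayed pointwise scalar-curvature estimate yields $\sup_{B_{\bar g_0'}(x_0',Ct^\beta)}|R^{g'}(t)-R^{g''}(t)\circ\phi|\le c_C\, t^{\beta(2+\eta)-1}$, which tends to $0$ as $t\to 0$ precisely because $\beta>1/(2+\eta)$. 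This is the displayed estimate, and the theorem follows as in the first paragraph. The main obstacle is exactly the step described above — propagating the weight $(d(x_0',x)+\sqrt t)^{2+\eta}$ from the $C^0$ bound of Theorem~\ref{thm:fixedpointexistence} to the first and second derivatives of $h_t$; the unweighted bounds of Corollary~\ref{cor:C0RDTderivbounds} lose $t^{-k/2}$ with no compensating gain from the weight and plain interpolation is insufficient, so the parabolic estimates must be run inside the weighted spaces of Definition~\ref{def:norms}, which is the content of the weighted analysis of \S\ref{sec:fixedptconstruction} and of its companion statement on second derivatives.
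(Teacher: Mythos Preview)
Your overall architecture is right and matches the paper: reduce to
\[
\sup_{B_{\bar g_0'}(x_0',Ct^\beta)}\big|R^{g'_t}-\phi^*R^{g''_t}\big|\le ct^\omega\to 0,
\]
then conclude by the triangle inequality (the paper's equation (\ref{eq:scalarsclose})). Your pointwise bound on $|R^{g'}-R^{\phi^*g''}|$ in terms of $|h_t|,|\nabla h_t|,|\nabla^2 h_t|$ is also exactly what the paper derives in (\ref{eq:ptwisescalardifference}).

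The genuine gap is in how you obtain the derivative bounds on $h_t=g'_t-\phi^*g''_t$. You assert that interpolation via Lemma~\ref{lemma:derivativeinterpolation} \emph{cannot} work, because with interpolation scale $a\sim t^\beta$ the high-order term $(t^\beta)^{\ell-k}t^{-\ell/2}$ blows up for every $\ell$. But you have only tested the largest admissible value of $a$; the lemma allows any $0<a\le r=Dt^\beta$, and taking $a$ \emph{smaller}---specifically $a=t^{1/2+\delta}$ for small $\delta>0$---is precisely what the paper does. With this choice the two terms in (\ref{eq:stronginterpolation}) become
\[
\frac{c}{a^2}\|h_t\|_{L^\infty(B(x_0',Dt^\beta+a))}\le c\,t^{-1-2\delta}\cdot t^{\beta(2+\eta)}=c\,t^{\beta(2+\eta)-1-2\delta},
\]
\[
c\,a^{\ell-2}\|\nabla^\ell h_t\|_{L^\infty}\le c\,t^{(1/2+\delta)(\ell-2)}\cdot t^{-\ell/2}=c\,t^{\delta\ell-1-2\delta}.
\]
Since $\beta>1/(2+\eta)$ forces $\beta(2+\eta)>1$, the first exponent is positive once $\delta$ is small; and the second exponent is positive once $\ell$ is large. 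This is exactly the paper's argument leading to (\ref{eq:secondderivsclose}) and (\ref{eq:firstderivsclose}). The only additional ingredient is the unweighted bound $\|\nabla^\ell h_t\|_{L^\infty}\le c t^{-\ell/2}$, which the paper extracts from Corollary~\ref{cor:C0RDTderivbounds} applied separately to $g'_t$ and $g''_t$ (with the cross-term $\|(\nabla^{\phi^*\bar g''})^\ell\phi^*g''_t-\nabla^\ell\phi^*g''_t\|$ handled by (\ref{eq:derivscloseweak})).

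So the paper's route is in fact the ``elementary'' interpolation route you rejected, not a weighted parabolic Schauder argument. Your proposed alternative---running Lemma~\ref{lemma:interiorests} inside the weighted spaces of Definition~\ref{def:norms}---would require a weighted analogue of Corollary~\ref{cor:C0RDTderivbounds} for the nonlinear difference equation satisfied by $h_t$, which the paper never proves; the $\tilde X_a$-norm controls only $h_t$ in $L^\infty$ and $\nabla h_t$ in $L^2,L^{n+4}$, not $\nabla^2 h_t$ pointwise. That detour is avoidable once you see that $a=t^{1/2+\delta}$ makes plain interpolation succeed.
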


We record here some observations concerning the weight $w_a(x,t)$:
Suppose that $y\in B(x, 2\sqrt{t})$. Then 
\begin{equation}\label{eq:wtimeslicecomparison}
\begin{split}
\frac{1}{(d(x,x_0) + \sqrt{t} + a)^{2+\eta}} &\geq \frac{1}{(d(y,x_0) + d(x,y) + \sqrt{t} + a)^{2+\eta}} \geq \frac{1}{(d(y,x_0) + 3\sqrt{t} + a)^{2+\eta}} 
\\& \geq \frac{3^{-2-\eta}}{(d(y,x_0) + \sqrt{t} + a)^{2+\eta}} 
\end{split}
\end{equation}
so $w_a(x,t) \geq 3^{-2-\eta}w_a(y,t)$. Moreover, $y\in B(x, 2\sqrt{t})$ implies that $x\in B(y, 2\sqrt{t})$, so exchanging $x$ and $y$ in the previous analysis implies $w_a(y,t)\geq 3^{-2-\eta}w_a(x,t)$. For $s\leq t$,
\begin{equation}\label{eq:wspaceslicecomparison}
w_a(x,s) \geq w_a(x,t).
\end{equation}
Finally, note that for all $(x,t)\in M\times(0,T)$ and $(y,s) \in B(x, \sqrt{t})\times(\tfrac{t}{2}, t)$,
$w_a(x,t)$ is comparable to $w_a(y,s)$, i.e. there is a universal constant $C$ depending only on $\eta$ such that
\begin{equation}\label{eq:wparabolicballcomparison}
\frac{1}{C}w_a(x,t)\leq w_a(y,s) \leq Cw_a(x,t).
\end{equation}
This holds because, by (\ref{eq:wtimeslicecomparison}) and (\ref{eq:wspaceslicecomparison}), we have
\begin{equation*}
3^{-2-\eta}w_a(x,t) \leq w_a(y,t) \leq w_a(y,s)
\end{equation*}
and
\begin{equation*}
w_a(y,s) \leq w_a(y,t/2) \leq \sqrt{2}^{2+\eta}w_a(y,t) \leq (3\sqrt{2})^{2+\eta}w_a(x,t).
\end{equation*}

We now prove some pointwise bounds for solutions to the homogeneous problem.
\begin{lemma}\label{lemma:homogeneoussup}
Fix a smooth background Ricci flow $\bar g(t)$ defined for $t\in [0,T]$, where $T$ is small enough so that (\ref{eq:almosteucl}) and (\ref{eq:boundedcoeffs}) hold, and let $\bar K_t$ be the corresponding heat kernel for the linear part of (\ref{eq:hevolution}). Suppose that $u_0$ is a $C^0$ $(0,2)$-tensor on $M$, and let 
\begin{equation*}
u(x,t) = \int_{M}\bar K(x,t;y,0)u_0(y)d_{\bar g_0}(y).
\end{equation*}
We have
\begin{equation}\label{eq:homogeneoussup}
\sup_{0<t<T}||w_a(t)u(t)||_{L^{\infty}(M)} \leq c||w_a(0)u_0||_{L^\infty(M)}
\end{equation}
and
\begin{equation}\label{eq:gradhomogeneoussup}
\sup_{0<t<T}||w_a(t)\nabla u(t)||_{L^{\infty}(M)} \leq \frac{c}{\sqrt{t}}||w_a(0)u_0||_{L^\infty(M)},
\end{equation}
\end{lemma}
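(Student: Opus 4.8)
The plan is to prove \eqref{eq:homogeneoussup} and \eqref{eq:gradhomogeneoussup} by inserting the heat kernel estimates from Corollary \ref{cor:exponentialboundnonscalar} directly into the representation formula $u(x,t) = \int_M \bar K(x,t;y,0)u_0(y)\,d_{\bar g_0}(y)$ and carefully tracking how the greater-than-second-order weight $w_a$ propagates through the Gaussian convolution. Write $u_0(y) = \big(w_a(y,0)^{-1}\big)\,\big(w_a(y,0)u_0(y)\big)$, so that $|u_0(y)| \leq \|w_a(0)u_0\|_{L^\infty(M)}\, w_a(y,0)^{-1} = \|w_a(0)u_0\|_{L^\infty(M)}\,(d(x_0,y)+a)^{2+\eta}$ (when the max in \eqref{eq:weight} is not realized by $1$; the region where $w_a \equiv 1$ is harmless and bounded). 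Then
\[
|w_a(x,t)u(x,t)| \leq \|w_a(0)u_0\|_{L^\infty(M)}\, w_a(x,t) \int_M C_0 t^{-n/2}\exp\!\left(-\frac{d_{\bar g_0}^2(x,y)}{Dt}\right)(d(x_0,y)+a)^{2+\eta}\,d_{\bar g_0}(y).
\]
So the whole statement reduces to the pointwise estimate
\[
w_a(x,t)\int_M t^{-n/2}\exp\!\left(-\frac{d^2(x,y)}{Dt}\right)(d(x_0,y)+a)^{2+\eta}\,d_{\bar g_0}(y) \leq c,
\]
i.e. that the Gaussian average of $(d(x_0,y)+a)^{2+\eta}$ is bounded by a constant times $(d(x_0,x)+\sqrt t + a)^{2+\eta}$.

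The key step is this last inequality, which I would prove by splitting the integral at $d(x,y) = d(x_0,x) + \sqrt t + a$. On the near region $\{d(x,y) \leq d(x_0,x)+\sqrt t + a\}$, the triangle inequality gives $d(x_0,y) + a \leq d(x_0,x) + d(x,y) + a \leq 2(d(x_0,x)+\sqrt t + a)$, so $(d(x_0,y)+a)^{2+\eta} \leq 2^{2+\eta}(d(x_0,x)+\sqrt t+a)^{2+\eta}$, and the remaining integral of $t^{-n/2}\exp(-d^2/(Dt))$ over all of $M$ is bounded by a dimensional constant (using \eqref{eq:almosteucl} / Bishop–Gromov, exactly as in the proof of \eqref{eq:heatkernelawayfrom0}). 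On the far region $\{d(x,y) > d(x_0,x)+\sqrt t+a\}$, one has $d(x_0,y)+a \leq 2d(x,y)$, and now the Gaussian decay dominates the polynomial: substituting $\rho = d(x,y)/\sqrt t$ and using Bishop–Gromov one gets $t^{-n/2}\int \exp(-\rho^2/D)(2\sqrt t \rho)^{2+\eta}\,(\text{vol element}) \leq c\, t^{(2+\eta)/2} = c(\sqrt t)^{2+\eta} \leq c(d(x_0,x)+\sqrt t+a)^{2+\eta}$. Multiplying by $w_a(x,t) = (d(x_0,x)+\sqrt t+a)^{-2-\eta}$ in both cases gives the claimed bound, proving \eqref{eq:homogeneoussup}.

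For \eqref{eq:gradhomogeneoussup} I would repeat the argument verbatim with $\bar K$ replaced by $\nabla \bar K$, using the $k=1$ case of \eqref{eq:ptwiseheatkernel}, namely $|\nabla \bar K_t|(x,y) \leq C_1 t^{-(1+n)/2}\exp(-d^2(x,y)/(Dt))$. The only change is an extra factor of $t^{-1/2}$ out front, which survives all the estimates and produces exactly the $\frac{c}{\sqrt t}$ on the right-hand side; note the integral bounds are unaffected since the Gaussian weight is the same (up to adjusting $D$, which only affects constants). I would also note at the outset that the region where $w_a$ attains the value $1$ contributes, after the same splitting, a term bounded by a constant times $\|w_a(0)u_0\|_{L^\infty(M)}$ times the total mass of the (sub-probability) heat kernel, hence is harmless.

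The main obstacle is purely bookkeeping: keeping the constants' dependencies honest (they should depend only on $n$, $\eta$, and $T\sup|\Rm|$, via the constants $C_0, C_1, D$ of Corollary \ref{cor:exponentialboundnonscalar} and the volume comparison \eqref{eq:almosteucl}), and handling the non-smooth $\max\{\cdot,1\}$ in the definition of $w_a$ cleanly — best done by simply observing $w_a(x,t)^{-1} \leq (d(x_0,x)+\sqrt t+a)^{2+\eta} + 1$ and running the two resulting pieces separately. There is no analytic difficulty beyond the elementary fact that Gaussians beat polynomials, which is what makes the far-region estimate work; everything else is the triangle inequality and Bishop–Gromov.
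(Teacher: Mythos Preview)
Your proposal is correct and follows essentially the same approach as the paper: both use the Gaussian bound on $\bar K$ from Corollary~\ref{cor:exponentialboundnonscalar}, write $|u_0(y)| \leq \|w_a(0)u_0\|_{L^\infty}\, w_a(y,0)^{-1}$, and then reduce to showing that the Gaussian average of $(d(x_0,y)+a)^{2+\eta}$ is controlled by $(d(x_0,x)+\sqrt t+a)^{2+\eta}$ via the triangle inequality and the fact that Gaussians beat polynomials, with Bishop--Gromov handling the volume integral. The only cosmetic difference is in the splitting: the paper first disposes of the case $w_a(x,t)=1$ and then, for the remaining case, splits the integral at the fixed radius $1$ (using \eqref{eq:heatkernelawayfrom0} on the complement and the uniform bound $\tfrac{(d(x_0,y)+\sqrt t+a)^{2+\eta}}{(d(x_0,x)+\sqrt t+a)^{2+\eta}} \leq c\big(1 + (d(x,y)/\sqrt t)^{2+\eta}\big)$ on the ball), whereas you split at the moving radius $d(x_0,x)+\sqrt t+a$; both lead to the same integrand $\exp(-s^2/D)(1+s^{2+\eta})$ after rescaling.
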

where $c = c(n,T\sup_{[0,T]}|\Rm|(\bar g(t)))$.
\begin{proof}
Fix $x\in M$ and $t>0$. If $w_a(x,t) = 1$, then using Corollary \ref{cor:exponentialboundnonscalar} and Bishop-Gromov, we have
\begin{align*}
|w(x,t)u(x,t)| &= |u(x,t)| \leq c\int_{M}t^{-n/2}\exp\left(-\frac{d^2_{\bar g_0}(x,y)}{Dt}\right)|u_0(y)|d_{\bar g_0}(y) 
\\& \leq c||u_0||_{L^\infty(M)}\int_M\exp\left(-\frac{d^2_{\hat{\bar g}_0}(x,y)}{D}\right)d_{\hat{\bar g}_0}(y)
\\& \leq c||u_0||_{L^\infty(M)}\int_0^\infty\vol(B_s^{-t\sup_{[0,T]}|\Rm|(\bar g)})\exp\left(-\frac{s^2}{D}\right)ds
\\& \leq c||u_0||_{L^\infty(M)} \leq c||w_a(0)u_0||_{L^\infty(M)},
\end{align*}
where $\hat{\bar g}_0:= (1/t)\bar g_0$.
Similarly, we find
\begin{align*}
|w(x,t)\nabla u(x,t)| &= |\nabla u(x,t)| \leq c\int_{M}t^{-n/2}\frac{\exp\left(\frac{-d^2_{\bar g_0}(x,y)}{Dt}\right)}{\sqrt{t}}|u_0(y)|d_{\bar g_0}(y) 
\\& \leq \frac{c}{\sqrt{t}}||u_0||_{L^\infty(M)} \leq \frac{c}{\sqrt{t}}||w_a(0)u_0||_{L^\infty(M)}.
\end{align*}
Now suppose $w_a(x,t) = (d_{\bar g_0}(x,x_0) + \sqrt{t} + a)^{-2-\eta}$. First observe that $1 \leq (d_{\bar g_0}(y,x_0)+ \sqrt{t} + a)^{2+\eta}w_a(y,t)$ for all $y\in M$, $t>0$. We appeal to (\ref{eq:ptwiseheatkernel}), (\ref{eq:heatkernelawayfrom0}), and Jensen's inequality to find
\begin{align*}
&w_a(x,t)|u(x,t)| \leq (d_{\bar g_0}(x,x_0) + \sqrt{t} + a)^{-2 -\eta}\bigg[\int_{B_{\bar g_0}(x,1)} |\bar K_{t}(x,y)||u_0(y)|d_{\bar g_0}(y) 
\\& \qquad \qquad \qquad \qquad \qquad + \int_{M\setminus B_{\bar g_0}(x,1)}|\bar K_t(x,y)||u_0(y)|d_{\bar g_0}(y)\bigg]
\\& \leq C(d(x,x_0) + \sqrt{t} + a)^{-2 -\eta}\left(\int_{B_{\bar g_0}(x,1)}t^{-n/2}\exp\left(\frac{-d^2_{\bar g_0}(x,y)}{Dt}\right)|u_0(y)|d_{\bar g_0}(y)\right) 
\\& \qquad \qquad \qquad \qquad + C\sqrt{t}^{-2-\eta}e^{-\frac{1}{2Dt}}||u_0||_{L^\infty(M)}
\\& \leq C\left(\int_{B_{\bar g_0}(x,1)}t^{-n/2}\exp\left(\frac{-d^2_{\bar g_0}(x,y)}{Dt}\right)\frac{(d_{\bar g_0}(y,x_0) + \sqrt{t} + a)^{2+\eta}w_a(y)}{(d_{\bar g_0}(x,x_0) + \sqrt{t} + a)^{2+\eta}}|u_0(y)|d_{\bar g_0}(y)\right) 
\\& \qquad \qquad \qquad \qquad  + C\sqrt{t}^{-2-\eta}e^{-\frac{1}{2Dt}}||u_0||_{L^\infty(M)} 
\\& \leq C||w_a(0)u_0||_{L^\infty(M)}\left(\int_{B_{\bar g_0}(x,1)}t^{-n/2}\exp\left(\frac{-d_{\bar g_0}^2(x,y)}{Dt}\right)\frac{(d_{\bar g_0}(x,y) + d_{\bar g_0}(x,x_0) + \sqrt{t} + a)^{2+\eta}}{(d_{\bar g_0}(x,x_0) + \sqrt{t} + a)^{2+\eta}}d_{\bar g_0}(y)\right) 
\\& \qquad \qquad \qquad \qquad  + C\sqrt{t}^{-2-\eta}e^{-\frac{1}{2Dt}}||u_0||_{L^\infty(M)}
\\& \leq C||w_a(0)u_0||_{L^\infty(M)}\bigg(\int_{B_{\bar g_0}(x,1)}t^{-n/2}\exp\left(\frac{-d_{\bar g_0}^2(x,y)}{Dt}\right)\left(1 + \frac{d_{\bar g_0}(x,y)^{2+\eta}}{\sqrt{t}^{2+\eta}}\right)d_{\bar g_0}(y)\bigg) 
\\&\qquad \qquad \qquad \qquad  + C\sqrt{t}^{-2-\eta}e^{-\frac{1}{2Dt}}||u_0||_{L^\infty(M)}
\\& \leq C||w_a(0)u_0||_{L^\infty(M)}\int_M\exp\left(-\frac{d^2_{\bar g_0'}(x,y)}{D}\right)\left(1 + d_{\bar g_0'}^{2+\eta}(x,y)\right)d_{\bar g_0'}(y) + C\sqrt{t}^{-2-\eta}e^{-\frac{1}{2Dt}}||u_0||_{L^\infty(M)}
\\&\leq C||w_a(0)u_0||_{L^\infty(M)}\int_0^\infty\vol(B_s^{-t\sup_{[0,T]}|\Rm|(\bar g)})\exp\left(-\frac{s^2}{D}\right)(1 + s^{2+\eta})ds + C\sqrt{t}^{-2-\eta}e^{-\frac{1}{2Dt}}||u_0||_{L^\infty(M)}
\\& \leq C||w_a(0)u_0||_{L^\infty(M)}
\end{align*}
The proof to show (\ref{eq:gradhomogeneoussup}) is similar.
\end{proof}

We now bound the $\tilde Y_a$-norm of the quadratic term of (\ref{eq:hevolution}), cf. \cite[Lemma $4.1$]{KL1}.
\begin{lemma}\label{lemma:easy}
Fix a smooth background Ricci flow $\bar g(t)$ defined for $t\in [0,T]$. If $T$ is sufficiently small so that (\ref{eq:almosteucl}) holds, then for every $0<\gamma <1$ and every $h\in X^\gamma$ we have the estimate
\begin{equation}
||Q^0[h]+ \nabla^*Q^1[h]||_{\tilde Y_a(B_{\bar g_0}(x, r))} \leq c(n,\gamma, T\sup_{[0,T]}|\Rm|(\bar g(t)))||h||_{X(B(x,r))}||h||_{\tilde X_a(B_{\bar g_0}(x,r))}.
\end{equation}
\end{lemma}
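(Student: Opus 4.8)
The plan is to split $Q^0[h]+\nabla^*Q^1[h]$ along the decomposition that defines the $\tilde Y_a$-norm, placing $Q^0[h]$ into $\tilde Y^0_a$ and $Q^1[h]$ into $\tilde Y^1_a$, so that it suffices to bound $\|Q^0[h]\|_{\tilde Y^0_a(B(x,r))}$ and $\|Q^1[h]\|_{\tilde Y^1_a(B(x,r))}$ separately. Since $h\in X^\gamma$ forces $|h|_{\bar g}\le\gamma<1$, the estimates (\ref{eq:inverseexpansion1})--(\ref{eq:inverseexpansion2}) apply and give the pointwise bounds (\ref{eq:ptwiseR0}) and (\ref{eq:ptwiseR1}), namely $|Q^0[h]|\le c(n,\gamma)(|\nabla h|^2+|\Rm^{\bar g_t}||h|^2)$ and $|Q^1[h]|\le c(n,\gamma)|h||\nabla h|$. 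Everything then reduces to estimating the three model quantities $|\nabla h|^2$, $|\Rm^{\bar g_t}||h|^2$, and $|h||\nabla h|$ in the appropriate weighted norms.

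The central bookkeeping device is that one factor of the weight may always be promoted to a \emph{pointwise} weighted factor. On the backward cylinder $B(x,r)\times(0,r^2)$ the constant $w_a(x,r^2)$ appearing in the $\tilde Y_a$- and $\tilde X_a$-norms is comparable, up to a constant depending only on $\eta$, to the pointwise weight $w_a(y,s)$ at every point of the cylinder: this follows by combining the time-monotonicity (\ref{eq:wspaceslicecomparison}) with the spatial comparison (\ref{eq:wtimeslicecomparison}) (and on $B(x,r)\times(r^2/2,r^2)$ it is immediate from (\ref{eq:wparabolicballcomparison})). Hence $w_a(x,r^2)\|h\|_{L^\infty(B(x,r)\times(0,r^2))}\le c\|h\|_{\tilde X_a(B(x,r))}$, while unweighted factors of $h$ and $\nabla h$ are absorbed into $\|h\|_{X(B(x,r))}$.

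For the $|\nabla h|^2$ term, Hölder gives $\|\,|\nabla h|^2\|_{L^1}=\|\nabla h\|_{L^2}^2$ and $\|\,|\nabla h|^2\|_{L^{(n+4)/2}}=\|\nabla h\|_{L^{n+4}}^2$, and the powers of $r$ match up exactly ($r^{-n}=(r^{-n/2})^2$, $r^{4/(n+4)}=(r^{2/(n+4)})^2$), so one writes $w_a(x,r^2)r^{-n}\|\nabla h\|_{L^2}^2=(r^{-n/2}\|\nabla h\|_{L^2})(w_a(x,r^2)r^{-n/2}\|\nabla h\|_{L^2})\le\|h\|_{X(B(x,r))}\|h\|_{\tilde X_a(B(x,r))}$, and likewise for the $L^{n+4}$ piece; the $|h||\nabla h|$ term in $\tilde Y^1_a$ is treated the same way, using $\|\,|h||\nabla h|\,\|_{L^p}\le\|h\|_{L^\infty}\|\nabla h\|_{L^p}$ for $p=2,n+4$ and pairing $\|h\|_{L^\infty}\le\|h\|_{X}$ with the weighted gradient factors controlled by $\|h\|_{\tilde X_a}$.

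The genuinely new term relative to the flat-background computation of \cite[Lemma $4.1$]{KL1} is $|\Rm^{\bar g_t}||h|^2$, which cannot be written as $\nabla h\star\nabla h$; here one simply bounds $|h|^2\le\|h\|_{L^\infty}^2$, uses (\ref{eq:almosteucl}) to estimate the parabolic cylinder volume by $cr^{n+2}$, and notes $r^{-n}\cdot r^{n+2}=r^2$ and $r^{4/(n+4)}\cdot(r^{n+2})^{2/(n+4)}=r^2$, so this term contributes at most $c(n,\gamma)\,r^2\sup_{[0,T]}|\Rm|(\bar g)\,w_a(x,r^2)\|h\|_{L^\infty(B(x,r)\times(0,r^2))}^2\le c(n,\gamma,T\sup_{[0,T]}|\Rm|(\bar g))\|h\|_{X(B(x,r))}\|h\|_{\tilde X_a(B(x,r))}$, where $r^2<T$ is used to absorb $T\sup|\Rm|$ into the constant. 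Summing the three contributions yields the claimed bound. I do not expect a serious obstacle here; the only thing requiring care is the uniform tracking of the weight $w_a$ — specifically, verifying that the anchored weight $w_a(x,r^2)$ dominates the pointwise weight throughout the relevant cylinder so that exactly one factor in each quadratic expression can be upgraded to a $\tilde X_a$-factor — together with the handling of the new curvature term, which is responsible for the $T\sup|\Rm|$ dependence of the constant.
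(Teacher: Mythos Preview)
Your proposal is correct and follows essentially the same approach as the paper's proof: both place $Q^0[h]$ in $\tilde Y^0_a$ and $Q^1[h]$ in $\tilde Y^1_a$, invoke the pointwise bounds (\ref{eq:ptwiseR0})--(\ref{eq:ptwiseR1}), factor $|\nabla h|^2$ via H\"older so that the $r$-powers split, pull one $L^\infty$-factor of $h$ out of $|h||\nabla h|$ and $|\Rm||h|^2$, and use (\ref{eq:almosteucl}) together with $r^2<T$ to absorb the curvature contribution. One small imprecision: on the full cylinder $B(x,r)\times(0,r^2)$ the anchored weight $w_a(x,r^2)$ is only \emph{bounded above} by $c(\eta)w_a(y,s)$ (combine (\ref{eq:wspaceslicecomparison}) with (\ref{eq:wtimeslicecomparison})), not two-sidedly comparable, since $w_a(y,s)$ can be arbitrarily large for $s$ near $0$; but this is exactly the direction you use, so the argument goes through.
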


\begin{proof} 
Let $h\in X^\gamma(B(x,r))$. Then, adopting the constants from (\ref{eq:ptwiseR0}), and using (\ref{eq:wtimeslicecomparison}) and (\ref{eq:wspaceslicecomparison}), we find
\begin{align*}
&||R^0[h]||_{\tilde Y^0_a(B(x,r))} \leq c(n,\gamma) w_{a}(x,r^2)\bigg(r^{-n}|||\nabla h|^2||_{L^1(B(x,r)\times(0,r^2))} + r^{\tfrac{4}{n+4}}|||\nabla h|^2||_{L^{\tfrac{n+4}{2}}(B(x,r)\times(\tfrac{r^2}{2}, r^2))} 
\\& \qquad \qquad + r^{-n}\sup_{[0,T]}|\Rm|(\bar g)||h||^2_{L^\infty(B(x,r)\times(0,r^2))}r^{n + 2} + r^{\tfrac{4}{n+4}}\sup_{[0,T]}|\Rm|(\bar g)||h||^2_{L^\infty(B(x,r)\times(\tfrac{r^2}{2},r^2))}(r^{n+2})^{\tfrac{2}{n+4}}\bigg)
\\& \leq c(n,\gamma) w_{a}(x,r^2)\bigg(r^{-n/2}||\nabla h||_{L^2(B(x,r)\times(0,r^2))} + r^{\tfrac{2}{n+4}}||\nabla h||_{L^{n+4}(B(x,r)\times(\tfrac{r^2}{2}, r^2))}
 \\& \qquad+ T\sup_{t\in[0,T]}|\Rm|(\bar g(t))\sup_{0<t<T}||w_a(t)h_t||_{L^\infty(B(x,r))}\bigg) \times \bigg(r^{-n/2}||\nabla h||_{L^2(B(x,r)\times(0,r^2))} 
 \\& \qquad \qquad \qquad + r^{\tfrac{2}{n+4}}||\nabla h||_{L^{n+4}(B(x,r)\times(\tfrac{r^2}{2}, r^2))} + ||h_t||_{L^\infty(B(x,r))}\bigg)  
 \\& \qquad \qquad \leq c(n,\gamma, T\sup_{[0,T]}|\Rm|(\bar g(t)))||h||_{X(B(x,r))}||h||_{\tilde X_a(B(x,r))}, 
\end{align*}
and, by (\ref{eq:ptwiseR1}),
\begin{align*}
||R^1[h]||_{\tilde Y^1_a(B(x,r))} &\leq c(n,\gamma)w_{a}(x,r^2)\bigg(r^{-\tfrac{n}{2}}|||h||\nabla h|||_{L^2(B(x,r)\times(0,r^2))} 
\\& \qquad \qquad + r^{\tfrac{2}{n+4}}|||h||\nabla h|||_{L^{n+4}(B(x,r)\times(\tfrac{r^2}{2}, r^2))}\bigg)
\\& \leq c \sup_{0<t<T}||h_t||_{L^\infty(B(x,r))}||\nabla h||_{\tilde Y^1_a(B(x,r))} \leq c||h||_{X(B(x,r))}||h||_{\tilde X_a(B(x,r))}.
\end{align*}
\end{proof}

We now bound the $\tilde X_a$-norm of the homogeneous part of a solution to (\ref{eq:integraleq}); cf. \cite[Lemma $2.2$]{KL1}
\begin{lemma}\label{lemma:homogeneous}
Fix a smooth background Ricci flow $\bar g(t)$ defined for $t\in [0,T]$, and let $\bar K_t$ be the corresponding heat kernel for the linear part of (\ref{eq:hevolution}). Let $h_0$ be a $C^0$ $(0,2)$-tensor on $M$ and $h$ be the time-dependent family of $(0,2)$-tensors on $M$ given by
\begin{equation*}
h(x,t) = \int_M \bar K(x,t;y,0)h_0(y)d_{\bar g_0}(y).
\end{equation*}
Then, for $T$ sufficiently small so that (\ref{eq:almosteucl}) and (\ref{eq:boundedcoeffs}) hold, we have 
\begin{equation}
||h||_{\tilde X_a} \leq c||w_a(0)h_0||_{L^\infty(M)},
\end{equation}
where $c = c(n, T\sup_{[0,T]}|\Rm|(\bar g(t)))$.
\end{lemma}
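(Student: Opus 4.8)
The plan is to bound each of the three summands of $||u||_{\tilde X_a(B(x_1,r))}$ (writing $u:=h$) by $c||w_a(0)h_0||_{L^\infty(M)}$ uniformly in $x_1\in M$ and $0<r^2<T$, and then take the supremum. The weighted supremum term $\sup_{0<t<T}||w_a(t)u(t)||_{L^\infty(B(x_1,r))}$ is immediately controlled by (\ref{eq:homogeneoussup}) of Lemma \ref{lemma:homogeneoussup}, since restriction to $B(x_1,r)$ only decreases the norm. For the weighted $\dot W^{1,n+4}$ term $w_a(x_1,r^2)\,r^{2/(n+4)}||\nabla u||_{L^{n+4}(B(x_1,r)\times(r^2/2,r^2))}$, which is situated away from $t=0$, I would invoke the gradient bound (\ref{eq:gradhomogeneoussup}) pointwise: on $B(x_1,r)\times(r^2/2,r^2)$ one has $|\nabla u(y,s)|\le \tfrac{c}{\sqrt s}w_a(y,s)^{-1}||w_a(0)h_0||_{L^\infty}\le \tfrac{C}{r}w_a(y,s)^{-1}||w_a(0)h_0||_{L^\infty}$, and $w_a(y,s)^{-1}\le C\,w_a(x_1,r^2)^{-1}$ there by (\ref{eq:wparabolicballcomparison}); Hölder's inequality and the volume bound (\ref{eq:almosteucl}) then give $||\nabla u||_{L^{n+4}(\cdot)}\le \tfrac{C}{r}w_a(x_1,r^2)^{-1}||w_a(0)h_0||_{L^\infty}\,(r^{n+2})^{1/(n+4)}$, after which the powers of $r$ and the factors $w_a(x_1,r^2)$ cancel.

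The weighted $\dot W^{1,2}$ term $w_a(x_1,r^2)\,r^{-n/2}||\nabla u||_{L^2(B(x_1,r)\times(0,r^2))}$ is the crux, since the pointwise gradient bound alone would leave a divergent $\int_0^{r^2}ds/s$; instead I would run a weighted Caccioppoli estimate. Fix a spatial cutoff $\phi$ with $\phi\equiv1$ on $B(x_1,r)$, $\supp\phi\subset B(x_1,2r)$, $|\nabla\phi|\le C/r$ (taking $\phi\equiv1$ if $2r>\diam M$), set $\omega:=w_a^2$ (replaced by the comparable, essentially smooth weight $(d(\cdot,x_0)+\sqrt s+a)^{-2-\eta}+1$, with the cut locus of $d(\cdot,x_0)$ handled in the usual way), and note $\partial_s\omega\le0$ and $|\nabla\omega|\le C\omega/(d(\cdot,x_0)+\sqrt s+a)$. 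For $\epsilon\in(0,r^2)$ I would test the linear part of (\ref{eq:hevolution}) against $\phi^2\omega u$ over $M\times(\epsilon,r^2)$ — the truncation at $\epsilon$ being forced by the singularity of $\omega$ at $(x_0,0)$ when $a=0$, which makes $\int\phi^2\omega|\nabla u|^2$ a priori possibly infinite. Using the Bochner-type identity relating $|\nabla u|^2$, the Laplacian of $|u|^2$, $\partial_s|u|^2$, and the curvature term, integrating by parts once in space so that only $\nabla\omega$ appears and its cross term with $\nabla u$ is absorbed into $\int\phi^2\omega|\nabla u|^2$, integrating by parts once in time (the $s=\epsilon$ boundary term being $\le\int\phi^2(w_a(\epsilon)|u(\epsilon)|)^2\le c||w_a(0)h_0||_{L^\infty}^2\vol B(x_1,2r)$ by (\ref{eq:homogeneoussup}), the terms from $\partial_s(d\bar g_s)=-R(\bar g_s)d\bar g_s$ and from the curvature absorbed using $T\sup|\Rm|(\bar g)<\infty$, and the $\partial_s\omega$ term discarded by its sign), and using $\omega|u|^2=(w_a|u|)^2\le||w_a(0)h_0||_{L^\infty}^2$ from (\ref{eq:homogeneoussup}), one is left with $\int_{\epsilon}^{r^2}\int_M\phi^2\omega|\nabla u|^2$ bounded by $C||w_a(0)h_0||_{L^\infty}^2\big(\vol B(x_1,2r)+\int_0^{r^2}\int_{B(x_1,2r)}(d(y,x_0)+\sqrt s+a)^{-2}\,dy\,ds\big)$. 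The decisive elementary computation is that this spacetime integral is $\lesssim\int_{B(x_1,2r)}\ln\tfrac{d(y,x_0)+r+a}{d(y,x_0)+a}\,dy\lesssim r^n$, the logarithmic singularity at $x_0$ (present only when $a=0$) being integrable in every dimension. Letting $\epsilon\to0$ by monotone convergence and then using $\omega(y,s)\ge 3^{-2(2+\eta)}w_a(x_1,r^2)^2$ on $B(x_1,r)\times(0,r^2)$ — which follows from (\ref{eq:wspaceslicecomparison}) and (\ref{eq:wtimeslicecomparison}) with the roles of $x_1$ and $y$ exchanged — to extract the external weight, I would conclude $w_a(x_1,r^2)^2\int_0^{r^2}\int_{B(x_1,r)}|\nabla u|^2\le C||w_a(0)h_0||_{L^\infty}^2 r^n$, i.e. exactly the required bound on the third summand.

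Summing the three bounds and taking the supremum over $x_1\in M$ and $0<r^2<T$ then yields $||h||_{\tilde X_a}\le c||w_a(0)h_0||_{L^\infty(M)}$, with $c$ depending only on $n$ and $T\sup_{[0,T]}|\Rm|(\bar g(t))$ as claimed. I expect the weighted Caccioppoli step to be the sole genuine obstacle: one must simultaneously manage the singularity of $\omega$ at $(x_0,0)$ (hence the $\epsilon$-truncation and the care needed for the absorption argument), recover the external weight $w_a(x_1,r^2)$ through the elementary weight comparisons of this section, and verify that the error terms generated by $\nabla\omega$ are tamed by the borderline integrability of $(d(\cdot,x_0)+\sqrt s+a)^{-2}$ over a parabolic cylinder — which is precisely the gain afforded by the greater-than-second-order exponent $2+\eta$.
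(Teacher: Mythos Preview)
Your proposal is correct, and for the weighted $L^\infty$ term and the weighted $L^{n+4}$ gradient term you do exactly what the paper does: invoke (\ref{eq:homogeneoussup}) for the former, and for the latter use the pointwise gradient bound (\ref{eq:gradhomogeneoussup}) together with the weight comparison (\ref{eq:wparabolicballcomparison}) and the volume bound (\ref{eq:almosteucl}).

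For the weighted $\dot W^{1,2}$ term, however, your route is genuinely different and considerably more elaborate than the paper's. You build the weight $\omega=w_a^2$ into the test function and run a \emph{weighted} Caccioppoli estimate, which forces you to (i) smooth the weight and cope with the cut locus, (ii) truncate in time at $\epsilon>0$ to avoid the singularity of $\omega$ at $(x_0,0)$ when $a=0$, (iii) absorb the cross term coming from $\nabla\omega$, and (iv) verify the borderline integrability of $(d(\cdot,x_0)+\sqrt s+a)^{-2}$ over a parabolic cylinder. All of this works, but the paper sidesteps every one of these complications: it runs the \emph{unweighted} energy estimate (testing only against $\eta_{x,r^2}^2 h$), obtaining
\[
\int_0^{r^2}\!\!\int_{B(x,r)}|\nabla h|^2 \;\lesssim\; \int_{B(x,2r)}|h_0|^2 \;+\; \frac{1}{r^2}\int_0^{r^2}\!\!\int_{B(x,2r)}|h|^2 \;+\; \sup|\Rm|\int_0^{r^2}\!\!\int_{B(x,2r)}|h|^2,
\]
and only afterwards multiplies through by the constant $w_a(x,r^2)^2 r^{-n}$. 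The elementary comparisons (\ref{eq:wtimeslicecomparison}) and (\ref{eq:wspaceslicecomparison}) then push this constant weight inside the integrals, turning $w_a(x,r^2)|h_0(y)|$ into $w_a(y,0)|h_0(y)|$ and $w_a(x,r^2)|h(y,s)|$ into $w_a(y,s)|h(y,s)|$; the latter is controlled by the already-established pointwise bound (\ref{eq:homogeneoussup}). No $\nabla\omega$ terms ever appear, no truncation is needed, and the integrability computation is replaced by a trivial volume count. Your approach buys nothing extra here, though the weighted-Caccioppoli technique you describe can be useful in settings where the external weight cannot be treated as effectively constant on the relevant parabolic cylinder.
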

\begin{proof}
The bound on the $L^\infty$-term is handled by Lemma \ref{lemma:homogeneoussup}. To bound the $L^2$-term, we multiply by a cutoff function and integrate by parts, as follows. Let $\eta:[0,\infty) \to [0,1]$ be a smooth cutoff function such that $\eta \equiv 1$ on $[0,1]$ and $\eta \equiv 0$ on $[2,\infty)$, with gradient bounded by $2$, say. For all $x\in M$ and $t>0$ let $\eta_{x,t} = \eta(d_{\bar g_0}(x,\cdot)/\sqrt{t})$, so that $|\nabla \eta_{x,t}|_{\bar g_t} \leq \frac{c(T\sup_{[0,T]}|\Rm|(\bar g(t)))}{\sqrt{t}}$. We pair $\eta_{x,r^2}$ with the evolution equation for $h$ and integrate by parts over $M\times[0,r^2]$ to find:
\begin{align*}
\frac{1}{2}\int_M \eta_{x,r^2}^2&|h_{r^2}|_{\bar g_0}^2 - \frac{1}{2}\int_M \eta_{x,r^2}^2|h_0|_{\bar g_0}^2 = \int_0^{r^2}\int_M\eta^2_{x, r^2} \langle\partial_s h, h\rangle_{\bar g_0} = \int_0^{r^2}\int_M \eta^2_{x,r^2}\langle Lh, h \rangle_{\bar g_0} 
\\& = \int_0^{r^2}\int_M \eta^2_{x,r^2}\langle\Delta^{\bar g_s} + \Rm^{\bar g_s}(h), h\rangle_{\bar g_0}
\\& \leq e^{T\sup_{[0,T]}|\Rm|(\bar g(t))}\int_0^{r^2}\int_M\eta_{x,r^2}^2\langle\Delta^{\bar g_s}h, h\rangle_{\bar g(s)} + e^{T\sup_{[0,T]}|\Rm|(\bar g(t))}\int_0^{r^2}\int_M\eta_{x,r^2}^2\langle\Rm^{\bar g_s}(h), h\rangle_{\bar g_s}
\\& \leq -c\int_0^{r^2}\int_M\eta_{x,r^2}|\nabla h|_{\bar g_s}^2 + 2c\int_0^{r^2}\int_M\eta_{x,r^2}|\nabla \eta_{x,r^2}|_{\bar g_s}|\nabla h|_{\bar g_s} |h|_{\bar g_s} + c\int_0^{r^2}\int_M \eta^2_{x,r^2}|\Rm^{\bar g_s}|_{\bar g_s}|h|_{\bar g_s}^2
\\& \leq -c\int_0^{r^2}\int_M\eta_{x,r^2}|\nabla h|_{\bar g_0}^2 + 2c\int_0^{r^2}\int_M\eta_{x,r^2}|\nabla \eta_{x,r^2}|_{\bar g_0}|\nabla h|_{\bar g_0} |h|_{\bar g_0} + c\int_0^{r^2}\int_M \eta^2_{x,r^2}|\Rm^{\bar g_s}|_{\bar g_0}|h|_{\bar g_0}^2
\end{align*}
so, using Young's inequality, we find
\begin{align*}
0 &\leq \frac{1}{2}\int_M\eta_{x,r^2}^2|h_0|^2 - c\int_0^{r^2}\int_M\eta_{x,r^2}^2|\nabla h|^2 + 2c\int_0^{r^2}\int_M\eta_{x,r^2}|\nabla\eta_{x,r^2}||\nabla h||h| + c\int_0^{r^2}\int_M \eta^2_{x,r^2}|\Rm||h|^2
\\& \leq  \frac{1}{2}\int_M\eta_{x,r^2}^2|h_0|^2 - c\int_0^{r^2}\int_M\eta_{x,r^2}^2|\nabla h|^2 + \frac{c}{2}\int_0^{r^2}\int_M \eta_{x,r^2}^2|\nabla h|^2 
\\& \qquad \qquad + 8c\int_0^{r^2}\int_M |\nabla \eta_{x,r^2}|^2|h|^2 + c\int_0^{r^2}\int_M\eta^2_{x,r^2}|\Rm||h|^2
\end{align*}

Thus we find
\begin{align*}
\frac{c}{2}\int_0^{r^2}\int_M \eta_{x,r^2}^2|\nabla h| & \leq \frac{1}{2}\int_M\eta_{x,r^2}^2|h_0|^2  + 8c\int_0^{r^2}\int_M |\nabla \eta_{x,r^2}|^2|h|^2 + c\int_0^{r^2}\int_M\eta^2_{x,r^2}|\Rm||h|^2,
\end{align*}
so, by (\ref{eq:wtimeslicecomparison}) and (\ref{eq:wspaceslicecomparison}), we have
\begin{align*}
&w_{a}(x,r^2)^2r^{-n}\int_0^{r^2}\int_{B(x,r)}|\nabla h|^2 \leq cw_{a}(x,r^2)^2r^{-n}\int_{B(x,2r)}|h_0|^2 + \frac{cw_{a}(x,r^2)^2r^{-n}}{r^2}\int_0^{r^2}\int_{B(x, 2r)}|h|^2
\\& \qquad \qquad \qquad + cw_a(x,r^2)^2r^{-n}\int_0^{r^2}\int_{B(x,2r)}\sup_{[0,T]}|\Rm|(\bar g)|h|^2
\\& \leq cr^{-n}\int_{B(x,2r)}|w_{a}(r^2)h_0|^2 + \frac{cr^{-n}}{r^2}\int_0^{r^2}\int_{B(x,2r)}|w_{a}(r^2)h|^2
 + cr^{-n}\int_0^{r^2}\int_{B(x,2r)}\sup_{[0,T]}|\Rm|(\bar g)|w_a(r^2)h|^2
\\& \leq cr^{-n}\int_{B(x,2r)}|w_{a}(0)h_0|^2 + \frac{cr^{-n}}{r^2}\int_0^{r^2}\int_{B(x,2r)}|w_ah|^2
  + cr^{-n}\int_0^{r^2}\int_{B(x,2r)}\sup_{[0,T]}|\Rm|(\bar g)|w_ah|^2
\\& \leq cr^{-n}(2r)^n||w_a(0)h_0||^2_{L^\infty(M)} + cr^{-n}r^{-2}r^2(2r)^n||w_ah||^2_{L^\infty(M\times[0,r^2])} \\& \qquad \qquad \qquad + cr^{-n}(r^2\sup_{[0,T]}|\Rm|(\bar g(t)))(2r)^n||w_ah||_{L^\infty}^2
\\& \leq c||w_a(0)h_0||^2_{L^\infty(M)},
\end{align*}
where the last inequality follows from the pointwise bound (\ref{eq:homogeneoussup}) and (\ref{eq:almosteucl}). The $L^{n+4}$-term follows from (\ref{eq:gradhomogeneoussup}) and (\ref{eq:wparabolicballcomparison}) by integration, as
\begin{align*}
w_a(x,r^2)r^{\tfrac{2}{n+4}}&\left(\int_{r^2/2}^{r^2}\int_{B(x,r)}|\nabla h|^{n+4}\right)^{\tfrac{1}{n+4}} \leq cr^{\tfrac{2}{n+4}}\left(\int_{r^2/2}^{r^2}\int_{B(x,r)}|w_a(y,s)\nabla h(y,s)|^{n+4}\right)^{\tfrac{1}{n+4}}
\\&\leq c||w_a(0)h_0||_{L^\infty(M)}r^{\tfrac{2}{n+4}}\left(\int_{r^2/2}^{r^2}\int_{B(x,r)}(\sqrt{s})^{-n-4}\right)^{\tfrac{1}{n+4}} 
\\& \leq c||w_a(0)h_0||_{L^\infty(M)}r^{\tfrac{2}{n+4}}\left(r^{-n-4}r^{n+2}\right)^{\tfrac{1}{n+4}} \leq c||w_a(0)u_0||_{L^\infty(M)},
\end{align*}
also by (\ref{eq:almosteucl}).
\end{proof}

We now show (cf. \cite[Lemma $4.2$]{KL1}):
\begin{lemma}\label{lemma:hard}
Fix a smooth background Ricci flow $\bar g(t)$ defined for $t\in [0,T]$, and let $\bar K_t$ be the corresponding heat kernel for the linear part of (\ref{eq:hevolution}). Let $h$ be the time-dependent family of $(0,2)$-tensors on $M$ given by
\begin{equation*}
h(x,t) = \int_0^t\int_M\bar K(x,t;y,s)Q(y,s)d_{\bar g_s}(y)ds,
\end{equation*}
where $Q= Q^0 + \nabla^*Q^1 \in \tilde Y_a$.
Then, if $T$ is sufficiently small so that (\ref{eq:almosteucl}) and (\ref{eq:boundedcoeffs}) hold, 
\begin{equation}
||h||_{\tilde X_a(B(x,r))} \leq c||Q||_{\tilde Y_a}.
\end{equation}
for all $x\in M$, $0<r^2 < T$, where $c= c(n, T\sup_{[0,T]}|\Rm|(\bar g(t)))$.
\end{lemma}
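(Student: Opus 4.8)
The plan is to adapt the proof of \cite[Lemma $4.2$]{KL1}, carrying the weight $w_a$ through the argument. First I would split $h = h^{(0)} + h^{(1)}$, where
$h^{(0)}(x,t) = \int_0^t\int_M \bar K(x,t;y,s)Q^0(y,s)\,d_{\bar g_s}(y)\,ds$ and
$h^{(1)}(x,t) = \int_0^t\int_M \nabla^*\bar K(x,t;y,s)Q^1(y,s)\,d_{\bar g_s}(y)\,ds$; since $M$ is closed, in the second term I integrate by parts in the space variable to replace $\nabla^*\bar K$ paired against $Q^1$ by $\bar K$-derivatives paired against $Q^1$ (the covariant derivative being that of $\bar g_s$, so there is no boundary term and metric-compatibility is exact; the time-dependence of $\bar g_s$ contributes only lower-order curvature factors of $\bar g$, harmless under (\ref{eq:boundedcoeffs})). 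Then each of the three ingredients of $\|\cdot\|_{\tilde X_a(B(x,r))}$ — the weighted sup $\sup_{0<t<T}\|w_a(t)h_t\|_{L^\infty(B(x,r))}$, the weighted Carleson $\dot W^{1,2}$ term, and the weighted $\dot W^{1,n+4}$ term near the top of the cylinder — is estimated separately for $h^{(0)}$ and $h^{(1)}$, using the pointwise heat-kernel and covariant-derivative bounds of Corollary \ref{cor:exponentialboundnonscalar} together with the heat-kernel Young inequality of Lemma \ref{lemma:Youngsfork}. Note that the right-hand side is the \emph{global} $\tilde Y_a$-norm: the Duhamel integral ranges over all of $M$, so contributions from parabolic cylinders far from $B(x,r)$ must be summed, controlled by the Gaussian decay of $\bar K$ (via (\ref{eq:heatkernelawayfrom0}) and Bishop--Gromov).

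\textbf{Moving the weight inside the integral.} The central mechanism is the following. To bound $w_a(z,t)|h^{(i)}(z,t)|$ (or its gradient) at a point $(z,t)$ with $z\in B(x,r)$, I insert the weight into the integrand exactly as in the proof of Lemma \ref{lemma:homogeneoussup}: using $1\le (d_{\bar g_0}(y,x_0)+\sqrt s + a)^{2+\eta}w_a(y,s)$ and the triangle inequality, $w_a(z,t)$ is dominated pointwise by $C\big(1 + d_{\bar g_0}(z,y)^{2+\eta}(t-s)^{-(2+\eta)/2}\big)\,w_a(y,s)$. The extra polynomial factor $d_{\bar g_0}(z,y)^{2+\eta}(t-s)^{-(2+\eta)/2}$ is absorbed into the Gaussian factors of $\bar K$, $\nabla\bar K$, $\nabla^2\bar K$ from Corollary \ref{cor:exponentialboundnonscalar}: one pays a slightly worse constant $D$ in the exponent but retains an estimate of the same shape $C_k(t-s)^{-(n+k)/2}\exp(-d^2_{\bar g_0}(z,y)/D(t-s))$. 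Once the weight sits on $Q^0$ (resp.\ $Q^1$) inside the integral, the problem reduces to the unweighted Koch--Lamm-type bounds for $\int_0^t\int_M (t-s)^{-(n+k)/2}e^{-d^2/D(t-s)}\,|w_a(s)Q^0(s)|$ and its $Q^1$ analog, i.e.\ one replaces $Q^0, Q^1$ in \cite[Lemma $4.2$]{KL1} by $w_aQ^0, w_aQ^1$ and uses (by (\ref{eq:wparabolicballcomparison})) that $w_a$ is comparable across each parabolic cylinder so that the Hölder/Young estimates on cylinders convert $\int |w_aQ^0|$, $\int|w_aQ^1|$ into the local norms $\|Q^0\|_{\tilde Y^0_a(\cdot)}$, $\|Q^1\|_{\tilde Y^1_a(\cdot)}$; the required space--time $L^p$ bounds on $\bar K,\nabla\bar K,\nabla^2\bar K$ feeding Lemma \ref{lemma:Youngsfork} (with the Hölder triple $\tfrac1p+\tfrac1q=\tfrac1r+1$ dictated by the $L^2/L^{n+4}$ exponents of Definition \ref{def:norms}) follow from Corollary \ref{cor:exponentialboundnonscalar} after rescaling, using (\ref{eq:almosteucl}) and (\ref{eq:boundedcoeffs}). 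In particular, as in \cite{KL1}, for the two gradient terms a naive pointwise-in-time estimate fails (the factors $(t-s)^{-(n+1)/2}$, $(t-s)^{-(n+2)/2}$ are not time-integrable), so the mixed-norm structure of $X$ and $Y$ and Lemma \ref{lemma:Youngsfork} are genuinely needed.

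\textbf{Main obstacle.} The hard part will be precisely this weight bookkeeping in the regime where $z$ is far from $x_0$ while $Q$ is concentrated near $x_0$: there the ratio $w_a(z,t)/w_a(y,s)$ is as large as $\big(1 + d_{\bar g_0}(z,y)/\sqrt{t-s}\big)^{2+\eta}$, and one must verify that after multiplying $\bar K$, $\nabla\bar K$, $\nabla^2\bar K$ by this polynomial one still obtains a kernel meeting the hypotheses of Lemma \ref{lemma:Youngsfork} with constants uniform in the background Ricci flow — equivalently, that the absorptions $\sup_{\rho>0}\rho^{N}e^{-\rho^2/D}<\infty$ and the attendant Bishop--Gromov volume comparisons (as in the proofs of (\ref{eq:heatkernelawayfrom0}) and Lemma \ref{lemma:homogeneoussup}) go through with the enlarged exponent coming from the $\eta$-shift. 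Everything else is a routine adaptation of \cite[Lemma $4.2$]{KL1}, the unweighted special case ($w_a\equiv 1$) being essentially their statement, and the interaction with the homogeneous piece being supplied by Lemmas \ref{lemma:homogeneoussup} and \ref{lemma:homogeneous}.
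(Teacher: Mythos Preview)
Your weight-transfer mechanism for the $L^\infty$ part is correct and is essentially a pointwise version of what the paper does: the paper covers $M$ by balls $B(z_i,\sqrt t)$, proves the inequality $w_a(x',t)e^{-d^2(x',z_i)/Dt}\le c\,e^{-d^2(x',z_i)/2Dt}w_a(z_i,t)$ (see (\ref{eq:expweight})), and sums via (\ref{eq:expclaim}); this is the discrete analogue of your absorption of $(1+d/\sqrt{t-s})^{2+\eta}$ into the Gaussian.

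The gap is in the two gradient terms. You claim that Lemma~\ref{lemma:Youngsfork} with exponents dictated by Definition~\ref{def:norms} handles $\nabla\bar K$ and $\nabla^2\bar K$, the relevant $L^p$ kernel norms following from Corollary~\ref{cor:exponentialboundnonscalar}. But the bound $|\nabla^2\bar K_{t-s}|\le C(t-s)^{-(n+2)/2}e^{-d^2/D(t-s)}$ gives $\|\nabla^2\bar K\|_{L^p(M\times(0,t))}^p\sim\int_0^t(t-s)^{(n-p(n+2))/2}\,ds$, which diverges for \emph{every} $p\ge1$; so no H\"older triple in Lemma~\ref{lemma:Youngsfork} closes the $L^{n+4}$ estimate for $\nabla h^{(1)}$. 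Likewise, the available control on $Q^0$ ($L^1$ on the full cylinder, $L^{(n+4)/2}$ on the top half) does not pair with any finite $L^p$ norm of $\nabla\bar K$ to give the $L^2$ gradient bound. The paper handles both differently: for the $L^2$ term it pairs the equation $(\partial_t+L)h=Q$ with $\eta_{x,r^2}^2h$, integrates by parts, and uses the already-established pointwise bound on $w_ah$ to absorb the cross term $\int|Q^0||w_ah|$; for the $L^{n+4}$ term it first reduces (via the same covering/summation used for the sup bound) to $Q$ supported in the top cylinder $\Omega(x,r^2)$, where $w_a$ is comparable to $w_a(x,r^2)$ by (\ref{eq:wparabolicballcomparison}), proves $L^2$-boundedness of $Q^1\mapsto\int\nabla^2\bar K\cdot Q^1$ by an energy argument on $u=\int\nabla\bar K\cdot Q^1$, and only then invokes Calder\'on--Zygmund and Marcinkiewicz interpolation to pass to $L^{n+4}$. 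Your proposal is missing precisely these two ingredients---the energy identity and the singular-integral step---and the explicit claim that ``the required space--time $L^p$ bounds on $\nabla^2\bar K$ follow from Corollary~\ref{cor:exponentialboundnonscalar}'' is false.
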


Note here that the $\tilde X_a$-norm is localized to $B(x,r)$, but the $\tilde Y_a$-norm is not local.

\begin{proof}
We estimate each term of the norm separately, making use of the representation formula (\ref{eq:representationformula}). Let $(x',t)\in B_{\bar g_0}(x,r)\times(0,T)$, and let $\Omega(x',t)= B_{\bar g_0}(x', \sqrt{t})\times(\tfrac{t}{2}, t)$. Then
\begin{align*}
|w_a(x',t)h_t(x')| &\leq w_a(x',t)\left|\int_{\Omega(x',t)}\bar K(x',t;y,s)Q(y,s)d_{\bar g_s}(y_ds\right| 
\\& \qquad + w_a(x',t)\left|\int_{M\times[0,t]\setminus \Omega(x',t)}\bar K(x',t; y,s)Q(y,s)d_{\bar g_s}(y)ds\right| =: I + II.
\end{align*}

Using H\"older's inequality, we find that
\begin{align*}
I &\leq w_a(x',t)||\bar K(x',t;\cdot,\cdot)||_{L^{\tfrac{n+4}{n+2}}(\Omega(x',t))}||Q^0||_{L^{\tfrac{n+4}{2}}(\Omega(x',t))} 
\\& \qquad\qquad + w_a(x',t)||\nabla \bar K(x', t; \cdot,\cdot)||_{L^{\tfrac{n+4}{n+3}}(\Omega(x',t))}||Q^1||_{L^{n+4}(\Omega(x',t))}
\\& \qquad \qquad \qquad \leq c||Q||_{\tilde Y_a(B(x,r))},
\end{align*}
where the last inequality holds by (\ref{eq:ptwiseheatkernelderivbounds}) as follows:
\begin{align*}
\int_{t/2}^{t}\int_{B(x',\sqrt{t})}|\bar K(x',t; y,s)|^{\frac{n+4}{n+2}}d\bar g_s(y)ds & \leq c\int_0^{t/2}\int_{B(x',\sqrt{t})}(d(x',y) + \sqrt{s})^{-n\frac{n+4}{n+2}}d\bar g_0(y)ds
\\& \leq ct^{\frac{n}{2}}t^{-\frac{n(n+4)}{2(n+2)} + 1} = ct^{\frac{2}{n+2}},
\end{align*}
and similarly for $\nabla \bar K$.

We now estimate $II$ by way of (\ref{eq:ptwiseheatkernelawayfromball}). Let $\{z_i\}$ be a maximal collection of points in $M$ such that the balls $B_{\bar g_0}(z_i, \sqrt{t}/2)$ are pairwise disjoint. It follows that $\{B_{\bar g_0}(z_i, \sqrt{t})\}$ is a cover of $M$: otherwise there would exist some $x\in M$ such that $d(x,z_i)\geq \sqrt{t}$ for all $i$, and if $y \in B_{\bar g_0}(x, \sqrt{t}/2)$ then $d_{\bar g_0}(y,z_i) \geq d_{\bar g_0}(x, z_i) - d_{\bar g_0}(x,y) > \sqrt{t} - \sqrt{t/2} = \sqrt{t}/2$, so $B_{\bar g_0}(x, \sqrt{t}/2) \cap B_{\bar g_0}(z_i, \sqrt{t}/2) = \varnothing$.

Therefore, we have
\begin{align*}
II &\leq C\sum_{i=1}^{\infty}w_a(x',t)\int_0^t\int_{B(z_i,\sqrt{t})}t^{-n/2}\exp\left(-\frac{d^2(x',y)}{2Dt}\right)(|Q^0_s|(y) + t^{-1/2}|Q^1_s|(y))d_{\bar g_s}(y)ds
\\& \leq Cw_a(x',t)\sum_{i=1}^{\infty}\exp\left(-\frac{d^2(x',z_i)}{2Dt}\right)\int_0^t\int_{B(z_i,\sqrt{t})}t^{-n/2}(|Q^0_s|(y) + t^{-1/2}|Q^1_s|(y))d_{\bar g_s}(y)ds.\stepcounter{equation}\tag{\theequation}\label{eq:IIsummation}
\end{align*}

We now compare $w_a(x',t)$ with $w_a(z_i,t)$, for a fixed $i$.  If $w_a(x',t) = 1$, then certainly $w_a(x',t)\leq w_a(z_i,t)$.  If $z_i\in B(x', \sqrt{t})$, then, by (\ref{eq:wtimeslicecomparison}) we have $w_a(x',t)\leq c w_a(z_i,t)$. Otherwise, $d(x', z_i)> \sqrt{t}$. Then
\begin{align*}
\frac{1}{(d(x',x_0) + \sqrt{t} + a)^{2+\eta}}&\exp\left(-\frac{d^2(x',z_i)}{Dt}\right) 
\\& \leq \frac{1}{(d(x',x_0) + \sqrt{t} + a)^{2+\eta}}\exp\left(-\frac{d^2(x',z_i)}{2Dt}\right)c(2+\eta)\left(\frac{\sqrt{t}}{d(x', z_i) + \sqrt{t}}\right)^{2+\eta}
\\& =c\exp\left(-\frac{d^2(x',z_i)}{2Dt}\right)\left(\frac{\sqrt{2Dt}}{(d(x',x_0) + \sqrt{t} + a)(d(x', z_i) + \sqrt{t})}\right)^{2+\eta}
\\& \leq c\exp\left(-\frac{d^2(x',z_i)}{2Dt}\right)\left(\frac{c}{d(x',x_0) + d(x', z_i) + \sqrt{t} + a}\right)^{2+\eta}
\\& = c\exp\left(-\frac{d^2(x',z_i)}{2Dt}\right)\frac{1}{(d(z_i,x_0) + \sqrt{t} + a)^{2+\eta}}.
\end{align*}
Therefore,
\begin{align*}
w_a(x', t)\exp\left(-\frac{d^2(x',z_i)}{Dt}\right) &= \max\{(d(x',x_0) + \sqrt{t} + a)^{-2-\eta}, 1\}\exp\left(-\frac{d^2(x',z_i)}{Dt}\right) 
\\& = \max\left\{\exp\left(-\frac{d^2(x',z_i)}{Dt}\right)(d(x',x_0) + \sqrt{t} + a)^{-2-\eta}, \exp\left(-\frac{d^2(x',z_i)}{Dt}\right)\right\}
\\& \leq \max\left\{c\exp\left(-\frac{d^2(x',z_i)}{2Dt}\right)(d(z_i,x_0) + \sqrt{t} + a)^{-2-\eta}, c\exp\left(-\frac{d^2(x',z_i)}{2Dt}\right)\right\}
\\& =c\exp\left(-\frac{d^2(x',z_i)}{2Dt}\right)\max\left\{(d(z_i,x_0) + \sqrt{t} + a)^{-2-\eta}, 1\right\}
\\& = c\exp\left(-\frac{d^2(x',z_i)}{2Dt}\right) w_a(z_i,t). \stepcounter{equation}\tag{\theequation}\label{eq:expweight}
\end{align*}

\noindent \textbf{Claim:} 
\begin{equation}\label{eq:expclaim}
\sum_{z_i}\exp\left(-\frac{d^2_{\bar g_0}(x', z_i)}{2Dt}\right) \leq c(n, T\sup_{[0,T]}|\Rm|(\bar g(t))),
\end{equation}
where $c$ is some finite positive constant.

\begin{proof}[Proof of Claim]
We now observe that if $\hat{\bar g}_0 := (1/t)\bar g_0$, and if, for all $k\in \N$, $\mathcal{A}_k := \{z_i: k\sqrt{t} \leq d_{\bar g_0}(x', z_i) < (k+1)\sqrt{t}\}$, then
\begin{equation}\label{eq:pointcount}
|\mathcal{A}_k| \leq C(n)\vol_{\hat{\bar g}_0}(B_{\hat{\bar g}_0}(x', (k+3/2))).
\end{equation}
where $|\mathcal{A}_k|$ denotes the number of points $z_i$ in $\mathcal{A}_k$. To see why (\ref{eq:pointcount}) is true, note that, because the balls $B(z_i, \sqrt{t}/2)$ are pairwise disjoint,  (\ref{eq:almosteucl}) implies
\begin{align*}
|A_k|\frac{1}{10}\omega_n\left(\frac{\sqrt{t}}{2}\right)^n &\leq \bigcup_{z_i \in A_k}\vol_{\bar g_0}(B_{\bar g_0}(z_i, \sqrt{t}/2)) \leq \vol_{\bar g_0}(B_{\bar g_0}(x', (k+3/2)\sqrt{t}))
\\& = \sqrt{t}^n\vol_{\hat{\bar g}_0}(B_{\hat{\bar g}_0}(x', (k+3/2))).
\end{align*}
By (\ref{eq:pointcount}) we have
\begin{align*}
\sum_{z_i} \exp\left(-\frac{d^2_{\bar g_0}(x', z_i)}{2Dt}\right) & = \sum_{z_i} \exp\left(-\frac{d^2_{\hat{\bar g}_0}(x', z_i)}{2D}\right) \leq \sum_{k=0}^{\infty}\sum_{z_i \in \mathcal{A}_k}\exp\left(-\frac{k^2}{2D}\right)
\\& \leq \sum_{k=0}^{\infty}|\mathcal{A}_k|\exp\left(-\frac{k^2}{2D}\right) 
\\& \leq \sum_{k=0}^{\infty}C(n)\vol_{\hat{\bar g}_0}(B_{\hat{\bar g}_0}(x', (k+3/2)))\exp\left(-\frac{k^2}{2D}\right)
\\& \leq \sum_{k=0}^{\infty}C(n)\vol(B_{k + 3/2}^{-T\sup_{[0,T]}|\Rm|(\bar g(t))})\exp\left(-\frac{k^2}{2D}\right)
\\& \leq \sum_{k=0}^{\infty}C\exp(-C'k^2) \leq c(n, T\sup_{[0,T]}|\Rm|(\bar g(t))),
\end{align*}
where $C=C(n, T\sup_{[0,T]}|\Rm|(\bar g(t)))$ and $C' = C'(n, T\sup_{[0,T]}|\Rm|(\bar g(t)))$.
\end{proof}

Applying (\ref{eq:expweight}) and (\ref{eq:expclaim}) to (\ref{eq:IIsummation}), we find
\begin{align*}
II &\leq c\sum_{i=1}^\infty\exp\left(-\frac{d^2(x',z_i)}{2Dt}\right)w_a(z_i,t)\left(\sqrt{t}^{-n}\int_0^t\int_{B(z_i,\sqrt{t})}|Q^0_s|(y)d_{\bar g_s}(y)ds +t^{-\tfrac{n+1}{2}}\int_0^t\int_{B(z_i,\sqrt{t})}|Q_s^1|\right)
\\& \leq c\sum_{i=1}^\infty\exp\left(-\frac{d^2(x',z_i)}{2Dt}\right)w_a(z_i,t)\left(\sqrt{t}^{-n}||Q^0||_{L^1(B(z_i,\sqrt{t})\times(0,t))} + \sqrt{t}^{-n-1}\sqrt{t}^{n/2 + 1}||Q^1||_{L^2(B(z_i,\sqrt{t})\times(0,t))}\right)
\\& \leq c||Q||_{\tilde Y_a}\sum_{i=1}^\infty\exp\left(-\frac{d^2(x',z_i)}{2Dt}\right) \leq c||Q||_{\tilde Y_a}.
\end{align*}
This yields the pointwise estimate $|w_t(x')h_t(x')| \leq c||Q||_{\tilde Y_a}$.

We now estimate the $L^2$-term. Let $\eta_{x,r^2}$ be as in the proof of Lemma \ref{lemma:homogeneous}. We multiply by $\eta^2_{x,r^2}$, integrate, and apply Young's inequality as before to find
\begin{align*}
\frac{1}{2}\int_M\eta_{x,r^2}^2|h_{r^2}|^2_{\bar g_0} &- \frac{1}{2}\int_M\eta_{x,r^2}^2|h_{0}|^2_{\bar g_0} = \int_0^{r^2}\int_M \eta_{x,r^2}^2\langle\Delta^{\bar g_s} + \Rm^{\bar g_s}(h) + Q^0 + \nabla Q^1, h\rangle_{\bar g_0}
\\& \leq -c\int_0^{r^2}\int_M \eta_{x,r^2}|\nabla h|^2_{\bar g_0} + \frac{c}{4}\int_{0}^{r^2}\int_M \eta_{x,r^2}^2|\nabla h|^2 + 16c\int_0^{r^2}\int_{M} |\nabla \eta_{x,r^2}|^2|h|^2 
\\& \qquad + c\int_0^{r^2}\int_M \eta_{x,r^2}^2|\Rm||h|^2 + \int_{0}^{r^2}\int_M \eta_{x,r^2}^2\langle Q^0, h \rangle + \int_{0}^{r^2}\int_M \eta_{x,r^2}^2\langle \nabla Q^1, h \rangle
\\& \leq -c\int_0^{r^2}\int_M \eta_{x,r^2}|\nabla h|^2_{\bar g_0} + \frac{c}{4}\int_{0}^{r^2}\int_M \eta_{x,r^2}^2|\nabla h|^2 + 16c\int_0^{r^2}\int_{M} |\nabla \eta_{x,r^2}|^2|h|^2 
\\& \qquad + c\int_0^{r^2}\int_M \eta_{x,r^2}^2|\Rm||h|^2 + \int_0^{r^2}\int_{M} \eta_{x,r^2}^2|Q^0||h| 
\\& \qquad \qquad + 16 c\int_{0}^{r^2}\int_{M} \eta_{x,r^2}^2|Q^1|^2 + \frac{c}{4}\int_{0}^{r^2}\int_{M} \eta_{x,r^2}^2|\nabla h|^2.
\end{align*}

Therefore we find
\begin{align*}
\int_0^{r^2}\int_{B(x,r)}|\nabla h|^2 \leq \frac{c}{r^2}\int_0^{r^2}\int_{B(x,2r)}|h|^2 + c\int_0^{r^2}\int_{B(x,2r)}\eta^2_{x,r^2}|Q^0||h|+ c\int_0^{r^2}\int_{B(x,2r)}|Q^1|^2.
\end{align*}
In particular,
\begin{align*}
w_{a}(x,r^2)^2r^{-n}&\int_0^{r^2}\int_{B(x,r)}|\nabla h|^2 \leq r^{-n}\frac{c}{r^2}\int_0^{r^2}\int_{B(x,2r)}|w_ah|^2 
\\& \qquad \qquad + cr^{-n}\int_0^{r^2}\int_{B(x,2r)}\eta^2_{x,r^2}|Q^0||w_ah|+ cw_{a}(x,r^2)r^{-n}\int_0^{r^2}\int_{B(x,2r)}|Q^1|^2
\\& \qquad \qquad \qquad \qquad \leq c||Q||_{\tilde Y_a(B(x,2r))}^2.
\end{align*}
by (\ref{eq:wtimeslicecomparison}), where the last inequality follows from the pointwise bound applied to $w_a(y,s)|h(y,s)|$. In particular,
\begin{equation}
w_a(x,r^2)r^{-\tfrac{n}{2}}||\nabla h||_{L^2(B(x,r)\times(0,r^2))} \leq c||Q||_{\tilde Y_a(B(x,2r))}.
\end{equation}

It remains to estimate $w_a(x,r^2)r^{\tfrac{2}{n+4}}||\nabla h||_{L^{n+4}(B(x,r)\times(\tfrac{r^2}{2}, r^2))}$. If we argue by summation as in the estimate of $II$, and appeal to (\ref{eq:ptwiseheatkernelawayfromball}) we find that
\begin{equation}
w_a(x,t)\left|\int_{M\times(0,t)\setminus \Omega(x,r^2)}\nabla \tilde K(x', t'; y,s)Q_s^0(y) + \nabla^2\tilde K(x',t'; y,s)Q_s^1(y)dyds\right| \leq c||Q||_{\tilde Y_a}
\end{equation}
for all $(x', t') \in B(x,r)\times(\tfrac{r^2}{2}, r^2),$ so without loss of generality we may assume that
\begin{equation}\label{eq:vanishingQ}
\supp(Q^0), \, \supp(Q^1) \subset B(x,r)\times(\tfrac{r^2}{2}, r^2) = \Omega(x,r^2).
\end{equation}

We have
\begin{align*}
w_a(x,r^2)&r^{\tfrac{2}{n+4}}||\nabla h||_{L^{n+4}(\Omega(x,r^2))} \leq w_a(x,r^2)r^{\tfrac{2}{n+4}}\left|\left|\int_0^t\int_M\nabla \bar K_{t-s}(z,y)Q^0(y,s)d_{\bar g_s}(y)ds\right|\right|_{L^{n+4}((z,t)\in\Omega(x,r^2))} 
\\& \qquad \qquad + w_a(x,r^2)r^{\tfrac{2}{n+4}}\left|\left|\int_0^t\int_M\nabla^2 \bar K_{t-s}(z,y) Q^1(y,s)d_{\bar g_s}(y)ds\right|\right|_{L^{n+4}((z,t)\in\Omega(x,r^2))},
\end{align*}
To estimate the first part of this sum, we use Lemma \ref{lemma:Youngsfork} and (\ref{eq:vanishingQ}). We obtain
\begin{equation*}
\begin{split}
&\left|\left|\int_0^t\int_M \nabla \bar K_{t-s}(z,y)Q^0(y,s)\right|\right|_{L^{n+4}((z,t)\in\Omega(x,r^2))}
\\& \qquad \qquad \qquad \leq \sup_{(z,t)\in \Omega(x,r^2)}||\nabla K(z,t;\cdot,\cdot)||_{L^{\tfrac{n+4}{n+3}}(M\times(0,T))}||Q^0||_{L^{\tfrac{n+4}{2}}(\Omega(x,r^2))}
\\& \qquad \qquad \qquad \leq c||Q^0||_{L^{\tfrac{n+4}{2}}(\Omega(x,r^2))}
\end{split}
\end{equation*}
by (\ref{eq:ptwiseheatkernelderivbounds}).

We may not apply this technique to the remaining term because the bounds on $|\nabla^2\bar K|$ are not strong enough. Instead, let $u(z,t) = \int_0^t\int_M\nabla \bar K_{t-s}(z,y)Q^1(y,s)d_{\bar g_s}(y)ds$, so that $\partial_t u - \Delta^{\bar g_t}u + \Rm^{\bar g_t}(u) = \nabla Q^1$. Then
\begin{equation*}
\langle u, \partial_t u \rangle_{\bar g_0} = \langle u, \Delta^{\bar g_t}u - \Rm^{\bar g_t}(u) + \nabla Q^1\rangle_{\bar g_0}
\end{equation*}
and we have, by Young's inequality,
\begin{align*}
\frac{1}{2}\int_{B(x,r)}|u(r^2)|_{\bar g_0}^2 &- \frac{1}{2}\int_{B(x,r)}|u(\tfrac{r^2}{2})|_{\bar g_0}^2  = \int_{r^2/2}^{r^2}\int_{B(x,r)}\langle u, \partial_t u\rangle_{\bar g_0} 
\\& = \int_{r^2/2}^{r^2}\int_{B(x,r)}\langle u, \Delta^{\bar g_t}u - \Rm^{\bar g_t}(u) + \nabla Q^1\rangle_{\bar g_0}
\\& = \int_{r^2/2}^{r^2}\int_{B(x,r)}|u|^2_{\bar g_0} - c\int_{r^2/2}^{r^2}\int_{B(x,r)}|\nabla u|^2_{\bar g_0} + \sup_{[0,T]}|\Rm|(\bar g(t))\int_{r^2/2}^{r^2}\int_{B(x,r)}|u|^2_{\bar g_0} 
\\& \qquad \qquad \qquad + c\int_{r^2/2}^{r^2}\int_{B(x,r)}\langle Q^1, \nabla u\rangle_{\bar g_0}
\\& \leq 2c\int_{r^2/2}^{r^2}\int_{B(x,r)}|Q^1|^2_{\bar g_0} + \left(1 + \sup_{[0,T]}|\Rm|(\bar g(t))\right)\int_{r^2/2}^{r^2}\int_{B(x,r)}|u|^2_{\bar g_0} 
\\& \qquad \qquad \qquad -c(1 + \tfrac{1}{2})\int_{r^2/2}^{r^2}\int_{B(x,r)}|\nabla u|^2_{\bar g_0}
\end{align*}
where we have used (\ref{eq:vanishingQ}).
Observing that, by (\ref{eq:vanishingQ}), $u(r^2/2)=0$, we find
\begin{align*}
\int_{r^2/2}^{r^2}\int_{B(x,r)}|\nabla u|^2 &\leq c\int_{r^2/2}^{r^2}\int_{B(x,r)}|Q^1|_{\bar g_0}^2 + c\left(1 + \sup_{[0,T]}|\Rm(\bar g(t))|\right)\int_{r^2/2}^{r^2}\int_{B(x,r)}|u|^2_{\bar g_0}.
\end{align*}

Recalling the definition of $u$, we apply Lemma \ref{lemma:Youngsfork}, (\ref{eq:vanishingQ}), and Corollary \ref{cor:exponentialboundnonscalar} to find:
\begin{align*}
&\int_{r^2/2}^{r^2}\int_{B(x,r)} |\nabla u|^2 \leq c||Q^1||^2_{L^2(B(x,r)\times(\tfrac{r^2}{2}, r^2))} 
\\& \qquad \qquad + c\left(1 + \sup_{[0,T]}|\Rm|(\bar g(t))\right)\left|\left| \int_0^t\int_M \nabla \bar K_{t-s}(z,y)Q^1(y,s)d_{\bar g_s}(y)ds\right|\right|^2_{L^2((z,t)\in B(x,r)\times(\tfrac{r^2}{2}, r^2))}
\\& \leq c||Q^1||^2_{L^2(B(x,r)\times(\tfrac{r^2}{2}, r^2))} 
\\& \qquad \qquad + c\left(1 + \sup_{[0,T]}|\Rm|(\bar g(t))\right)\sup_{(z,t)\in B(x,r)\times(\tfrac{r^2}{2}, r^2)}||\nabla K(z,t; \cdot, \cdot)||^2_{L^1(M\times (0,t))}||Q^1||^2_{L^2(B(x,r)\times(\tfrac{r^2}{2}, r^2))}
\\& \leq c\left(1 + \sup_{[0,T]}|\Rm|(\bar g(t))\right)\sup_{(z,t)\in B(x,r)\times(\tfrac{r^2}{2}, r^2)}
\left|\int_0^t\int_M(t-s)^{-\tfrac{(n+1)}{2}}\exp\left(-\frac{d_{\bar g_0}(z,y)}{d(t-s)}\right)\right|^2 ||Q^1||^2_{L^2(\Omega(x,r^2))}
\\& \leq c\left(1 + \sup_{[0,T]}|\Rm|(\bar g(t))\right)\sup_{(z,t)\in B(x,r)\times(\tfrac{r^2}{2}, r^2)}
\left|\int_0^t(t-s)^{-\tfrac{1}{2}}\right|^2 ||Q^1||^2_{L^2(\Omega(x,r^2)}
\\& \leq c\left(1 + \sup_{[0,T]}|\Rm|(\bar g(t))\right)r^2||Q^1||^2_{L^2(B(x,r)\times(\tfrac{r^2}{2}, r^2))}
\leq cT\sup_{[0,T]}|\Rm|(\bar g(t))||Q^1||^2_{L^2(B(x,r)\times(\tfrac{r^2}{2}, r^2))}
\end{align*}
where $c= c(n, T\sup_{[0,T]}|\Rm|(\bar g(t)))$, and is adjusted as necessary.

Since $\nabla u(z,t) = \int_0^t\int_M\nabla^2 K_{t-s}(z,y)Q^1(y,s)d_{\bar g_s}(y)ds$, we have shown
\begin{equation}
\left|\left|\int_0^t\int_M\nabla^2 \bar K_{t-s}(z,y) Q^1(y,s)d_{\bar g_s}(y)ds\right|\right|_{L^2((z,t)\in \Omega(x,r^2))} \leq c||Q^1||_{L^2(\Omega(x,r^2))} 
\end{equation}
so the integral operator given by $\nabla^2\bar K$ is a bounded operator on $L^2(\Omega(x,r^2))$. By the Calder\'on-Zygmund Theorem and the Marcinkiewicz Interpolation Theorem we may extend the result to $1 < p < \infty$. In particular, 
\begin{equation}\label{eq:hardboundpostCZ}
\begin{split}
w_{a}(x,r^2)r^{\tfrac{2}{n+4}}&\left|\left|\int_0^t\int_M\nabla^2 \bar K_{t-s}(z,y) Q^1(y,s)d_{\bar g_s}(y)ds\right|\right|_{L^{n+4}((z,t)\in \Omega(x,r^2))} 
\\& \leq cw_{a}(x,r^2)r^{\tfrac{2}{n+4}}||Q^1||_{L^{n+4}(\Omega(x,r^2))} \leq c||Q||_{\tilde Y_a(B(x,r))}.
\end{split}
\end{equation}

\end{proof}

We are now ready to prove the main theorem of this section.

\begin{proof}[Proof of Theorem \ref{thm:fixedpointexistence}] 
Let $\varepsilon$ be as in Lemma \ref{lemma:RDTexistence}. We select smooth background metrics $\bar g'$ and $\bar g''$ as follows: First choose a smooth metric $\bar g$ on $M'$ such that $||g_0' - \bar g||_{C^0(B_{\bar g}(x_0', 4R), \bar g)}, ||\phi^*g_0'' - \bar g||_{C^0(B_{\bar g}(x_0', 4R))} < \varepsilon/2$, where the notation $||\cdot||_{C^0(B_{\bar g}(x_0', 4R), \bar g)}$ means that both the norm and the domain are measured using the metric $\bar g$, for sufficiently small $R>0$. This is possible because $g_0'(x_0') = \phi^*g_0''(x_0')$. Note that by choosing smaller $R$ and $T$, we can reduce $\varepsilon$ as needed.

Extend $\bar g$ to a smooth metric $\bar g'_0$ on $M'$ that is $\varepsilon$-close to $g_0'$ everywhere in $M'$, and extend $\phi_*\bar g$ to a smooth metric $\bar g''_0$ on $M''$ that is $\varepsilon$-close to $g_0''$ everywhere in $M''$. Solve the Ricci flow equation starting from $\bar g'_0$ on $M'$ and $\bar g''_0$ on $M''$ to find smooth Ricci flows $\bar g'(t)$ on $M'$ and $\bar g''(t)$ on $M''$, both defined on some time interval $[0,T]$.

By Lemma \ref{lemma:RDTexistence} there exist solutions $h_t' = g_t' - \bar g_t'$ and $h_t'' = g_t'' - \bar g_t''$ to (\ref{eq:representationformula}), defined for $t\in (0,T]$, starting from $g_0'$ and $g_0''$ in $X^{\bar g'}$ and $X^{\bar g''}$ respectively, and satisfying $||h_t'||_{X^{\bar g'}}, ||h_t''||_{X^{\bar g''}} \leq C\varepsilon < 1$, where $C$ is as in Lemma \ref{lemma:RDTexistence}. For the sake of simplicity, we assume that $C\varepsilon < 1/2$ in what follows; this can be achieved by reducing $\varepsilon$ and $T$ if necessary.

Now fix $x\in M, 0<r\leq 4R \leq \sqrt{T}$. Let $\varphi$ be a (time-independent) smooth cutoff function equal to $1$ on $B(x_0', R)$ that vanishes outside $B(x_0', 4R)$, such that $|\nabla^{\bar g'}\varphi|, |\Delta^{\bar g'}\varphi| \leq b$ for all $t\in [0,T]$, where $b$ is some finite, positive number (we may take $b$ to be bounded by some function of $R$). Now consider the evolution of $\varphi(h_t'-\phi^*h_t'')$:
\begin{align*}
(\partial_t + L^{\bar g'})\varphi(h_t' - \phi^*h_t'') &= \varphi\partial_t(h_t' - \phi^*h_t'') - \Delta^{\bar g'}(\varphi(h_t' - \phi^*h_t'')) + \varphi\Rm^{\bar g'}(h_t' - \phi^*h_t'')
\\&= \varphi\left[\partial_t(h_t' - \phi^*h_t'') - \Delta^{\bar g'}(h_t' - \phi^*h_t'') + \Rm^{\bar g'}(h_t' - \phi^*h_t'')\right]
\\& \qquad - (\Delta^{\bar g'}\varphi)(h_t' - \phi^*h_t'') - 2\nabla^{\bar g'}_{\nabla\varphi} (h_t' - \phi^*h_t'')
\\& = \varphi\big[(\partial_t h_t' - \Delta^{\bar g'}h_t' + \Rm^{\bar g'}(h_t')) - (\partial_t \phi^*h_t'' - \Delta^{\phi^*\bar g''}\phi^*h_t'' + \Rm^{\phi^*\bar g''}(\phi^*h_t''))\big] 
\\& - \left(\Delta^{\phi^*\bar g''}\phi^*h_t'' - \Delta^{\bar g'}\phi^*h_t''\right) + \Rm^{\phi^*\bar g''}(\phi^*h_t'') - \Rm^{\bar g'}(\phi^*h_t'')\big] 
\\& -(\Delta^{\bar g'}\varphi)(h_t' - \phi^*h_t'')- 2\nabla^{\bar g'}_{\nabla\varphi}(h_t' - \phi^*h_t'')
\\& = \varphi\big[Q^{\bar g'}[h_t'] - Q^{\phi^*\bar g''}[\phi^*h_t''] - \left(\Delta^{\phi^*\bar g''}\phi^*h_t'' - \Delta^{\bar g'}\phi^*h_t''\right) 
\\& \qquad+ \Rm^{\phi^*\bar g''}(\phi^*h_t'') - \Rm^{\bar g'}(\phi^*h_t'')\big] -(\Delta^{\bar g'}\varphi)(h_t' - \phi^*h_t'')
\\& \qquad \qquad -2\nabla^{\bar g'}_{\nabla\varphi} (h_t' - \phi^*h_t''),
\end{align*}
where defined. Therefore, if $\bar K$ is the heat kernel corresponding to $\bar g'$, observing that the integrand vanishes outside of a set on which $\phi^*g_t''$ and $\phi^*\bar g_t''$ are defined, we find
\begin{align*}
\varphi(h_t' - \phi^*h_t'')(x) &= \int_0^t\int_M \bar K_{t-s}(x,y)\varphi(Q^{\bar g'}[h'] - Q^{\phi^*\bar g''}[\phi^*h''])(y,s)dyds
\\& -\int_0^t\int_M\bar K_{t-s}(x,y)\varphi(\Delta^{\phi^*\bar g''}\phi^*h'' - \Delta^{\bar g'}\phi^*h'')(y,s)dyds 
\\& + \int_0^t\int_M\bar K_{t-s}(x,y)\varphi(\Rm^{\phi^*\bar g''}(\phi^*h'')- \Rm^{\bar g'}(\phi^*h''))(y,s)dyds 
\\& - \int_0^t\int_M\bar K_{t-s}(x,y)(\Delta^{\bar g'}\varphi)(h' - \phi^*h'')(y,s)dyds
\\& + 2\int_0^t\int_M\bar K_{t-s}(x,y)\nabla_{\nabla \varphi}^{\bar g'}(h' - \phi^*h'')\rangle(y,s)dyds 
\\& + \int_M \bar K_{t}(x,y)\varphi(h'_0-\phi^*h''_0)(y)dy
\end{align*}

Inheriting the constants from Lemmata \ref{lemma:homogeneous} and \ref{lemma:hard}, we have
\begin{align*}
||\varphi(h'-\phi^*h'')||&_{\tilde X_a(B(x,r))} \leq c||\varphi(Q_0^{\bar g'}[h'] - Q_0^{\phi^*\bar g''}[\phi^*h''])||_{\tilde Y^0_a} + c||\varphi(Q_1^{\bar g'}[h'] - Q_1^{\phi^*\bar g''}[\phi^*h''])||_{\tilde Y^1_a} 
\\& + \bigg|\bigg| \int_0^t\int_M \bar K_{t-s}(x,y)\varphi(\Delta^{\bar g'}\phi^*h'' - \Delta^{\phi^*\bar g''}\phi^*h'')(y,s)dyds\bigg|\bigg|_{\tilde X_a(B(x,r))}  
\\& + c||\varphi(\Rm^{\bar g'}(\phi^*h'') - \Rm^{\phi^*\bar g''}(\phi^*h''))||_{\tilde Y^0_a} 
\\& + \left|\left|\int_0^t\int_M \bar K_{t-s}(x,y)(\Delta^{\bar g'}\varphi)(y)(h_t' - \phi^*h_t'')(y,s)d_{\bar g'_s}(y)ds\right|\right|_{\tilde X_a} 
\\& + \left|\left|2\int_0^t\int_M \bar K_{t-s}(x,y)\nabla^{\bar g'}_{\nabla\varphi}(h_t' - \phi^*h_t'')(y,s) d_{\bar g'_s}(y)ds\right|\right|_{\tilde X_a}  
\\& + c||\varphi w_a(0)(h_0' - \phi^*h_0'')||_{L^\infty(M)}
\\& =: I + II+ III+ IV + V + VI +c||\varphi w_a(0)(h_0' - \phi^*h_0'')||_{L^\infty(M)},
\end{align*}
where here all norms are computed with respect to $\bar g'$.
Note first that, after reducing $T$ sufficiently depending on $||\nabla^{\ell + m}(\bar g' - \phi^*\bar g'')||_{C^0(B_{\bar g_0'}(x_0', 4R), \bar g_0')}$, we may assume that, for a tensor field $h$, we have
\begin{equation}\label{eq:eatsweight}
|(\nabla^{\bar g'})^\ell h - (\nabla^{\phi^*\bar g''})^\ell h| \leq 20Rt^m |h|
\end{equation} 
for all $\ell \leq 3$, say, where $m\geq 2+ \eta$, as follows: Reduce $T$ (and hence $R$) so that $|(\nabla^{\bar g'})^{\ell + m} h(x_0',t) - (\nabla^{\phi^*\bar g''})^{\ell + m} h(x_0',t)| < 20$ for all $0\leq t \leq T$; this is possible because $\bar g'(0) = \phi^*\bar g''(0)$ in $B(x_0',4R)$. Then, by integrating in time, we find $|(\nabla^{\bar g'})^{\ell} h(x_0',t) - (\nabla^{\phi^*\bar g''})^{\ell} h(x_0',t)| < 20t^m$. To obtain the bound elsewhere in the ball, integrate along a geodesic. In particular, by requiring that $m \geq 2+\eta$, we have $w_a(t)|\nabla^{\bar g'}h-\nabla^{\bar g''}h| \leq c(n)|h|$. Moreover, (\ref{eq:eatsweight}) together with the fact that $\bar g_0' = \phi^*\bar g_0''$ on $B(x_0', 4R)$ and the definition of the $X$-norm implies that
\begin{equation}\label{eq:eatsweightcomp}
||\phi^*h''||_{X^{\bar g'}(B_{\bar g_0'}(x, r))} \leq c(n)||h''||_{X^{\bar g''}(B_{\bar g_0''}(\phi(x), r))},
\end{equation}
for $B_{\bar g_0'}(x,r)\subset B_{\bar g_0'}(x_0', 4R)$, so we often replace instances of $||\phi^*h''||_{X^{\bar g'}(\phi^{-1}(B(x, r)))}$ with $||h''||_X^{\bar g''}$ in what follows.

\emph{Term $I$:}
First, note that by (\ref{eq:Q0is}) and (\ref{eq:inverseexpansion1}) we find
\begin{align*}
&|(\bar g' + h')^{-1}\star(\bar g' + h')^{-1}\star\nabla^{\bar g'}h'\star\nabla^{\bar g'}h' - (\phi^*\bar g'' + \phi^*h'')^{-1}\star(\phi^*\bar g'' + \phi^*h'')^{-1}\star\nabla^{\phi^*\bar g''}\phi^*h''\star\nabla^{\phi^*\bar g''}\phi^*h''|
\\& \leq |(\bar g' + h')^{-1}\star(\bar g' + h')^{-1}\star\nabla^{\bar g'}h'\star\nabla^{\bar g'}h' - (\bar g' + h')^{-1}\star(\bar g' + h')^{-1}\star\nabla^{\phi^*\bar g''}\phi^*h''\star\nabla^{\phi^*\bar g''}\phi^*h'' |
\\& \qquad + |(\bar g' + h')^{-1}\star(\bar g' + h')^{-1}\star\nabla^{\phi^*\bar g''}\phi^*h''\star\nabla^{\phi^*\bar g''}\phi^*h'' 
\\& \qquad\qquad\qquad - (\phi^*\bar g'' + \phi^*h'')^{-1}\star(\phi^*\bar g'' + \phi^*h'')^{-1}\star\nabla^{\phi^*\bar g''}\phi^*h''\star\nabla^{\phi^*\bar g''}\phi^*h''|
\\& \leq c(n)|\nabla^{\bar g'}h'\star\nabla^{\bar g'}h' - \nabla^{\phi^*\bar g''}\phi^*h'' \star \nabla^{\phi^*\bar g''}\phi^*h''| + c(n,\gamma)|\nabla^{\phi^*\bar g''}\phi^*h''\star\nabla^{\phi^*\bar g''}\phi^*h''|
\\& \leq c(n)|\nabla^{\bar g'}h'\star\nabla^{\bar g'}h' - \nabla^{\bar g'}h'\star\nabla^{\phi^*\bar g''}\phi^*h'' | 
\\& \qquad + c(n)|\nabla^{\bar g'}h'\star\nabla^{\phi^*\bar g''}\phi^*h'' - \nabla^{\phi^*\bar g''}\phi^*h''\star\nabla^{\phi^*\bar g''}\phi^*h''| 
\\& \qquad \qquad + c(n)|\nabla^{\phi^*\bar g''}\phi^*h''\star\nabla^{\phi^*\bar g''}\phi^*h''|
\\& \leq c(n)|\nabla^{\bar g'}h'||\nabla^{\bar g'}h' - \nabla^{\phi^*\bar g''}\phi^*h''| + c(n, \gamma)|\nabla^{\phi^*\bar g''}\phi^*h''||\nabla^{\bar g'}h' - \nabla^{\phi^*\bar g''}\phi^*h''| 
\\& \qquad + c(n)|\nabla^{\phi^*\bar g''}\phi^*h''\star\nabla^{\phi^*\bar g''}\phi^*h''| 
\\& \leq c(n)|\nabla^{\bar g'}h'||\nabla^{\bar g'}h' - \nabla^{\bar g'}\phi^*h''| + c(\gamma)|\nabla^{\bar g'}h'||\nabla^{\bar g'}\phi^*h'' - \nabla^{\phi^*\bar g''}\phi^*h''|
\\& \qquad + c(n)|\nabla^{\phi^*\bar g''}\phi^*h''||\nabla^{\bar g'}h' - \nabla^{\bar g'}\phi^*h''| + c(\gamma)|\nabla^{\phi^*\bar g''}\phi^*h''||\nabla^{\bar g'}\phi^*h'' - \nabla^{\phi^*\bar g''}\phi^*h''| 
\\& \qquad \qquad + c(n)|\nabla^{\phi^*\bar g''}\phi^*h''\star\nabla^{\phi^*\bar g''}\phi^*h''|.
\end{align*}

Similarly, by (\ref{eq:Q0is}) and (\ref{eq:inverseexpansion2}) we have
\begin{align*}
& |[(\bar g' + h')^{-1} - (\bar g')^{-1}]\star\Rm^{\bar g'}\star h' - [(\phi^*\bar g'' + \phi^*h'')^{-1} - (\phi^*\bar g'')^{-1}]\star\Rm^{\phi^*\bar g''}\star \phi^*h'' |
\\& \leq |[(\bar g' + h')^{-1} - (\bar g')^{-1}]\star\Rm^{\bar g'}\star h' - [(\bar g' + h')^{-1} - (\bar g')^{-1}]\star\Rm^{\phi^*\bar g''}\star \phi^*h''| 
\\& \qquad \qquad + |[(\bar g' + h')^{-1} - (\bar g')^{-1}]\star\Rm^{\phi^*\bar g''}\star \phi^*h'' - [(\phi^*\bar g'' + \phi^*h'')^{-1} - (\phi^*\bar g'')^{-1}]\star\Rm^{\phi^*\bar g''}\star \phi^*h'' |
\\& \leq  c(n)|h'||\Rm^{\bar g'}\star h' - \Rm^{\phi^*\bar g''}\star \phi^*h''|  + c(n)|\Rm^{\phi^*\bar g''}||\phi^*h''|
\\& \leq c(n)|h'||\Rm^{\bar g'}\star h' - \Rm^{\bar g'}\star \phi^*h''| + c(n)|h'||\Rm^{\bar g'}\star \phi^*h'' - \Rm^{\phi^*\bar g''}\star \phi^*h''| + c(n)|\Rm^{\phi^*\bar g''}||\phi^*h''|
\\& \leq c(n)|h'||\Rm^{\bar g'}||h' - \phi^*h''| + c(n)|h'||\phi^*h''||\Rm^{\bar g'} - \Rm^{\phi^*\bar g''}| + c(n)|\Rm^{\phi^*\bar g''}||\phi^*h''|
\end{align*}

Thus we have
\begin{align*}
|\varphi(Q_0^{\bar g'}[h'] & - Q_0^{\phi^*\bar g''}[\phi^*h''])| \leq c(n)|\nabla^{\bar g'}h'||\varphi(\nabla^{\bar g'}h' - \nabla^{\bar g'}\phi^*h'')| + c(n)|\nabla^{\bar g'}h'||\varphi(\nabla^{\bar g'}\phi^*h'' - \nabla^{\phi^*\bar g''}\phi^*h'')|
\\& + c(n)|\nabla^{\phi^*\bar g''}\phi^*h''||\varphi(\nabla^{\bar g'}h' - \nabla^{\bar g'}\phi^*h'')| + c(n)|\nabla^{\phi^*\bar g''}\phi^*h''||\varphi(\nabla^{\bar g'}\phi^*h'' - \nabla^{\phi^*\bar g''}\phi^*h'')| 
\\& + c(n)|\varphi(\nabla^{\phi^*\bar g''}\phi^*h''\star\nabla^{\phi^*\bar g''}\phi^*h'')| + c(n)|h'||\Rm^{\bar g'}||\varphi(h' - \phi^*h'')| 
\\& + c(n)|h'||\phi^*h''||\varphi(\Rm^{\bar g'} - \Rm^{\phi^*\bar g''})| + c(n)|\Rm^{\phi^*\bar g''}||\varphi\phi^*h''|
\end{align*}

Arguing as in the proof of Lemma \ref{lemma:easy} and applying (\ref{eq:eatsweight}), we find
\begin{equation}
\begin{split}
I &\leq c(n)(||h'||_{X^{\bar g'}}+ ||h''||_{X^{\bar g''}})||\varphi(h'-\phi^*h'')||_{\tilde X_a^{\bar g'}} 
\\& \qquad + c(n)||h'||_{X^{\bar g'}}||h''||_{X^{\bar g''}} + c(n, \gamma)||h''||_{X^{\bar g''}} + c(n)||h''||^2_{X^{\bar g''}} 
\\& = c(n)(||h'||_{X^{\bar g'}} + ||h''||_{X^{\bar g''}})||\varphi(h'-\phi^*h'')||_{\tilde X_a^{\bar g'}} + C(n),
\end{split}
\end{equation}
where $C(n)$ is a finite positive number.

\emph{Term $II$:}
Applying (\ref{eq:Q1is}), (\ref{eq:inverseexpansion1}), (\ref{eq:inverseexpansion2}), and (\ref{eq:inverseexpansion3}) we have
\begin{align*}
|Q_1^{\bar g'}[h'] - &Q_1^{\phi^*\bar g''}[\phi^*h'']| \leq c(n)|[(\bar g' + h')^{-1} - (\bar g')^{-1}]\star\nabla^{\bar g'}h' - [(\phi^*\bar g'' + \phi^*h'')^{-1}- (\phi^*\bar g'')^{-1}]\star\nabla^{\phi^*\bar g''}\phi^*h''|
\\& \leq c(n)||[(\bar g' + h')^{-1} - (\bar g')^{-1}]\star\nabla^{\bar g'}h' - [(\bar g' + h')^{-1} - (\bar g')^{-1}]\star\nabla^{\bar g'}\phi^*h''| 
\\& + c(n)|[(\bar g' + h')^{-1} - (\bar g')^{-1}]\star\nabla^{\bar g'}\phi^*h'' - [(\bar g' + h')^{-1} - (\bar g')^{-1}]\star\nabla^{\phi^*\bar g''}\phi^*h''|
\\& + c(n)|[(\bar g' + h')^{-1} - (\bar g')^{-1}]\star\nabla^{\phi^*\bar g''}\phi^*h'' - [(\phi^*\bar g'' + \phi^*h'')^{-1} - (\phi^*\bar g'')^{-1}]\star\nabla^{\phi^*\bar g''}\phi^*h''|
\\& \leq c(n)|h'||\nabla^{\bar g'}h' - \nabla^{\bar g'}\phi^*h''| + c(n)|h'||\nabla^{\bar g'}\phi^*h'' - \nabla^{\phi^*\bar g''}\phi^*h''|
\\& + c(n)|(\bar g')^{-1} - (\phi^*\bar g'')^{-1}||\nabla^{\phi^*\bar g''}\phi^*h''| + c(n)|(\bar g' + h')^{-1} - (\bar g' + \phi^*h'')^{-1}||\nabla^{\phi^*\bar g''}\phi^*h''| 
\\& + c(n)|(\bar g' + \phi^*h'')^{-1} - (\bar g' + (-\bar g' + \phi^*\bar g'' + \phi^*h''))^{-1}||\nabla^{\phi^*\bar g''}\phi^*h''|
\\& \leq c(n)|h'||\nabla^{\bar g'}h' - \nabla^{\bar g'}\phi^*h''| + c(n)|h'||\nabla^{\bar g'}\phi^*h'' - \nabla^{\phi^*\bar g''}\phi^*h''|
\\& + c(n)|\bar g' - \phi^*\bar g''||\nabla^{\phi^*\bar g''}\phi^*h''| + |h' - \phi^*h''||\nabla^{\phi^*\bar g''}\phi^*h''|.
\end{align*}
Then, again arguing as in the analysis of Term $I$ and the proof of Lemma \ref{lemma:easy}, we find
\begin{equation}
\begin{split}
II & \leq c(n)(||h'||_{X^{\bar g'}} + ||h''||_{X^{\bar g''}})||\varphi(h'-\phi^*h'')||_{\tilde X_a^{\bar g'}} + c(n)||h''||_{X}
\\& \leq c(n)(||h'||_{X^{\bar g'}} + ||h''||_{X^{\bar g''}})||\varphi(h'-\phi^*h'')||_{\tilde X_a^{\bar g'}} + C(n),
\end{split}
\end{equation}
where $C(n)$ is some finite positive number.

\emph{Term $III$:}
Working in coordinates, we find
\begin{align*}
&\bigg|\int_0^t\int_M K_{t-s}(x,y)\varphi(\Delta^{\bar g'}\phi^*h'' - \Delta^{\phi^*\bar g''}\phi^*h'')dyds\bigg| \\& = \bigg|\int_0^t\int_M K_{t-s}(x,y)\varphi((\bar g')^{ij}\nabla^{\bar g'}_i\nabla^{\bar g'}_j\big((\phi^*h'')_{ab}dx^a\otimes dx^b\big)  -(\phi^*\bar g'')^{ij}\nabla^{\phi^*\bar g''}_i\nabla^{\phi^*\bar g''}_j\big((\phi^*h'')_{ab}dx^adx^b\big)dyds\bigg|
\\& \leq \bigg|\int_0^t\int_M K_{t-s}(x,y)\varphi((\bar g')^{ij}\nabla^{\bar g'}_i\nabla^{\bar g'}_j\big((\phi^*h'')_{ab}dx^a\otimes dx^b\big)  -(\phi^*\bar g'')^{ij}\nabla^{\bar g'}_i\nabla^{\bar g'}_j\big((\phi^*h'')_{ab}dx^adx^b\big)dyds\bigg|
\\& + \bigg|\int_0^t\int_M K_{t-s}(x,y)\varphi((\phi^*\bar g'')^{ij}\nabla^{\bar g'}_i\nabla^{\bar g'}_j\big((\phi^*h'')_{ab}dx^a\otimes dx^b\big)  -(\phi^*\bar g'')^{ij}\nabla^{\phi^*\bar g''}_i\nabla^{\phi^*\bar g''}_j\big((\phi^*h'')_{ab}dx^adx^b\big)dyds\bigg| 
\\& \leq \bigg|\int_0^t\int_M \big(\nabla^{\bar g'}_iK_{t-s}(x,y)\big)\varphi((\bar g')^{ij}\nabla^{\bar g'}_j\big((\phi^*h'')_{ab}dx^a\otimes dx^b\big)  -(\phi^*\bar g'')^{ij}\nabla^{\bar g'}_j\big(\phi^*h''_{ab}dx^adx^b\big)dyds\bigg|
\\& + \bigg|\int_0^t\int_M K_{t-s}(x,y)(\nabla^{\bar g'}_i\varphi)((\bar g')^{ij}\nabla^{\bar g'}_j\big((\phi^*h'')_{ab}dx^a\otimes dx^b\big)  -(\phi^*\bar g'')^{ij}\nabla^{\bar g'}_j\big((\phi^*h'')_{ab}dx^adx^b\big)dyds\bigg|
\\& + \bigg|\int_0^t\int_M K_{t-s}(x,y)\varphi[(\nabla^{\bar g'}_i((\bar g')^{ij} - (\phi^*\bar g'')^{ij}))\nabla^{\bar g'}_j\big((\phi^*h'')_{ab}dx^a\otimes dx^b\big)]dyds\bigg|
\\& + \bigg|\int_0^t\int_M K_{t-s}(x,y)\varphi[(\phi^*\bar g'')^{ij}\partial_i(\phi^*h'')_{ab}(\nabla^{\bar g'}_j - \nabla^{\phi^*\bar g''}_j)dx^a\otimes dx^b] dyds\bigg|
\\& +\bigg|\int_0^t\int_M K_{t-s}(x,y)\varphi[(\phi^*\bar g'')^{ij}\partial_j(\phi^*h'')_{ab}(\nabla^{\bar g'}_i - \nabla^{\phi^*\bar g''}_i)dx^a\otimes dx^b] dyds\bigg|
\\& + \bigg| \int_0^t\int_M K_{t-s}(x,y)\varphi[(\phi^*h'')_{ab}(\nabla^{\bar g'}_i\nabla^{\bar g'}_j - \nabla^{\phi^*\bar g''}_i\nabla^{\phi^*\bar g''}_j)dx^a\otimes dx^b]dyds \bigg|
\end{align*}

Now we observe the following consequences of H\"older's inequality and (\ref{eq:almosteucl}):
\begin{equation}\label{eq:L1L2}
r^{-n}||f||_{L^1(B(x,r)\times(0,r^2))} \leq r^{-n}||1||_{L^2(B(x,r)\times(0,r^2))}||f||_{L^2(B(x,r)\times(0,r^2))} \leq cr^{-n/2 + 1/2}||f||_{L^2(B(x,r)\times (0,r^2))}
\end{equation}
\begin{equation}\label{eq:Ln+4Ln+4/2}
\begin{split}
r^{\tfrac{4}{n+4}}||f||_{L^{\tfrac{n+4}{2}}(B(x,r)\times(\tfrac{r^2}{2}, r^2))} &= r^{\tfrac{4}{n+4}} || |f|^{\tfrac{n+4}{2}}||_{L^{1}(B(x,r)\times (\tfrac{r^2}{2}, r^2))}^{\tfrac{2}{n+4}}
\\& \leq r^{\tfrac{4}{n+4}}\left(|| |f|^{\tfrac{n+4}{2}}||_{L^2(B(x,r)\times (\tfrac{r^2}{2}, r^2))}||1||_{L^2(B(x,r)\times (\tfrac{r^2}{2}, r^2))}\right)^{\tfrac{2}{n+4}}
\\& \leq cr^{\tfrac{2}{n+4}}||f||_{L^{n+4}(B(x,r)\times (\tfrac{r^2}{2}, r^2))}
\end{split}
\end{equation}

Then, using Lemma \ref{lemma:hard} and (\ref{eq:eatsweight}), we conclude:
\begin{align*}
\bigg|\bigg|& \int_0^t\int_M K_{t-s}(x,y)\varphi(\Delta^{\bar g'}\phi^*h'' - \Delta^{\bar g''}\phi^*h'') \bigg|\bigg|_{\tilde X_a(B(x,r))}
\\& \leq ||\varphi(((\bar g')^{ij} - (\phi^*\bar g'')^{ij})\nabla^{\bar g'}_j\phi^*h'')||_{\tilde Y_a^1} + ||(\nabla^{\bar g'}_i\varphi)(((\bar g')^{ij} - (\phi^*\bar g'')^{ij})\nabla^{\bar g'}_j\phi^*h'')||_{\tilde Y_a^0}
\\& + ||\varphi(\nabla^{\bar g'}_i((\bar g')^{ij} - (\phi^*\bar g'')^{ij})\nabla^{\bar g'}_j\phi^*h'')||_{\tilde Y_a^0}
+ ||\varphi((\phi^*\bar g'')^{ij}\partial_i(\phi^*h'')_{ab}(\nabla^{\bar g'}_j - \nabla^{\phi^*\bar g''}_j)dx^a\otimes dx^b)||_{\tilde Y_a^0}
\\& + ||\varphi((\phi^*\bar g'')^{ij}\partial_j(\phi^*h'')_{ab}(\nabla^{\bar g'}_i - \nabla^{\phi^*\bar g''}_i)dx^a\otimes dx^b)||_{\tilde Y_a^0} 
\\& + ||\varphi((\phi^*h'')_{ab}(\nabla^{\bar g'}_i\nabla^{\bar g'}_j - \nabla^{\phi^*\bar g''}_i\nabla^{\phi^*\bar g''}_j)dx^a\otimes dx^b)||_{\tilde Y_a^0} + \text{lower order terms}
\\& \leq c(n, b)||\varphi\phi^*h''||_{X^{\bar g''}},
\end{align*}
where the estimate on the first term follows from the definition of the norms as in the proof of Lemma \ref{lemma:easy}, the estimate on the last term follows from bounding the $\tilde Y^0_a$-norm by the $L^\infty$ norm as in the proof of Lemma \ref{lemma:easy}, the bound on the lower order terms follows from Lemmata \ref{lemma:easy} and \ref{lemma:hard}, and the bounds on the remaining terms are a consequence of taking $f = \nabla^{\bar g'}\phi^*h''$ in (\ref{eq:L1L2}) and (\ref{eq:Ln+4Ln+4/2}).

\emph{Term $IV$:}
We have 
\begin{equation*}
|\varphi(\Rm^{\bar g'}(\phi^*h'') - \Rm^{\phi^*\bar g''}(\phi^*h''))| \leq |\Rm^{\bar g'} - \Rm^{\phi^*\bar g''}||\varphi(\phi^*h'')| \leq c(n)Rt^m|\varphi(\phi^*h'')|
\end{equation*}
so, comparing the $Y^0$-norm to the $L^\infty$ norm as in the proof of Lemma \ref{lemma:easy}, we find
\begin{equation}
IV \leq c(n)||h''||_{X^{\bar g''}}.
\end{equation}

\emph{Term $V$:}
By Lemma \ref{lemma:hard} we have that 
\begin{equation}\label{eq:Vvanishingbound}
\begin{split}
V &\leq c(n)||(\Delta^{\bar g'}\varphi)(h' - \phi^*h'')||_{\tilde Y^0_a} \leq c(n)\sup_{x\in M, 0<r^2 < T}w_a(x,r^2)\bigg(r^{-n}||(\Delta^{\bar g'}\varphi)(h' - \phi^*h'')||_{L^1(B_{\bar g_0'}(x,r)\times (0,r^2))} 
\\& \qquad \qquad + r^{\frac{4}{n+4}}||(\Delta^{\bar g'}\varphi)(h' - \phi^*h'')||_{L^{\frac{n+4}{2}}(B_{\bar g_0'}(x,r)\times (\tfrac{r^2}{2},r^2))}\bigg)
\\&= c(n)\sup_{x\in M, 0<r^2 < T}w_a(x,r^2)\bigg(r^{-n}||(\Delta^{\bar g'}\varphi)(h' - \phi^*h'')||_{L^1([B_{\bar g_0'}(x,r)\cap A_{\bar g_0'}(x_0';R, 4R)]\times (0,r^2))} 
\\& \qquad \qquad + r^{\frac{4}{n+4}}||(\Delta^{\bar g'}\varphi)(h' - \phi^*h'')||_{L^{\frac{n+4}{2}}([B_{\bar g_0'}(x,r)\cap A_{\bar g_0'}(x_0';R, 4R)]\times (\tfrac{r^2}{2},r^2))}\bigg),
\end{split}
\end{equation}
where $A_{\bar g_0'}(x_0'; R, 4R):= B_{\bar g_0'}(x_0', 4R)\setminus B_{\bar g_0'}(x_0', R)$, since $\varphi \equiv 1$ on $B_{\bar g_0'}(x_0', R)$

For fixed $(x,r)\in M\times (0,\sqrt{T})$, if $B_{\bar g_0'}(x,r) \subset B_{\bar g_0'}(x_0', R)$, then $\Delta^{\bar g'}\varphi \equiv 0$ on $B_{\bar g_0'}(x,r)$, so 
\begin{equation}\label{eq:vanishingpart}
w(x,r^2)r^\theta||(\Delta^{\bar g'}\varphi)(h' - \phi^*h'')||_{L^p([B_{\bar g_0'}(x,r) \cap A_{\bar g_0'}(x_0'; R, 4R)]\times (0, r^2))} = 0,
\end{equation}
for any exponent $\theta$ and positive integer $p$.

On the other hand, if there exists $y\in B_{\bar g_0'}(x,r)$ such that $y\notin B_{\bar g_0'}(x_0', R)$, then $d_{\bar g_0'}(x, x_0') \geq d_{\bar g_0'}(x_0', y) - d_{\bar g_0'}(x, y) \geq R- r$ so
\begin{equation}\label{eq:boundedpart}
\begin{split}
w_a(x,r^2)r^\theta&||(\Delta^{\bar g'}\varphi)(h' - \phi^*h'')||_{L^p([B_{\bar g_0'}(x,r) \cap A_{\bar g_0'}(x_0'; R, 4R)]\times (0, r^2))} 
\\& \leq  \max\left\{\frac{1}{(d_{\bar g_0'}(x, x_0') + r + a)^{2+\eta}}, 1\right\}||\Delta^{\bar g'}\varphi||_{L^\infty(M)}r^\theta||h' - \phi^*h''||_{L^p(B_{\bar g_0'}(x,r)\times (0, r^2))}
\\& \leq \frac{||\Delta^{\bar g'}\varphi||_{L^\infty(M)}}{R^{2+\eta}}r^\theta||h' - \phi^*h''||_{L^p(B_{\bar g_0'}(x,r)\times (0, r^2))}
\\&  \leq \frac{||\Delta^{\bar g'}\varphi||_{L^\infty(M)}}{R^{2+\eta}}r^\theta\left(||h'||_{L^p(B_{\bar g_0'}(x,r)\times (0, r^2))} + ||\phi^*h''||_{L^p(B_{\bar g_0'}(x,r)\times (0, r^2))}\right)
\end{split}
\end{equation}

Taking the supremum over all pairs $(x,r)$ and applying (\ref{eq:vanishingpart}) and (\ref{eq:boundedpart}) to (\ref{eq:Vvanishingbound})
\begin{equation}
V \leq \frac{c(n)||\Delta^{\bar g'}||_{L^\infty(M)}}{R^{2+\eta}}\left(||h'||_{Y^0} + ||h''||_{Y^0}\right) \leq c(n, R, b),
\end{equation}
where $c(n, R, b)$ is some finite positive constant, and the last inequality follows from comparing the $Y^0$-norm to the $L^\infty$-norm as in the proof of Lemma \ref{lemma:easy}, so that $||h'||_{Y^0}+ ||h''||_{Y^0} \leq ||h'||_{X} + ||h''||_X \leq c$.

\emph{Term $VI$:}
Using a similar argument to that of the analysis of Term $V$, we find
\begin{equation}
\begin{split}
\left|\left|2\int_0^t\int_M \bar K_{t-s}(x,y)\nabla^{\bar g'}_{\nabla \varphi}(h' - \phi^*h'')(y,s)d_{\bar g'_s}(y)ds\right|\right|_{\tilde X_a} &\leq \frac{c(n)||\nabla \varphi||_{L^\infty(M)}}{R^{2+\eta}}\left(||\nabla h'||_{Y^1} + ||\nabla h''||_{Y^1}\right)
\\& \leq \frac{c(n)||\nabla \varphi||_{L^\infty(M)}}{R^{2+\eta}}\left(||h'||_{X} + ||h''||_{X}\right)
\\& \leq c(n, R, b).
\end{split}
\end{equation}

Having estimated terms $I - VI$, we conclude:
\begin{equation}\label{eq:inequalitystring}
\begin{split}
||\varphi(h'-\phi^*h'')||_{\tilde X_a^{\bar g'}(B(x,r))} & \leq c(n)(||h'||_{X^{\bar g'}} + ||h''||_{X^{\bar g''}})||\varphi(h'-\phi^*h'')||_{\tilde X_a^{\bar g'}}
\\& + c(n)||w_a(0)\varphi(h_0' - \phi^*h_0'')||_{L^\infty(M)} + c(n, R, b),
\end{split}
\end{equation}
where $c(n, R, b)$ is some finite positive constant. 

Observe that $||\varphi(h'-\phi^*h'')||_{\tilde X_a^{\bar g'}} \leq \tfrac{1}{a}||\varphi(h'-\phi^*h'')||_{X} \leq \tfrac{1}{a}(||h'||_{X^{\bar g'}} + ||h''||_{X^{\bar g''}})$. Thus, $||\varphi(h'-\phi^*h'')||_{\tilde X_a^{\bar g'})}$ is finite for all $a>0$. We show that, because $||\varphi(h'-\phi^*h'')||_{\tilde X_a^{\bar g'}}$ is finite, it is in fact bounded by some constant that does not depend on $a$. 

First, recall that, by Lemma \ref{lemma:RDTexistence}, we have
\begin{equation}
||h'||_{\bar g'} + ||h''||_{\bar g''} \leq C(n)(||g_0' - \bar g_0'||_{L^\infty(M)} + ||g_0'' - \bar g_0''||_{L^\infty(M)}) \leq 2C(n)\varepsilon.
\end{equation}
Reduce $\varepsilon$ as necessary so that 
\begin{equation}\label{eq:smallcoeff}
2c(n)C(n)\varepsilon < 1/2,
\end{equation}
where $c(n)$ is the constant from (\ref{eq:inequalitystring}). Now observe that 
\begin{align*}
 ||\varphi(h' - h'')||_{\tilde X_a}  &= \sup_{0<t<T}||w_a(t)(h_t' - \phi^*h_t'')||_{L^\infty(B(x_0', R))} 
\\& \qquad + \sup_{\substack{(x,r)\in M\times (0,\sqrt{T})\\B(x,r)\subset B(x_0',R)}}w_a(x,r^2)\bigg(r^{-\frac{n}{2}}||\nabla(h' - \phi^*h'')||_{L^2(B(x,r)\times (0,r^2))} 
\\& \qquad \qquad \qquad + r^{\frac{2}{n+4}}||\nabla(h' - \phi^*h'')||_{L^{n+4}(B(x,r)\times (\tfrac{r^2}{2}, r^2))}\bigg)
\\& + \sup_{0<t<T}||w_a(t)(h_t' - \phi^*h_t'')||_{L^\infty(A(x_0'; R,4R))} 
\\& \qquad + \sup_{\substack{(x,r)\in M\times (0,\sqrt{T})\\B(x,r)\not \subset B(x_0',R)}}w_a(x,r^2)\bigg(r^{-\frac{n}{2}}||\nabla[\varphi(h' - \phi^*h'')]||_{L^2(B(x,r)\times (0,r^2))} 
\\& \qquad \qquad \qquad + r^{\frac{2}{n+4}}||\nabla[\varphi(h' - \phi^*h'')]||_{L^{n+4}(B(x,r)\times (\tfrac{r^2}{2}, r^2))}\bigg)
\\& \leq \sup_{\substack{0<t<T\\ (x,r)\in M\times (0,\sqrt{T}) \\ B(x,r)\subset B(x_0', R)}}||w_a(t)(h_t' - \phi^*h_t'')||_{L^\infty(B(x,r))} 
\\& \qquad + \sup_{\substack{(x,r)\in M\times (0,\sqrt{T})\\B(x,r)\subset B(x_0',R)}}w_a(x,r^2)\bigg(r^{-\frac{n}{2}}||\nabla(h' - \phi^*h'')||_{L^2(B(x,r)\times (0,r^2))} 
\\& \qquad \qquad \qquad + r^{\frac{2}{n+4}}||\nabla(h' - \phi^*h'')||_{L^{n+4}(B(x,r)\times (\tfrac{r^2}{2}, r^2))}\bigg)
\\& + \sup_{0<t<T}||w_a(t)(h_t' - \phi^*h_t'')||_{L^\infty(A(x_0'; R,4R))} 
\\& \qquad  + \sup_{\substack{(x,r)\in M\times (0,\sqrt{T})\\B(x,r)\not \subset B(x_0',R)}}w_a(x,r^2)\bigg(r^{-\frac{n}{2}}||(\nabla\varphi)\otimes(h' - \phi^*h'')||_{L^2(B(x,r)\times (0,r^2))} 
\\& \qquad \qquad \qquad + r^{\frac{2}{n+4}}||(\nabla\varphi)\otimes(h' - \phi^*h'')||_{L^{n+4}(B(x,r)\times (\tfrac{r^2}{2}, r^2))}\bigg)
 \\& + \sup_{\substack{(x,r)\in M\times (0,\sqrt{T})\\B(x,r)\not \subset B(x_0',R)}}w_a(x,r^2)\bigg(r^{-\frac{n}{2}}||\varphi\nabla(h' - \phi^*h'')||_{L^2(B(x,r)\times (0,r^2))} 
  \\& \qquad \qquad \qquad + r^{\frac{2}{n+4}}||\varphi\nabla(h' - \phi^*h'')||_{L^{n+4}(B(x,r)\times (\tfrac{r^2}{2}, r^2))}\bigg)
 \\& \leq \sup_{\substack{(x,r)\in M\times (0,T)\\ B(x,r)\subset B(x_0', R)}}||h' - \phi^*h''||_{\tilde X_a(B(x,r))} + \frac{(1 + ||\nabla \varphi||_{L^\infty(M)})}{R^{2+\eta}}(||h'||_{X^{\bar g'}} + ||h''||_{X^{\bar g''}})
 \\& = \sup_{\substack{(x,r)\in M\times (0,T)\\ B(x,r)\subset B(x_0', R)}}||h' - \phi^*h''||_{\tilde X_a(B(x,r))} + c(n, R, b).
\end{align*}
where the last inequality follows from estimating $w_a(x,r^2)$ as in the analysis of terms $V$ and $VI$.

Then, taking the supremum over all $(x,r)$ with $B(x,r)\subset B(x_0', R)$ so that $\varphi \equiv 1$ on $B(x,r)$ and applying (\ref{eq:smallcoeff}) to (\ref{eq:inequalitystring}) we find
\begin{equation}\label{eq:validinequalitystring}
\begin{split}
\sup_{\substack{(x,r)\in M\times (0,\sqrt{T})\\ B(x,r)\subset B(x_0', R)}}||h' - \phi^*h''||_{\tilde X_a(B(x,r))} &\leq \frac{1}{2} \left(\sup_{\substack{(x,r)\in M\times (0,\sqrt{T})\\ B(x,r)\subset B(x_0', R)}}||h' - \phi^*h''||_{\tilde X_a(B(x,r))}\right) 
\\& \qquad \qquad \qquad + c(n)||w_a(0)\varphi(h_0' - \phi^*h_0'')||_{L^\infty(M)} + c(n, R, b)
\end{split}
\end{equation}
Now, because $||\varphi(h'-\phi^*h'')||_{\tilde X_a^{\bar g'}}$ is finite, we may rearrange (\ref{eq:validinequalitystring}) to find
\begin{equation}
||h' - \phi^*h''||_{\tilde X_a(B(x_0', R))} \leq c(n)||w_a(0)\varphi(h_0' - \phi^*h_0'')||_{L^\infty(M)} + c(n, R, b).
\end{equation}
In particular, for all $a>0$, $x\in B(x_0', R)$ and $0<t<T$ we have
\begin{equation}
\begin{split}
|g_t'(x)- \phi^*g_t''(x)| &\leq  (d(x,x_0') + \sqrt{t} + a)^{2+\eta}[c(n)||w_a(0)\varphi(h_0' - \phi^*h_0'')||_{L^\infty(M)} 
\\& + ||w_a(t)(\bar g'(t)) - \phi^*\bar g''(t)||_{L^\infty(B(x_0', R))} + c(n, R, b)].
\end{split}
\end{equation}
Letting $a\searrow 0$, we find
\begin{equation}
\begin{split}
\sup_{0<t<T}||g_t' - \phi^*g_t''||_{L^\infty(B(x_0', R))} &\leq (R + \sqrt{t})^{2+\eta}[c(n)||w_0(0)\varphi(h_0' - h_0'')||_{L^\infty(M)} 
\\& +\sup_{0<t< T}||w_0(t)(\bar g'(t)) - \phi^*\bar g''(t)||_{L^\infty(B(x_0', R))} + c(n, R, b)],
\end{split}
\end{equation}
whence follows the result.
\end{proof}

We now are ready to prove Theorem \ref{thm:weakagreement}.
\begin{proof}[Proof of Theorem \ref{thm:weakagreement}]
Let $R$ be as in Theorem \ref{thm:fixedpointexistence} and reduce $R$ is necessary so that $g'$ and $\phi^*g''$ are $\varepsilon'/4$-close, where $\varepsilon'$ is the constant from Corollary \ref{cor:C0RDTderivbounds}.
We work within a time slice. Select $t\in (0,T)$, and let $\nabla$ denote the connection associated with $\bar g'(t)$. Observe that, if $\ell$ is some positive integer, then by Corollary \ref{cor:C0RDTderivbounds} we have
\begin{align*}
||\nabla^2(g_t' - \phi^*g_t'')||_{L^\infty(B_{\bar g_0'}(x_0', R))} &\leq ||\nabla^\ell g_t' - (\nabla^{\phi^*\bar g''})^\ell \phi^*g_t''||_{L^\infty(B_{\bar g_0'}(x_0', R))} 
\\& \qquad+ ||(\nabla^{\phi^*\bar g''})^{\ell}\phi^*g_t'' - \nabla^{\ell}\phi^*g_t''||_{L^\infty(B_{\bar g_0'}(x_0',R))}
\\& \leq c||\nabla^\ell g_t'||_{L^\infty(B_{\bar g_0'}(x_0',R))} + c||(\nabla^{\phi^*\bar g''})^\ell g_t''||_{L^\infty(B_{\bar g_0'}(x_0', R))} 
\\& \qquad + c||(\nabla^{\phi^*\bar g''})^{\ell}\phi^*g_t'' - \nabla^{\ell}\phi^*g_t''||_{L^\infty(B_{\bar g_0'}(x_0',R))}
\\& \leq 2c\varepsilon t^{-\ell/2} + c(\ell)t^{-\ell/2} \leq ct^{-\ell/2},
\end{align*}
due to the bound
\begin{align*}
||(\nabla^{\phi^*\bar g''})^{\ell}\phi^*g_t'' - \nabla^{\ell}\phi^*g_t''||_{L^\infty(B_{\bar g_0'}(x_0',R))} & \leq ||(\partial^{\ell - 1}\phi^*\bar g''_{ij})(\nabla^{\phi^*\bar g''} - \nabla)dx^i\otimes dx^j||_{L^\infty(B_{\bar g_0'}(x_0',R))} 
\\& + ||(\partial^{\ell - 1}\phi^*\bar g''_{ij})((\nabla^{\phi^*\bar g''})^\ell - \nabla^\ell)(dx^i\otimes dx^j)||_{L^\infty(B_{\bar g_0'}(x_0',R))} 
\\& + \cdots + ||\phi^*g''_{ij}((\nabla^{\phi^*\bar g''})^{\ell} - \nabla^{\ell})(dx^i\otimes dx^j)||_{L^\infty(B_{\bar g_0'}(x_0',R))}
\\& \leq c(\ell)t^{-\ell/2},\stepcounter{equation}\tag{\theequation}\label{eq:derivscloseweak}
\end{align*}
where $\partial$ denotes a partial derivative of the coefficient function, and the inequality follows from the fact that $\bar g'_0= \phi^*\bar g''_0$ on $B_{\bar g_0'}(x_0',R)$, as remarked in (\ref{eq:eatsweight}), and the application of Corollary \ref{cor:C0RDTderivbounds} to $g_t''$.

Now note that for all $\beta < \tfrac{1}{2}$ and all $D>0$ such that $Dt^\beta \leq R$, we have, by Theorem \ref{thm:fixedpointexistence},
\begin{equation}\label{eq:0derivsclose}
||g_t' - \phi^*g_t''||_{L^\infty(B_{\bar g_0'}(x_0', Dt^\beta))} \leq c(Dt^\beta + \sqrt{t})^{2+\eta} \leq Ct^{\beta(2+\eta)},
\end{equation}
where $C = C(n, R)$.
Moreover, if $a\leq Dt^\beta$, then we have, by Lemma \ref{lemma:derivativeinterpolation},
\begin{equation}
||\nabla^2(g_t' - \phi^*g_t'')||_{L^\infty(B_{\bar g_0'}(x_0', Dt^\beta))} \leq \frac{c}{a^2}||g_t' - \phi^*g_t''||_{L^\infty(B_{\bar g_0'}(x_0', Dt^\beta + a))} + ca^{\ell - 2}||\nabla^{\ell}(g_t' - \phi^*g_t'')||_{L^\infty(B_{\bar g_0'}(x_0', Dt^\beta + a))},
\end{equation}
where $c= c(n, \sup_{t\in [0,T], 0\leq k \leq \ell}|(\nabla^{\bar g'})^k\Rm|(\bar g'(t)), \inf_{[0,T]}\inj(\bar g'(t)))$.
We now specify some parameters: Fix $\beta\in (1/(2+\eta), 1/2)$, so that $(2+\eta)\beta >1$. Fix $\delta>0$ sufficiently small so that $-1 - 2\delta + (2+\eta)\beta > 0$. Choose $\ell$ large so that $\delta \ell - 1 - 2\delta > 0$. Let $a = t^{\tfrac{1}{2} + \delta}$. Observe that $a< t^\beta$, since $t<1$ and $1/2 + \delta > 1/2 > \beta$. By assuming that $t$ is sufficiently small (depending on $D$, $\beta$, and $\delta$), we may assume that $a < Dt^\beta$. Then we find
\begin{equation}\label{eq:secondderivsclose}
||\nabla^2(g_t' - \phi^*g_t'')||_{L^\infty(B_{\bar g_0'}(x_0', Dt^\beta))} \leq \frac{c}{t^{2(\tfrac{1}{2} + \delta)}}t^{(2+\eta)\beta} + ct^{\tfrac{\ell}{2} + \delta\ell - 1-2\delta}t^{-\ell / 2} \leq ct^\gamma,
\end{equation}
where $\gamma$ is some positive number, and $c$ does not depend on $t$. 

Moreover, using (\ref{eq:derivscloseweak}) we find
\begin{equation}\label{eq:firstderivsclose}
(||\nabla g'||_{L^\infty(B_{\bar g_0'}(x_0',Dt^\beta))} + ||\phi^*g''||_{L^\infty(B_{\bar g_0'}(x_0',Dt^\beta))})||\nabla(g' - \phi^*g'')||_{L^\infty(B_{\bar g_0'}(x_0',Dt^\beta))} \leq ct^{\gamma'}t^{-1/2}
\end{equation}
where $\gamma' > 1/2$, as follows: arguing as in the proof of (\ref{eq:secondderivsclose}), we apply Lemma \ref{lemma:derivativeinterpolation}, choosing our parameters as follows: let $a= t^{\alpha}$ where $\alpha < (2+\eta)\beta - \tfrac{1}{2}$ and choose $\ell$ sufficiently large so that $\alpha\ell - \alpha > 1$. Then
\begin{equation}
||\nabla(g' - \phi^*g'')||_{L^\infty(B_{\bar g_0'}(x_0',Dt^\beta))} \leq \frac{c}{t^{\alpha}}t^{(2+\eta)\beta} + ct^{\alpha\ell - \alpha}t^{-1/2} \leq ct^{\gamma'}
\end{equation}

We now estimate the difference in scalar curvatures. Observe:
\begin{equation}\label{eq:ptwisescalardifference}
\begin{split}
|R^{g'} - \phi^*R^{g''}| &\leq |(\nabla^2g')\star(g')^{-1} - (\nabla^2 \phi^*g'')\star(\phi^*g'')^{-1}| 
\\& \qquad + |(\nabla g')\star(\nabla g')\star(g')^{-1} - (\nabla \phi^*g'')\star(\nabla \phi^*g'') \star (\phi^*g'')^{-1}|
\\& \leq c(n)|\nabla^2(g' - \phi^*g'')||(g')^{-1}| + c(n)|\nabla^2\phi^*g''||(g')^{-1} - (\phi^*g'')^{-1}| 
\\& \qquad + c(n)(|\nabla g'| + |\nabla \phi^*g''|)|\nabla(g' - \phi^*g'')||(g')^{-1}| + |\nabla \phi^*g''|^2|(g')^{-1} - (\phi^*g'')^{-1}|
\\& \leq c|\nabla^2(g' - \phi^*g'')| + c|\nabla^2\phi^*g''||g' - \phi^*g''|
\\& \qquad + c(|\nabla g'| + |\nabla \phi^*g''|)|\nabla(g' - \phi^*g'')| + c |\nabla \phi^*g''|^2|g'- \phi^*g''|,
\end{split}
\end{equation}
where $c$ does not depend on $t$.
Moreover, observe that, by (\ref{eq:derivscloseweak}) and (\ref{eq:0derivsclose}) we have
\begin{equation}\label{eq:extraok}
|\nabla^2\phi^*g''||g' - \phi^*g''| + |\nabla \phi^*g''|^2|g'- \phi^*g''| \leq c\frac{t^{(2+\eta)\beta}}{t} = ct^{\gamma''},
\end{equation}
where $\gamma''>0$.

Combining (\ref{eq:secondderivsclose}), (\ref{eq:firstderivsclose}), (\ref{eq:extraok}), and (\ref{eq:ptwisescalardifference}), we find
\begin{equation}\label{eq:scalarsclose}
||R^{g'_t} - \phi^*R^{g''_t}||_{L^\infty(B_{\bar g_0'}(x_0',Dt^\beta))} \leq ct^\omega,
\end{equation} 
where $\omega$ is some small positive exponent, and $c$ does not depend on $t$.
Therefore, for all $C>0$, 
\begin{align*}
\limsup_{t\to 0}\left(\sup_{B_{\bar g_0'}(x_0',Ct^\beta)}|R^{g'}|(t)\right) & \leq \limsup_{t\to 0}\left(\sup_{B_{\bar g_0'}(x_0',Ct^\beta)}|R^{g'} - \phi^*R^{g''}|(t)\right) + \limsup_{t\to 0}\left(\sup_{B_{\bar g_0''}(x_0'',Ct^\beta)}|R^{g''}|(t)\right)
\\& \leq \limsup_{t\to 0}ct^\omega + \limsup_{t\to 0}\left(\sup_{B_{\bar g_0''}(x_0'',Ct^\beta)}|R^{g''}|(t)\right)
\\& = \limsup_{t\to 0}\left(\sup_{B_{\bar g_0''}(x_0'',Ct^\beta)}|R^{g''}|(t)\right),
\end{align*}
and vice-versa, so
\begin{equation}
\sup_{C>0}\left(\limsup_{t\to 0}\left(\sup_{B_{\bar g_0'}(x_0,Ct^\beta)}|R^{g'}|(t)\right)\right) = \sup_{C>0}\left(\limsup_{t\to 0}\left(\sup_{B_{\bar g_0''}(x_0,Ct^\beta)}|R^{g''}|(t)\right)\right).
\end{equation}
\end{proof}

\section{Regularizing Ricci flow}\label{sec:RRF}
In this section we pullback the Ricci-DeTurck flow from \S \ref{sec:initialderivatives} to obtain a regularizing Ricci flow.

\begin{definition}\label{def:RRF}
Let $g_0$ be a $C^0$ metric on a manifold $M$. We say that a pair $((\tilde g_t)_{t\in (0,T]}, \chi)$, where $(\tilde g_t)_{t\in (0,T]}$ is a time dependent family of smooth metrics defined on some positive time interval and $\chi$ is a continuous surjective map $M\to M$ , is a \emph{regularizing Ricci flow} for $g_0$ if the following are true:
\begin{enumerate}
\item The family $(\tilde g_t)_{t\in (0,T]}$ is a Ricci flow, i.e. for all $t\in (0,T]$ we have
\begin{equation*}
\frac{\partial \tilde g_t}{\partial t}= - 2\Ric(\tilde g_t), \text{ and }
\end{equation*}
\item there exists a smooth family of diffeomorphisms  of $M$, $(\chi_t)_{t\in (0,T]}$, such that
\begin{equation*}
\chi_t\xrightarrow[t\to 0]{C^0} \chi \qquad \text{ and } \qquad ||(\chi_t)_*\tilde g_t - g_0||_{C^0(M)}\xrightarrow[t\to 0]{} 0,
\end{equation*}
\end{enumerate}
where all norms are computed with respect to some stationary smooth background metric.
\end{definition}
We will at times suppress the continuous surjection $\chi$, and refer to the family $(\tilde g_t)_{t\in (0,T]}$ as the regularizing Ricci flow.

\begin{remark}
If $M$ is closed, then, because $T_pM$ is finite dimensional for all $p\in M$, it does not matter which stationary background metric we use to compute the $C^0$ norms in Definition \ref{def:RRF}, i.e. if we have $||(\chi_t)_*\tilde g_t - g_0||_{C^0(M)}\xrightarrow[t\to 0]{} 0$ with respect to one stationary background metric,  the same statement holds with respect to any other stationary background metric.
\end{remark}

We show the following:
\begin{theorem}[Existence of a regularizing Ricci flow]\label{thm:existence}
Let $M$ be a closed manifold. If $g_0$ is a $C^0$ metric on $M$ and $g_t$ is a Ricci-DeTurck flow on $M$ starting from $g_0$ in the sense of Corollary \ref{cor:C0RDTderivbounds}, then there exists a smooth family of diffeomorphisms $(\chi_t)_{t\in (0,T]}$ such that $\chi_t^*g_t$ is a Ricci flow defined for $t\in (0,T]$ and $\chi_t\xrightarrow[t\to 0]{C^0} \chi$, where $\chi$ is some continuous surjection of $M$. In particular, there exists a regularizing Ricci flow for any $C^0$ metric on $M$.
\end{theorem}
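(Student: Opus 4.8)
The plan is to construct the family $(\chi_t)$ by integrating the DeTurck vector field backwards from $t=T$ and then showing the resulting flow extends continuously to $t=0$. Concretely, starting from the Ricci-DeTurck flow $g_t$ on $M$ with respect to a background Ricci flow $\bar g(t)$ (whose existence and derivative bounds are given by Lemma \ref{lemma:RDTexistence} and Corollary \ref{cor:C0RDTderivbounds}), let $X_t := X_{\bar g(t)}(g_t)$ be the DeTurck vector field from (\ref{eq:Xoperator}). For each fixed $t_0 \in (0,T]$ one solves the ODE in (\ref{eq:diffeoseq}) on the time interval $[t_0, T]$ (or rather its reparametrized version), obtaining a smooth family of diffeomorphisms; since $M$ is closed, $X_t$ is a smooth time-dependent vector field for $t>0$, so these flows exist and are diffeomorphisms. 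The key estimate is Corollary \ref{cor:C0RDTderivbounds}, which gives $|\nabla^{\bar g}(g_t - \bar g(t))|_{\bar g} \leq c t^{-1/2}\|g_0 - \bar g_0\|_{C^0}$; combined with the smoothness of $\bar g(t)$ this yields $|X_t|_{\bar g_t} \leq c s^{-1/2}$, which is exactly the hypothesis of Lemma \ref{lemma:driftbound}. Define $\chi_t$ to be the time-$t$ diffeomorphism in a family normalized so that, say, $\chi_T = \id$ is replaced by the correct normalization making $\chi_t^* g_t$ solve the Ricci flow with the family anchored appropriately; the standard fact recalled in \S\ref{sec:RFandRDT} is that $\chi_t^* g_t$ then solves the Ricci flow equation for $t \in (0,T]$.

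The next step is to show $\chi_t$ converges in $C^0$ as $t \searrow 0$ to a continuous map $\chi : M \to M$. This is where Lemma \ref{lemma:driftbound} does the work: the drift bound (\ref{eq:distancesnotmuchmoved}) gives $d_{\bar g(t_1)}(\chi_{t_1}(p), \chi_{t_2}(p)) \leq c(\sqrt{t_2} - \sqrt{t_1})$ for all $0 < t_1 < t_2$ and all $p \in M$. Since $M$ is closed, hence $(M, \bar g_0)$ is complete and the metrics $\bar g(t)$ are all uniformly bilipschitz to $\bar g_0$ by (\ref{eq:distancedistortion}), this shows $\{\chi_t(p)\}$ is uniformly Cauchy as $t \searrow 0$: for any $p$, $d_{\bar g_0}(\chi_{t_1}(p), \chi_{t_2}(p)) \leq c(\sqrt{t_2} - \sqrt{t_1}) \to 0$ uniformly in $p$. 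Therefore $\chi_t$ converges uniformly to a continuous limit $\chi$, and in particular $\diam\{\chi_s(p) : s \in (0,t]\} \leq C\sqrt{t}$. Then I would verify that $\|(\chi_t)_* \tilde g_t - g_0\|_{C^0(M)} \to 0$: since $\tilde g_t = \chi_t^* g_t$, we have $(\chi_t)_* \tilde g_t = g_t$, and Corollary \ref{cor:convergenceto0} (or the continuity statement in Corollary \ref{cor:C0RDTderivbounds}) gives $g_t \to g_0$ uniformly as $t \searrow 0$. This is almost immediate once the setup is in place.

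The one genuinely substantive point is surjectivity of $\chi$. Each $\chi_t$ is a diffeomorphism, hence surjective, but a uniform limit of surjections of a compact manifold need not be surjective a priori. I would argue as follows: $\chi$ is continuous and homotopic to each $\chi_t$ (for $t$ small, via the path $s \mapsto \chi_s$), so $\chi$ has the same degree as a diffeomorphism, namely $\pm 1 \neq 0$; a degree-nonzero continuous self-map of a closed connected manifold is surjective. (If $M$ is not orientable one uses $\mathbb{Z}/2$ degree instead, or observes directly that the image, being compact hence closed, and also — by the drift estimate and surjectivity of $\chi_t$ — $\sqrt{t}$-dense for every $t$, must be all of $M$.) Actually the $\sqrt{t}$-density argument is cleanest and avoids orientability issues: for any $q \in M$ pick $p_t$ with $\chi_t(p_t) = q$; by compactness $p_t \to p$ along a subsequence, and $d_{\bar g_0}(\chi(p), q) \leq d_{\bar g_0}(\chi(p), \chi(p_t)) + d_{\bar g_0}(\chi(p_t), \chi_t(p_t)) \to 0$, using continuity of $\chi$ and uniform convergence $\chi_t \to \chi$, so $\chi(p) = q$. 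I expect this surjectivity argument, though short, to be the part requiring the most care; everything else is bookkeeping on top of the estimates already established. Finally, the pair $((\tilde g_t)_{t \in (0,T]}, \chi)$ is by construction a regularizing Ricci flow in the sense of Definition \ref{def:RRF}, proving the theorem.
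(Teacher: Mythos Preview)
Your proposal is correct and follows essentially the same approach as the paper: solve the ODE for $\chi_t$ with an anchor at some fixed positive time (the paper takes $\chi_{t_0} = \id$ for an arbitrary $t_0 > 0$), invoke Lemma \ref{lemma:driftbound} for the uniform Cauchy estimate, and prove surjectivity by exactly your ``cleanest'' argument---picking preimages $\chi_{t_i}^{-1}(x)$, passing to a subsequential limit $y$, and using uniform convergence plus continuity of $\chi$ to conclude $\chi(y)=x$. The degree argument you mention is not used in the paper.
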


\begin{theorem}[Uniqueness of  regularizing Ricci flows]\label{thm:uniqueness}
Let $M$ be a closed manifold, and $g_0$ a $C^0$ metric on $M$. Suppose $((\tilde g_t^1)_{t\in(0,T^1]}, \chi^1)$ and $((\tilde g_t^2)_{t\in(0,T^2]}, \chi^2)$ are two regularizing Ricci flows for $g_0$. Then there is a stationary diffeomorphism $\alpha: M\to M$ such that $\alpha^*\tilde g_t^1 = \tilde g_t^2$ on $(0,T^1]\cap(0,T^2]$ and $\chi^1\circ\alpha = \chi^2$.
\end{theorem}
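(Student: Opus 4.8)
## Proof proposal for Theorem \ref{thm:uniqueness}

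The plan is to reduce the uniqueness of regularizing Ricci flows to the uniqueness statement for Ricci-DeTurck flows built into Lemma \ref{lemma:RDTexistence} (together with Corollary \ref{cor:C0RDTderivbounds}), and then to track how the freedom in choosing the background metric and in solving the ODE (\ref{eq:diffeoseq}) manifests as a single stationary diffeomorphism $\alpha$. First I would record the structure of a regularizing Ricci flow: by Theorem \ref{thm:existence}, each $\tilde g^i_t$ arises as $\tilde g^i_t = (\psi^i_t)^* g^i_t$, where $g^i_t$ is a Ricci-DeTurck flow with respect to some background Ricci flow $\bar g^i(t)$ starting from $\bar g^i_0$, and $\psi^i_t$ solves the diffeomorphism ODE (\ref{eq:diffeoseq}) with $\psi^i_0 = \id$; moreover $\chi^i = \lim_{t\to 0}\chi^i_t$ where $\chi^i_t = (\psi^i_t)^{-1}$ (or its inverse, depending on bookkeeping — I would fix the convention so that $(\chi^i_t)_*\tilde g^i_t = g^i_t$ matches Definition \ref{def:RRF}). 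I would then split into two cases: (i) the two flows use the \emph{same} background Ricci flow $\bar g$, and (ii) the general case.

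In case (i), since both $g^1_t$ and $g^2_t$ are Ricci-DeTurck flows with respect to the same $\bar g(t)$ and both have time slices converging uniformly to the same $g_0$ (by the $C^0$-convergence in Definition \ref{def:RRF}(b) combined with the drift bound $\diam\{\chi_s(x)\}\le C\sqrt t$, which forces $g^i_t \to g_0$), the uniqueness clause of Lemma \ref{lemma:RDTexistence} gives $g^1_t = g^2_t$ for all $t$ in the common interval, once we know both lie in the ball $\{||g_t - \bar g_t||_{X} \le C\varepsilon < 1\}$; the latter follows after possibly shrinking $T$, using that $g^i_t$ is the $X$-solution produced by that lemma. Then $\tilde g^1_t = (\psi^1_t)^* g^1_t$ and $\tilde g^2_t = (\psi^2_t)^* g^2_t$ differ only by the choice of solution to (\ref{eq:diffeoseq}). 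Two solutions of that ODE with the same $X_{\bar g(t)}(g(t))$ differ by a \emph{stationary} diffeomorphism (the time-$0$ discrepancy), i.e. $\psi^2_t = \psi^1_t\circ\beta$ for some fixed $\beta$; but here both have $\psi^i_0 = \id$, so actually $\psi^1_t = \psi^2_t$ and $\tilde g^1_t = \tilde g^2_t$, giving $\alpha = \id$ and $\chi^1 = \chi^2$. (If one does not normalize $\psi^i_0 = \id$, then $\alpha = \beta^{-1}$ and $\chi^1\circ\alpha = \chi^2$ follows by passing to the limit $t\to 0$.)

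In case (ii) the backgrounds $\bar g^1(t)$ and $\bar g^2(t)$ need not agree. Here I would invoke Theorem \ref{thm:fixedpointexistence} / Theorem \ref{thm:weakagreement} with the diffeomorphism $\phi = \id$: since $\bar g^1_0$ and $\bar g^2_0$ are two smooth metrics both $\varepsilon$-close to the \emph{same} $C^0$ metric $g_0$, they agree to order zero, and — more to the point — the two Ricci-DeTurck flows $g^1_t$, $g^2_t$ have the same $C^0$ initial data $g_0$. The key structural fact I want is: a Ricci-DeTurck flow with respect to $\bar g^1(t)$ and one with respect to $\bar g^2(t)$, both limiting to $g_0$, are related by a \emph{fixed} diffeomorphism (the DeTurck-gauge ambiguity). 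Concretely, both $(\psi^1_t)^*g^1_t$ and $(\psi^2_t)^*g^2_t$ are Ricci flows limiting (in the sense of Definition \ref{def:RRF}) to $g_0$; I would compare them directly as Ricci flows. Using that a smooth Ricci flow on $(0,T]$ with uniformly bounded curvature (here $|\Rm|(\tilde g^i_t) \le c/t$, from Corollary \ref{cor:C0RDTderivbounds} pulled back) that converges uniformly to a $C^0$ metric is determined by that data up to the isometry group — this is exactly the content one extracts from the forward-uniqueness of Ricci flow together with the backward stability estimate in Theorem \ref{thm:fixedpointexistence} — one obtains a stationary $\alpha: M \to M$ with $\alpha^*\tilde g^1_t = \tilde g^2_t$. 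Finally $\chi^1\circ\alpha = \chi^2$ is obtained by taking $t\to 0$ in the relation $\chi^1_t\circ\alpha = \chi^2_t$ (which itself follows from $\alpha^*\tilde g^1_t = \tilde g^2_t$ and the definitions of $\chi^i_t$), using the uniform convergence $\chi^i_t \to \chi^i$.

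The main obstacle is case (ii): making rigorous the claim that two regularizing Ricci flows with \emph{different} smooth backgrounds but the same $C^0$ limit differ by a stationary diffeomorphism. The honest route is probably not to compare the Ricci flows directly but to first show the two Ricci-DeTurck flows are related by a time-independent diffeomorphism — i.e. that the DeTurck perturbation $h^i_t = g^i_t - \bar g^i_t$, after the gauge change, solves the same fixed-point equation (\ref{eq:integraleq}) — and then to appeal to the uniqueness in Lemma \ref{lemma:RDTexistence} in a common gauge. Theorem \ref{thm:fixedpointexistence} with $\phi = \id$ is the technical engine here, since it controls $||g^1_t - g^2_t||_{L^\infty}$ near $t = 0$ in terms of the (vanishing) $C^0$ distance of the initial data, which is what lets one run a contraction/uniqueness argument; the bookkeeping of which diffeomorphism absorbs which gauge freedom, and checking that it is genuinely stationary (time-independent), is where care is needed.
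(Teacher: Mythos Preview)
Your argument has a structural gap at the very first step: you assert that ``by Theorem \ref{thm:existence}, each $\tilde g^i_t$ arises as $\tilde g^i_t = (\psi^i_t)^* g^i_t$'' for a Ricci-DeTurck flow $g^i_t$. But Theorem \ref{thm:existence} only says that \emph{some} regularizing Ricci flow can be built this way; Definition \ref{def:RRF} does not require that $(\chi_t)_*\tilde g_t$ be a Ricci-DeTurck flow, only that it converge in $C^0$ to $g_0$. So for an arbitrary regularizing Ricci flow you have no $g^i_t$, no background $\bar g^i$, and the case split (i)/(ii) never gets off the ground. Even within case (i) there is a second issue: the diffeomorphisms in the existence construction are normalized by $\chi_{t_0}=\id$ for some $t_0>0$ (the vector field $X$ is singular at $t=0$), not $\psi^i_0=\id$, so two such families with different $t_0$ will not coincide and your conclusion $\alpha=\id$ fails. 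Case (ii) you correctly flag as incomplete; invoking Theorem \ref{thm:fixedpointexistence} is not the right tool, since that concerns local higher-order agreement at a point, not global equality of $C^0$ data with different backgrounds.

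The paper avoids all of this by never assuming the given regularizing Ricci flows come from Ricci-DeTurck flows. Instead it proves a general lemma (Lemma \ref{lemma:isometryofRRF}): given any two regularizing Ricci flows and a sequence of $(1+\delta_i)$-bilipschitz maps between the pushed-forward smooth time slices $(\chi^j_{t_i})_*\tilde g^j_{t_i}$ with $\delta_i\to 0$, one gets a stationary isometry $\alpha$. The mechanism is to pick a \emph{single} smooth background $\bar g_0$ close to $g_0$, run \emph{fresh} Ricci-DeTurck flows from the smooth data $(\chi^j_{t_i})_*\tilde g^j_{t_i}$ for each $i$, use the contraction estimate of Lemma \ref{lemma:iterationcontraction} to keep the two flows $C\delta_i$-close, pull back to get $(1+c\delta_i)$-bilipschitz maps $\psi^i_t:(M,\tilde g^2_t)\to(M,\tilde g^1_t)$, extract a limiting isometry $\alpha_t$ by Arzel\`a--Ascoli, and then use forward uniqueness of smooth Ricci flow to upgrade $\alpha_t$ to a stationary $\alpha$. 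Theorem \ref{thm:uniqueness} is then the one-line application with $\varphi_i=\id$. The key idea you are missing is to use the smooth positive-time slices as initial data for an auxiliary flow, rather than trying to identify the given flows themselves as Ricci-DeTurck flows.
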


Moreover, regularizing Ricci flows are unique in a broader sense:
\begin{corollary}\label{cor:metricuniqueness}
Suppose that $(M_1, g^1)$ and $(M_2, g^2)$ are closed Riemannian manifolds and that there exists a $C^0$ metric space isometry $\varphi: M_2\to M_1$, i.e. $\varphi$ is a $C^0$ bijection with $d_{g_1}(\varphi(x),\varphi(y)) = d_{g_2}(x, y)$ for all $x, y\in M$. If $(\tilde g^1(t))_{t\in (0,T_1]}$ and $(\tilde g^2(t))_{t\in (0,T_2]}$ are regularizing Ricci flows for $g^1$ and $g^2$ respectively, then there is a diffeomorphism $\alpha: M_2 \to M_1$ such that $\alpha^*\tilde g^1(t) = \tilde g^2(t)$ for all $t\in (0,\min\{T_1, T_2\}]$.
\end{corollary}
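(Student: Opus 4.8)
The plan is to reduce to the Ricci--DeTurck picture, record that a metric space isometry of $C^0$ metrics is bilipschitz with an a.e.\ isometric differential, then transport the Ricci--DeTurck flow through $\varphi$ and extract a \emph{smooth} stationary diffeomorphism $\alpha$ in the limit $t\searrow 0$ by the same uniqueness argument for the DeTurck ODE \eqref{eq:diffeoseq} that drives Theorem \ref{thm:uniqueness}.

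First I would reduce the statement. By Theorem \ref{thm:uniqueness} it is enough to exhibit a single diffeomorphism $\alpha$ relating \emph{one} choice of regularizing Ricci flow for each of $g^1$, $g^2$; take these to be the flows $\tilde g^i(t) = (\chi^i_t)^*g^i(t)$ produced in Theorem \ref{thm:existence}, where $g^i(t)$ is the Ricci--DeTurck flow of Corollary \ref{cor:C0RDTderivbounds} starting from $g^i$ with respect to a fixed smooth background Ricci flow $\bar g^i(t)$ on $M_i$, $\chi^i_t$ the DeTurck diffeomorphisms, with $\chi^i_t\to\chi^i$ in $C^0$ and $\|(\chi^i_t)_*\tilde g^i(t)-g^i\|_{C^0(M_i)}\to 0$. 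Next I would record two facts about $\varphi$: since $g^1,g^2$ are $C^0$ metrics on closed manifolds the distances $d_{g^i}$ are bilipschitz to smooth background distances, so $\varphi$ is a bilipschitz homeomorphism; and at every point $p\in M_2$ where $\varphi$ is differentiable (a.e.\ by Rademacher), comparing $d_{g^2}$ near $p$ and $d_{g^1}$ near $\varphi(p)$ with the respective coordinate distances shows $D\varphi_p$ is a linear isometry $(T_pM_2,g^2_p)\to(T_{\varphi(p)}M_1,g^1_{\varphi(p)})$. Hence $\varphi^*g^1=g^2$ as $L^\infty$ symmetric $2$-tensors, even though $\varphi$ need not be $C^1$.

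The heart of the argument is to propagate this weak, ``$t=0$'' identity to positive times. The key point is that the Ricci--DeTurck existence and uniqueness of Lemma \ref{lemma:RDTexistence}, being a Banach fixed-point argument in the norms of Definition \ref{def:norms} that only ever measures $L^\infty$- and $L^p$-sizes of the data, applies verbatim to \emph{$L^\infty$} initial metrics that are a small $L^\infty$-perturbation of the smooth background; the interior estimates of Corollary \ref{cor:C0RDTderivbounds} still make the solution smooth for $t>0$, with convergence to the initial data now taking place in $L^\infty$ (the argument of Corollary \ref{cor:convergenceto0} goes through with $C^0$ replaced by $L^\infty$). I would then show that $\varphi$ intertwines the two integral equations \eqref{eq:integraleq}: the $L^\infty$-family $\varphi^*g^1(t)$ on $M_2$ is a small $L^\infty$-perturbation of $\varphi^*\bar g^1(t)$ and solves the Ricci--DeTurck integral equation relative to it; reconciling this with the genuinely smooth background $\bar g^2(t)$ is exactly the situation already handled in the proof of Theorem \ref{thm:uniqueness}, where two regularizing Ricci flows built from different smooth backgrounds but converging to the same metric are shown to be identified by a stationary diffeomorphism. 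Pulling the resulting bilipschitz identification of the two Ricci--DeTurck flows back by $\chi^i_t$ and invoking uniqueness of solutions of the smooth ODE \eqref{eq:diffeoseq}, as in Theorem \ref{thm:uniqueness}, upgrades it to a stationary \emph{smooth} $\alpha:M_2\to M_1$ with $\alpha^*\tilde g^1(t)=\tilde g^2(t)$ on $(0,\min\{T_1,T_2\}]$ and $\chi^1\circ\alpha=\varphi\circ\chi^2$.

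The main obstacle is precisely the transport of the background: $\varphi$ is only bilipschitz, so $\varphi^*\bar g^1(t)$ is not smooth and the Ricci flow and heat-kernel machinery of \S\ref{sec:preliminaries}--\S\ref{sec:fixedptconstruction} does not literally apply to flows run from it. I expect this to be dealt with not by smoothing $\varphi$ (impossible in general) but by never actually flowing from $\varphi^*\bar g^1(t)$: one only needs the assertion, implicit in Theorem \ref{thm:uniqueness}, that two Ricci--DeTurck flows converging in $L^\infty$ as $t\searrow 0$ to the same $L^\infty$ metric, relative to possibly different smooth backgrounds, are related by a stationary diffeomorphism, into which $\varphi$ enters only through the a.e.\ identity $\varphi^*g^1=g^2$ and the two-sided Lipschitz bound — both $L^\infty$-robust. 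Making the first of these two steps (the $L^\infty$ extension of Lemma \ref{lemma:RDTexistence} and the intertwining of the integral equations through a non-smooth diffeomorphism) fully rigorous is where the real work lies.
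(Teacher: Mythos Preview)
Your proposal correctly identifies the reduction to the Ricci--DeTurck picture and the fact that $\varphi$ is bilipschitz with a.e.\ isometric differential. However, the route you then pursue --- extending Lemma \ref{lemma:RDTexistence} to $L^\infty$ data and intertwining the integral equations through the non-smooth $\varphi$ --- runs into exactly the obstacle you name, and your proposed workaround (``never actually flow from $\varphi^*\bar g^1(t)$'', rely only on $L^\infty$-robustness) is not a workable strategy: to compare the two flows one must move them onto the same manifold, and any such transport through $\varphi$ itself destroys the smoothness that the heat-kernel estimates of \S\ref{sec:preliminaries} require.

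The paper sidesteps this entirely by an idea you explicitly dismiss: it \emph{does} approximate $\varphi$ by smooth maps. Not by a single smooth isometry (which, as you say, is impossible in general), but by a sequence of smooth $(1+\delta_i)$-bilipschitz maps $\varphi_i$. The point is that at each positive time $t_i$, the pushed-forward metrics $(\chi^j_{t_i})_*\tilde g^j(t_i)$ are smooth, and $\varphi$ is a $(1+\varepsilon_i)$-bilipschitz map between these smooth Riemannian manifolds; a standard smoothing result (the paper cites \cite[Theorem 4.4]{Kar} and \cite[Lemma D.1]{BK}) then yields nearby smooth $\varphi_i$, which converge uniformly by Arzel\`a--Ascoli. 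All transport is thus done via smooth diffeomorphisms, so the Ricci--DeTurck machinery and Lemma \ref{lemma:iterationcontraction} apply without any $L^\infty$ extension. The extraction of a stationary isometry $\alpha$ from this sequence is packaged in Lemma \ref{lemma:isometryofRRF}, which handles both Theorem \ref{thm:uniqueness} (with $\varphi_i=\id$) and the present corollary in one stroke.
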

In particular, Corollary \ref{cor:torusrigidity} is the optimal result.

\begin{remark}
Theorem \ref{thm:introthm0} is immediate from Theorem \ref{thm:existence} and Theorem \ref{thm:uniqueness}.
\end{remark}

\begin{remark}
Theorem \ref{cor:introcor1} follows from (\ref{eq:scalarsclose}) by taking $\tilde g'_t$ and $\tilde g''_t$ to be the regularizing Ricci flows to be the ones obtained from Ricci-DeTurck flows as in Theorem \ref{thm:existence}. 
\end{remark}

\begin{proof}[Proof of Theorem \ref{thm:existence}]
Choose a smooth background Ricci flow $\bar g_t$ with $||g_0 - \bar g_0||_{L^\infty(M)} < \varepsilon'$, where $\varepsilon'$ is as in Corollary \ref{cor:C0RDTderivbounds}. Find a solution $g_t$ to the Ricci-DeTurck flow starting from $g_0$ in the sense of Corollary \ref{cor:C0RDTderivbounds}, on a time interval $T$. Then $g_t\to g_0$ uniformly by Corollary \ref{cor:convergenceto0} and $g_t$ is smooth for positive times. We now show that $g_t$ pulls back to a regularizing Ricci flow. Let $\chi_t$ be a family of diffemorphisms $M\to M$ defined for $t\in (0,T]$, satisfying
\begin{equation}
\begin{cases}
(\frac{\partial}{\partial t}\chi_t) (\chi_t^{-1}(x)) &= X_{\bar g(t)}(g(t))\big|_{x} \text{ for all }x\in M,
\\ \chi_{t_0} &= \id,
\end{cases}
\end{equation}
where $X$ is as in (\ref{eq:Xoperator}) and $t_0>0$. Note that such a solution exists by standard ODE theory, since $X$ is nonsingular for $t>0$ by Lemma \ref{lemma:driftbound}.
Define $\tilde g(t)$ for $t>0$ by $\tilde g(t) = \chi_t^*g(t)$. As discussed in \S \ref{sec:preliminaries}, $\tilde g(t)$ satisfies the Ricci flow equation, and $(\chi_t)_*\tilde g(t) = g(t)\to g_0$. It remains to be shown that there exists some continuous surjection $\chi$ such that $\chi_t\xrightarrow[t\to 0]{C^0} \chi$.

First, pick a sequence of time slices, $t_i \searrow 0$. Then, by Lemma \ref{lemma:driftbound}, we find that
\begin{equation*}
d_{\bar g_0}(\chi_{t_i}(p), \chi_{t_j}(p)) \leq c(\sqrt{t_i} - \sqrt{t_j}),
\end{equation*}
for all $p\in M$ and $i\geq j$, where $c$ is as in Lemma \ref{lemma:driftbound}. In particular, $\chi_{t_i}$ is a Cauchy sequence in $C^0$, and hence converges uniformly to some continuous limit, $\chi$. If $t_i'\searrow 0$ is a different sequence of time slices, then $d_{\bar g_0}(\chi_{t_i}(p), \chi_{t_i'}(p)) \leq c|\sqrt{t_i} -\sqrt{t_i'}|$ so $\chi_{t_i'}\xrightarrow[i\to\infty]{C^0} \chi$ as well. Thus, $\chi_t\xrightarrow[t\to 0]{C^0}\chi$. 

We now show that $\chi$ is a surjection. Fix $x\in M$. By compactness, $\chi_{t_i}^{-1}(x)$ has a convergent subsequence in $M$. Pass to this subsequence, and let $y$ denote its limit. Then we have
\begin{align*}
d_{\bar g_0}(x, \chi(y)) &= d_{\bar g_0}(\chi_{t_i}(\chi_{t_i}^{-1}(x)), \chi(y))
\\& \leq d_{\bar g_0}(\chi_{t_i}(\chi_{t_i}^{-1}(x)), \chi(\chi_{t_i}^{-1}(x))) + d_{\bar g_0}(\chi(\chi_{t_i}^{-1}(x)), \chi(y))
\\& \leq \sup_{p\in M}d_{\bar g_0}(\chi_{t_i}(p), \chi(p)) + d_{\bar g_0}(\chi(\chi_{t_i}^{-1}(x)), \chi(y))
\end{align*}
so, letting $i\to \infty$, we find that $x = \chi(y)$, and hence $\chi$ is surjective.
\end{proof}

Before showing Theorem \ref{thm:uniqueness} and Corollary \ref{cor:metricuniqueness}, we will first prove:
\begin{lemma}\label{lemma:isometryofRRF}
Suppose that $M_1$ and $M_2$ are closed manifolds with $C^0$ metrics $g^1$ and $g^2$ respectively, with regularizing Ricci flows $((\tilde g^1_t)_{t\in (0,T^1]}, \chi^1)$ and $((\tilde g^2_t)_{t\in (0,T^2]}, \chi^2)$. Suppose also that there exist a sequence of numbers $\delta_i\searrow 0$, a sequence of times $t_i \searrow 0$, and a sequence of smooth maps $\varphi_i: M_2 \to M_1$ such that, for all $i$, $\varphi_i: (M_2, (\chi_{t_i}^2)_*\tilde g^2(t_i)) \to (M_1, (\chi_{t_i}^1)_*\tilde g^1(t_i))$ is a $(1+\delta_i)$-bilipschitz map, where $(\chi_t^1)_{t\in (0,T^1]}$ and $(\chi_t^2)_{t\in (0,T^2]}$ are the smooth families of diffeomorphisms given by Definition \ref{def:RRF}. Suppose also that the sequence $\varphi_i$ converges uniformly to some continuous function $\varphi:M_2\to M_1$ as $i\to \infty$. Then there exists a smooth stationary diffeomorphism $\alpha: M_2 \to M_1$ such that $\alpha^*\tilde g^1(t) = \tilde g^2(t)$ for all $t\in (0, \min\{T^1, T^2\}]$ and $\chi^1\circ \alpha = \varphi\circ\chi^2$.
\end{lemma}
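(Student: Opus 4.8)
Given the Ricci-DeTurck flows $g^1_t$ and $g^2_t$ underlying the two regularizing Ricci flows (with respect to background Ricci flows $\bar g^1(t)$ and $\bar g^2(t)$), the plan is to first upgrade the sequence of bilipschitz maps $\varphi_i$ at the level of the Ricci-DeTurck flows into a single limiting \emph{smooth isometry} between the Ricci flows $\tilde g^1(t)$ and $\tilde g^2(t)$ for a fixed small positive time, and then propagate that isometry across all times by uniqueness of the Ricci flow. Concretely, set $\psi_i := (\chi^1_{t_i})^{-1}\circ\varphi_i\circ\chi^2_{t_i}: M_2\to M_1$, so that $\psi_i$ is a $(1+\delta_i)$-bilipschitz map from $(M_2,\tilde g^2(t_i))$ to $(M_1,\tilde g^1(t_i))$. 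The goal is to show that a subsequence of $\{\psi_i\}$, after re-parametrizing time, converges to a smooth isometry.

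First I would fix a small time $\tau>0$ with $\tau\le\min\{T^1,T^2\}$ and use the smoothness estimates of Corollary \ref{cor:C0RDTderivbounds}, together with the relation $\tilde g^j(t)=(\chi^j_t)^*g^j(t)$ and the drift bound (\ref{eq:distancesnotmuchmoved}) from Lemma \ref{lemma:driftbound}, to get uniform curvature and injectivity-radius bounds for $\tilde g^1(\tau)$ and $\tilde g^2(\tau)$. Pull the maps $\psi_i$ back so that they become maps between fixed smooth manifolds $(M_2,\tilde g^2(\tau))$ and $(M_1,\tilde g^1(\tau))$: since each $\tilde g^j(t_i)$ converges smoothly to $\tilde g^j(\tau)$ as we shift the time parameter, and the $\psi_i$ are uniformly bilipschitz with ratio tending to $1$, an Arzel\`a--Ascoli argument (applied after pre/post-composing with the smooth flows of the DeTurck vector fields to move everything to the time slice $\tau$) yields a subsequential limit $\psi_\infty:(M_2,\tilde g^2(\tau))\to(M_1,\tilde g^1(\tau))$ which is a $1$-bilipschitz map, i.e. a metric-space isometry between smooth Riemannian manifolds, hence a smooth Riemannian isometry (by Myers--Steenrod). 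Care is needed here because the bilipschitz hypothesis is stated at the times $t_i$, not at $\tau$; I would handle this by letting $\tau=\tau_i\to 0$ and instead running the compactness argument on parabolically rescaled flows so that the curvature stays controlled, extracting a limit isometry of the time-$1$ slices of the blow-ups, which then descends to an isometry of $\tilde g^1(\tau)$ and $\tilde g^2(\tau)$ for all small $\tau$ by the smooth convergence of rescaled flows. Call the resulting smooth isometry $\alpha:M_2\to M_1$.

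Next I would show that this single diffeomorphism $\alpha$ pulls back the \emph{entire} Ricci flow $\tilde g^1(t)$ to $\tilde g^2(t)$. Since $\alpha$ is an isometry $(M_2,\tilde g^2(\tau))\to(M_1,\tilde g^1(\tau))$ for a fixed $\tau$, the pulled-back family $\alpha^*\tilde g^1(t)$ is a Ricci flow on $M_2$ agreeing with $\tilde g^2(t)$ at $t=\tau$; by uniqueness of smooth Ricci flow on a closed manifold (\cite[Theorems $5.2.1$, $5.2.2$]{Top}, applied forward and backward from $\tau$ within $(0,\min\{T^1,T^2\}]$) we conclude $\alpha^*\tilde g^1(t)=\tilde g^2(t)$ for all $t\in(0,\min\{T^1,T^2\}]$. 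In fact, since both flows are defined down to $t\to 0$ with the stated convergence, and the identity $\alpha^*\tilde g^1(t)=\tilde g^2(t)$ holds on a neighborhood of $\tau$, it extends to the whole common interval. Finally, to obtain $\chi^1\circ\alpha=\varphi\circ\chi^2$, take the limit as $t\to 0$ in the approximate relation: on the time slice $t_i$ we have $\chi^1_{t_i}\circ\psi_i\to\varphi\circ$ (identity) in an appropriate sense using $\psi_i=(\chi^1_{t_i})^{-1}\circ\varphi_i\circ\chi^2_{t_i}$, while $\psi_i\to\alpha$ and $\chi^j_{t_i}\xrightarrow{C^0}\chi^j$; chasing these uniform limits and using $\varphi_i\to\varphi$ gives $\chi^1\circ\alpha=\varphi\circ\chi^2$.

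The main obstacle I anticipate is the compactness/convergence step producing the limiting smooth isometry $\alpha$: the bilipschitz bounds are imposed on the metrics $(\chi^j_{t_i})_*\tilde g^j(t_i)=g^j(t_i)$ at the degenerating times $t_i\searrow 0$, where the Ricci flows themselves may be collapsing or their curvature blowing up like $t^{-1}$ (cf. (\ref{eq:scalarlowerbound}) and the estimates in Corollary \ref{cor:C0RDTderivbounds}), so one cannot naively apply Cheeger--Gromov compactness at a fixed scale. The resolution is to exploit that $\tilde g^j(t_i)$ converges smoothly (after moving by the DeTurck diffeomorphisms $\chi^j_t$, whose drift is $O(\sqrt t)$ by Lemma \ref{lemma:driftbound}) to the \emph{fixed} smooth metric $\tilde g^j(\tau)$ as we let the flow run from $t_i$ up to $\tau$; equivalently, push the bilipschitz maps forward along the flow from time $t_i$ to time $\tau$, absorbing the controlled distortion (\ref{eq:distancedistortion}) and (\ref{eq:distancesnotmuchmoved}) into the bilipschitz constant, so that the modified maps are $(1+\delta_i')$-bilipschitz between the \emph{fixed} smooth manifolds and Arzel\`a--Ascoli plus Myers--Steenrod apply cleanly. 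Keeping track of the various diffeomorphisms $\chi^j_t$ and the compositions so that the final relation $\chi^1\circ\alpha=\varphi\circ\chi^2$ comes out correctly will require some bookkeeping, but no new analytic input beyond what is already established in \S\ref{sec:preliminaries} and \S\ref{sec:initialderivatives}.
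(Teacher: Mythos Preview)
Your overall architecture (extract a limiting isometry at a fixed positive time via Arzel\`a--Ascoli and Myers--Steenrod, then propagate by uniqueness of Ricci flow, then chase the $\chi^j$ limits) matches the paper. But the step you flag as ``the main obstacle'' is a genuine gap in your proposal, and your suggested fixes do not close it.

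You want to transport the $(1+\delta_i)$-bilipschitz maps $\psi_i$ from time $t_i$ up to a fixed time $\tau$ by absorbing the metric distortion from (\ref{eq:distancedistortion}) into the bilipschitz constant. But (\ref{eq:distancedistortion}) gives distortion $e^{2K(\tau-t_i)}$ with $K=\sup_{[t_i,\tau]}|\Rm|(\tilde g^j)$, and the available curvature estimate for these flows is only $|\Rm|\lesssim 1/t$ (from Corollary~\ref{cor:C0RDTderivbounds}), so $\int_{t_i}^\tau |\Rm|\,ds$ diverges as $t_i\to 0$. Even if you had a uniform bound $K$ on $[t_i,\tau]$, the distortion would tend to $e^{2K\tau}>1$, not $1$, so the limit map would merely be bilipschitz, not an isometry. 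Your alternative suggestion of parabolic rescaling does not help either: a limit isometry between blow-ups at scale $t_i$ does not descend to an isometry of the unrescaled flows at any fixed $\tau$.

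The paper replaces this metric-distortion argument with the \emph{PDE stability} of the Ricci--DeTurck flow (Lemma~\ref{lemma:iterationcontraction}). For each $i$ one pulls back $(\chi^1_{t_i})_*\tilde g^1(t_i)$ to $M_2$ via $\varphi_i$, obtaining a smooth metric that is $C\delta_i$-close in $C^0$ to $(\chi^2_{t_i})_*\tilde g^2(t_i)$. One then runs Ricci--DeTurck flow from \emph{both} initial metrics with respect to the \emph{same} background $\bar g(t)$ on $M_2$; by Lemma~\ref{lemma:iterationcontraction} the two flows remain $C\delta_i$-close in $C^0$ for all $t\in[t_i,T']$, with constants independent of $i$. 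Pulling these back through the DeTurck diffeomorphisms yields, for every fixed $t$, a map $\psi^i_t:(M_2,\tilde g^2(t))\to(M_1,\tilde g^1(t))$ that is genuinely $(1+c\delta_i)$-bilipschitz with $c\delta_i\to 0$. Now Arzel\`a--Ascoli and Myers--Steenrod apply cleanly at each fixed $t$ to give isometries $\alpha_t$, and a further limiting argument (using that the $\alpha_{t_i}\circ\alpha_t^{-1}$ are isometries of $\tilde g^1(t)$, hence compact) produces the stationary $\alpha$. The essential missing idea in your proposal is this use of the contraction estimate in $X_T$ to propagate $C^0$-closeness of initial data to $C^0$-closeness at all later times, rather than crude distance-distortion bounds along the flow.
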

\begin{proof}
Choose a smooth metric $\bar g_0$ on $M_2$ with $||g^2 - \bar g_0||_{C^0(M_2, \bar g_0)} < \varepsilon'/4$, where $\varepsilon'$ is as in Corollary $\ref{cor:C0RDTderivbounds}$. Making $i$ sufficiently large so that $||(\chi_{t_i}^2)_*\tilde g^2(t_i) - g^2||_{C^0(M_2, \bar g_0)} < \varepsilon'/4$, we find that
\begin{equation}\label{eq:initialbilipschitzcheck}
\begin{split}
||(\chi_{t_i}^2)_*\tilde g^2(t_i) - \bar g_0||_{C^0(M_2, \bar g_0)} & \leq ||(\chi_{t_i}^2)_*\tilde g^2(t_i) - g^2||_{C^0(M_2, \bar g_0)} + ||g^2 - \bar g_0||_{C^0(M_2, \bar g_0)} 
< \varepsilon'/2
\end{split}
\end{equation}
Moreover, since $\varphi_i: (M_2, (\chi_{t_i}^2)_*\tilde g^2(t_i)) \to (M_1, (\chi_{t_i}^1)_*\tilde g^1(t_i))$ is $(1+\delta_i)$-bilipschitz, (\ref{eq:initialbilipschitzcheck}) implies that
\begin{equation}
||\varphi_i^*(\chi_{t_i}^1)_*\tilde g^1(t_i) - (\chi_{t_i}^2)_*\tilde g^2(t_i)||_{C^0(M_2, \bar g_0)} \leq c(\varepsilon')||\varphi_i^*(\chi_{t_i}^1)_*\tilde g^1(t_i) - (\chi_{t_i}^2)_*\tilde g^2(t_i)||_{C^0(M_2, (\chi_{t_i}^2)_*\tilde g^2(t_i))} \leq c(\varepsilon')\delta_i.
\end{equation}
Therefore, making $i$ sufficiently large, we have
\begin{align*}
||\varphi_i^*(\chi_{t_i}^1)_*\tilde g^1(t_i) - \bar g_0||_{C^0(M_2,\bar g_0)} &\leq || \varphi_i^*(\chi_{t_i}^1)_*\tilde g^1(t_i) - (\chi_{t_i}^2)_*\tilde g^2(t_i)||_{C^0(M_2, \bar g_0)} + ||(\chi_{t_i}^2)_*\tilde g^2(t_i) - \bar g_0||_{C^0(M_2, \bar g_0)}
\\& \leq c(\varepsilon')\delta_i + \varepsilon'/2 < \varepsilon'.
\end{align*}
Thus, if $\bar g(t)$ is the Ricci flow starting from $\bar g_0$, Corollary \ref{cor:C0RDTderivbounds} implies that we may find smooth Ricci DeTurck-flows $g^{1,i}(t)$ and $g^{2,i}(t)$ with respect to $\bar g(t)$, starting from $\varphi_i^*(\chi_{t_i}^1)_*\tilde g^1(t_i)$ and $(\chi_{t_i}^2)_*\tilde g^2(t_i)$ respectively, defined for $t_i \leq t \leq T'(\bar g)$. Moreover, Lemma \ref{lemma:iterationcontraction} and (\ref{eq:distancedistortion}) imply that 
\begin{align*}
||g^{1,i}(t) - g^{2,i}(t)||_{C^0(M, \bar g_0)} &\leq c||g^{1,i}(t_i) - g^{2,i}(t_i)||_{C^0(M, \bar g_0)} 
\\& = c||\varphi_i^*(\chi_{t_i}^1)_*\tilde g^1(t_i) - (\chi_{t_i}^2)_*\tilde g^2(t_i)||_{C^0(M, \bar g_0)} \leq C\delta_i,
\end{align*} after adjusting the constant from Lemma \ref{lemma:iterationcontraction} according to (\ref{eq:distancedistortion}).

Since, for $j=1$ (resp. $j=2$), $g^{j,i}(t)$ is a smooth Ricci-DeTurck flow starting from $\varphi_i^*(\chi_{t_i}^1)_*\tilde g^1(t_i)$ (resp. $(\chi_{t_i}^2)_*\tilde g^2(t_i)$), it is isometric via a time-dependent family of diffeomorphisms to the (unique) Ricci flow starting from $\varphi_i^*(\chi_{t_i}^1)_*\tilde g^{1}(t_i)$ (resp. $(\chi_{t_i}^2)_*\tilde g^{2}(t_i)$), which is given by $\varphi_i^*(\chi_{t_i}^1)_*\tilde g^1(t)$ (resp. $(\chi_{t_i}^2)_*\tilde g^2(t)$), for $t_i \leq t \leq T'(\bar g)$.
 Thus, for $j=1,2$, there exists a smooth family of diffeomorphisms $\phi_t^{j,i}$ with $\phi_{t_i}^{j,i} = \id$ such that 
\begin{equation}
(\phi_t^{1,i})^*g^{1,i}(t) = \varphi_i^*(\chi_{t_i}^1)_*\tilde g^1(t) \text{ and }(\phi_t^{2,i})^*g^{2,i}(t) = (\chi_{t_i}^2)_*\tilde g^2(t)
\end{equation}
for $t_i \leq t \leq T'$.
Thus we find, for $j=1$ (resp. $j=2$), that $g^{1,i}(t) = (\phi_t^{1,i})_*\varphi_i^*(\chi_{t_i}^1)_*\tilde g^1(t)$ (resp. $g^{2,i}(t) = (\phi_t^{2,i})_*(\chi_{t_i}^2)_*\tilde g^2(t)$) and
\begin{equation}
||(\phi_t^{1,i})_*\varphi_i^*(\chi_{t_i}^1)_*\tilde g^1(t) -  (\phi_t^{2,i})_*(\chi_{t_i}^2)_*\tilde g^2(t)||_{C^0(M, \bar g_0)} = ||g^{1,i}(t) - g^{2,i}(t)||_{C^0(M, \bar g_0)} \leq C\delta_i
\end{equation}
for $t_i \leq t \leq T'$.
Thus, we may define $\psi_t^i := (\chi_{t_i}^1)^{-1} \circ \varphi_i \circ (\phi_{t}^{1,i})^{-1}\circ\phi_t^{2,i}\circ\chi_{t_i}^2$ and conclude that
\begin{equation}
||(\psi_t^i)^*\tilde g^1(t) - \tilde g^2(t)||_{C^0(M, (\phi_t^{2,i}\circ\chi_{t_i}^2)^*\bar g_0)} \leq C\delta_i,
\end{equation}
for $t_i \leq t \leq T'$. We would like to remove the dependence of this norm on $i$, so that we may let $i\to \infty$. To do this, observe that $||g^{2,i}(t) - \bar g(t)||_{C^0(M, \bar g_0)} \leq C\varepsilon'$, so $||\tilde g^2(t) - (\phi_t^{2,i}\circ\chi_{t_i}^2)^*\bar g(t)||_{C^0(M, (\phi_t^{2,i}\circ\chi_{t_i}^2)^*\bar g_0)} \leq C\varepsilon'$. Thus $\tilde g^2(t)$ is uniformly $(1+C\varepsilon')$-bilipschitz to $(\phi_t^{2,i}\circ\chi_{t_i}^2)^*\bar g(t)$, which is uniformly bilipschitz to $(\phi_t^{2,i}\circ\chi_{t_i}^2)^*\bar g_0$. In particular, we may measure the $C^0$-norm with $\tilde g^2(t)$ instead of $(\phi_t^{2,i}\circ\chi_{t_i}^2)^*\bar g_0$ to find
\begin{equation}\label{eq:C0metricswitch}
||(\psi_t^i)^*\tilde g^1(t) - \tilde g^2(t)||_{C^0(M, \tilde g^2(t))} \leq c(\varepsilon', \sup_{[0,T]}|\Rm|(\bar g(t)), C)\delta_i.
\end{equation}
Thus we have that $\psi_t^i: (M_2, \tilde g^2(t)) \to (M_1, \tilde g^1(t))$ is a $(1 + c\delta_i)$-bilipschitz map. In particular, Arzel\`a-Ascoli implies that, after passing to a subsequence, $\psi_t^i$ converges uniformly to some $1$-bilipschitz map $\alpha_t: (M_2,\tilde g^2(t)) \to (M_1, \tilde g^1(t))$. Since $\alpha_t$ is $1$-bilipschitz, it is an isometry, and thus it must be smooth, since $\tilde g^2(t)$ and $\tilde g^1(t)$ are smooth.

We now show that, in fact, there exists a stationary diffeomorphism $\alpha$ such that $\alpha^*\tilde g_t^1 = \tilde g_t^2$. First fix $i$. For $t \geq t_i$, $\tilde g_t^2 =\alpha_t^*\tilde g_t^1$ is a (smooth) Ricci flow starting from $\tilde g_{t_i}^2 = \alpha_{t_i}^*\tilde g_{t_i}^1$, and so is $\alpha_{t_i}^*\tilde g_t^1$. By uniqueness of smooth Ricci flows on closed manifolds, $\alpha_t^*\tilde g_t^1 = \alpha_{t_i}^*\tilde g_t^1$. In particular, $\alpha_t^*\tilde g_t^1=\alpha_{t_i}^*\tilde g_t^1$ for $t\geq t_i$, or $(\alpha_t^{-1})^*\alpha _{t_i}^*\tilde g_t^1 = \tilde g_t^1$. In particular, $(\alpha_{t_i}\circ\alpha_t^{-1})$ is an isometry of $\tilde g_t^1$, so, by compactness, there exists some limiting diffeomorphism $\beta_t: M_1\to M_1$ such that $\tilde g_t^1 = \beta_t^*\tilde g_t^1$ with $\alpha_{t_i}\circ\alpha_t^{-1}\xrightarrow[]{C^\infty}\beta_t$, after passing to a subsequence.

Define $\alpha = \beta_t\circ\alpha_t$. We show that $\alpha = \beta_t\circ\alpha_t$ is independent of choice of $t$, as follows:
\begin{equation}\label{eq:limisometry}
\beta_t \circ\alpha_t = (\lim_{i\to\infty}\alpha_{t_i}\circ\alpha_t^{-1})\circ\alpha_t = \lim_{i\to\infty}\alpha_{t_i},
\end{equation}
which is independent of $t$. Then we have
\begin{equation}
\alpha^*\tilde g_t^1 =(\alpha_t)^*(\beta_t^*\tilde g_t^1) =\alpha_t^*\circ\tilde g_t^1 = \tilde g_t^2.
\end{equation}
Thus we have shown that $\tilde g_t^1$ and $\tilde g_t^2$ are isometric by way of a stationary diffeomorphism. 

It remains to relate $\chi^1$ and $\chi^2$. Fix $x\in M$, and a time slice $t$. First, note that by Lemma \ref{lemma:driftbound} we have $d_{\bar g(t_i)}(\phi_t^{i,2}(p), \phi_{t_i}^{i,2}(p)) \leq c(\sqrt{t} - \sqrt{t_i})$ for $t\geq t_i$ and all $p\in M$, where $c$ is as in Lemma \ref{lemma:driftbound}. Similarly, 
\begin{equation*}
d_{\bar g(t_i)}((\phi_t^{i,1})^{-1}(p), p) = d_{\bar g(t_i)}(\phi_{t_i}^{i,1}\circ(\phi_t^{i,1})^{-1}(p), \phi_t^{i,1}\circ(\phi_t^{i,1})^{-1}(p)) \leq c(\sqrt{t} - \sqrt{t_i}).
\end{equation*}
Thus, using the fact that $\phi_{t_i}^{i,j} = \id$ for $j= 1,2$, we have
\begin{align*}
d_{\bar g(t_i)}((\phi_t^{i,1})^{-1}(\phi_t^{2,i}(\chi_{t_i}^2(x))), \chi_{t_i}^2(x)) & \leq d_{\bar g(t_i)}((\phi_t^{i,1})^{-1}(\phi_t^{2,i}(\chi_{t_i}^2(x))), \phi_t^{2,i}(\chi_{t_i}^2(x))) 
\\&  \qquad + d_{\bar g(t_i)}(\phi_t^{2,i}(\chi_{t_i}^2(x)), \chi_{t_i}^2(x))
\\& \leq 2c(\sqrt{t} - \sqrt{t_i})
\end{align*}
for $t\geq t_i$. In particular, by (\ref{eq:distancedistortion}) and adjusting $c$ we find
\begin{equation}
d_{\bar g(0)}((\phi_t^{i,1})^{-1}(\phi_t^{2,i}(\chi_{t_i}^2(x))), \chi_{t_i}^2(x)) \leq c(\sqrt{t} - \sqrt{t_i}),
\end{equation}
so
\begin{equation}\label{eq:unifto0}
\lim_{j\to\infty}\bigg[\lim_{i\to\infty}\left(d_{\bar g_0}(\varphi\circ(\phi_{t_j}^{1,i})^{-1}\circ\phi_{t_j}^{2,i}\circ\chi_{t_i}^2(x), \varphi\circ\chi^2_{t_i}(x))\right)\bigg] = 0.
\end{equation}
Thus, for all $j\leq i$ we have
\begin{align*}
&d_{\bar g_0}(\chi^1\circ\alpha(x), \varphi\circ\chi^2(x))
 \leq d_{\bar g_0}(\chi^1\circ\alpha(x), \chi^1\circ\alpha_{t_j}(x)) + d_{\bar g_0}(\chi^1\circ\alpha_{t_j}(x), \chi^1\circ \psi_{t_j}^i(x)) 
\\&  \qquad + d_{\bar g_0}(\chi^1\circ(\chi_{t_i}^1)^{-1}\circ\varphi_i\circ(\phi_{t_j}^{1,i})^{-1}\circ\phi_{t_j}^{2,i}\circ\chi_{t_i}^2(x), \chi_{t_i}^1\circ(\chi_{t_i}^1)^{-1}\circ\varphi_i\circ(\phi_{t_j}^{1,i})^{-1}\circ\phi_{t_j}^{2,i}\circ\chi_{t_i}^2(x))
\\&  \qquad + d_{\bar g_0}(\varphi_i\circ(\phi_{t_j}^{1,i})^{-1}\circ\phi_{t_j}^{2,i}\circ\chi_{t_i}^2(x), \varphi\circ(\phi_{t_j}^{1,i})^{-1}\circ\phi_{t_j}^{2,i}\circ\chi_{t_i}^2(x))
\\&  \qquad + d_{\bar g_0}(\varphi\circ(\phi_{t_j}^{1,i})^{-1}\circ\phi_{t_j}^{2,i}\circ\chi_{t_i}^2(x), \varphi\circ\chi_{t_i}^2(x))
\\&\qquad + d_{\bar g_0}(\varphi\circ\chi_{t_i}^2(x), \varphi\circ\chi^2(x))
\\& \leq d_{\bar g_0}(\chi^1\circ\alpha(x), \chi^1\circ\alpha_{t_j}(x)) + d_{\bar g_0}(\chi^1\circ\alpha_{t_j}(x), \chi^1\circ \psi_{t_j}^i(x))
\\& \qquad + \sup_{p\in M_1}d_{\bar g_0}(\chi^1(p), \chi_{t_i}^1(p))
\\& \qquad + \sup_{p\in M_2}d_{\bar g_0}(\varphi_i(p), \varphi(p))
\\&  \qquad + d_{\bar g_0}(\varphi\circ(\phi_{t_j}^{1,i})^{-1}\circ\phi_{t_j}^{2,i}\circ\chi_{t_i}^2(x), \varphi\circ\chi_{t_i}^2(x))
\\&\qquad + d_{\bar g_0}(\varphi\circ\chi_{t_i}^2(x), \varphi\circ\chi^2(x))
\end{align*}
In particular,
\begin{align*}
d_{\bar g_0}(\chi^1\circ\alpha(x), &\varphi\circ\chi^2(x))\leq \lim_{j\to \infty}\bigg[\lim_{i\to\infty}\bigg(d_{\bar g_0}(\chi^1\circ\alpha(x), \chi^1\circ\alpha_{t_j}(x)) + d_{\bar g_0}(\chi^1\circ\alpha_{t_j}(x), \chi^1\circ \psi_{t_j}^i(x))
\\& \qquad + \sup_{p\in M_1}d_{\bar g_0}(\chi^1(p), \chi_{t_i}^1(p)) + \sup_{p\in M_2}d_{\bar g_0}(\varphi_i(p), \varphi(p))
\\&  \qquad + d_{\bar g_0}(\varphi\circ(\phi_{t_j}^{1,i})^{-1}\circ\phi_{t_j}^{2,i}\circ\chi_{t_i}^2(x), \varphi\circ\chi_{t_i}^2(x)) + d_{\bar g_0}(\varphi\circ\chi_{t_i}^2(x), \varphi\circ\chi^2(x))\bigg)\bigg]
\\& \leq \lim_{j \to\infty} d_{\bar g_0}(\chi^1\circ\alpha(x), \chi^1\circ\alpha_{t_j}(x))
\\&\qquad + \lim_{j\to\infty}\bigg[\lim_{i\to\infty}\bigg(d_{\bar g_0}(\varphi\circ(\phi_{t_j}^{1,i})^{-1}\circ\phi_{t_j}^{2,i}\circ\chi_{t_i}^2(x), \varphi\circ\chi_{t_i}^2(x))\bigg) \bigg] = 0,
\end{align*}
by (\ref{eq:unifto0}), and hence $\chi^1\circ\alpha(x) = \varphi\circ\chi^2(x)$.
\end{proof}

\begin{proof}[Proof of Theorem \ref{thm:uniqueness}]
We apply Lemma \ref{lemma:isometryofRRF} with $M_1 = M_2 = M$ and $\varphi_i = \varphi = \id$. Let $\delta_i\searrow 0$. Since, for $j=1,2$, $(\chi_t^j)_*\tilde g^j_t\xrightarrow[t\to 0]{C^0} g_0$, there exists a sequence of times $t_i\searrow 0$ such that 
\begin{equation*}
||(\chi_{t_i}^1)_*\tilde g^1_{t_i} - (\chi_{t_i}^2)_*\tilde g^2_{t_i}||_{C^0(M, (\chi_{t_i}^2)_*\tilde g^2_{t_i})} \leq \delta_i.
\end{equation*}
Then Lemma \ref{lemma:isometryofRRF} implies that there exists a smooth stationary map $\alpha:M\to M$ such that $\alpha^*\tilde g^1(t) = \tilde g^2(t)$ for all $t\in (0,\min\{T^1, T^2\}]$ and $\chi^1\circ\alpha = \chi^2$.
\end{proof}

\begin{proof}[Proof of Corollary \ref{cor:metricuniqueness}]
For any sequence $\varepsilon_i\searrow 0$, there exists a sequence of times slices $t_i\searrow 0$ such that $\varphi: (M_2, (\chi_{t_i}^2)_*\tilde g^2_{t_i}) \to (M_1, (\chi_{t_i}^1)_*\tilde g^1_{t_i})$ is a $(1 + \varepsilon_i)$-bilipschitz map, since or $j= 1,2,$ we have $(\chi_{t}^j)_*\tilde g^j_t \xrightarrow[t\to 0]{C^0}g^j$. In particular, we may find (see, for instance, \cite[Theorem $4.4$]{Kar} and \cite[Lemma $D.1$]{BK}) smooth maps $\varphi_i: (M_2, (\chi_{t_i}^2)_*\tilde g^2_{t_i}) \to (M_1, (\chi_{t_i}^1)_*\tilde g^1_{t_i})$ that are $(1+\delta_i)$-bilipschitz, where $\delta_i$ is some sequence of positive numbers that decreases to $0$.

Moreover, the $\varphi_i$ converge to some uniform limit by Arzel\`a-Ascoli, since each $\varphi_i$ is a $(1+\delta_i + \gamma_i)$-bilipschitz map $(M_2, g^2)\to (M_1, g^1)$, where $\gamma_i$ also decreases to $0$. This is because
\begin{align*}
&||\varphi_i^*g^1 - g^2||_{L^\infty(M_2, g^2)}  \leq ||\varphi_i^*g^1 - \varphi_i^*(\chi_{t_i}^1)_*\tilde g^1_{t_i}||_{L^\infty(M_2, g^2)} + ||\varphi_i^*(\chi_{t_i}^1)_*\tilde g^1_{t_i} - (\chi_{t_i}^2)_*\tilde g_{t_i}^2||_{L^\infty(M_2, g^2)} 
\\& \qquad \qquad \qquad \qquad \qquad \qquad + ||(\chi_{t_i}^2)_*\tilde g_{t_i}^2 - g^2||_{L^\infty(M_2, g^2)}
\\& \leq ||\varphi_i^*g^1 - \varphi_i^*(\chi_{t_i}^1)_*\tilde g^1_{t_i}||_{L^\infty(M_2, g^2)} + ||\varphi_i^*(\chi_{t_i}^1)_*\tilde g^1_{t_i} - (\chi_{t_i}^2)_*\tilde g_{t_i}^2||_{L^\infty(M_2, (\chi_{t_i}^2)_*\tilde g_{t_i}^2)} 
\\& \qquad - \left( ||\varphi_i^*(\chi_{t_i}^1)_*\tilde g^1_{t_i} - (\chi_{t_i}^2)_*\tilde g_{t_i}^2||_{L^\infty(M_2, (\chi_{t_i}^2)_*\tilde g_{t_i}^2)} - ||\varphi_i^*(\chi_{t_i}^1)_*\tilde g^1_{t_i} - (\chi_{t_i}^2)_*\tilde g_{t_i}^2||_{L^\infty(M_2, g^2)} \right)
\\& \qquad \qquad + ||(\chi_{t_i}^2)_*\tilde g_{t_i}^2 - g^2||_{L^\infty(M_2, g^2)},
\end{align*}
and we set 
\begin{equation*}
\begin{split}
\gamma_i &:= ||\varphi_i^*g^2 - \varphi_i^*(\chi_{t_i}^2)_*\tilde g^2_{t_i}||_{L^\infty(M_1, g^1)} 
\\& \qquad - \left( ||\varphi_i^*(\chi_{t_i}^2)_*\tilde g^2_{t_i} - (\chi_{t_i}^1)_*\tilde g_{t_i}^1||_{L^\infty(M_1, (\chi_{t_i}^1)_*\tilde g_{t_i}^1)} - ||\varphi_i^*(\chi_{t_i}^2)_*\tilde g^2_{t_i} - (\chi_{t_i}^1)_*\tilde g_{t_i}^1||_{L^\infty(M_1, g^1)} \right) 
\\& \qquad \qquad + ||(\chi_{t_i}^1)_*\tilde g_{t_i}^1 - g^1||_{L^\infty(M_1, g^1)}.
\end{split}
\end{equation*}
Then Lemma \ref{lemma:isometryofRRF} implies that there exists a stationary smooth isometry $\alpha: (M_2, \tilde g_t^2)\to (M_1, \tilde g_t^1)$ for all $t\in (0,\min\{T^1, T^2\}]$.
\end{proof}

\section{Pointwise nonnegative scalar curvature for $C^0$ metrics}\label{sec:equivalentdefs}
In light of the previous section, we now study various properties of Definition \ref{def:RRFscalar}.

\begin{lemma}\label{lemma:flowindependence} Let $M^n$ be a manifold and $g_0$ a $C^0$ metric on $M$. Suppose that $((\tilde g_t^1)_{t\in(0,T^1]}, \chi^1)$ and $((\tilde g_t^2)_{t\in(0,T^2]}, \chi^2)$ are two regularizing Ricci flows for $g_0$. Suppose that (\ref{eqn:RRFscalar}) holds for $((\tilde g_t^2)_{t\in(0,T^2]}, \chi^2)$ at some point $y^2$ with $\chi^2(y^2) = x$. Then (\ref{eqn:RRFscalar}) also holds for $((\tilde g_t^1)_{t\in(0,T^1]}, \chi^1)$ at a point $y^1$ with $\chi^1(y^1) =x$. In particular, it is equivalent to require in Definition \ref{def:RRFscalar} that \emph{all} regularizing Ricci flows for $g_0$ satisfy (\ref{eqn:RRFscalar}) at some point in the corresponding preimage of $x$.
\end{lemma}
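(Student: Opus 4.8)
The plan is to reduce the statement to an application of the uniqueness theorem for regularizing Ricci flows (Theorem \ref{thm:uniqueness}) together with the naturality of the scalar curvature and of distance balls under isometry. By Theorem \ref{thm:uniqueness}, since $((\tilde g_t^1)_{t\in(0,T^1]}, \chi^1)$ and $((\tilde g_t^2)_{t\in(0,T^2]}, \chi^2)$ are both regularizing Ricci flows for the same $C^0$ metric $g_0$, there is a stationary diffeomorphism $\alpha: M\to M$ with $\alpha^*\tilde g_t^1 = \tilde g_t^2$ on $(0,T^1]\cap(0,T^2]$ and $\chi^1\circ\alpha = \chi^2$. I will set $y^1 := \alpha(y^2)$, so that $\chi^1(y^1) = \chi^1(\alpha(y^2)) = \chi^2(y^2) = x$, which gives the required point in the preimage.

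It then remains to transport the inequality (\ref{eqn:RRFscalar}) from $(y^2, \tilde g^2)$ to $(y^1, \tilde g^1)$. The key observation is that $\alpha: (M,\tilde g_t^2) \to (M, \tilde g_t^1)$ is a Riemannian isometry for each fixed $t$ in the common time interval, so it maps the metric ball $B_{\tilde g^2(t)}(y^2, Ct^\beta)$ isometrically onto $B_{\tilde g^1(t)}(y^1, Ct^\beta)$, and it identifies the scalar curvature functions: $R^{\tilde g^2}(\cdot, t) = R^{\tilde g^1}(\alpha(\cdot), t)$. Consequently, for every $C>0$ and every $t$ in the common interval,
\begin{equation*}
\inf_{B_{\tilde g^1(t)}(y^1, Ct^\beta)} R^{\tilde g^1}(\cdot, t) = \inf_{B_{\tilde g^2(t)}(y^2, Ct^\beta)} R^{\tilde g^2}(\cdot, t).
\end{equation*}
Taking $\liminf_{t\searrow 0}$ and then $\inf_{C>0}$ of both sides (using $T^1, T^2 > 0$ so that the common interval contains $(0,\min\{T^1,T^2\}]$ and small $t$ are available), the quantity in (\ref{eqn:RRFscalar}) for $((\tilde g_t^1)_{t\in(0,T^1]}, \chi^1)$ at $y^1$ equals the corresponding quantity for $((\tilde g_t^2)_{t\in(0,T^2]}, \chi^2)$ at $y^2$, which by hypothesis is $\geq \kappa$. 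This establishes (\ref{eqn:RRFscalar}) for the first flow at $y^1$.

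For the final sentence of the statement: given any metric $g_0$ satisfying Definition \ref{def:RRFscalar} at $x$, there is, by definition, \emph{some} regularizing Ricci flow and \emph{some} point $y$ over $x$ for which (\ref{eqn:RRFscalar}) holds; applying the argument just described to an arbitrary regularizing Ricci flow $((\tilde g_t^1)_{t\in(0,T^1]},\chi^1)$ in place of the ``first'' flow produces a point $y^1$ over $x$ at which (\ref{eqn:RRFscalar}) holds for that flow. Hence the definition is equivalent to demanding that every regularizing Ricci flow satisfy (\ref{eqn:RRFscalar}) at some preimage point of $x$. I do not anticipate a serious obstacle here; the only point requiring a little care is bookkeeping the time intervals --- one must restrict to $(0,\min\{T^1,T^2\}]$ before taking limits --- and recalling that a smooth bijective distance-preserving map between closed Riemannian manifolds is a smooth Riemannian isometry, so that the pushforward of scalar curvature and of metric balls behaves as claimed; both of these are already used freely in \S\ref{sec:RRF}.
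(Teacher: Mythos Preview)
Your proposal is correct and follows essentially the same argument as the paper: invoke Theorem \ref{thm:uniqueness} to obtain the stationary diffeomorphism $\alpha$ with $\alpha^*\tilde g_t^1=\tilde g_t^2$ and $\chi^1\circ\alpha=\chi^2$, set $y^1:=\alpha(y^2)$, and use that $\alpha$ is a Riemannian isometry at each time to identify the infima over the corresponding balls. The paper's proof is slightly terser (it omits your remarks on the time interval and the ``in particular'' clause), but the content is the same.
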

\begin{proof}
By Theorem \ref{thm:uniqueness} there exists an isometry $\alpha: M\to M$ with $\alpha^*\tilde g_t^1 = \tilde g_t^2$ and $\chi^1\circ\alpha = \chi^2$. Then, for fixed $C,t>0$ we have, for $x_0\in M$,
\begin{align*}
\inf_{B_{\tilde g_t^2}(y^2, Ct^\beta)}R^{\tilde g^2}(\cdot, t) & = \inf_{B_{\alpha^*\tilde g_t^1}(y^2, Ct^\beta)}R^{\alpha^*\tilde g^1}(\cdot, t)
\\&= \inf\{R^{\tilde g_1}(\alpha(y)): d_{\tilde g_t^1}(\alpha(y^2), \alpha(y)) < Ct^\beta\}
\\&= \inf\{R^{\tilde g_1}(y): d_{\tilde g_t^1}(\alpha(y^2), y) < Ct^\beta\}
\\& = \inf_{B_{\tilde g_t^1}(\alpha(y^2), Ct^\beta)}R^{\tilde g^1}(\cdot, t).
\end{align*}
By Theorem \ref{thm:uniqueness}, $\chi^1(\alpha(y^2)) = \chi^2(y^2) = x$, so we define $y^1 := \alpha(y^2)$ to find that (\ref{eqn:RRFscalar}) holds for $\tilde g_t'$ at $y^1$.
\end{proof}

\begin{lemma}
Definition \ref{def:RRFscalar} is independent of choice of $y$, i.e. if $\chi(y) = x = \chi(y')$ and (\ref{eqn:RRFscalar}) is true at $y$, then (\ref{eqn:RRFscalar}) is also true at $y'$.
\end{lemma}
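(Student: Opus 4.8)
The plan is to exploit the fact that, because $\chi(y)=\chi(y')=x$, the points $y$ and $y'$ lie within distance $o(t^\beta)$ of one another with respect to $\tilde g(t)$ as $t\searrow 0$, so that the families of balls $B_{\tilde g(t)}(y,Ct^\beta)$ and $B_{\tilde g(t)}(y',Ct^\beta)$ are nested once one adjusts the constant $C$, and then to observe that condition (\ref{eqn:RRFscalar}) is insensitive to such an adjustment.

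First I would fix the regularizing Ricci flow and write $\tilde g_t=\chi_t^*g_t$, where $g_t$ is the Ricci-DeTurck flow (with respect to a background Ricci flow $\bar g_t$) from which it is produced and $(\chi_t)$ is the smooth family of diffeomorphisms of Definition \ref{def:RRF}, converging uniformly to $\chi$. Since pullback by a diffeomorphism is a metric-space isometry, $d_{\tilde g_t}(y,y')=d_{g_t}(\chi_t(y),\chi_t(y'))$, $B_{\tilde g(t)}(y,\rho)=\chi_t^{-1}(B_{g(t)}(\chi_t(y),\rho))$, and $R^{\tilde g}(\cdot,t)=R^{g}(\chi_t(\cdot),t)$. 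Next I would bound $d_{\bar g_0}(\chi_t(y),x)$: letting $t_1\searrow 0$ in (\ref{eq:distancesnotmuchmoved}) of Lemma \ref{lemma:driftbound}, and using $\chi_{t_1}(y)\to\chi(y)=x$ together with the uniform bilipschitz equivalence of $\bar g(t_1)$ and $\bar g_0$ from (\ref{eq:distancedistortion}), one gets $d_{\bar g_0}(\chi_t(y),x)\le c\sqrt t$, and likewise for $y'$, hence $d_{\bar g_0}(\chi_t(y),\chi_t(y'))\le 2c\sqrt t$. Since $g_t$ is uniformly bilipschitz to $\bar g_t$ (by the $X$-norm bound of Lemma \ref{lemma:RDTexistence} and Corollary \ref{cor:C0RDTderivbounds}) and $\bar g_t$ is uniformly bilipschitz to $\bar g_0$, this yields $d_{\tilde g_t}(y,y')=d_{g_t}(\chi_t(y),\chi_t(y'))\le C_0\sqrt t$ for a constant $C_0$ independent of $t$; and since $\beta<\tfrac12$ and $t<1$ (after shrinking $T$), $d_{\tilde g_t}(y,y')\le C_0 t^\beta$.

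Then the triangle inequality gives $B_{\tilde g(t)}(y',Ct^\beta)\subseteq B_{\tilde g(t)}(y,(C+C_0)t^\beta)$ for every $C>0$ and $t<1$, so $\inf_{B_{\tilde g(t)}(y',Ct^\beta)}R^{\tilde g}(\cdot,t)\ge \inf_{B_{\tilde g(t)}(y,(C+C_0)t^\beta)}R^{\tilde g}(\cdot,t)$. Because (\ref{eqn:RRFscalar}) at $y$ is an infimum over $C>0$ that is $\ge\kappa$, in fact $\liminf_{t\searrow 0}\inf_{B_{\tilde g(t)}(y,C''t^\beta)}R^{\tilde g}(\cdot,t)\ge\kappa$ for \emph{every} $C''>0$; applying this with $C''=C+C_0$ and taking $\liminf_{t\searrow 0}$ in the previous inequality gives $\liminf_{t\searrow 0}\inf_{B_{\tilde g(t)}(y',Ct^\beta)}R^{\tilde g}(\cdot,t)\ge\kappa$, and since $C>0$ was arbitrary, taking $\inf_{C>0}$ yields (\ref{eqn:RRFscalar}) at $y'$. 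The only mildly delicate step is the distance estimate $d_{\tilde g_t}(y,y')\le C_0 t^\beta$; the remainder is the routine observation that (\ref{eqn:RRFscalar}) at a point is equivalent to the family of statements $\liminf_{t\searrow 0}\inf_{B_{\tilde g(t)}(\cdot,Ct^\beta)}R^{\tilde g}(\cdot,t)\ge\kappa$, one for each $C>0$, which is stable under moving the center within an $o(t^\beta)$-neighborhood.
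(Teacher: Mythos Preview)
Your argument is essentially the same as the paper's: obtain $d_{\tilde g_t}(y,y')\le C_0\sqrt t\le C_0 t^\beta$ via the drift estimate, deduce the nesting of balls $B_{\tilde g(t)}(y',Ct^\beta)\subset B_{\tilde g(t)}(y,(C+C_0)t^\beta)$, and observe that (\ref{eqn:RRFscalar}) is insensitive to shifting the constant $C$. The distance estimate you give (passing to the limit $t_1\searrow 0$ in (\ref{eq:distancesnotmuchmoved}) and using bilipschitz equivalence) is a clean variant of the paper's computation.

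There is, however, one genuine gap. You write ``where $g_t$ is the Ricci-DeTurck flow \ldots\ from which it is produced,'' but Definition \ref{def:RRF} does \emph{not} assert that $(\chi_t)_*\tilde g_t$ is a Ricci-DeTurck flow, nor that it satisfies the derivative bounds $|\nabla^{\bar g}g_s|\le c_0 s^{-1/2}$ needed to invoke Lemma \ref{lemma:driftbound}. It only requires $(\chi_t)_*\tilde g_t\to g_0$ in $C^0$. So your drift estimate is justified only for the special regularizing Ricci flows constructed in Theorem \ref{thm:existence}, not for an arbitrary one. The paper makes exactly this distinction: it first proves the statement under the assumption that $(\chi_t)_*\tilde g_t$ is a Ricci-DeTurck flow in the sense of Corollary \ref{cor:C0RDTderivbounds}, and then handles an arbitrary regularizing Ricci flow $(\tilde g_t',\chi')$ by invoking Theorem \ref{thm:uniqueness} to produce a diffeomorphism $\alpha$ with $\tilde g_t'=\alpha^*\tilde g_t$ and $\chi'=\chi\circ\alpha$, so that (\ref{eqn:RRFscalar}) for $\tilde g_t'$ at $y$ (resp.\ $y'$) is equivalent to (\ref{eqn:RRFscalar}) for $\tilde g_t$ at $\alpha(y)$ (resp.\ $\alpha(y')$), reducing to the special case. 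You should add this reduction step.
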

\begin{proof}
First assume that the regularizing Ricci flow $((\tilde g_t)_{t\in (0,T]}, \chi)$ with corresponding family of diffeomorphisms $\chi_t$ has the property that $(\chi_t)_*\tilde g_t$ is a Ricci-DeTurck flow starting from $g_0$ in the sense of Corollary \ref{cor:C0RDTderivbounds}, as in Theorem \ref{thm:existence}.

Recall that $\chi$ is the uniform limit as $t\searrow$ of the $\chi_t$. We apply (\ref{eq:Xbound}) and the bounds on the time derivative from Corollary \ref{cor:C0RDTderivbounds}, and argue as in the proof of Lemma \ref{lemma:driftbound} to find
\begin{align*}
d_{\tilde g(t)}(y, y') &= d_{\tilde g(t)}(\chi_t^{-1}(\chi_t(y)), \chi_t^{-1}(\chi_t(y'))) = d_{(\chi_t)_*\tilde g_t}(\chi_t(y), \chi_t(y'))
\\& \leq \int_0^t\left|\frac{\partial}{\partial s}\left[d_{(\chi_s)_*\tilde g_s}(\chi_s(y), \chi_s(y'))\right]\right|ds + d_{g_0}(\chi(y), \chi(y')) \leq c\sqrt{t},
\end{align*}
where $\bar g$ is a smooth background Ricci flow as in Corollary \ref{cor:C0RDTderivbounds}, $c$ depends on $C$ and is adjusted according to the constants in Lemma \ref{lemma:driftbound} and Corollary \ref{cor:C0RDTderivbounds}, and $d_{g_0}(\chi(y), \chi(y')) = 0$ since $\chi(y) = \chi(y')$. Then for fixed $D>0$ there exists a constant $D' >0$ such that, for small $t$,
\begin{equation*}
\begin{split}
B_{\tilde g(t)}(y, D't^\beta) &\subset B_{\tilde g(t)}(y', Dt^\beta)\\
B_{\tilde g(t)}(y', D't^\beta) &\subset B_{\tilde g(t)}(y, Dt^\beta),
\end{split}
\end{equation*}
 so that (\ref{eqn:RRFscalar}) holds at $y$ if and only if it holds at $y'$. Thus Definition \ref{def:RRFscalar} is independent of choice of $y$, when $\tilde g_t$ is constructed from a Ricci-DeTurck flow, as in Theorem \ref{thm:existence}.

Now let $((\tilde g_t')_{(0,T']}, \chi')$ be an arbitrary regularizing Ricci flow for $g_0$, i.e. $\tilde g_t'$ does not necessarily come from a Ricci-DeTurck flow as in Theorem \ref{thm:existence}, and suppose that $y$ and $y'$ satisfy $\chi'(y) = x = \chi'(y')$. Then, by Theorem \ref{thm:uniqueness}, there exists a diffeomorphism $\alpha:M\to M$ such that $\tilde g_t' = \alpha^*\tilde g_t$ and $\chi' = \chi\circ\alpha$. Then, as in the proof of Lemma \ref{lemma:flowindependence}, we find that (\ref{eqn:RRFscalar}) holds for $\tilde g_t'$ at $y$ if and only if it holds for $\tilde g_t$ at $\alpha(y)$, and that it holds for $\tilde g_t'$ and $y'$ if and only if it holds for $\tilde g_t$ at $\alpha(y')$. Moreover, as discussed above, (\ref{eqn:RRFscalar}) holds for $\tilde g_t$ at $\alpha(y)$ if and only if it holds for $\tilde g_t$ at $\alpha(y')$, since $\chi(\alpha(y)) = \chi'(y) = x = \chi'(y') = \chi(\alpha(y'))$. Therefore, (\ref{eqn:RRFscalar}) holds for $\tilde g_t'$ at $y$ if and only if it holds for $\tilde g_t'$ at $y'$.
\end{proof}

\begin{remark}\label{rmk:scalinginvariance}
If $\kappa = 0$, Definition \ref{def:RRFscalar}  is invariant under rescaling: if $g_0$ has nonnegative scalar curvature at $x$ in the $\beta$-weak sense for some regularizing Ricci flow $\tilde g(t)$ for $g_0$, then, by parabolic rescaling, $g'(t):= \lambda \tilde g(t/\lambda)$ is a regularizing Ricci flow for $\lambda g_0$ ($\lambda > 0$), and we have 
\begin{align*}
\inf_{C > 0}\left(\liminf_{t\to 0}\left(\inf_{B_{\tilde g'(t)}(y, Ct^\beta)}R^{\tilde g'}(\cdot, t)\right)\right) &= \inf_{C > 0}\left(\liminf_{t\to 0}\left(\inf_{B_{\lambda \tilde g(t/\lambda)}(y, Ct^\beta)}\lambda^{-1}R^{\tilde g}(\cdot, t/\lambda)\right)\right)
\\&= \lambda^{-1}\inf_{C > 0}\left(\liminf_{t\to 0}\left(\inf_{B_{\tilde g(t/\lambda)}(x', C\lambda^{\beta-1/2}(t/\lambda)^\beta)}R^{\tilde g}(\cdot, t/\lambda)\right)\right) \geq 0.
\end{align*}
\end{remark}

We now present some equivalent formulations of Definition \ref{def:RRFscalar}. By allowing the balls to be measured by a stationary metric and reformulating Definition \ref{def:RRFscalar} in terms of Ricci-DeTurck flow, we may apply the results of \S \ref{sec:initialderivatives} and \S \ref{sec:fixedptconstruction} and use heat kernel estimates from \S \ref{sec:preliminaries}.

\begin{lemma}\label{lemma:RDTscalar}
Let $M^n$ be a manifold, $g_0$ a $C^0$ metric on $M$, and $x\in M$. The following are equivalent:
\begin{enumerate}
\item\label{item:RRFscalardef} The scalar curvature of $g_0$ is bounded below by $\kappa$ at $x$ in the $\beta$-weak sense, i.e. in the sense of Definition \ref{def:RRFscalar}.
\item\label{item:RDTdef}
If $g_t$ is a Ricci-DeTurck flow starting from $g_0$ in the sense of Corollary \ref{cor:C0RDTderivbounds} and if $\bar g_0$ is a stationary metric on $M$ that is uniformly bilipschitz to $(g_t)_{t\in (0,T]}$, then
\begin{equation*}
\inf_{C>0}\left(\liminf_{t\searrow 0}\left(\inf_{B_{\bar g_0}(x, Ct^\beta)}R^{g_t}(\cdot)\right)\right) \geq \kappa.
\end{equation*}
\end{enumerate}
\end{lemma}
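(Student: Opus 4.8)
The plan is to show the two statements are equivalent by exploiting the fact that a regularizing Ricci flow $\tilde g_t$ is, by construction (Theorem \ref{thm:existence}), related to a Ricci-DeTurck flow $g_t$ starting from $g_0$ via $(\chi_t)_* \tilde g_t = g_t$, where $\chi_t$ is the smooth family of diffeomorphisms from Definition \ref{def:RRF}, and that scalar curvature is a diffeomorphism invariant, so $R^{\tilde g}(y,t) = R^{g}(\chi_t(y), t)$. Thus the infimum over a $\tilde g(t)$-ball about $y$ equals the infimum over the $\chi_t$-image of that ball, which is a $g(t)$-ball about $\chi_t(y)$. The entire content of the lemma is therefore the interchangeability of the metrics $g(t)$ and $\bar g_0$ used to measure the balls, together with the harmlessness of replacing $\chi_t(y)$ by the fixed point $x$.

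First I would reduce, using Lemma \ref{lemma:flowindependence} (Theorem \ref{thm:uniqueness}) and the already-established independence of Definition \ref{def:RRFscalar} from the choice of regularizing Ricci flow and of the preimage point $y$, to the case where $\tilde g_t$ is the specific regularizing Ricci flow built from the Ricci-DeTurck flow $g_t$ as in Theorem \ref{thm:existence}, and $y$ is a preimage of $x$ under $\chi = \lim_{t\to 0}\chi_t$. Then I would push forward by $\chi_t$: since $R^{\tilde g}(\cdot,t) = \chi_t^* R^{g}(\cdot,t)$ and $\chi_t$ maps $B_{\tilde g(t)}(y, Ct^\beta)$ isometrically onto $B_{g(t)}(\chi_t(y), Ct^\beta)$, we get
\begin{equation*}
\inf_{B_{\tilde g(t)}(y, Ct^\beta)} R^{\tilde g}(\cdot, t) = \inf_{B_{g(t)}(\chi_t(y), Ct^\beta)} R^{g_t}.
\end{equation*}
Next I would control the drift: by Lemma \ref{lemma:driftbound} (using the derivative bounds of Corollary \ref{cor:C0RDTderivbounds} to verify its hypothesis $|\nabla^{\bar g} g_s|_{\bar g} \le c_0 s^{-1/2}$) and $\chi(y) = x$, we have $d_{\bar g_0}(\chi_t(y), x) \le c\sqrt{t}$, which is $\ll t^\beta$ since $\beta < 1/2$. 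Hence for any $C$ there is $C'$ with $B_{g(t)}(\chi_t(y), C't^\beta) \subset B_{\bar g_0}(x, Ct^\beta)$ and vice versa with the roles of the metrics and constants adjusted, once we also pass between $g(t)$-balls and $\bar g_0$-balls. This last passage is the uniform bilipschitz equivalence of $g(t)$ and $\bar g_0$ on $(0,T]$ (which holds after shrinking $T$, by (\ref{eq:distancedistortion}) and the $C^0$-closeness from Lemma \ref{lemma:RDTexistence}): it costs only a fixed multiplicative constant in the radius, again absorbed into the $C$. Since both expressions in the lemma are infima over all $C>0$ of the corresponding $\liminf_{t\to 0}$, and nested inclusions of balls (up to replacing $C$ by $C'$) are exactly what these $\inf_C$ absorb, the two quantities coincide.

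The main obstacle — really the only nontrivial point — is bookkeeping the quantifiers cleanly: one must check that for \emph{every} $C>0$ one can find $C'>0$ (depending on $C$ and the fixed bilipschitz and drift constants, but not on $t$) so that the $\bar g_0$-ball of radius $C't^\beta$ about $x$ contains the $\tilde g(t)$-ball of radius $Ct^\beta$ about $y$ for all small $t$, and conversely, so that taking $\inf_{C>0}$ on both sides yields equality rather than merely one inequality. I would also remark that the choice of stationary $\bar g_0$ in statement (\ref{item:RDTdef}) is immaterial, since any two such metrics are mutually bilipschitz and the argument only ever uses bilipschitz equivalence of the radius-measuring metric with $g(t)$; this justifies the phrasing ``if $\bar g_0$ is a stationary metric\ldots uniformly bilipschitz'' without ambiguity. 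No genuinely hard analysis is needed here beyond invoking Lemma \ref{lemma:driftbound}, Corollary \ref{cor:C0RDTderivbounds}, Theorem \ref{thm:existence}, and Theorem \ref{thm:uniqueness}.
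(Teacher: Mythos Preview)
Your proposal is correct and follows essentially the same approach as the paper: reduce via Theorem \ref{thm:uniqueness} and Lemma \ref{lemma:flowindependence} to the regularizing Ricci flow built from the given Ricci-DeTurck flow, push forward by $\chi_t$ to convert $\tilde g(t)$-balls about $y$ into $g(t)$-balls about $\chi_t(y)$, use Lemma \ref{lemma:driftbound} to replace $\chi_t(y)$ by $x$ at cost $c\sqrt{t} \ll t^\beta$, and use uniform bilipschitz equivalence to swap $g(t)$-balls for $\bar g_0$-balls, absorbing all radius changes into the $\inf_{C>0}$. The paper's proof is terser in the reverse direction (``arguing similarly as above''), whereas you spell out the two-way inclusions more explicitly; your additional remark that the particular choice of bilipschitz $\bar g_0$ is immaterial is also correct and implicit in the paper's formulation.
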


\begin{proof}[Proof of Lemma \ref{lemma:RDTscalar}]

First suppose that (\ref{item:RRFscalardef}) holds, and let $g_t$ be a Ricci-DeTurck flow starting from $g_0$ in the sense of Corollary \ref{cor:C0RDTderivbounds}. Let $\tilde g_t:= \chi_t^*g_t$ as in Theorem \ref{thm:existence}. Then (\ref{item:RRFscalardef}) holds for $\tilde g_t$, with $\chi_t\to \chi$ as in Definition \ref{def:RRFscalar}, and $y\in M$ such that $\chi(y) = x$. Note first that Lemma \ref{lemma:driftbound} implies $d_{\bar g_0}(\chi_t(p), \chi(p)) \leq c\sqrt{t}$, so $d_{\bar g_0}(\chi_t(y), x) \leq c\sqrt{t}$. In particular, there is some $C'$ such that, for sufficiently small $t$,
\begin{equation*}
B_{\bar g_0}(x, C't^\beta) \subset B_{g_t}(\chi_t(y), Ct^\beta).
\end{equation*}
Now observe that
\begin{equation*}
\inf_{B_{\tilde g(t)}(y, Ct^\beta)}R^{\tilde g_t}(\cdot) = \inf_{B_{\chi_t^*g_t}(y, Ct^\beta)}R^{\chi_t^*\tilde g_t}(\cdot) = \inf_{B_{g_t}(\chi_t(y), Ct^\beta)}R^{g_t}(\cdot) \leq \inf_{B_{\bar g_0}(x, C't^\beta)}R^{g_t}(\cdot)
\end{equation*}
Thus (\ref{item:RRFscalardef}), implies that 
\begin{equation*}
\inf_{C>0}\left(\liminf_{t\searrow 0}\left(\inf_{B_{\bar g_0}(x, Ct^\beta)}R(\cdot, t)\right)\right) \geq \kappa.
\end{equation*}

Conversely, suppose that (\ref{item:RDTdef}) holds. By Lemma \ref{lemma:flowindependence} it is sufficient to show that there exists some regularizing Ricci flow for $g_0$, say $(\tilde g_t, \chi)$, for which (\ref{item:RRFscalardef}) holds. Let $g_t$ be a Ricci-DeTurck flow starting from $g_0$ in the sense of Corollary \ref{cor:C0RDTderivbounds}. By assumption, (\ref{item:RDTdef}) holds for $g_t$, and we may use $g_t$ to construct a regularizing Ricci flow $(\tilde g_t, \chi)$ as in Theorem \ref{thm:existence}. Arguing similarly as above, we find that, since $g_t$ satisfies (\ref{item:RDTdef}), (\ref{item:RRFscalardef}) is true for $\tilde g_t$.
\end{proof}

\begin{corollary}\label{cor:weakagreement}
Suppose that $g_0'$ and $g_0''$ are $C^0$ metrics on $M'$ and $M''$ respectively. Suppose also that there exists a locally defined diffeomorphism $\phi: U\to V$, where $U$ and $V$ are neighborhoods of $x_0'\in M'$ and $x_0'' \in M''$ respectively, such that $\phi(x_0') = x_0''$ and $\phi^*g_0''$ agrees to greater than second order (in the sense of Definition \ref{def:norms}) with $g_0'$ about $x_0'$. Then $g_0'$ has scalar curvature bounded below by $\kappa$ at $x_0'$ in the $\beta$-weak sense if and only if $g_0''$ does at $x_0''$, for all $\beta<1/2$ sufficiently close to $1/2$.

\end{corollary}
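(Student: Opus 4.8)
The plan is to reduce Corollary \ref{cor:weakagreement} to the two main theorems of Section \ref{sec:fixedptconstruction} together with the equivalent characterization of Definition \ref{def:RRFscalar} in terms of Ricci-DeTurck flow, which is Lemma \ref{lemma:RDTscalar}. Given the hypothesis on $\phi^*g_0''$ and $g_0'$, I would first invoke Theorem \ref{thm:fixedpointexistence} to produce Ricci-DeTurck flows $g_t'$ and $g_t''$ (with respect to suitably chosen background Ricci flows $\bar g'(t)$ and $\bar g''(t)$) starting from $g_0'$ and $g_0''$ in the sense of Corollary \ref{cor:C0RDTderivbounds}, such that the supremum bound on $\|g_t' - \phi^*g_t''\|$ near $x_0'$ holds. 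Then I would apply Theorem \ref{thm:weakagreement} to these same flows, obtaining
\begin{equation*}
\sup_{C>0}\left(\limsup_{t\to 0}\left(\sup_{B_{\bar g_0'}(x_0', Ct^\beta)} |R^{g'}|(t)\right)\right) = \sup_{C>0}\left(\limsup_{t\to 0}\left(\sup_{B_{\bar g_0''}(x_0'', Ct^\beta)} |R^{g''}|(t)\right)\right)
\end{equation*}
for $\beta \in (1/(2+\eta), 1/2)$.

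The second step is to upgrade this equality of $\limsup$s of the absolute values of the scalar curvature to the equivalence of the one-sided lower bound appearing in Definition \ref{def:RRFscalar}. For this I would look instead at the quantitative estimate (\ref{eq:scalarsclose}) from the proof of Theorem \ref{thm:weakagreement}, namely $\|R^{g'_t} - \phi^*R^{g''_t}\|_{L^\infty(B_{\bar g_0'}(x_0', Dt^\beta))} \leq ct^\omega$ with $\omega > 0$. Since $t^\omega \to 0$, this implies that for every $C > 0$,
\begin{equation*}
\liminf_{t\searrow 0}\left(\inf_{B_{\bar g_0'}(x_0', Ct^\beta)}R^{g'}(\cdot, t)\right) = \liminf_{t\searrow 0}\left(\inf_{B_{\bar g_0''}(x_0'', Ct^\beta)}R^{g''}(\cdot, t)\right)
\end{equation*}
(using that $\phi$ is a diffeomorphism, so $\phi$ carries balls measured by $\bar g_0'$ to sets comparable to balls measured by $\bar g_0''$ up to a fixed bilipschitz factor, which can be absorbed into the constant $C$ by a further inf over $C$), and hence taking $\inf_{C>0}$ on both sides gives that the quantity in (\ref{eqn:RRFscalar}) for $g_0'$ at $x_0'$ equals the corresponding quantity for $g_0''$ at $x_0''$. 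By Lemma \ref{lemma:RDTscalar}, part (\ref{item:RDTdef}), this common quantity being $\geq \kappa$ is equivalent to $g_0'$ having scalar curvature bounded below by $\kappa$ at $x_0'$ in the $\beta$-weak sense, and likewise for $g_0''$ at $x_0''$; this gives the stated equivalence.

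One bookkeeping point to handle carefully is the role of the background metrics $\bar g_0'$ and $\bar g_0''$ and the admissible range of $\beta$. The statement of Lemma \ref{lemma:RDTscalar} allows \emph{any} stationary metric uniformly bilipschitz to the Ricci-DeTurck flow, so the $\bar g_0'$ produced by Theorem \ref{thm:fixedpointexistence} qualifies; one should note that Definition \ref{def:RRFscalar} (via Lemma \ref{lemma:RDTscalar}) does not depend on this choice, which is why no extra hypothesis is needed. The constraint $\beta \in (1/(2+\eta),1/2)$ coming from Theorem \ref{thm:weakagreement} is the source of the phrase ``for all $\beta<1/2$ sufficiently close to $1/2$'': given the fixed $\eta>0$ from the greater-than-second-order agreement, any such $\beta$ works. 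The main obstacle, which is really already dispatched by the earlier sections, is ensuring that \emph{the same} pair of Ricci-DeTurck flows simultaneously satisfies both the $C^0$-closeness bound and the scalar-curvature-closeness bound — but this is precisely the content of Theorems \ref{thm:fixedpointexistence} and \ref{thm:weakagreement}, whose proofs are constructed around a common choice of background metrics, so no new work is required beyond assembling these pieces and invoking Lemma \ref{lemma:RDTscalar}.
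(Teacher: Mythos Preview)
Your proposal is correct and follows essentially the same route as the paper: invoke the Ricci-DeTurck flows and scalar-curvature closeness from Theorems \ref{thm:fixedpointexistence} and \ref{thm:weakagreement}, then translate via Lemma \ref{lemma:RDTscalar}(\ref{item:RDTdef}). You are in fact more careful than the paper's sketch in one respect: the paper records the conclusion of Theorem \ref{thm:weakagreement} about $|R^{g'}|$ and $|R^{g''}|$, which does not by itself transfer one-sided lower bounds, whereas you correctly point to the quantitative estimate (\ref{eq:scalarsclose}) to get the equality of the $\liminf$-$\inf$ quantities needed for Definition \ref{def:RRFscalar}.
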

\begin{proof}
This follows from Theorem \ref{thm:weakagreement} and (\ref{item:RDTdef}) in Lemma \ref{lemma:RDTscalar}, since if $g'(t)$ and $g''(t)$ are Ricci-DeTurck flows starting from $g'$ and $g''$ with respect to background Ricci flows $\bar g'(t)$ and $\bar g''(t)$ respectively, as described in Theorem \ref{thm:weakagreement}, then we have
\begin{equation}
\sup_{C>0}\left(\limsup_{t\to 0}\left(\sup_{B_{\bar g_0'}(x_0', Ct^\beta)}|R^{g'}|\right)\right) = \sup_{C>0}\left(\limsup_{t\to 0}\left(\sup_{B_{\bar g_0''}(x_0'', Ct^\beta)}|R^{g''}|\right)\right).
\end{equation}
\end{proof}

\begin{proposition}
Let $\mathcal{G}_x$ denote the space of germs of $C^0$ metrics on $M$ at $x$. Definition \ref{def:RRFscalar} descends to $\mathcal{G}_x$. Moreover, if we define the equivalence relation $\sim$ on $\mathcal{G}_x$ by $[g]\sim [g']$ if $g$ and $g'$ agree to greater than second order at $x$ (in the sense of Definition \ref{def:norms}), where $g$ and $g'$ are metrics on $M$ and $[g]$ and $[g']$ are their respective germs at $x$, then Definition \ref{def:RRFscalar} descends to the quotient space $\mathcal{G}_x/\sim$.
\end{proposition}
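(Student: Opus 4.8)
The plan is to verify the two claimed descent statements using the well-definedness results already established in the excerpt. The first statement — that Definition \ref{def:RRFscalar} descends to $\mathcal{G}_x$ — amounts to showing that whether $g_0$ has scalar curvature bounded below by $\kappa$ at $x$ in the $\beta$-weak sense depends only on the germ $[g_0]$ at $x$, i.e. only on the values of $g_0$ in an arbitrarily small neighborhood of $x$. The second statement strengthens this: the notion depends only on the equivalence class $[g_0]/\sim$, where two germs are identified if they agree to greater than second order at $x$. Both are consequences of results already proven, so the proof should be short.

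First I would prove the descent to $\mathcal{G}_x$. Suppose $g$ and $g'$ are two $C^0$ metrics on $M$ whose germs at $x$ coincide, i.e. $g \equiv g'$ on some neighborhood $W$ of $x$. Then in particular $g$ and $g'$ agree \emph{exactly} on $W$, hence trivially agree to greater than second order at $x$ in the sense of Definition \ref{def:norms} (take $\phi = \id$ on $W$; the weighted difference $w_0(\cdot,0)(g - g')$ vanishes near $x$, so its $L^\infty$-norm on a small ball is $0 < \infty$). Therefore, by Corollary \ref{cor:weakagreement} (equivalently, by Theorem \ref{cor:introcor1} or Theorem \ref{thm:weakagreement} combined with Lemma \ref{lemma:RDTscalar}), $g$ has scalar curvature bounded below by $\kappa$ at $x$ in the $\beta$-weak sense if and only if $g'$ does, for all $\beta < 1/2$ sufficiently close to $1/2$. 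Hence the property is constant on each germ, and Definition \ref{def:RRFscalar} descends to $\mathcal{G}_x$. Technically one should note that the conclusion of Corollary \ref{cor:weakagreement} is stated "for all $\beta < 1/2$ sufficiently close to $1/2$", matching the quantifier implicit in the proposition.

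Next I would prove the descent to $\mathcal{G}_x/\sim$. Given $[g] \sim [g']$, by definition $g$ and $g'$ (as metrics on $M$) agree to greater than second order at $x$; this is precisely the hypothesis of Corollary \ref{cor:weakagreement} with $M' = M'' = M$, $x_0' = x_0'' = x$, and $\phi = \id$ on a neighborhood of $x$. Corollary \ref{cor:weakagreement} then gives directly that $g$ satisfies Definition \ref{def:RRFscalar} at $x$ iff $g'$ does. Combined with the first part (which shows the property only depends on the germ, so that the equivalence relation $\sim$ on $\mathcal{G}_x$ is well-defined on representatives), this shows the property is constant on $\sim$-classes, i.e. Definition \ref{def:RRFscalar} descends to $\mathcal{G}_x/\sim$. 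I do not anticipate a genuine obstacle here: the entire content has already been packaged into Corollary \ref{cor:weakagreement}, and the only thing to check carefully is that "having the same germ at $x$" and "agreeing to greater than second order at $x$" both fall under the hypothesis of that corollary — the former as the degenerate case where the metrics are literally equal near $x$. A minor bookkeeping point worth stating explicitly is that the definition of $\beta$-weak lower scalar curvature bound implicitly ranges over $\beta$ in an interval near $1/2$, so the descent statement is understood with the same range of $\beta$ as in Corollary \ref{cor:weakagreement}.
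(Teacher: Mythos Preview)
Your proposal is correct and follows essentially the same approach as the paper: both parts reduce to an application of Corollary \ref{cor:weakagreement}, with the observation that coinciding germs is a special case of agreeing to greater than second order. The paper's proof is slightly terser but identical in content.
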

\begin{proof}
If $g$ and $g'$ agree on a neighborhood of $x$, then they certainly agree to greater than second order about $x$, so, by Corollary \ref{cor:weakagreement}, $g$ has scalar curvature bounded below by $\kappa$ at $x$ in the $\beta$-weak sense if and only if $g'$ does. In particular, Definition \ref{def:RRFscalar} descends to $\mathcal{G}_x$.

Similarly, if $[g']\sim [g'']$, then, by Corollary \ref{cor:weakagreement}, $g'$ has scalar curvature bounded below by $\kappa$ at $x$ in the $\beta$-weak sense if and only if $g''$ does.
\end{proof}

\section{Behavior of $\beta$-weak scalar curvature under the Ricci flow}\label{sec:maxprinciple}
In this section we prove Theorems \ref{thm:introthm2} and and \ref{thm:kappaapproximation}, and Corollary \ref{cor:torusrigidity}. They follow quickly from:
\begin{proposition}\label{prop:RDTmaxprinciple}
Suppose that $g_0$ is a $C^0$ metric on a closed manifold $M$ and that there exists $\beta \in (0, 1/2)$ such that, for all $x\in M$, $g_0$ has scalar curvature bounded below by $\kappa$ at $x$ in the $\beta$-weak sense. Then, if $g(t)$ is a Ricci-DeTurck flow starting from $g_0$ in the sense of Corollary \ref{cor:C0RDTderivbounds}, and $R(\cdot, t)$ is the scalar curvature of $g(t)$, we have $R(x,t) \geq \kappa$ for all $x\in M$.
\end{proposition}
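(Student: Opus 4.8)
\textbf{Proof proposal for Proposition \ref{prop:RDTmaxprinciple}.}

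The plan is to run a maximum-principle argument for the scalar curvature $R(\cdot,t)$ of the Ricci--DeTurck flow $g(t)$, exploiting the fact that the $\beta$-weak hypothesis controls $R$ near $t=0$ at scale $t^\beta$, which is exactly the scale that ODE comparison arguments can propagate forward in time. Fix $\varepsilon>0$ and let $u(x,t) := R(x,t) - \kappa + \varepsilon$; I want to show $u$ stays nonnegative for $t>0$, and then send $\varepsilon\to 0$. By (\ref{eq:RDTRevolution}), under the Ricci--DeTurck flow $\partial_t R \geq \Delta^{g(t)} R - \langle X,\nabla R\rangle + \tfrac{2}{n}R^2 \geq \Delta^{g(t)}R - \langle X,\nabla R\rangle$, so $R$ is a supersolution of a linear parabolic equation with a (singular, but integrable-in-time by Lemma \ref{lemma:driftbound}) drift term $X$ satisfying $|X|_{\bar g_t} \leq c/\sqrt{t}$. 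The subtlety is that the flow is only defined for $t>0$ and $R$ need not be bounded below uniformly as $t\to 0$ (we only have $R \geq -n/(2t)$ from (\ref{eq:scalarlowerbound})), so an ordinary minimum principle on $[0,T]$ does not directly apply. The $\beta$-weak hypothesis is what replaces the missing initial data.

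First I would set up the local comparison. Fix a point $x_0$; by Lemma \ref{lemma:RDTscalar}\eqref{item:RDTdef}, the $\beta$-weak bound at $x_0$ says that for every $C>0$, $\liminf_{t\searrow 0}\inf_{B_{\bar g_0}(x_0,Ct^\beta)}R(\cdot,t) \geq \kappa$. Since $g(t)$ is uniformly bilipschitz to $\bar g_0$, the same holds with $g(t)$-balls (after adjusting $C$). Now suppose for contradiction that $u$ becomes negative somewhere, i.e.\ there is a first time $t_1>0$ and a point $p$ with $R(p,t_1) = \kappa - \varepsilon$. I would localize near $p$: consider a parabolic neighborhood and use that on a slightly earlier time-slice $R > \kappa - \varepsilon$ everywhere (by minimality of $t_1$, modulo the usual argument that the infimum is attained for $t$ bounded away from $0$ — here is where the $\beta$-weak hypothesis enters, to rule out the infimum escaping to $t=0$). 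Concretely, given $\delta > 0$, the $\beta$-weak bound combined with the spatial derivative estimates $|\nabla^k R| \leq c_k t^{-(k+2)/2}$ from Corollary \ref{cor:C0RDTderivbounds} (which control oscillation of $R$ at scale $t^\beta$ since $\beta > $ the relevant threshold would force $t^\beta \cdot t^{-3/2}\to 0$ — actually one needs $\beta$ only to guarantee the ball $B(x_0,Ct^\beta)$ is large compared to length scales where $R$ oscillates by $\varepsilon$, which holds for $t$ small) implies $R(\cdot,t) \geq \kappa - \varepsilon$ on all of $M$ for all sufficiently small $t>0$. Combined with compactness of $M$, this gives a genuine initial barrier $R \geq \kappa - \varepsilon$ at some small time $t_0 = t_0(\varepsilon)$.

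Once I have $R(\cdot,t_0) \geq \kappa - \varepsilon$ on the closed manifold $M$, I would apply the maximum principle of \cite[Theorem $3.2.1$]{Top} (or a direct ODE-comparison argument using (\ref{eq:diffineqR}) pushed forward as in (\ref{eq:RDTRevolution})): since $\partial_t R \geq \Delta^{g(t)}R - \langle X,\nabla R\rangle + \tfrac{2}{n}R^2$ and $R \geq \kappa - \varepsilon$ at time $t_0$, the solution $\phi(t)$ of $\phi' = \tfrac{2}{n}\phi^2$, $\phi(t_0) = \kappa - \varepsilon$ is a lower barrier, so $R(\cdot,t) \geq \phi(t) \geq \kappa - \varepsilon - O(t-t_0)$ for $t \in [t_0, T]$ — in fact $\phi(t) \geq \kappa-\varepsilon$ if $\kappa - \varepsilon \geq 0$, and in general $\phi(t)$ stays close to $\kappa$ for $t_0$ small. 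Since $t_0(\varepsilon)$ can be taken arbitrarily small as $\varepsilon \to 0$, letting $\varepsilon \searrow 0$ yields $R(\cdot,t) \geq \kappa$ for all $t \in (0,T]$. The main obstacle is the first part: carefully verifying that the $\beta$-weak hypothesis at every point, together with the interior derivative estimates, genuinely upgrades to a uniform lower bound $R \geq \kappa - \varepsilon$ on the whole slice $M \times \{t_0\}$ for $t_0$ small — this requires a covering argument balancing the scale $t^\beta$ of the balls in Definition \ref{def:RRFscalar} against the oscillation estimates for $R$, and is where the restriction $\beta < 1/2$ and the precise form of Corollary \ref{cor:C0RDTderivbounds} must be used. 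Everything after that is a standard parabolic maximum principle on a compact manifold.
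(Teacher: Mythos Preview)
Your overall strategy --- establish $R(\cdot,t_0)\geq \kappa-\varepsilon$ on all of $M$ for some small $t_0(\varepsilon)$, then apply the classical maximum principle forward from $t_0$ and let $\varepsilon\searrow 0$ --- is exactly what the paper does. The second step is fine. The genuine gap is in your mechanism for the first step.

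The $\beta$-weak hypothesis at $x_0$ gives only a \emph{pointwise} threshold $t_0(x_0,\varepsilon)$ below which $\inf_{B(x_0,Ct^\beta)}R(\cdot,t)\geq\kappa-\varepsilon$, with no uniformity in $x_0$. Your proposed covering--oscillation argument does not produce this uniformity: the gradient bound $|\nabla R|\leq c t^{-3/2}$ allows $R$ to oscillate by order $t^{\beta-3/2}\to\infty$ across a ball of radius $t^\beta$, so it cannot propagate a lower bound from one point to its $t^\beta$-neighbors. (Your parenthetical ``$t^\beta\cdot t^{-3/2}\to 0$'' is simply false for $\beta<1/2$.) Likewise, a compactness argument on a sequence $(p_i,t_i)$ with $R(p_i,t_i)<\kappa-\varepsilon$ and $p_i\to p_\infty$ fails because nothing forces $d(p_i,p_\infty)\leq Ct_i^\beta$, so the $\beta$-weak bound at $p_\infty$ does not see the $p_i$.

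The paper resolves this by a completely different device (Lemma \ref{lemma:localscalarbound}): assuming $R(x,t)<\kappa-\varepsilon$, it uses the heat-kernel representation for $R-\kappa$ on the interval $[t/2,t]$ together with the Gaussian tail estimate of Corollary \ref{cor:heatkernelboundforballs} to find $x^{(1)}\in B_{\bar g_0}(x,t^\beta)$ with $R(x^{(1)},t/2)<\kappa-\varepsilon+\text{(error)}$. The error at step $k$ is of order $(t/2^k)^{-1}\exp\bigl(-c(t/2^k)^{2\beta-1}\bigr)$, and it is precisely the condition $\beta<1/2$ that makes this summable in $k$ (with sum $<\varepsilon/2$ for $t\leq T(\varepsilon,\beta,\kappa,\bar g)$ uniform in $x$). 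Iterating gives a Cauchy sequence $x^{(k)}\to x^\infty$ with $R(x^{(k)},t/2^k)<\kappa-\varepsilon/2$ and $d(x^{(k)},x^\infty)\leq C(t/2^k)^\beta$, which directly violates the $\beta$-weak hypothesis at $x^\infty$. This iteration is the missing idea; the derivative estimates from Corollary \ref{cor:C0RDTderivbounds} play no role in this step.
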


We first prove:
\begin{lemma}\label{lemma:localscalarbound}
For any smooth Ricci flow $\bar g(t)$ and all $\varepsilon>0$, $\beta \in (0,1/2)$, and $\kappa\in \R$ there exists $T= T(\varepsilon, \beta, \kappa, \bar g(t))$ such that the following is true:

Suppose $g_t$ is a Ricci-DeTurck flow starting from some $C^0$ metric $g_0$ in the sense of Corollary \ref{cor:C0RDTderivbounds}, with respect to $\bar g(t)$. For any $t\leq T$, if $g_0$ has scalar curvature bounded below by $\kappa$ in the $\beta$-weak sense everywhere in $B_{\bar g_0}\left(x,\left(\tfrac{2^\beta}{2^\beta - 1}\right)t^\beta\right)$, then $R^{g}(x, t) \geq \kappa - \varepsilon$.

\end{lemma}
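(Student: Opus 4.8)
The strategy is to exploit the fact that, by Lemma \ref{lemma:RDTscalar}, the $\beta$-weak scalar curvature lower bound at a point $z$ can be tested directly on the Ricci-DeTurck flow $g_t$ itself (with balls measured in a fixed background metric $\bar g_0$), combined with a continuity-in-time argument built on the smooth derivative bounds of Corollary \ref{cor:C0RDTderivbounds}. First I would fix $\varepsilon$, $\beta$, $\kappa$ and the smooth background $\bar g(t)$; reduce $T$ so that (\ref{eq:almosteucl}) and (\ref{eq:boundedcoeffs}) hold and so that $g_t$ is smooth on $M\times(0,T]$ with $|\nabla^k(g_t-\bar g_t)|\le c_k t^{-k/2}\|g_0-\bar g_0\|_{L^\infty}$. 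I would also note the geometric fact that $\sum_{k\ge 1} t^\beta/2^{k\beta} = t^\beta\cdot\frac{2^{-\beta}}{1-2^{-\beta}} = t^\beta/(2^\beta-1)$, which is why the constant $\frac{2^\beta}{2^\beta-1}$ shows up: a dyadic sequence of times $t_k := t/2^{k}$ has radii $C t_k^\beta = C t^\beta 2^{-k\beta}$ whose sum (together with the drift bound of Lemma \ref{lemma:driftbound}, which contributes $\le c\sum_k(\sqrt{t_{k-1}}-\sqrt{t_k})\le c\sqrt t$) stays inside $B_{\bar g_0}(x,(\tfrac{2^\beta}{2^\beta-1})t^\beta)$ for $t$ small.

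The core of the argument: suppose for contradiction that $R^g(x,t)<\kappa-\varepsilon$ for some $t\le T$. By the $\beta$-weak hypothesis at $x$ itself, using form (\ref{item:RDTdef}) of Lemma \ref{lemma:RDTscalar}, for every $C>0$ we have $\liminf_{s\searrow 0}\inf_{B_{\bar g_0}(x,Cs^\beta)}R^g(\cdot,s)\ge\kappa$; in particular along the sequence $s=t_k=t/2^k$ there is a smallest $k_0$ with $\inf_{B_{\bar g_0}(x,Ct_{k_0}^\beta)}R^g(\cdot,t_{k_0})\ge\kappa-\varepsilon/2$, say. Then on the parabolic region $B_{\bar g_0}(x,Ct_{k_0}^\beta)\times[t_{k_0},t]$ one runs the maximum principle (or simply Grönwall applied to the ODE comparison) for the scalar curvature evolution (\ref{eq:RDTRevolution}), $\partial_t R\ge\Delta^{g(t)}R-\langle X,\nabla R\rangle+\tfrac{2}{n}R^2$; the obstruction is that this region is not all of $M$, so I must control the boundary. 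This is exactly where the derivative bounds and the drift bound come in: the size of $R^g$ and its gradient on the parabolic boundary are bounded by $c/t_{k_0}$, and the domain of dependence (the set of points that can influence $(x,t)$ through the heat-type operator, up to exponentially small Gaussian tails as in Corollary \ref{cor:heatkernelboundforballs}) is contained in the ball of radius $\sim C\sqrt{t-t_{k_0}}$, which for appropriate $C$ sits inside $B_{\bar g_0}(x,Ct_{k_0}^\beta)$ once $\beta<1/2$ and $t$ is small (since $\sqrt{t-t_{k_0}}\le\sqrt t\ll t^\beta$ is false — rather $t^\beta\gg t^{1/2}$, so $t^\beta$ dominates $\sqrt t$; this is the point where $\beta<1/2$ is essential).

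Concretely I would use the representation formula with the heat kernel $\Phi$ for $\partial_t-\Delta^{g(t)}-\nabla^{g(t)}_{X}$ on the Ricci-DeTurck background (Lemma \ref{lemma:RDTexpbound}, Corollary \ref{cor:heatkernelboundforballs}): write $R^g(x,t)$ minus the solution of the pure ODE $\dot\rho=\tfrac{2}{n}\rho^2$, $\rho(t_{k_0})=\kappa-\varepsilon/2$, as an integral against $\Phi$ of nonnegative source terms ($\tfrac{2}{n}(R^2-\rho^2)$ handled by a sign/Grönwall argument) plus a boundary term supported outside $B_{\bar g_0}(x,Ct_{k_0}^\beta)$; the latter is bounded by $C\exp(-r^2/(D(t-t_{k_0})))\cdot\sup|R^g|\le \frac{c}{t_{k_0}}\exp(-c(Ct_{k_0}^\beta)^2/(t-t_{k_0}))$. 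Choosing $C$ large (depending on $\varepsilon$, $\beta$, $\kappa$) and then $T$ small, this tail is $\le\varepsilon/4$, while $\rho(t)\ge\kappa-\varepsilon/2-\tfrac{2}{n}|\kappa|\cdot(t-t_{k_0})\ge\kappa-\varepsilon$ for $T$ small; hence $R^g(x,t)\ge\kappa-\varepsilon$, contradicting the assumption. The main obstacle — and the step requiring the most care — is the localization: making rigorous that the value $R^g(x,t)$ depends, up to an error controllable by $\varepsilon$, only on the data of $R^g$ on the space-time region sitting inside $B_{\bar g_0}(x,(\tfrac{2^\beta}{2^\beta-1})t^\beta)$, which forces the precise bookkeeping of the dyadic radii summing to $t^\beta/(2^\beta-1)$, the drift $c\sqrt t$, and the Gaussian tail, all simultaneously small relative to $\varepsilon$ after shrinking $T$.
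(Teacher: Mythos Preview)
Your proposal has a genuine gap, and it is precisely in the step you flag as ``requiring the most care.'' You want to invoke the $\beta$-weak hypothesis at $x$ to find a first dyadic time $t_{k_0}=t/2^{k_0}$ with $\inf_{B_{\bar g_0}(x,Ct_{k_0}^\beta)}R^g(\cdot,t_{k_0})\ge\kappa-\varepsilon/2$, and then propagate forward from $t_{k_0}$ to $t$ in a single application of the heat kernel representation. The problem is that $k_0$ is determined by how quickly the $\liminf$ in the $\beta$-weak condition becomes effective, and this depends on $g_0$, not just on $\varepsilon,\beta,\kappa,\bar g$. The lemma, however, demands that $T$ be chosen \emph{uniformly} in $g_0$. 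Concretely, the Gaussian tail you must absorb is of size
\[
\frac{c}{t_{k_0}}\exp\!\left(-\frac{(Ct_{k_0}^\beta)^2}{D(t-t_{k_0})}\right),
\]
and with $t-t_{k_0}\approx t$ and $t_{k_0}^\beta=t^\beta/2^{k_0\beta}$, the exponent is $\approx C^2 t^{2\beta-1}/2^{2k_0\beta}$. For fixed $C$ and $T$, a metric $g_0$ whose $\liminf$ only kicks in at extremely small times forces $k_0$ large, and then this exponent is not large and the tail is not small. Your remark that ``$t^\beta\gg t^{1/2}$'' is correct but beside the point: the relevant spatial scale is $t_{k_0}^\beta$, not $t^\beta$, and you have no control on $k_0$.

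The paper's argument avoids this by never asking when the $\beta$-weak bound becomes effective. Instead it runs your heat kernel idea \emph{one dyadic step at a time, moving the center}: assuming $R^g(x,t)<\kappa-\varepsilon$, it shows (by the representation formula on $[t/2,t]$ and the tail bound from Corollary~\ref{cor:heatkernelboundforballs}, using only $R\ge -n/(2s)$ outside the ball) that there must be a point $x^{(1)}\in B_{\bar g_0}(x,t^\beta)$ with $R^g(x^{(1)},t/2)$ still below a slightly weaker threshold; then iterates to produce $x^{(k)}\in B_{\bar g_0}(x^{(k-1)},(t/2^{k-1})^\beta)$ with $R^g(x^{(k)},t/2^k)<\kappa-\varepsilon/2$. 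Because each step halves the time, the tail errors form the series $\sum_i\big[\tfrac{Cn}{t/2^i}+C\kappa\big]\exp\!\big(-(t/2^{i-1})^{2\beta}/(D\,t/2^i)\big)$, which is $<\varepsilon/2$ for $t\le T$ with $T=T(\varepsilon,\beta,\kappa,\bar g)$ --- no dependence on $g_0$. The $x^{(k)}$ are Cauchy (the radii sum to $\tfrac{2^\beta}{2^\beta-1}t^\beta$, which is where that constant comes from), their limit $x^\infty$ lies in $B_{\bar g_0}\!\big(x,\tfrac{2^\beta}{2^\beta-1}t^\beta\big)$, and along $t/2^k$ the $\beta$-weak condition at $x^\infty$ is violated. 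This ``point-picking'' iteration --- propagating the \emph{bad} point backward rather than a \emph{good} bound forward --- is the idea your proposal is missing.
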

\begin{remark}
If $g_0$ is smooth, then Lemma \ref{lemma:localscalarbound} says that if $R^g(z, 0) \geq \kappa$ for all $z\in B_{\bar g_0}\left(x,\left(\tfrac{2^\beta}{2^\beta - 1}\right)t^\beta\right)$, then $R^g(x, t) \geq \kappa - \varepsilon$.
\end{remark}
\begin{proof}[Proof of Lemma \ref{lemma:localscalarbound}]
Fix a smooth Ricci flow $\bar g(t)$, $\varepsilon>0$, $\beta\in (0,1/2)$, and $\kappa\in \R$. We show that following: that there exists $T$ such that if $g_t$ is a Ricci-DeTurck flow with respect to $\bar g(t)$ in the sense of Corollary \ref{cor:C0RDTderivbounds} and $R^g(x,t) < \kappa - \varepsilon$ for some $t \leq T$, then $g_0$ does not have scalar curvature bounded below by $\kappa$ in the $\beta$-weak sense everywhere in $B_{\bar g_0}\left(x,\left(\tfrac{2^\beta}{2^\beta - 1}\right)t^\beta\right)$.

Observe that, by Corollary \ref{cor:C0RDTderivbounds}, $g(t)$ satisfies the requisite derivative bounds $|\nabla^{\bar g(t)}g(t)| \leq c't^{-m/2}$ for $m= 1,2$, where $c'$ depends only on the constants from Corollary \ref{cor:C0RDTderivbounds}, and $g_t$ is uniformly $c''$-bilipschitz to  $\bar g_0$, where $c''$ depends only on $T\sup|\Rm|(\bar g(t))$. Thus the heat kernel estimate from Corollary \ref{cor:heatkernelboundforballs} holds, and the inherited constants do not depend on $g_t$.

We will show that exists a sequence of points $x^{(k)}\in B_{\bar g_0}(x^{(k-1)}, (t/2^{k-1})^\beta)$ such that
\begin{equation}\label{eq:subsequenceproperty}
R(x^{(k)}, t/2^k) \leq \kappa -\varepsilon + \sum_{i=1}^{k}\left[\frac{Cn}{(t/2^i)} + C\kappa\right]\exp\left(-\frac{(t/2^{i-1})^{2\beta}}{D(t/2^i)}\right) < \kappa - \frac{\varepsilon}{2},
\end{equation}
where $C= C(n,c,\sup|\Rm|(\bar g_0))$ and $D= D(c',c'')$ are the constants from Corollary \ref{cor:heatkernelboundforballs}.

\noindent \textbf{Claim.} There exists $T=T(\varepsilon, \beta, \kappa, C, D)$ such that, for $0<t\leq T$, we have
\begin{equation}\label{eq:expseriessmall}
\sum_{i=1}^{\infty}\left[\frac{Cn}{(t/2^i)} + C\kappa \right]\exp\left(-\frac{(t/2^{i-1})^{2\beta}}{D(t/2^i)}\right) < \frac{\varepsilon}{2}
\end{equation}
\begin{proof}[Proof of claim]
First observe that 
\begin{equation*}
\exp\left(-\frac{(t/2^{i-1})^{2\beta}}{D(t/2^i)}\right) \leq c(a)\left(\frac{(t/2^{i-1})^{2\beta}}{D(t/2^i)}\right)^{-a} = c(a)\left(\frac{2^{2\beta}}{D}\right)^{-a}\frac{t^{a(1-2\beta)}}{2^{ai(1-2\beta)}}.
\end{equation*}
In particular,
\begin{equation*}
\begin{split}
\left[\frac{Cn}{(t/2^i)} + C\kappa\right]\exp\left(-\frac{(t/2^{i-1})^{2\beta}}{D(t/2^i)}\right) &\leq Cnc(a)\left(\frac{2^{2\beta}}{D}\right)^{-a}t^{a(1-2\beta)-1}\left(\frac{1}{2^{a(1-2\beta)-1}}\right)^i 
\\& + C\kappa c(a)\left(\frac{2^{2\beta}}{D}\right)^{-a}t^{a(1-2\beta)}\left(\frac{1}{2^{a(1-2\beta)}}\right)^i.
\end{split}
\end{equation*}
Now pick $a = a(\beta) > 1/(1-2\beta)$ so that $1/2^{a(1-2\beta) - 1} < 1$, and pick $T = T(\varepsilon, a, \kappa)$ such that if $0<t\leq T$, we have
\begin{equation*}
\begin{split}
\sum_{i=1}^{\infty}\left[\frac{Cn}{(t/2^i)} + C\kappa\right]\exp\left(-\frac{(t/2^{i-1})^{2\beta}}{D(t/2^i)}\right) & \leq Cnc(a)\left(\frac{2^{2\beta}}{D}\right)^{-a}t^{a(1-2\beta)-1}\sum_{i=1}^{\infty}\left(\frac{1}{2^{a(1-2\beta)-1}}\right)^i 
\\& + C\kappa c(a)\left(\frac{2^{2\beta}}{D}\right)^{-a}t^{a(1-2\beta)}\sum_{i=1}^{\infty}\left(\frac{1}{2^{a(1-2\beta)}}\right)^i < \frac{\varepsilon}{2}.
\end{split}
\end{equation*}
\end{proof}
Now let $(x,t)$ be a pair with $R(x,t) \leq \kappa -\varepsilon$, such that $t\leq T$. Then (\ref{eq:expseriessmall}) implies that, for all $k$,
\begin{equation}\label{eq:experrorneg}
 \sum_{i=1}^{k}\left[\frac{Cn}{(t/2^i)} + C\kappa \right]\exp\left(-\frac{(t/2^{i-1})^{2\beta}}{D(t/2^i)}\right) <  \frac{\varepsilon}{2}.
\end{equation}
We construct a sequence $x^{(k)}$ iteratively as follows: set $(x^0, t) := (x,t)$. Then, given $x^{(k)} \in B_{\bar g_0}(x^{(k-1)}, (t/2^{k-1})^\beta)$ such that 
\begin{equation*}
R(x^{(k)}, t/2^k) \leq \kappa -\varepsilon + \sum_{i=1}^{k}\left[\frac{Cn}{(t/2^i)} + C\kappa\right]\exp\left(-\frac{(t/2^{i-1})^{2\beta}}{D(t/2^i)}\right)
\end{equation*}
 we show that there exists $x^{(k+1)} \in B_{\bar g_0}(x^{(k)}, (t/2^{k})^\beta)$ such that 
\begin{equation*}
R(x^{(k+1)}, t/2^{k+1}) \leq \kappa -\varepsilon + \sum_{i=1}^{k+1}\left[\frac{Cn}{(t/2^i)} + C\kappa\right]\exp\left(-\frac{(t/2^{i-1})^{2\beta}}{D(t/2^i)}\right).
\end{equation*}
Suppose that no such $x^{(k+1)}$ exists. Then, for all $y\in B_{\bar g_0}(x^{(k)}, (t/2^{k})^\beta)$ we have
\begin{equation*}
R(y,t/2^{k+1}) - \kappa >  -\varepsilon + \sum_{i=1}^{k+1}\left[\frac{Cn}{(t/2^i)} + C\kappa \right]\exp\left(-\frac{(t/2^{i-1})^{2\beta}}{D(t/2^i)}\right).
\end{equation*}
 By (\ref{eq:RDTRevolution}) we may use (\ref{eq:scalarlowerbound}), (\ref{eq:experrorneg}), and Corollary \ref{cor:heatkernelboundforballs} to find
\begin{align*}
 -\varepsilon &+ \sum_{i=1}^{k}\frac{Cn}{(t/2^i)}\exp\left(-\frac{(t/2^{i-1})^{2\beta}}{D(t/2^i)}\right) \geq R(x^{(k)}, t/2^k) - \kappa
\\& = \int_{B_{\bar g_0}(x^{(k)}, (t/2^{k})^\beta)}\Phi(x^{(k)}, t/2^k; y, t/2^{k+1})[R(y,t/2^{k+1}) - \kappa]d_{g(t/2^{k+1})}(y) 
\\& + \int_{M\setminus B_{\bar g_0}(x^{(k)}, (t/2^{k})^\beta)}\Phi(x^{(k)}, t/2^k; y, t/2^{k+1})[R(y,t/2^{k+1}) - \kappa ]d_{g(t/2^{k+1})}(y)
\\& > \left(-\varepsilon + \sum_{i=1}^{k+1}\left[\frac{Cn}{(t/2^i)} + C\kappa\right]\exp\left(-\frac{(t/2^{i-1})^{2\beta}}{D(t/2^i)}\right)\right)\int_{B_{\bar g_0}(x^{(k)}, (t/2^{k})^\beta)}\Phi(x^{(k)}, t/2^k; y, t/2^{k+1})d_{g(t/2^{k+1})}(y)
\\& - \left[\frac{n}{2(t/2^{k+1})} + \kappa \right] \int_{M\setminus B_{\bar g_0}(x^{(k)}, (t/2^{k})^\beta)}\Phi(x^{(k)}, t/2^k; y, t/2^{k+1})d_{g(t/2^{k+1})}(y)
\\& \geq  -\varepsilon + \sum_{i=1}^{k+1}\left[\frac{Cn}{(t/2^i)} + C\kappa\right]\exp\left(-\frac{(t/2^{i-1})^{2\beta}}{D(t/2^i)}\right) - \left[\frac{Cn}{2(t/2^{k+1})} + C\kappa\right]\exp\left(-\frac{(t/2^{k})^{2\beta}}{D(t/2^{k+1})}\right)
\\& \geq  - \varepsilon + \sum_{i=1}^k\left[\frac{Cn}{t/2^{i}} + C\kappa\right]\exp\left(-\frac{(t/2^{i-1})^{2\beta}}{D(t/2^i)}\right) + \left[\frac{Cn}{(t/2^k)} + C\kappa\right]\exp\left(-\frac{(t/2^{k})^{2\beta}}{D(t/2^{k+1})}\right).
\end{align*}
In particular, we may conclude that
\begin{equation*}
0> \left[\frac{Cn}{(t/2^k)} + C\kappa\right]\exp\left(-\frac{(t/2^{k})^{2\beta}}{D(t/2^{k+1})}\right),
\end{equation*}
which is a contradiction.

Thus a sequence $x^{(k)}$ satisfying (\ref{eq:subsequenceproperty}) exists. By (\ref{eq:expseriessmall}), the sequence $x^{(k)}$ satisfies
\begin{equation*}
R(x^{(k)}, t/2^k) < \kappa - \frac{\varepsilon}{2}.
\end{equation*}
Furthermore, for $k< j$, we have
\begin{equation*}
d_{\bar g_0}(x^{(k)}, x^{(j)}) \leq \sum_{i=k}^{j-1}d_{\tilde g}(x^{(i)}, x^{(i+1)}) < t^{\beta}\sum_{i=k}^{\infty}\frac{1}{2^{\beta i}} \xrightarrow[k\to \infty]{}0,
\end{equation*}
since the tails of a convergent series must always tend to $0$. Thus $\left(x^{(k)}\right)$ is Cauchy, and hence converges to some $x^\infty\in M$. Moreover, for all $k$,
\begin{equation}
d_{\bar g_0}(x^{(k)}, x^\infty) < t^\beta\sum_{i=k}^{\infty}\frac{1}{2^{\beta i}} = \left(\frac{2^\beta}{2^{\beta} - 1}\right)\left(\frac{t}{2^k}\right)^\beta,
\end{equation}
and
\begin{equation}
d_{\bar g_0}(x, x^\infty) = d_{\bar g_0}(x^0, x^\infty) < \left(\frac{2^\beta}{2^\beta - 1}\right)t^\beta.
\end{equation}
Let $D = 2^\beta/(2^{\beta} - 1)$, so that
\begin{equation*}
\liminf_{t\to 0}\left(\inf_{B_{\bar g_0}(x^{\infty}, Dt^\beta)}R(\cdot, t)\right) \leq \liminf_{k\to \infty}\left(\inf_{B_{\bar g_0}(x^{\infty}, D(t/2^k)^\beta)}R(\cdot, t/2^k)\right) \leq \lim_{k\to\infty}R(x^{(k)}, t/2^k) < \kappa -\varepsilon/2.
\end{equation*}
This  violates (\ref{item:RDTdef}) in Lemma \ref{lemma:RDTscalar} and $x_\infty$, so $g_0$ does not have scalar curvature bounded below by $\kappa$ in the $\beta$-weak sense at $x_\infty$.
\end{proof}

\begin{proof}[Proof of Proposition \ref{prop:RDTmaxprinciple}]
Let $\varepsilon_i\searrow 0$. Let $t_i\searrow 0$ be a sequence of times such that, for all $i$, $t_i \leq T(\varepsilon_i, \beta, \kappa)$, where $T(\varepsilon_i, \beta, \kappa)$ is given by Lemma \ref{lemma:localscalarbound}. Then $R(\cdot, t_i) \geq \kappa - \varepsilon_i$ for all $i$, so we may the usual maximum principle for smooth solutions to the Ricci-DeTurck flow starting from time $t_i$ (see (\ref{eq:Rpreservation})), to find
\begin{equation*}
R(\cdot, t) \geq \kappa - \varepsilon_i \text{ for } t\geq t_i.
\end{equation*}
In particular, for all $t>0$ we have
\begin{equation*}
R(\cdot, t) \geq \lim_{i\to\infty} \kappa - \varepsilon_i = \kappa.
\end{equation*}
\end{proof}

\begin{proof}[Proof of Theorem \ref{thm:introthm2}]
We may assume without loss of generality that $(\tilde g_t, \chi)$ is constructed from a Ricci-DeTurck flow starting from $g_0$ in the sense of Corollary \ref{cor:C0RDTderivbounds} as in Theorem \ref{thm:existence}, i.e. that there exists a smooth family of diffeomorphisms $\chi_t$ such that $(\chi_t)_*\tilde g_t =: g_t$ is such a Ricci-DeTurck flow, because Theorem \ref{thm:uniqueness} implies that $(\tilde g_t, \chi)$ is isometric to such a regularizing Ricci flow. By Proposition \ref{prop:RDTmaxprinciple}, $R^{g_t}(x) \geq \kappa$ for all $x\in M$. Therefore, $R^{\tilde g_t}(x) = R^{\chi_t^*g_t}(x) = R^{g_t}(\chi_t(x)) \geq \kappa$ for all $x\in M$.
\end{proof}

\begin{proof}[Proof of Theorem \ref{thm:kappaapproximation}]
Fix a smooth background metric $\bar g_0$ with $||g - \bar g_0||_{C^0(M, \bar g_0)} < \varepsilon'/2$, where $\varepsilon'$ is as in Corollary \ref{cor:C0RDTderivbounds}. Then, for sufficiently large $i$, there exists a Ricci-DeTurck flow $g_i(t)$ starting from $g_i$ in the sense of Corollary \ref{cor:C0RDTderivbounds}, which is smooth for $t>0$. By Proposition \ref{prop:RDTmaxprinciple}, for all $i$ we have $R(g_i(t)) \geq \kappa_i$, for $t>0$.

Let $g(t)$ denote the Ricci-DeTurck flow starting from $g$ in the sense of Corollary \ref{cor:C0RDTderivbounds}. By Corollary \ref{cor:C0RDTderivbounds}, $g_i(t)\xrightarrow[i\to\infty]{C^\infty_{\loc}(M\times (0,T')}g(t)$, so for all $x\in M$, $C>0$, and $t\in (0,T']$ we have
\begin{equation}
\inf_{B_{\bar g_0}(x, Ct^\beta)}R^{g(t)}(\cdot) \geq \lim_{i\to\infty}\inf\inf_{B_{\bar g_0}(x, Ct^\beta)}R^{g_i(t)}(\cdot) \geq \lim_{i\to\infty} \kappa_i = \kappa.
\end{equation}
In particular, $g$ has scalar curvature bounded below by $\kappa$ in the $\beta$-weak sense everywhere, so by Theorem \ref{thm:introthm2}, any regularizing Ricci flow $(\tilde g_t)_{t\in (0,T]}$ for $g$ satisfies $R(\tilde g_t) \geq \kappa$ for all $t\in (0,T]$.
\end{proof}

\begin{proof}[Proof of Corollary \ref{cor:torusrigidity}]
Let $((\tilde g_t)_{t\in (0,T]}, \chi)$ be a regularizing Ricci flow for $g$, and let $(\chi_t)_{t\in (0,T]}$ be the smooth family of diffeomorphisms given by Definition \ref{def:RRF}. By Theorem \ref{thm:introthm2}, $R(\tilde g(t)) \geq 0$ for all $t\in (0,T]$, so by the scalar torus rigidity theorem (see \cite[Corollary $2$]{SY} and \cite[Corollary A]{GL}), $\tilde g(t)$ is flat for all $t\in (0,T]$. By uniqueness of the Ricci flow starting from any (smooth) positive time slice, we must have that $\tilde g(t) = g_{\flat}$ for all $t\in (0,T]$, where $g_{\flat}$ is some fixed flat metric on $\mathbb{T}$. By Definition \ref{def:RRF}, we have $(\chi_t)_*g_{\flat}\xrightarrow[t\to 0]{C^0} g$, so the constant sequence $g_{\flat}$ converges in the Gromov-Hausdorff sense to $g$. In particular, $(\mathbb{T}, d_g)$ is isometric as a metric space to $(\mathbb{T}, d_{g_{\flat}})$.
\end{proof}

\begin{appendices}
\section{Iteration scheme for the Ricci-DeTurck flow}\label{appendix:iterationscheme}
The aim of this section is essentially to show that closeness of two metrics is stable under the Ricci-DeTurck flow, by appealing to the Banach fixed point theorem. We first record the unweighted versions of several results that we have proven in \S \ref{sec:fixedptconstruction}; see also \cite[Lemmata $2.2$, $4.1$, and $4.2$]{KL1}.
\begin{lemma}
We have, for every $0<\gamma < 1$ and every two $h', h''\in X^{\gamma}$,
\begin{equation}\label{eq:easylemunweighted}
||Q^0[h'] - Q^0[h''] + \nabla^*Q^1[h']- \nabla^*Q^1[h'']||_{Y} \leq c_1(||h'||_X + ||h''||_X)||h' - h''||_X,
\end{equation}
where $c_1 = c_1(n,\gamma, T\sup_{t\in [0,T]}|\Rm|(\bar g_t))$.
Moreover, if $(\partial_t + L)h\equiv 0$ with initial condition $h_0\in L^\infty(M)$, then
\begin{equation}\label{eq:homogeneouslemunweighted}
||h||_X \leq c_2||h_0||_{L^\infty(M)},
\end{equation}
where $c_2$ is the constant from Lemma \ref{lemma:homogeneous}.
Finally, if $(\partial_t + L)h = Q\in Y$ and $h_0 \equiv 0$, then
\begin{equation}\label{eq:hardlemunweighted}
||h||_X \leq c_3||Q||_Y,
\end{equation}
where $c_3$ is the constant from Lemma \ref{lemma:hard}.
\end{lemma}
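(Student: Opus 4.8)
The plan is to handle the three estimates in increasing order of difficulty, in each case reducing to work already carried out in \S\ref{sec:fixedptconstruction}.

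Estimates (\ref{eq:homogeneouslemunweighted}) and (\ref{eq:hardlemunweighted}) are the weight-free specializations of Lemma \ref{lemma:homogeneous} and Lemma \ref{lemma:hard} respectively; they also appear, on a stationary background, as \cite[Lemmata $2.2$, $4.2$]{KL1}. I would observe that both of those proofs use the weight $w_a$ only through its parabolic compatibility properties (\ref{eq:wtimeslicecomparison})--(\ref{eq:wparabolicballcomparison}), together with the pointwise homogeneous bounds of Lemma \ref{lemma:homogeneoussup}. Replacing $w_a$ throughout by the constant function $1$ (which trivially satisfies all those comparison inequalities), Lemma \ref{lemma:homogeneoussup} becomes the standard heat-kernel $L^\infty$-bound, and the arguments --- the term-by-term heat-kernel estimate of $I$ and $II$, the integration by parts against the cutoff $\eta_{x,r^2}$, and the Calder\'on--Zygmund step --- go through verbatim, and in fact simplify, since no weight must be carried. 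Thus (\ref{eq:homogeneouslemunweighted}) and (\ref{eq:hardlemunweighted}) hold with $c_2$, $c_3$ the constants of Lemmas \ref{lemma:homogeneous} and \ref{lemma:hard}.

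The substantive point is the Lipschitz estimate (\ref{eq:easylemunweighted}). Here I would use that $Q^0$ and $\nabla^*Q^1$ are, schematically, multilinear in $(\bar g+h)^{-1}$, $h$, $\nabla h$, and $\Rm(\bar g)$ --- see (\ref{eq:Q0is}), (\ref{eq:Q1is}) --- and decompose each difference telescopically so that every resulting monomial contains exactly one factor of $h'-h''$ or $\nabla(h'-h'')$. For a gradient--gradient term of $Q^0$ this means writing
\[
(\bar g+h')^{-1}\star(\bar g+h')^{-1}\star\nabla h'\star\nabla h' - (\bar g+h'')^{-1}\star(\bar g+h'')^{-1}\star\nabla h''\star\nabla h''
\]
as a four-term sum in which one factor is replaced at a time, bounding $|(\bar g+h')^{-1}-(\bar g+h'')^{-1}|\le c(n,\gamma)|h'-h''|$ by (\ref{eq:inverseexpansion3}) and controlling the remaining inverse factors by (\ref{eq:inverseexpansion1}); for the curvature term $((\bar g+h)^{-1}-\bar g^{-1})\star\Rm(\bar g)\star h$ splitting off a single difference factor and using (\ref{eq:inverseexpansion2}); and for $\nabla^*Q^1[h']-\nabla^*Q^1[h'']$, after the Leibniz expansion in (\ref{eq:Q1is}), splitting the difference again, now also using that $\nabla$ is metric so $\nabla(\bar g+h)^{-1}=-(\bar g+h)^{-1}\star\nabla h\star(\bar g+h)^{-1}$. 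Each resulting monomial then has the form (bounded coefficient) $\star$ (a factor of $h'$, $h''$, $\nabla h'$ or $\nabla h''$) $\star$ (a factor of $h'-h''$ or $\nabla(h'-h'')$), possibly times $\Rm(\bar g)$; estimating its $Y^0$- or $Y^1$-norm exactly as in the proof of Lemma \ref{lemma:easy} with weight $1$ --- and absorbing the $|h'-h''|\,|h'|$-type curvature contribution into the $L^\infty$-parts of the $X$-norms at the cost of a factor $T\sup_{[0,T]}|\Rm|(\bar g)$ --- yields the bound $c(n,\gamma,T\sup_{[0,T]}|\Rm|(\bar g))(||h'||_X+||h''||_X)||h'-h''||_X$.

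The main obstacle is purely the bookkeeping of this last step: one must perform the telescoping for every monomial appearing in $Q^0$ and $Q^1$ (there are several distinct $\nabla h\star\nabla h$ terms in (\ref{eq:Q0is})), track which H\"older pairing in the definition of the $Y$-norm governs each piece, and check that in every monomial precisely one difference factor is isolated so the product of $X$-norms comes out with the claimed shape. No analytic input beyond (\ref{eq:inverseexpansion1})--(\ref{eq:inverseexpansion3}), (\ref{eq:ptwiseR0})--(\ref{eq:ptwiseR1}), and the norm manipulations of Lemma \ref{lemma:easy} is needed; this is the argument of the second part of \cite[Lemma $4.1$]{KL1} adapted to a Ricci-flow background.
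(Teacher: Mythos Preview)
Your proposal is correct and follows essentially the same approach as the paper: the paper likewise observes that (\ref{eq:homogeneouslemunweighted}) and (\ref{eq:hardlemunweighted}) follow from Lemmata \ref{lemma:homogeneous} and \ref{lemma:hard} by omitting the weights, and proves (\ref{eq:easylemunweighted}) by a telescoping decomposition of $Q^0[h']-Q^0[h'']$ and $Q^1[h']-Q^1[h'']$ using (\ref{eq:inverseexpansion1})--(\ref{eq:inverseexpansion3}), then feeding the resulting pointwise bounds into the norm manipulations of Lemma \ref{lemma:easy}. One small remark: for the $Q^1$ piece you need only bound $|Q^1[h']-Q^1[h'']|$ directly (since the $Y$-norm is defined so that $\nabla^*Q^1$ is controlled by $\|Q^1\|_{Y^1}$), so the Leibniz expansion and the formula for $\nabla(\bar g+h)^{-1}$ are not actually needed there.
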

\begin{proof}
The proofs of (\ref{eq:homogeneouslemunweighted}) and (\ref{eq:hardlemunweighted}) follow similarly to the proofs of Lemmata \ref{lemma:homogeneous} and \ref{lemma:hard}, by omitting the weights. The proof of (\ref{eq:easylemunweighted}) is similar to the analysis of Terms $I$ and $II$ in the proof of Theorem \ref{thm:fixedpointexistence} but we shall give more details here. The analysis is simplified by the fact that $h'$ and $h''$ are solutions with respect to the same background metric. 
First observe that if $||h'||_X, ||h''||_X < \gamma < 1$, then (\ref{eq:inverseexpansion1}) and (\ref{eq:inverseexpansion3}) imply
\begin{align*}
|(\bar g + h')^{-1}\star (\bar g + h')^{-1} &- (\bar g + h'')^{-1}\star (\bar g + h'')^{-1}| \leq |(\bar g + h')^{-1}\star (\bar g + h')^{-1} - (\bar g + h')^{-1}\star (\bar g + h'')^{-1}| 
\\& \qquad + |(\bar g + h')^{-1}\star (\bar g + h'')^{-1} - (\bar g + h'')^{-1}\star (\bar g + h'')^{-1}|
\\& \leq |(\bar g + h')^{-1}||(\bar g + h')^{-1} - (\bar g + h'')^{-1}| + |(\bar g + h')^{-1} - (\bar g + h'')^{-1}||(\bar g + h'')^{-1}|
\\& \leq |(\bar g + h')^{-1}|^2 |(\bar g + h'')^{-1}| |h' - h''| + |(\bar g + h'')^{-1}|^2 |(\bar g + h')^{-1}| |h' - h''|
\\& \leq c(n,\gamma)|h' - h''|.
\end{align*}

Thus we have
\begin{align*}
|Q^0[h'] - Q^0[h'']| & \leq |(\bar g + h')^{-1}\star(\bar g + h')^{-1} \star \nabla h' \star \nabla h' - (\bar g + h'')^{-1}\star(\bar g + h'')^{-1} \star \nabla h'' \star \nabla h''| 
\\& + |[(\bar g + h')^{-1} - \bar g]\star \Rm^{\bar g}\star h' \star h' - [(\bar g + h'')^{-1} - \bar g]\star\Rm^{\bar g}\star h'' \star h''|
\\& \leq |(\bar g + h')^{-1}\star(\bar g + h')^{-1} \star \nabla h' \star \nabla h' - (\bar g + h')^{-1}\star(\bar g + h')^{-1} \star \nabla h' \star \nabla h''| 
\\& + |(\bar g + h')^{-1}*(\bar g + h')^{-1} \star \nabla h' \star \nabla h'' - (\bar g + h'')^{-1}\star(\bar g + h'')^{-1} \star \nabla h' \star \nabla h''|
\\& + |(\bar g + h'')^{-1}\star(\bar g + h'')^{-1} \star \nabla h' \star \nabla h'' - (\bar g + h'')^{-1}\star(\bar g + h'')^{-1} \star \nabla h'' \star \nabla h''|
\\& + |[(\bar g + h')^{-1} - \bar g]\star\Rm^{\bar g}\star h' \star h' - [(\bar g + h')^{-1} - \bar g]\star\Rm^{\bar g}\star h' \star h''| 
\\& + 
|[(\bar g + h')^{-1} - \bar g]\star \Rm^{\bar g}\star h' \star h'' - [(\bar g + h'')^{-1} - \bar g]\star \Rm^{\bar g}\star h' \star h''|
\\& + |[(\bar g + h'')^{-1} - \bar g]\star\Rm^{\bar g}\star h' \star h'' - [(\bar g + h'')^{-1} - \bar g]\star \Rm^{\bar g}\star h'' \star h''|
\\& \leq c(n, \gamma)|\nabla h'||\nabla(h' - h'')| + c(n)|h' - h''||\nabla h'||\nabla h''| + c(n,\gamma)|\nabla(h' - h'')||\nabla h''|
\\& + c(n,\gamma)|\Rm^{\bar g}||h'||h' - h''| + c(n)|h' - h''||\Rm^{\bar g}||h'||h''| + c(n,\gamma)|\Rm^{\bar g}||h'-h''||h''|,
\end{align*}
appealing to (\ref{eq:inverseexpansion1}). We also have
\begin{align*}
|Q^1[h'] - Q^1[h'']| & \leq |[(\bar g + h')^{-1} - \bar g^{-1}]\star h' \star \nabla h' - [(\bar g + h'')^{-1} - \bar g^{-1}]\star h'' \star \nabla h''|
\\& \leq |[(\bar g + h')^{-1} - \bar g^{-1}]\star h' \star \nabla h' - [(\bar g + h')^{-1} - \bar g^{-1}]\star h' \star \nabla h''| 
\\& + |[(\bar g + h')^{-1} - \bar g^{-1}]\star h' \star \nabla h'' - [(\bar g + h')^{-1} - \bar g^{-1}]\star h'' \star \nabla h''|
\\& + |[(\bar g + h')^{-1} - \bar g^{-1}]\star h'' \star \nabla h'' - [(\bar g + h'')^{-1} - \bar g^{-1}]\star h'' \star \nabla h''|
\\& \leq c(n,\gamma)|h'||\nabla (h' - h'')| + c(n,\gamma)|h' - h''||\nabla h''| + c(n,\gamma)|h''||\nabla h''||h' - h''|.
\end{align*}
Then the estimate (\ref{eq:easylemunweighted}) follows from the definitions of the $X$ and $Y$ norms, much as in the proof of Lemma \ref{lemma:easy}.
\end{proof}

\begin{lemma}\label{lemma:iterationcontraction}
Suppose $\bar g(t)$ is a smooth Ricci flow background defined for $t\in [0,T]$. Suppose $g_0', g_0''\in L^{\infty}(M)$ are two initial metrics such that  $||g_0' - \bar g(0)||_{L^\infty(M)}, ||g_0''-\bar g(0)||_{L^\infty(M)} < \varepsilon$, where $\varepsilon$ is given by Lemma \ref{lemma:RDTexistence} and reduced, if necessary, so that $2c_1c_3C\varepsilon < 1$, where $C$ is the constant from Lemma \ref{lemma:RDTexistence} and $c_1$ and $c_3$ are as in (\ref{eq:easylemunweighted}) and (\ref{eq:hardlemunweighted}) respectively. Let $h'(t)$ and $h''(t)$ be solutions to the integral equation (\ref{eq:integraleq}) in $X^\gamma$ for some $0<\gamma < 1$, given by Lemma \ref{lemma:RDTexistence}, starting from $g_0'$ and $g_0''$ respectively. Then
\begin{equation}\label{eq:C0convunweighted}
||h' - h''||_{X} \leq c||g_0' - g_0''||_{L^\infty(M)},
\end{equation}
where $c = c(n, \gamma, T\sup_{[0,T]}|\Rm|(\bar g_t))$.
\end{lemma}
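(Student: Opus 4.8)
The plan is to subtract the two integral equations and exploit the contraction structure already encoded in the unweighted estimates (\ref{eq:easylemunweighted}), (\ref{eq:homogeneouslemunweighted}), and (\ref{eq:hardlemunweighted}). Set $h_0' := g_0' - \bar g(0)$ and $h_0'' := g_0'' - \bar g(0)$, so that $h'$ and $h''$ are fixed points $h' = F[h',h_0']$ and $h'' = F[h'',h_0'']$ of the operator $F$ from (\ref{eq:integraleq}). Since $F$ is affine in the initial data and the nonlinearity enters only through the $Q$-terms, subtracting gives
\[
h' - h'' = \int_M \bar K(\cdot,\cdot;y,0)(h_0'-h_0'')(y)\,d\bar g_0(y) + \int_{M\times[0,t]} \bar K(\cdot,\cdot;y,s)\big(Q^0[h']-Q^0[h'']\big)(y,s) + \nabla^*\bar K(\cdot,\cdot;y,s)\big(Q^1[h']-Q^1[h'']\big)(y,s)\,d\bar g_s(y)\,ds.
\]

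Next I would estimate the two pieces separately. The first piece solves $(\partial_t + L)u = 0$ with $u(0) = h_0' - h_0'' \in L^\infty(M)$, so (\ref{eq:homogeneouslemunweighted}) bounds its $X$-norm by $c_2\|h_0'-h_0''\|_{L^\infty(M)}$. The second piece solves $(\partial_t + L)u = (Q^0[h']-Q^0[h'']) + \nabla^*(Q^1[h']-Q^1[h''])$ with vanishing initial data, so (\ref{eq:hardlemunweighted}) bounds its $X$-norm by $c_3\,\|(Q^0[h']-Q^0[h'']) + \nabla^*(Q^1[h']-Q^1[h''])\|_Y$, and then (\ref{eq:easylemunweighted}) bounds this in turn by $c_1 c_3(\|h'\|_X + \|h''\|_X)\|h'-h''\|_X$. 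Using the $X$-norm bound $\|h'\|_X, \|h''\|_X \le C\varepsilon$ from Lemma \ref{lemma:RDTexistence}, the triangle inequality yields
\[
\|h'-h''\|_X \le c_2\,\|h_0'-h_0''\|_{L^\infty(M)} + 2c_1 c_3 C\varepsilon\,\|h'-h''\|_X.
\]

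Finally, since $h', h'' \in X$ the quantity $\|h'-h''\|_X$ is a priori finite, so the standing hypothesis $2c_1 c_3 C\varepsilon < 1$ lets me absorb the last term on the left, giving $\|h'-h''\|_X \le \tfrac{c_2}{1-2c_1c_3C\varepsilon}\,\|h_0'-h_0''\|_{L^\infty(M)}$; as $h_0'-h_0'' = g_0'-g_0''$, this is exactly (\ref{eq:C0convunweighted}) with $c = c_2/(1-2c_1c_3C\varepsilon)$, which depends only on $n$, $\gamma$, and $T\sup_{[0,T]}|\Rm|(\bar g_t)$ through $c_1, c_2, c_3, C$. I do not expect a genuine obstacle here: all the analytic content has been packaged into the three cited lemmas, and the only points requiring care are the a priori finiteness of $\|h'-h''\|_X$ (so the absorption step is legitimate) and the verification that $h', h'' \in X^\gamma$ as required by (\ref{eq:easylemunweighted}), both of which are part of the hypotheses.
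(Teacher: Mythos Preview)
Your proof is correct and uses the same three ingredients (\ref{eq:easylemunweighted}), (\ref{eq:homogeneouslemunweighted}), (\ref{eq:hardlemunweighted}) as the paper, but the execution differs slightly. The paper proceeds by Picard iteration: it sets $h_i' = F^i[0,h_0']$, $h_i'' = F^i[0,h_0'']$, proves by induction that $\|h_i'-h_i''\|_X \le c_2\sum_{k=0}^{i-1}(2c_1c_3C\varepsilon)^k\|h_0'-h_0''\|_{L^\infty}$, and then passes to the limit using that the iterates converge to the fixed points. You instead subtract the fixed-point equations directly and absorb, relying on the a priori finiteness of $\|h'-h''\|_X$. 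Both routes yield the identical constant $c = c_2/(1-2c_1c_3C\varepsilon) = c_2\sum_{k\ge 0}(2c_1c_3C\varepsilon)^k$; your argument is a bit more streamlined since it avoids tracking the iterates, while the paper's version makes the contraction structure of $F$ more explicit.
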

\begin{proof}
Observe that $||h'||_{X}, ||h''||_{X} \leq C\varepsilon$, by Lemma \ref{lemma:RDTexistence}. Moreover, the proof of Lemma \ref{lemma:RDTexistence} implies that $F[\cdot, h_0']$ and $F[\cdot, h_0'']$ are contraction mappings $X^{C\varepsilon}\to X^{C\varepsilon}$. For $i\in \N$ let $h_i' = F^i[0, h_0']$, i.e. $F[\cdot, h_0']$ applied to the $0$-tensor $i$ times, and let $h_i'' = F^i[0,h_0'']$, so that $h_i'\to h'$ in $X$ as $i\to \infty$, and similarly for $h''$, by the Banach fixed point theorem. We show
\begin{equation}\label{eq:iterationcontraction}
||h_i' - h_i''||_{X} \leq c_2\sum_{k=0}^{i-1}(2c_1c_3C\varepsilon)^{k}||h_0' - h_0''||_{L^\infty(M)}.
\end{equation}
To prove (\ref{eq:iterationcontraction}) we induct. We have
\begin{equation*}
||h_1' - h_1''||_{X} = \left|\left|\int_M\bar K(x,y) (h_0' - h_0'')(y)dy\right|\right|_{X} \leq c_2||h_0' - h_0''||_{L^\infty(M)},
\end{equation*}
by (\ref{eq:homogeneouslemunweighted}). Moreover, supposing (\ref{eq:iterationcontraction}) holds for $i-1$, we appeal to (\ref{eq:hardlemunweighted}), (\ref{eq:homogeneouslemunweighted}), and (\ref{eq:easylemunweighted}) to find
\begin{align*}
||h_i' - h_i''||_X & \leq ||F[h_{i-1}', h_0'] - F[h_{i-1}'', h_0']||_X + ||F[h_{i-1}'', h_0'] - F[h_{i-1}'', h_0'']||_X
\\& = \left|\left|\int_0^t\int_M\bar K_{t-s}(x,y)(Q[h_{i-1}'] - Q[h_{i-1}''])(y,s)dyds\right|\right|_X + \left|\left|\int_M\bar K(x,y)(h_0' - h_0'')(y)dy\right|\right|_X
\\& \leq c_3||Q[h_{i-1}'] - Q[h_{i-1}'']||_Y + c_2||h_0' - h_0''||_{L^\infty(M)}
\\& \leq c_1c_3(||h_{i-1}'||_{X} + ||h_{i-1}''||_{X})||h_{i-1}' - h_{i-1}''||_X + c_2||h_0' - h_0''||_{L^\infty(M)}
\\& \leq 2c_1c_3C\varepsilon c_2\sum_{k=0}^{i-2}(2c_1c_3C\varepsilon)^{k}||h_0' - h_0''||_{L^\infty(M)} + c_2||h_0' - h_0''||_{L^\infty(M)} .
\end{align*}
Taking limits, we obtain (\ref{eq:C0convunweighted}), with $c = c_2\sum_{k=0}^{\infty}(2c_1c_3C\varepsilon)^k$.
\end{proof}
\end{appendices}

\bibliography{C0SCCbib}
\bibliographystyle{plain}

\end{document}